\newcommand{\PSLR}{{\bf {PSL}}(2,\R)}
\newcommand{\PSLC}{{\bf {PSL}}(2,\C)}
\newcommand{\D} {\mathbb {D}}
\newcommand{\Ha}{{\mathbb {H}}^{2}}
\newcommand{\Ho}{ {\mathbb {H}}^{3}}
\newcommand{\R} {\mathbb {R} }
\newcommand{\Z} {\mathbb {Z}}
\newcommand{\N}{\mathbb {N}}
\newcommand{\C} {\mathbb {C}}
\newcommand{\TT} {\mathbb {T}}
\newcommand{\ph}{\widehat{\partial}}
\newcommand{\wt}{\widetilde}
\newcommand{\wh}{\widehat}
\newcommand{\M}{{\bf {M}}^{3} }
\newcommand{\Su}{{\bf S}}
\newcommand{\Pant}{{\bf {\Pi}}}
\newcommand{\hl}{{\bf {hl}}}
\newcommand{\flow}{{\bf {g}}}
\newcommand{\abs}{{\bf a}}
\newcommand{\taso}{{\bf b}}
\newcommand{\len}{{\bf l}}
\newcommand{\dis}{{\bf d}}
\newcommand{\Ang}{{\bf {\Theta}}}
\newcommand{\ft}{{\bf {f}}}
\newcommand{\Col}{{\mathcal {C}}}
\newcommand{\Lab}{{\mathcal {L}}}
\newcommand{\TB}{T^{1}}
\newcommand{\NB}{N^{1}}
\newcommand{\Mes}{{\mathcal {M}}}
\newcommand{\A}{{\mathcal {A}}}
\newcommand{\Ne}{{\mathcal {N}}}
\newcommand{\KG}{{\mathcal {G}}}
\newcommand{\dista}{{\mathcal {D}}}
\newcommand{\FR}{{\mathcal {F}}}
\newcommand{\refl}{{\mathcal {R}}}
\newcommand{\IM}{\operatorname{Im}}
\newcommand{\RE}{\operatorname{Re}}
\newcommand{\lab}{\operatorname{lab}}
\newcommand{\rot}{\operatorname{rot}}
\newcommand{\foot}{\operatorname{foot}}
\newcommand{\Rel}{{\operatorname {Rel}}}
\newcommand{\dist}{{\operatorname {dis}}}
\newcommand{\Eucl}{{\operatorname {Eucl}}}
\newcommand{\tr}{{\operatorname {tr}}}
\newcommand{\id}{{\operatorname {id}}}
\newcommand{\Orb}{{\operatorname {Orb}}}
\newcommand{\Jac}{{\operatorname {Jac}}}
\newcommand{\at}{\mathbin{@}}
\newcommand{\from}{\colon}
\newtheorem{corollary}{Corollary}[section]
\newtheorem{theorem}{Theorem}[section]
\newtheorem{definition}{Definition}[section]
\newtheorem{lemma}{Lemma}[section]
\newtheorem{proposition}{Proposition}[section]
\newtheorem{assumption}{Assumption}[section]
\newtheorem{claim}{Claim}[section]
\theoremstyle{remark}
\newtheorem*{remark}{Remark}
\newtheorem*{example}{Example}
\begin{document}

\title[Nearly geodesic surfaces] {Immersing almost geodesic surfaces in a closed hyperbolic three manifold}
\author[Kahn and Markovic]{Jeremy Kahn and Vladimir Markovic}
\address{\newline Mathematics Department \newline  Stony Brook University \newline Stony Brook, 11794 \newline  NY, USA \newline  and
\newline University of Warwick \newline Institute of Mathematics \newline Coventry,
CV4 7AL, UK}
\email{kahn@math.sunysb.edu,  v.markovic@warwick.ac.uk}

\today

\subjclass[2000]{Primary 20H10}

\begin{abstract} Let $\M$ be a closed hyperbolic three manifold.  We construct  closed
surfaces which map by immersions into $\M$ so that for each one the
corresponding mapping on the universal covering spaces  is an  embedding,  or, in
other words,  the corresponding induced mapping on fundamental groups is  an injection.

\end{abstract}

\maketitle

\let\johnny\thefootnote
\renewcommand{\thefootnote}{}

\footnotetext{JK is supported by NSF grant number DMS 0905812}
\let\thefootnote\johnny

\section{Introduction}  The purpose of this paper  is to prove the following theorem.

\begin{theorem}\label{main} Let $\M=\Ho/ \KG$ denote a closed hyperbolic three manifold where $\KG$ is a Kleinian group and let $\epsilon>0$. Then there exists
a Riemann surface $S_{\epsilon}=\Ha/F_{\epsilon}$ where $F_{\epsilon}$ is a Fuchsian group and a $(1+\epsilon)$-quasiconformal  map 
$g:\partial{\Ho} \to \partial{\Ho}$, such that the quasifuchsian group $g\circ F_{\epsilon} \circ g^{-1}$ is a subgroup of $\KG$ (here we identify the hyperbolic plane $\Ha$ with an oriented geodesic plane in $\Ho$ and the circle $\partial{\Ha}$ with the corresponding circle on the sphere $\partial{\Ho}$).
\end{theorem}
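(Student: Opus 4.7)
The plan is to build the surface $S_\epsilon$ by gluing together many nearly totally geodesic immersed pairs of pants in $\M$. Fix a large length parameter $R$ and a small tolerance $\eta$, both depending on $\epsilon$, and call an immersed pair of pants \emph{good} if each of its three cuffs is a closed geodesic whose complex half length is $\eta$-close to $R/2$. The main steps are: (a) produce a large collection of good pants with well-distributed cuff data; (b) glue these pants pairwise along common cuffs so that the resulting shear at every cuff is $\eta$-close to the canonical Fuchsian value; (c) verify that the resulting immersed closed surface is $(1+\epsilon)$-quasi-Fuchsian.

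For (a), I would use exponential mixing of the frame flow on $\TB \M$, which follows from the spectral gap for closed hyperbolic three manifolds. The crucial output is that for every ``good'' closed geodesic $\gamma \subset \M$ the \emph{feet} on $\gamma$ of good pants having $\gamma$ as a cuff --- regarded as points in the unit normal bundle of $\gamma$ --- become $\eta$-equidistributed as $R \to \infty$, with error sharp enough to be uniform in $\gamma$. Concretely, good pants containing $\gamma$ are obtained by closing up pairs of long frame-flow trajectories that return $\eta$-close to themselves, and counting such returns reduces to a quantitative mixing estimate.

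For (b), along each good cuff $\gamma$ there are two sides, each carrying a finite set of feet, and one must find a bijective matching so that each matched pair is $\eta$-close to the canonical Fuchsian shear offset. Because the sets of feet on the two sides are $\eta$-equidistributed, the matching constraint satisfies the Hall marriage condition and a perfect matching exists; performing this simultaneously on all good cuffs yields a closed oriented surface $S_\epsilon$ immersed in $\M$. For (c), every pant is $\eta$-close to totally geodesic and every shear is $\eta$-close to Fuchsian, so a pant-by-pant local-to-global comparison shows that the developing map $\wt S_\epsilon \to \Ho$ is a $(1+O(\eta))$-quasi-isometric embedding. Its boundary extension is then the required $(1+\epsilon)$-quasiconformal map $g \from \partial \Ho \to \partial \Ho$ conjugating a Fuchsian group $F_\epsilon$ into $\KG$.

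The main obstacle will be step (a): one needs \emph{pointwise} equidistribution of pant feet at the fine scale $\eta$, uniformly in the closed geodesic $\gamma$, and not merely an averaged mixing estimate. This forces the use of exponential (not just ergodic) mixing with controlled implicit constants, together with careful handling of thin-part issues and of the combinatorial geometry of pant feet on $\gamma$, so that the matching argument in step (b) has the tolerance it needs to succeed.
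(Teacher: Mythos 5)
Your overall strategy matches the paper's: produce a large collection of nearly-geodesic pants via exponential mixing of the frame flow, equidistribute their feet on the unit normal bundles of the cuffs, match them via Hall's marriage theorem (the paper's Theorem~\ref{thm-Hall}), and verify the resulting surface is quasi-Fuchsian. Steps (a) and (b) are in close agreement with the paper's Sections 3--4.

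However, step (c) contains a genuine gap. The claim that ``a pant-by-pant local-to-global comparison shows that the developing map $\wt S_\epsilon \to \Ho$ is a $(1+O(\eta))$-quasi-isometric embedding'' underestimates the problem: perturbations of size $\eta$ at each cuff accumulate as a geodesic in $\wt S_\epsilon$ crosses many pants, so the distortion at distance $d$ is a priori of order $d\eta$, which is not uniformly close to $1$. The paper's actual argument (all of Section 2) is a boundary-level perturbation theory. One fixes the lamination $\Col_0(R)$ of cuff lifts in the universal cover, estimates the $\tau$-derivative of the complex distance $\dis_O(C_0,C_{n+1})$ between two leaves via a Kerckhoff--Series--Wolpert type formula, and shows (Lemma~\ref{derivative-3}) that this derivative is bounded by $C\,d\,e^{d}$, where $d$ is the distance between the leaves. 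This alone is still not enough: the key additional insight is a compactness statement (Proposition~\ref{proposition-C} and Theorem~\ref{theorem-new-1}) that a quasi-symmetric map of the boundary circle preserving complex distances of leaves at a \emph{single bounded scale} $T_1$ must be close to M\"obius, uniformly in $R$. This works because the laminations $\Col_0(R)$ satisfy Property~$T$ and form a precompact family in the Hausdorff topology. Without this, the local estimates never globalize to a uniform quasi-conformal bound; the ``local-to-global'' step is genuinely the heart of the proof and requires this extra idea.

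Two smaller but real issues. The tolerance on the shear must scale like $\epsilon/R$ rather than a fixed $\eta$ (cf.\ the hypothesis $|s(C)-1|<\epsilon/R$ in Theorem~\ref{thm-intro-2}): a transverse geodesic of length $L$ crosses roughly $RL$ cuff lifts, so a fixed error per cuff accumulates to $O(\eta RL)$. Your mixing estimate in fact delivers an exponentially small error, which is more than adequate, but this scaling must be tracked. Also, the ``canonical Fuchsian shear offset'' should be specified as a fixed \emph{positive} real constant (the paper uses $+1$), not $0$: gluing $P(R)$ to itself with zero twist produces closed geodesics of length comparable to $e^{-R/4}$ (the doubled short sides of the hexagon decomposition), so the model surfaces $\Su(R)$ would escape every compact set in moduli space and the compactness argument in Theorem~\ref{theorem-new-1} would collapse; Lemma~\ref{short} is exactly where the nonzero twist is used.
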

\begin{remark} In the above theorem the Riemann surface $S_{\epsilon}$ has a pants decomposition where all the cuffs have a fixed large length  and they are glued by twisting for $+1$.
\end{remark}

One can extend the map $g$ to an equivariant diffeomorphism  of the hyperbolic space. This extension defines the map   
$f:S_{\epsilon} \to \M$ and the surface $f(S_{\epsilon}) \subset \M$ is an immersed  $(1+\epsilon)$-quasigeodesic surface. 
In particular, the surface $f(S_{\epsilon})$ is essential which means that the induced map $f_{*}:\pi_1(S_{\epsilon}) \to \pi_1(\M)$ is an injection.
We summarise this in the following theorem.

\begin{theorem}\label{thm-intro-1} Let $\M$ be a closed hyperbolic 3-manifold. Then we can find a closed hyperbolic surface $S$ and a continuous map $f:S \to \M$
such that the induced map between fundamental groups is injective. 
\end{theorem}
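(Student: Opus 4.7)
The plan is to derive Theorem \ref{thm-intro-1} directly from Theorem \ref{main}; the paragraph preceding the statement already indicates the route, and the argument amounts to checking that the conjugation built in Theorem \ref{main} descends to a $\pi_1$-injective map between quotients.

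First I would fix any $\epsilon > 0$ (say $\epsilon = 1$) and invoke Theorem \ref{main}, obtaining a closed Riemann surface $S_\epsilon = \Ha/F_\epsilon$ and a $(1+\epsilon)$-quasiconformal homeomorphism $g : \partial \Ho \to \partial \Ho$ with $g F_\epsilon g^{-1} \subset \KG$. Next, extend $g$ to a homeomorphism $\wt g : \Ho \to \Ho$ that is equivariant in the sense that $\wt g \circ \gamma = (g \gamma g^{-1}) \circ \wt g$ for every $\gamma \in F_\epsilon$. The Douady--Earle barycentric extension achieves this: it commutes with all M\"obius transformations, so the equivariance follows immediately. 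Identifying $\Ha$ with a geodesic plane in $\Ho$ preserved by $F_\epsilon$, the restriction $\wt g |_{\Ha} : \Ha \to \Ho$ is equivariant with respect to the homomorphism
\[
\rho : F_\epsilon \to \KG, \qquad \rho(\gamma) = g \gamma g^{-1}.
\]
Passing to quotients yields a continuous map $f : S_\epsilon \to \M$.

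Finally, to see that $f_* : \pi_1(S_\epsilon) \to \pi_1(\M)$ is injective, note that under the canonical identifications $\pi_1(S_\epsilon) = F_\epsilon$ and $\pi_1(\M) = \KG$, the induced map $f_*$ is exactly $\rho$. Since $\rho$ is conjugation by a homeomorphism, it is an isomorphism of $F_\epsilon$ onto its image $g F_\epsilon g^{-1}$, and in particular injective.

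There is no real obstacle in this step: all of the substance sits in Theorem \ref{main}. The only things to verify are the existence of some equivariant extension of a boundary quasiconformal map (standard, e.g.\ via Douady--Earle, or even just radial extension from the origin in the ball model together with averaging) and that the induced map on fundamental groups agrees with the algebraic conjugation, which is immediate from the equivariance of the extension.
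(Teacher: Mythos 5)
Your proposal is correct and follows essentially the same route as the paper's (informal) argument: invoke Theorem \ref{main}, extend $g$ to an equivariant self-map of $\Ho$, restrict to the plane $\Ha$, descend to a map $f:S_\epsilon \to \M$, and observe that under the covering-space identifications $f_*$ is precisely the conjugation $\gamma \mapsto g\gamma g^{-1}$, which is injective since $g$ is a homeomorphism of $\partial\Ho$. The paper phrases the last step in more geometric language (``$f(S_\epsilon)$ is an immersed $\epsilon$-quasigeodesic subsurface, hence essential''), but the substance is identical; your explicit Douady--Earle extension and the algebraic identification of $f_*$ with $\rho$ make the implicit steps cleaner.
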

\vskip .1cm

Let $S$ be an oriented closed topological surface with a given pants decomposition $\Col$, where $\Col$ is a maximal collection of disjoint (unoriented) simple closed curves that cut $S$ into the corresponding pairs of pants. Let $f:S \to \M$  be a continuous map and let $\rho_f:
\pi_1(S) \to \pi_1(\M)$  be the induced map between the fundamental groups. Assume that $\rho_f$ is injective on $\pi_1(\Pi)$, for every pair of pants  $\Pi$ from the pants decomposition of $S$. Then to each curve $C \in \Col$  we can assign a complex half-length $\hl(C) \in (\C/2\pi i \Z)$ and a complex twist-bend $s(C) \in \C / (\hl(C) \Z +2\pi i \Z ) $. 
We prove the following in Section 2.

\begin{theorem}\label{thm-intro-2} There are universal constants $\wh{\epsilon}, K_0>0$ such that the following holds.
Let  $\epsilon$ be such that  $\wh{\epsilon}>\epsilon>0$. Suppose $(S,\Col)$ and $f:S \to \M$ are as above, and for every $C \in \Col$ we have
$$
|\hl(C)-\frac{R}{2}|<\epsilon, \, \, \text{and} \,\, |s(C)-1|<{{\epsilon}\over{R}},
$$
\noindent
for some  $R>R(\epsilon)>0$. Then $\rho_f$ is injective and the map $\partial{\wt{f}}:\partial{\wt{S}} \to \partial{\wt{\M}}$ extends to a $(1+K_0\epsilon)$-quasiconformal map from $\partial{\Ho}$ to 
itself (here $\wt{S}$ and $\wt{\M}$  denote the corresponding universal covers).
\end{theorem}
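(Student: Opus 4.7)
The plan is to construct, equivariantly, a smooth diffeomorphism $g: \Ho \to \Ho$ conjugating the Fuchsian representation $\rho_0: \pi_1(S) \to \PSLR$ of an ``ideal'' right-angled pants decomposition (with cuff half-lengths exactly $R$ and twists exactly $+1$) to the given representation $\rho_f$, and to show that its global Beltrami coefficient has sup-norm at most $K_0\epsilon$. Taking boundary values then produces the desired $(1+K_0\epsilon)$-quasiconformal map on $\partial\Ho$, and injectivity of $\rho_f$ follows because conjugation by a quasiconformal homeomorphism preserves faithfulness of a discrete action. The diffeomorphism $g$ is built pants by pants on $\wt S = \Ha$.

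\textbf{Single-pants maps.} Each Fuchsian pants $\wt\Pi \subset \Ha$ has a canonical \emph{seam-and-cuff skeleton}: its three cuff axes together with the three common perpendiculars between pairs of cuffs. On the image side, the restriction of $\rho_f$ to $\pi_1(\Pi)$ is generated by three loxodromic elements (with product $1$) whose complex half-lengths lie within $\epsilon$ of $R$; their axes together with the three pairwise common perpendiculars form an analogous skeleton in $\Ho$. Map the Fuchsian skeleton to the image skeleton by length-matching isometries and extend by interpolation to a $(1+K\epsilon)$-bilipschitz diffeomorphism of a fixed-size neighborhood of $\wt\Pi$. The point is that, because $|\hl(C)-R|<\epsilon$ for every cuff, the image skeleton lies within $O(\epsilon)$ (in Hausdorff distance on compacta) of a genuine totally geodesic pair of pants.

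\textbf{Gluing across cuffs.} In the Fuchsian model, two adjacent pants are glued coplanarly with real shear $+1$. In the image, two adjacent $\wt f$-pants share the common cuff axis but lie in approximately distinct geodesic planes, meeting at dihedral angle $\IM(s(C))=O(\epsilon/R)$ and with shear $\RE(s(C))-1=O(\epsilon/R)$. The two single-pants maps above need not agree on the cuff collar, but they differ there by an explicit twist-bend of magnitude $O(\epsilon/R)$; this discrepancy is absorbed into a collar map of dilatation $1+K\epsilon$ supported in a fixed-width annular neighborhood of the cuff.

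\textbf{Globalization and the main obstacle.} Assemble these pants-maps and collar corrections $\pi_1(S)$-equivariantly over all of $\Ha$ to produce $g$. The principal obstacle I anticipate is preventing catastrophic accumulation of error across long chains of pants: the cuff length is large (approximately $R$) and small bending errors can be amplified by the exponential geometry of $\Ho$, which is precisely why the hypothesis demands $|s(C)-1|<\epsilon/R$ rather than merely $<\epsilon$. The resolution is to keep every correction strictly \emph{local} -- supported in a uniformly bounded neighborhood of a single cuff or inside a single pants -- so that each point of $\Ha$ lies in the support of at most a bounded number of corrections; combined with the $R$-weighted closeness of $s(C)$ to $1$, this yields a global Beltrami coefficient of sup-norm at most $K_0\epsilon$. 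Boundary extension of the resulting $(1+K_0\epsilon)$-quasiconformal map on $\Ho$ then completes the proof.
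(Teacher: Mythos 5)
Your plan — build a $\rho_0$-to-$\rho_f$ equivariant diffeomorphism of $\Ho$ out of near-isometric pants maps plus small collar corrections, and read off the boundary quasiconformal map — is a genuinely different route from the paper's. The paper never constructs a map of $\Ho$ at all: it embeds the target representation into a holomorphic family $\rho_\tau$ (Definition \ref{def-C}), differentiates the complex distance between any two cuff-axes via a Kerckhoff--Series--Wolpert type formula (formula (\ref{S-formula})), bounds the derivative by a function of the distance alone under Assumption \ref{ass} (Lemma \ref{derivative-3}), and runs a continuity/bootstrap argument on the maximal disc of $\tau$'s for which the conjugating circle map $f_\tau$ is $K_1$-quasisymmetric. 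The rigidity input that closes the bootstrap (Proposition \ref{proposition-C}, Theorem \ref{theorem-new-1}) is that a quasisymmetric circle map approximately preserving complex distances between nearby leaves of a lamination with cocompact symmetry is close to M\"obius. Working with quasisymmetric boundary maps and complex distances lets the paper avoid altogether the problem of extending a map of the plane to a diffeomorphism of $\Ho$.

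Your proposal has a concrete gap at its load-bearing step. You assert that ``each point of $\Ha$ lies in the support of at most a bounded number of corrections,'' but for fixed collar width and large $R$ this is false. The cuff lamination $\Col_0(R)$ has total leaf length scaling like $R$ per unit area (the surface $\Su(R)$ has area $4\pi$ for every $R$ while the three cuffs have total length $3R$), and near the waist of each right-angled hexagon two adjacent leaves are at distance $\approx e^{-R/4}$ (formula (\ref{hex-1})). So a fixed-radius disc meets on the order of $R$ leaves — this is exactly what Lemma \ref{number} quantifies, and it is precisely the source of the $1/R$ in the hypothesis $|s(C)-1|<\epsilon/R$. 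Your final estimate could still be true (on the order of $R$ overlapping corrections, each of size $O(\epsilon/R)$, might well compose to a global Beltrami coefficient $O(\epsilon)$), but that delicate balance is the entire quantitative content of the paper's Lemma \ref{derivative-est-0} through Lemma \ref{derivative-3}, where the sum over intermediate cuffs weighted by $\cosh(\dis_{N_i}(O,C_i))$ must be controlled by careful combinatorics on the $a_i$'s. It cannot be dismissed as ``bounded multiplicity.''

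A second, separate issue: even granting a locally $(1+K\epsilon)$-bilipschitz equivariant immersion of (a neighborhood of) $\Ha$ into $\Ho$, you have not produced a diffeomorphism of $\Ho$. Extending such a map requires knowing its image is a uniform quasi-plane — a genuine local-to-global statement for nearly geodesic surfaces in $\Ho$ that is not automatic from pointwise Beltrami bounds (locally bilipschitz immersions of the plane can spiral). Since the image being a quasi-plane is essentially equivalent to $\rho_f$ being quasi-Fuchsian, assuming it here is circular. If you want to rescue the approach, you would need either an independent local-to-global theorem for nearly-Fuchsian surfaces, or to replace the global diffeomorphism of $\Ho$ with an intrinsic argument on the boundary circle — which is what the paper does.
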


\vskip .1cm 
It then remains to construct such a pair $(f,(S,\Col))$. If $\Pi$ is a (flat) pair of pants, we say $f:\Pi \to \M$ is a skew pair of pants  
if $\rho_f$ is injective, and $f(\partial{\Pi})$ is the union of three closed geodesics. Suppose we are given a collection 
$\{f_{\alpha}: \Pi_\alpha \to \M \}_{\alpha \in A}$ of skew pants, and suppose for the sake of simplicity that no $f_\alpha$ maps two components of $\partial{\Pi}$ to the same geodesic. 
\vskip .1cm
For each closed geodesic $\gamma$ in $\M$ we let $A_{\gamma}=\{\alpha \in A: \gamma \in  f_{\alpha} (\partial{\Pi_{\alpha} } ) \}$. Given permutations $\sigma_{\gamma}:A_{\gamma} \to A_{\gamma}$ for all such $\gamma$, we can build a closed surface in $\M$ as follows. For each $(f_{\alpha},\Pi_{\alpha})$
we make two pairs of skew pants in $\M$, identical except for their orientations. For each $\gamma$ we connect via the permutation $\sigma_{\gamma}$ the pants that induce one orientation on $\gamma$ to the pants that induce the opposite orientation on $\gamma$. We show in Section 3 that if the pants are 
``evenly distributed' around each geodesic $\gamma$ then we can build a surface this way that satisfies the hypotheses of  Theorem \ref{thm-intro-2}.
\vskip .1cm
We can make this statement more precise as follows: for each $\gamma \in f_{\alpha}(\partial{\Pi}_{\alpha})$ we define an unordered pair $\{n_1,n_2\} \in \NB(\gamma)$, the unit normal bundle to $\gamma$. The two vectors satisfy $2(n_1-n_2)=0$ in the torus $\C/(2\pi i \Z+l(\gamma)\Z)$, where $l(\gamma)$ is the complex length of $\gamma$. So we write 
$$
\foot_{\gamma}(\Pi_{\alpha})\equiv \foot_{\gamma}(f_{\alpha},\Pi_{\alpha})=\{n_1,n_2\} \in \NB(\sqrt{\gamma})=\C/(2\pi i \Z+\hl(\gamma)\Z).
$$
\noindent

We let 
$\foot(A)=\{\foot_\gamma(\Pi_{\alpha} ):  \alpha \in A,    \gamma \in \partial \Pi_\alpha\}$
(properly speaking $\foot(A)$ is a labelled set (or a multiset) rather than a set; see Section 3 for details).
We then define $\foot_{\gamma}(A)= \foot(A)|_{\NB(\sqrt{\gamma})}$.
\noindent
We let $\tau:\NB(\sqrt{\gamma}) \to \NB(\sqrt{\gamma})$ be defined by $\tau(n)=n+1+i\pi$. If for each $\gamma$ we can define a permutation $\sigma_{\gamma}\from A_\gamma \to A_\gamma$ such that 
$$
|\foot_{\gamma}(\Pi_{\sigma_{\gamma}(\alpha)})-\tau(\foot_{\gamma}(\Pi_{\alpha}))| <{{\epsilon}\over{R}},
$$
\noindent
and $|\hl(\gamma)- \frac{R}{2}|<\epsilon$ for all $\gamma \in \partial{\Pi_{\alpha}}$, then the resulting surface will satisfy the assumptions of 
Theorem \ref{thm-intro-2}. The details of the above discussion are carried out in Section 3.
\vskip .1cm
In Section 4 we construct the measure on  skew pants that after rationalisation will give us the collection $\Pi_\alpha$ we mentioned above. This is the heart of the paper. Showing that there exists a single skew pants that satisfy the first inequality in Theorem \ref{thm-intro-2} is a non-trivial theorem and the only known proofs use the ergodicity of either the horocyclic or the frame flow. This result was first formulated and proved by L. Bowen \cite{bowen}, where he used the horocyclic flow to construct such skew pants. Our construction is different. We use the frame flow to construct a measure on skew pants whose equidistribution properties follow from the exponential mixing of the frame flow.   This exponential mixing is a result of Moore \cite{moore} (see also \cite{pollicott})  (it has been shown by Brin and Gromov \cite{brin-gromov} that for a much larger class of negatively curved manifolds the frame flow is strong mixing). The detailed outline of this construction is given at the beginning of Section 4.
\vskip .1cm
We point out that Cooper-Long-Reid \cite{c-l-r} proved the existence of essential surfaces in cusped finite volume hyperbolic 3-manifolds. Lackenby 
\cite{lackenby} proved the existence of such surfaces in all closed hyperbolic 3-manifolds that are arithmetic. 

\subsection{Acknowledgement} We would like to thank the following people for their interest in our work and suggestions for writing the paper: 
Ian Agol, Nicolas Bergeron, Martin Bridgeman, Ken Bromberg, Danny Calegari, Dave Gabai, Bruce Kleiner, Francois Labourie, Curt McMullen, Yair Minsky, Jean Pierre Otal, Peter Ozsvath, Dennis Sullivan, Juan Suoto, Dylan Thurston, and Dani Wise. In particular, we are grateful to the referee for numerous comments and suggestions that have improved the paper.

\section{Quasifuchsian representation of a surface group}

\subsection{The Complex Fenchel-Nielsen coordinates}

Below we define the Complex Fenchel-Nielsen coordinates. For a very detailed account we refer to \cite{series} and \cite{kour}.
Originally the coordinates were defined in \cite{ser-tan} and \cite{kour}. 

A word on notation. By $d(X,Y)$ we denote the hyperbolic distance between sets $X,Y \subset \Ho$. If $\gamma^{*} \subset \Ho$ is an oriented geodesic and $p,q \in \gamma^{*}$ then $\dis_{\gamma^{*}}(p,q)$ denotes the signed real distance between $p$ and $q$. Let $\alpha^{*},\beta^{*}$ be two  oriented geodesics in $\Ho$, and let $\gamma^{*}$ be the geodesic that is orthogonal to both $\alpha^{*}$ and $\beta^{*}$, with an orientation. Let $p=    \alpha^{*} \cap \gamma^{*}$ and $q=\beta^{*} \cap \gamma^{*}$.
Let $u$ be the tangent vector to $\alpha^{*}$ at $p$, and $v$ be  the tangent vector to $\beta^{*}$ at $q$. We let $u'$ be the parallel transport of $u$ to $q$. By $\dis_{\gamma^{*}}(\alpha^{*},\beta^{*})$ we denote the complex distance between $\alpha^{*}$ and $\beta^{*}$ measured along $\gamma^{*}$. The real part is given by $\RE(\dis_{\gamma^{*}}(\alpha^{*},\beta^{*} ) )=\dis_{\gamma^{*}}(p,q)$. The imaginary part $\IM(\dis_{\gamma^{*}}(\alpha^{*},\beta^{*}))$ is the oriented angle from $u'$ to $v$, where the angle is oriented by $\gamma^{*}$ which is orthogonal to both $u'$ and $v$. 
The complex distance is well defined $\pmod {2k\pi i}$, $k \in \Z$. In fact, every identity we write in terms of complex distances is therefore assumed to be true $\pmod {2k\pi i}$. We have the following identities
$\dis_{\gamma^{*}}(\alpha^{*},\beta^{*})=-\dis_{\gamma^{*}}(\beta^{*},\alpha^{*})$; $\dis_{-\gamma^{*}}(\alpha^{*},\beta^{*})=-\dis_{\gamma^{*}}(\alpha^{*},\beta^{*})$, and  
$\dis_{\gamma^{*}}(-\alpha^{*},\beta^{*})=\dis_{\gamma^{*}}(\alpha^{*},\beta^{*})+i \pi$.

We let $\dis(\alpha^*,\beta^*)$ (without a subscript for $\dis$) denote the unsigned complex distance equal to $\dis_{\gamma^{*}}(\alpha^*,\beta^*)$
modulo $<z \to -z>$. We will write $\dis(\alpha^*,\beta^*) \in (\C   / 2 \pi i \Z )/  \Z_2$, where $\Z_2$ of course stands for $<z \to -z>$.
We observe that $\dis(\alpha^*,\beta^*)= \dis(\beta^*,\alpha^*)=\dis(-\alpha^*,-\beta^*)= \dis(-\beta^*,-\alpha^*)$.

For a loxodromic element  $A \in \PSLC$, by $\len(A)$ we denote its complex translation length. The number $\len(A)$ has a positive real part and it is defined  $\pmod {2k\pi i}$, $k \in \Z$. By $\gamma^{*}$ we denote the oriented axis of $A$, where $\gamma^{*}$ is oriented so that the attracting fixed point of $A$ follows the repelling fixed point.

Let $\Pi^{0}$ be a topological pair of pants (a three holed sphere). We consider $\Pi^0$ as a manifold with boundary, that is we assume that $\Pi^0$ contains its cuffs.  
We say that a pair of pants in a closed hyperbolic 3-manifold $\M$  is an injective homomorphism 
$\rho:\pi_1(\Pi^{0}) \to \pi_1(\M)$, up to conjugacy. This induces a representation
$$
\rho:\pi_1(\Pi^{0}) \to \PSLC, 
$$
up to conjugacy, which in general we also call a free-floating pair of pants. A pair of pants in $\M$ is determined by (and determines) a continuous map $f:\Pi^{0} \to \M$, up to homotopy, and free-floating pair of pants likewise determines a map 
$$
f:\Pi^{0} \to \Ho / \rho(\pi_1(\Pi^0))=M_{\rho},
$$
up to homotopy.

Suppose $\rho:\pi_1(\Pi^0) \to \PSLC$ is a free-floating pair of pants, and $\rho=f_{*}$, where $f:\Pi^0 \to M_{\rho}$. We orient the components $C_i$ of $\partial{\Pi}^0$ so that $\Pi^0$ is on the left of each $C_i$. For each $i$, there is a unique oriented closed geodesic $\gamma_i$ in $M_{\rho}$ freely homotopic to $f(C_0)$. Now let $a_i$ be the simple non-separating arc on $\Pi^0$ 
connecting $C_{i-1}$ and $C_{i+1}$ (we take the subscript $\pmod{3}$). We can homotop $f$ so that $f$ maps each $C_i$ to $\gamma_i$, and maps $a_i$ to an arc $\eta_i$ from $\gamma_{i-1}$ to $\gamma_{i+1}$ that is orthogonal at its endpoints to $\gamma_{i-1}$ and $\gamma_{i+1}$. 

While such an $f$ is not unique, the 1-complex made of the $\gamma_i$ and the $\eta_i$ together divide $f(\Pi^0)$ into two singular regions whose boundaries are geodesic right-angled hexagons.
Because  the geometry of each of these two hexagons is determined by these unsigned complex distances $\dis_{\eta_{i}}(\gamma_{i-1},\gamma_{i+1})$, the two right-angled hexagons are isometric.

Let us fix for the moment $i \in \{0,1,2\}$. We then orient $\eta_{i-1}$ and $\eta_{i+1}$ to point away from $\gamma_i$ (so the signed complex distance $\dis_{\eta_{i \pm 1}}(\gamma_i,\gamma_{i \mp 1})$ has positive real part). Recall that  $\dis_{\gamma_{i}}(\eta_{i-1},\eta_{i+1})$ denotes the signed complex distance from $\eta_{i-1}$ to $\eta_{i+1}$, along $\gamma_i$.
Because the two hexagons are isometric, 
$$
\dis_{\gamma_{i}}(\eta_{i-1},\eta_{i+1})=\dis_{\gamma_{i}}(\eta_{i+1},\eta_{i-1}).
$$
We let

$$
\hl(\gamma_i)=\dis_{\gamma_{i}}(\eta_{i-1},\eta_{i+1}).
$$

We can also think of this definition on the universal cover $\Ho$ as follows: We conjugate $\rho$ so that there is a lift $\wt{\gamma}_i$ of $\gamma_i$ to $\Ho=\{(x,y,z):z>0\}$ that connects $0$ and $\infty$. We let $A_{\gamma_{i}} \in \PSLC$ be such that $\gamma_{i}=\wt{\gamma}_i /\langle A_{\gamma_{i}} \rangle $. Then $A_{\gamma_{i}}:\Ho \to \Ho$ extends to  map $\wh{\C}=\partial{\Ho}$ to itself
by $z \mapsto e^{\len(\gamma_{i})} \cdot z$.

Moreover, the lifts of $\eta_{i-1}$ and $\eta_{i+1}$ that intersect $\wt{\gamma}_{i}$ will alternate along $\wt{\gamma}_{i}$ (so we can define $\dis_{\gamma_{i}}(\eta_{i-1},\eta_{i+1})$ as 
$\dis_{\wt{\gamma}_{i}}(\wt{\eta}_{i-1},\wt{\eta}_{i+1})$, where $\wt{\eta}_{i-1}$ is a lift of $\eta_{i-1}$ that intersects $\wt{\gamma}_{i}$ and $\wt{\eta}_{i+1}$ is the next lift of $\eta_{i+1}$ along $\wt{\gamma}_{i}$). If we define $\sqrt{A_{\gamma_{i}}} \in \PSLC$ so that it maps $z \mapsto e^{\hl(\gamma_{i})} \cdot z$, then it will map the lifts of $\eta_{i-1}$ to the lifts of $\eta_{i+1}$, and vice verse.

Moreover, the unit normal bundle $\NB(\wt{\gamma}_i)$ is a torsor for $\C^{*} \equiv \C / 2\pi i \Z$, and the unit normal bundle $\NB(\gamma_i)$ is a torsor for 
$$
\C^{*}/\langle A_{\gamma_{i}} \rangle=\C/ 2\pi i \Z+\len(\gamma_{i}) \cdot \Z.
$$

\begin{remark} Let $G$ be a group and let $X$ be a space on which $G$ acts. We say that $X$ is a torsor for $G$ (or that $X$ is a $G$-torsor) if for any two elements $x_1$ and $x_2$ of $X$
there exists a unique group element $g \in G$  with $g(x_1) = x_2$.
\end{remark}

By a mild abuse of notation, we let 
$$
\NB(\sqrt{\gamma_{i}})=\NB(\wt{\gamma}_{i})/\langle \sqrt{A_{\gamma_{i}}} \rangle.
$$
This is a torsor for 
$$
\C^{*}/\langle \sqrt{A_{\gamma_{i}}} \rangle=\C/ 2\pi i \Z+\hl(\gamma_{i}) \cdot \Z.
$$

For $i \ne j$, $i,j=0,1,2$, we let $n(i,j) \in \NB(\gamma_{i})$ be the unit vector at $\gamma_i \cap \eta_j$ pointing along $\eta_j$. Then $\sqrt{A_{\gamma_{i}}}$ 
interchanges $n(i,i-1)$ and $n(i,i+1)$, so we can think of the unordered pair $\{n(i,i-1),n(i,i+1)\}$ as an element of $\NB(\sqrt{\gamma_{i}})$. We call this element $\foot_{\gamma_{i}}(\rho)$ or 
$\foot_{\gamma_{i}}(f)$ where $f:\Pi^0 \to M_{\rho}$ is a map whose homotopy class is determined by $\rho$.

If $\rho:\pi_1(\Pi^0) \to \PSLC$ is a representation for which $\hl(C) \in \R^{+}$ for each $C \in \partial{\Pi}^0$, then, after conjugation, $\rho(\pi_1(\Pi^0)) \in \PSLR < \PSLC$, and
$\Ha /\rho(\pi_1(\Pi^0))$ is a topological pair of pants (homeomorphic to the interior of $\Pi^0$). Also the converse is true: if we are given  $\rho:\pi_1(\Pi^0) \to \PSLR$ and $\Ha / \rho(\pi_1(\Pi^0))$ is homeomorphic to the interior of $\Pi^0$, then $\hl(C) \in \R^+$ for each cuff $C \in \partial{\Pi}^0$.

Now suppose that $S^0$ is a closed surface (of genus at least 2), and $\Col^0$ a maximal set of simple closed curves on $S^0$ (the curves in $\Col^0$ are disjoint, non-isotopic and nontrivial). By $\Col^*$ we denote the set of oriented curves from $\Col^0$ (each curve is taken with both orientations).  A pair of pants $\Pi$ for $(S^0,\Col^0)$ is the closure of a component of $S^0 \setminus \bigcup \Col^0 $, and a marked pair of pants is a pair $(\Pi,C)$, where $C \in \Col^*$ is an oriented closed curve such that $C \in \partial{\Pi}$, and $C$ lies to the left of $\Pi$. 
For any marked pair of pants $(\Pi,C)$, there is a unique marked pair of pants $(\Pi',C')$ such that $C'=-C$ (where $-C$ denotes the curve $C$ but with the opposite orientation). We observe in passing that $\Pi$ can be equal to $\Pi'$. 

Now suppose that 
$$
\rho:\pi_1(S^0) \to \PSLC
$$
is a representation that is discrete and faithful when restricted to $\pi_1(\Pi)$, for each pair of pants $\Pi$ in  $S^0 \setminus \bigcup \Col^0 $. By $M_{\rho}$ we again denote the quotient $\Ho /\rho(\pi_1(S^0))$.  Suppose that $\rho=f_*$ for some continuous map $f:S^0 \to M_{\rho}$. Then for each marked pair of pants $(\Pi,C)$ we let $\gamma$ be the oriented geodesic freely homotopic to $f(C)$. As before,  we define $\hl_{\Pi}(\gamma)$  using $f|_{\Pi}$.

Let $(\Pi',C')$ be the marked pair of pants such that $C'=-C$.  Then $\hl_{\Pi}(C)=\hl_{\Pi'}(C)$, or  $\hl_{\Pi}(C)=\hl_{\Pi'}(C)+i\pi$. In the former case, $\langle \sqrt{A_{\gamma}} \rangle=\langle \sqrt{A_{\gamma'}} \rangle$, so $\NB(\sqrt{\gamma})=\NB(\sqrt{\gamma'})$ literally. In this case we write  $\hl(C)=\hl_{\Pi}(C)=\hl_{\Pi'}(C)$.

\begin{definition} Let $S^0$ and $\Col^0$ be as above. We say that a representation 
$$
\rho:\pi_1(S^0) \to \PSLC
$$
is viable if 
\begin{itemize}
\item  $\rho$ is discrete and faithful when restricted to $\pi_1(\Pi)$, for each pair of pants $\Pi$ in  $S^0 \setminus \bigcup \Col^0 $,
\item $\hl(C)=\hl_{\Pi}(C)=\hl_{\Pi'}(C)$, for each $C \in \Col^0$, where $\Pi$ and $\Pi'$ are two pairs of pants that contain $C$.
\end{itemize}
\end{definition}
 
Given a viable representation $\rho:\pi_1(S^0) \to \PSLC$, we let 
$$
s(C)=\foot_{\gamma}(\rho|_{\Pi})-\foot_{\gamma'}(\rho|_{\Pi'})-i\pi.
$$
Then $s(C) \in \C/ 2\pi i \Z+\hl(C) \cdot \Z$. If we reverse the roles of $(\Pi,C)$ and $(\Pi',C')$, we negate the difference of the two feet, but we also reverse the orientation of $\gamma$, so we get the same element  $s(C) \in \C/ 2\pi i \Z+\hl(C) \cdot \Z$.  The coordinates $(\hl(C),s(C))$ are called the reduced complex Fenchel-Nielsen coordinates for $\rho$.

The following is the main result of this section and it will be used later in the paper.

\begin{theorem}\label{geometry} Let  $0<\epsilon< \wh{\epsilon}$ where $\wh{\epsilon}>0$ is a universal constant. Then there exists $R_0=R_0(\epsilon)>0$ such that the following holds.
Let  $S^{0}$ be a closed topological surface with a pants decomposition $\Col^0$. Suppose that  $\rho:\pi_1(S^{0}) \to \PSLC$ is a viable representation such that 
$$
|\hl(C)-\frac{R}{2}|<\epsilon, \, \, \text{and} \,\, |s(C)-1|<{{\epsilon}\over{R}},
$$
\noindent
for some  $R>R_0>0$. Then there exists a viable representation $\rho_0:\pi_1(S^{0}) \to \PSLC$ such that $\hl(C)=R$ and $s(C)=1$ for all $C \in \Col^0$, and a $K$-quasisymmetric map
$h:\partial{\Ho} \to \partial{\Ho}$ so that $h^{-1}\rho_0(\pi_1(S^0)) h=\rho(\pi_1(S^0))$, where $K=K(\epsilon)$ and $K(\epsilon) \to 1$ uniformly when $\epsilon \to 0$.
In particular, the representation $\rho$ is injective and  the group $\rho(\pi_1(S^{0}) )$ is quasifuchsian.
\end{theorem}

\subsection{Holomorphic families of representations} In this subsection we state Theorem \ref{geometry-1} that will imply Theorem \ref{geometry}. The rest of Section 2 is devoted to proving  Theorem \ref{geometry-1}.

Fix a closed surface $S^0$ with a pants decomposition $\Col^0$. Fix a pair of pants $\Pi$ from $S^0 \setminus \Col^0$, and let $C_0,C_1,C_2 \in \Col^0$ denote the cuffs  of $\Pi$. The inclusion $\Pi \to S^0$ induces an embedding $\pi_1(\Pi) \to \pi_1(S^0)$ (such embedding is well defined up to conjugation). Let $c_0,c_1 \in \pi_1(\Pi) \subset \pi_1(S^0)$ be elements in the conjugacy classes corresponding to $C_0$ and $C_1$ respectively.

Let $\rho:\pi_1(S^0) \to \PSLC$ be a viable representation. After conjugating $\rho$ by an element of $\PSLC$, we may assume  that the axis of $\rho(c_0)$ is the geodesic in $\Ho$ that connects $0$ and $\infty$ (such that $0$ is the repelling point) and that the point $1 \in \partial{\Ho}$ is the repelling point of $\rho(c_1)$ (such a conjugation exists since $\rho$ is viable and the restriction of $\rho$ to $\pi_1(\Pi)$ is injective). Such $\rho$ is said to be normalized (the normalization depends on the choice of $c_0$ and $c_1$ but we suppress this).

Let $R>0$, and  we let $\Omega$ denote the set of all pairs $(z_C,w_C)$, $C \in \Col^0$, where for each $C$ we have 
\begin{enumerate}
\item $z_C \in \C / 2\pi i \Z$ and  $|z_C-\frac{R}{2}|<1$,
\item $w_C \in \C/ 2\pi i \Z+z_C \cdot \Z$ and $|s(C)-1|<{{1}\over{R}}$.
\end{enumerate}

For simplicity we let $z=(z_C)_{C \in \Col^{0}}$ and  $w=(w_C)_{C \in \Col^{0}}$. It follows  from \cite{kour} and \cite{series} that  when $R$ is large enough (say $R>2$),  for each $(z,w) \in \Omega$ there exists a normalized viable  representation $\rho:\pi_1(S^{0}) \to \PSLC$ such that $\hl(C)=z_C$ and $s(C)=w_C$. 

\begin{remark} Such a normalized representation $\rho$ is not unique  since $(\hl(C),s(C))$ are the reduced complex Fenchel-Nielsen coordinates and they determine the normalized representation only if we specify the marking of the cuffs (that is, a normalized viable representation is uniquely determined by the choice of the (non-reduced) Fenchel-Nielsen coordinates). 
\end{remark}

Suppose that we are given a normalized viable representation  $\rho':\pi_1(S^{0}) \to \PSLC$ such that  $|\hl(C)-\frac{R}{2}|<1$ and  $|s(C)-1|<{{1}\over{R}}$, where $(\hl(C),s(C))$ are the reduced complex Fenchel-Nielsen coordinates for $\rho'$. Let $z'_C=\hl(C)$ and $w'_C=s(C)$.  Then $(z',w') \in \Omega$. It then follows from  \cite{kour} and \cite{series} that for each $(z,w) \in \Omega$, there exists a unique normalized viable representation  $\rho_{z,w}:\pi_1(S^{0}) \to \PSLC$ such that 
\begin{itemize}
\item  $z_C=\hl(C)$ and $w_C=s(C)$, where $(\hl(C),s(C))$ are the reduced complex Fenchel-Nielsen coordinates for $\rho_{z,w}$,
\item The family of representations $\rho_{z,w}$ varies holomorphically in $(z,w)$,
\item $\rho'=\rho_{z',w'}$.
\end{itemize}

\begin{definition}\label{def-C}
For $C \in \Col^0$ let $\zeta_C, \eta_C \in \D$, where $\D$ denotes the unit disc in the complex plane.  Let $\tau \in \D$ be a complex parameter. Fix $R>1$ and let $\hl(C)(\tau)=\frac{1}{2}(R+ \tau \zeta_C)$ and  $s(C)(\tau)=1+{{\tau \eta_C } \over{R}}$. By $\rho_{\tau}$ we denote the corresponding normalized viable representation with the reduced Fenchel-Nielsen coordinates $(\hl(C)(\tau),s(C)(\tau))$. 
Note that $\rho_{\tau}$ depends on $\zeta_C,\eta_C$ but we suppress this. 
\end{definition} 

It follows that $\rho_{\tau}$ depends holomorphically on $\tau$. The remainder of this section is devoted to proving the following theorem.

\begin{theorem} \label{geometry-1} There exist constants $\wh{R},\wh{\epsilon}>0$, such that the following holds. Let $S^{0}$ be any closed topological surface with a pants decomposition $\Col^0$ and fix $\zeta_C, \eta_C \in \D$ for $C \in \Col^{0}$.  Then for every $R \ge \wh{R}$ and $|\tau| <\wh{\epsilon}$, the group $\rho_{\tau} (\pi_1(S^{0}))$  is quasifuchsian and the induced quasisymmetric map  $f_{\tau}:\partial \Ha \to \partial \Ho$ (that conjugates $\rho_{0}(\pi_1(S^{0}))$ to  $\rho_{\tau}(\pi_1(S^{0}))$)  is  $K(\tau)$-quasisymmetric, where 
$$
K(\tau)={{\wh{\epsilon} +|\tau|}\over{\wh{\epsilon}-|\tau|}}.
$$
\end{theorem}

\begin{figure}
	\input{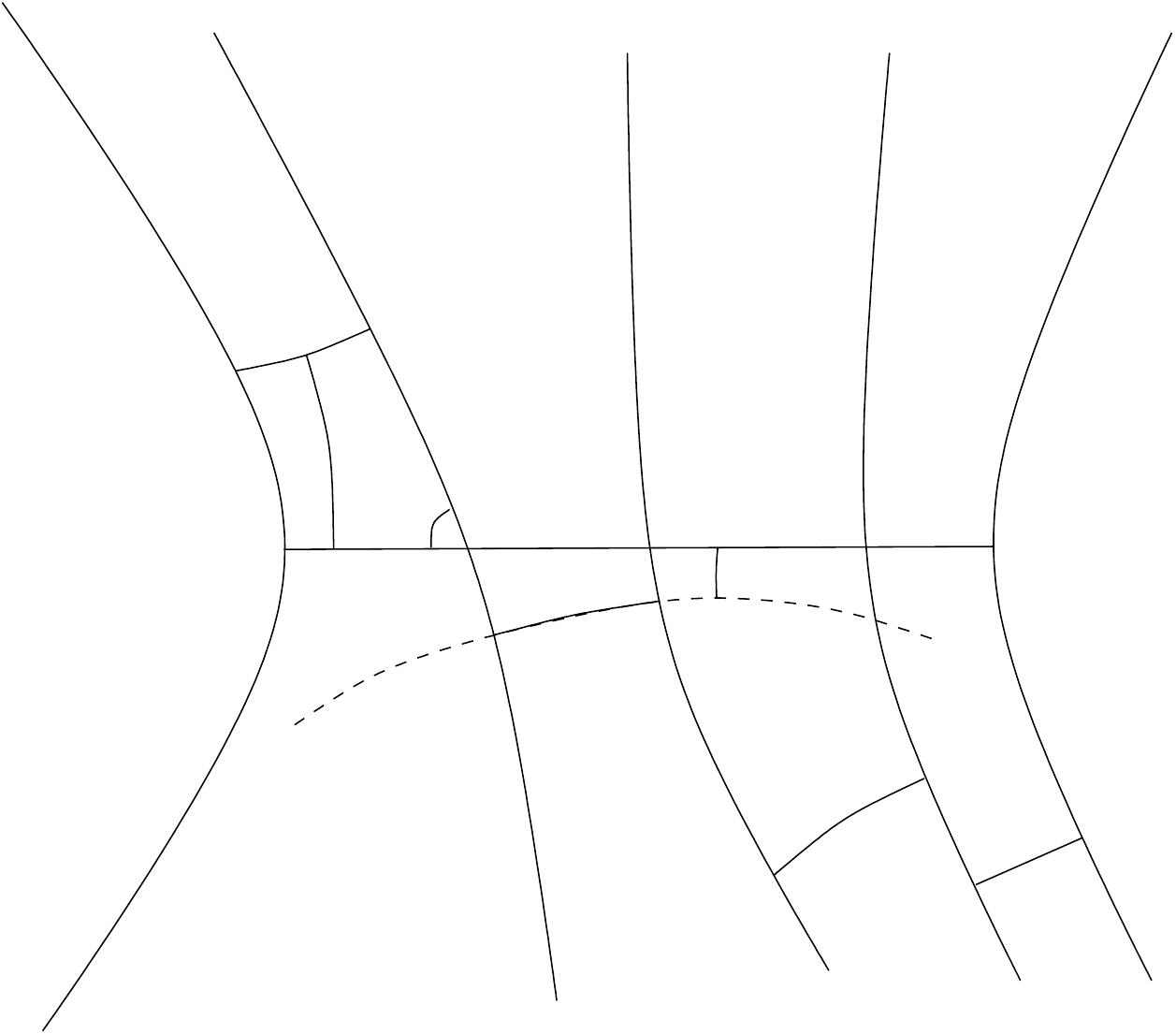_t}
	\label{strans}
	\caption{The geodesics $O$, $C_i$, $N_i$, $F_i$, and $D_i$}
\end{figure}

\subsection{Notation and the brief outline of the proof of Theorem \ref{geometry-1}} The following notation remains valid through the section. Fix $S^{0}$, $\Col^0$ and $\zeta_C,\eta_C \in \D$  as above.  Denote  by $\Col_{\tau}(R)$ the collection of translation axes in $\Ho$ of all the elements $\rho_{\tau}(c)$, where $c \in \pi_1(S^{0})$ is in the conjugacy class of some cuff $C \in \Col^{0}$. Fix two such axes $C(\tau)$ and $\wh{C}(\tau)$ and let $O(\tau)$ be their common orthogonal in $\Ho$.  Since  $C(\tau)$ and $\wh{C}(\tau)$ vary holomorphically in $\tau$ so does $O(\tau)$ (this means that the endpoints of $O(\tau)$ vary holomorphically on $\partial \Ho$). Note that the endpoints of $O(\tau)$ might 
not belong to the limit set of the group $\rho_{\tau}(\pi_1(S^{0}))$.
\vskip .1cm
Let $C_0(0),C_1(0),...,C_{n+1}(0)$ be the ordered collection of geodesics from $\Col_0(R)$ that $O(0)$ intersects (and in this order) and so that $C_0(0)=C(0)$ and $C_{n+1}(0)=\wh{C}(0)$. The geodesic segment on $O(0)$ between $C_0(0)$ and $C_{n+1}(0)$ intersects $n \ge 0$ other geodesics from $\Col_0(R)$ (until the end of this section $n$ will have the same meaning).  We orient $O(0)$ so that it goes from  $C_0(0)$ to $C_{n+1}(0)$. We orient each  $C_i(0)$ so that the angle from $O(0)$ to $C_i(0)$ is positive (recall that we fix in advance an orientation on the initial plane $\Ha \subset \Ho$ so this angle is positive with respect to this orientation of the plane $\Ha$). Then the oriented geodesics $C_i(\tau)$ vary holomorphically in $\tau$.
\vskip .1cm 
Let $N_i(\tau)$ be the common orthogonal between $O(\tau)$ and $C_i(\tau)$ that is oriented so that  the imaginary part of the complex distance 
$\dis_{N_{i}(\tau)}(O(\tau),C_i(\tau))$ is positive. Let $D_i(\tau)$, $i=0,...,n$ be the common orthogonal between $C_i(\tau)$ and $C_{i+1}(\tau)$, that is oriented so that the angle from $D_i(0)$ to $C_i(0)$ is positive. Also, let $F_i(\tau)$ be the common orthogonal between $O(\tau)$ and $D_i(\tau)$, for $i=0,...,n$. We orient $F_i(\tau)$ so that the angle from $O(0)$ to $F_i(0)$  is positive. Observe that $F_0(\tau)=C_0(\tau)$ and $F_{n}(\tau)=C_{n+1}(\tau)$.
\vskip .1cm
For simplicity, in the rest of this section  we suppress the dependence on $\tau$, that is we write $C_i(\tau)=C_i$, $O(\tau)=O$ and so on. 
However, we still write $C_i(0)$, $O(0)$, to distinguish the case $\tau=0$.
\vskip .1cm
For Theorem \ref{geometry-1} we need to estimate the quasisymmetric constant of the map $f_{\tau}$, when $\tau$ belongs to some small, but definite neighbourhood of the origin in $\D$.   In order to do that we want to estimate the derivative (with respect to $\tau$) 
$\dis'_{O} (C_0,C_{n+1})$ of the complex distance   $\dis_{O} (C_0,C_{n+1})$ between any two geodesics $C_0,C_{n+1} \in \Col_{\tau}(R)$. We will 
compute an upper bound of $|\dis'_{O} (C_0,C_{n+1})|$ in terms of $\dis_{O} (C_0,C_{n+1})$.
This will lead to an inductive type of argument that will finish the proof. We will offer more explanations as we go along.

\subsection{The Kerckhoff-Series-Wolpert type formula} In \cite{series} C. Series has derived  the formula for the derivative of the complex translation length of a (not necessarily simple) closed  curve on $S^{0}$ under the representation $\rho_{\tau}$. Using the same method (word by word) one can obtain the appropriate formula for the derivative of the complex distance $\dis_{O(\tau)}(C_0(\tau),C_{n+1}(\tau))$.

\begin{theorem}\label{thm-ksw} Letting $'$ denote the derivative with respect to $\tau$ we have

\begin{align}
\dis'_{O} (C_0,C_{n+1}) &= \sum_{i=0}^{n} \cosh( \dis_{F_{i}}(O,D_i))  \dis'_{D_i}(C_i,C_{i+1})+  \label{S-formula} \\
&+ \sum_{i=1}^{n} \cosh( \dis_{N_{i}}(O,C_i)) \dis'_{C_i}(D_{i-1},D_i) \notag.
\end{align}

\end{theorem}

\begin{proof} For each $i=1,...,n$ consider the skew right-angled hexagon with sides $O,F_{i},D_i,C_i,D_{i-1},F_{i-1}$. Since each hexagon varies holomorphically in $\tau$ we have the following derivative formula in each hexagon (this is the formula $(7)$ in \cite{series})

\begin{align}
\dis'_{O}(F_{i-1},F_i) &= \cosh(\dis_{N_{i}}(O,C_i)) \dis'_{C_i}(D_{i-1},D_i)+ \notag \\
&+ \cosh(\dis_{F_{i-1}}(O,D_{i-1}) ) \dis'_{D_{i-1}}(F_{i-1},C_{i}) \label{S-formula-1} \\
& + \cosh(\dis_{F_{i}}(O,D_{i})) \dis'_{D_{i}}(C_{i}, F_i) \notag .
\end{align}
\noindent
The following relations (\ref{S-formula-2}), (\ref{S-formula-3}) and  (\ref{S-formula-4}) are direct corollaries of the identities 
$F_0=C_0$ and $F_{n}=C_{n+1}$. We have 
\begin{equation}\label{S-formula-2}
\sum_{i=1}^{n} \dis_{O}(F_{i-1},F_i)=\dis_{O}(C_0,C_{n+1}).
\end{equation}
\noindent
Also
\begin{equation}\label{S-formula-3}
\dis'_{D_{0}}(F_{0},C_{1})= \dis'_{D_{0}}(C_{0},C_{1}),
\end{equation}
\noindent
and 
\begin{equation}\label{S-formula-4} 
\dis'_{D_{n}}(C_{n}, F_n)=\dis'_{D_{n}}(C_{n}, C_{n+1}).
\end{equation}
\noindent
Also for $1=1,...,n$ we observe the identity 
$$
\dis_{D_{i}}(C_{i}, F_{i})+\dis_{D_{i}}(F_{i},C_{i+1})=
\dis_{D_{i}}(C_{i},C_{i+1}).
$$
\noindent

Putting all this together and summing up the formulae (\ref{S-formula-1}), for $i=1,...,n$  we obtain (\ref{S-formula}). 

\end{proof}

Let $H$ be a consistently oriented skew right-angled hexagon with sides $L_k$, $k \in \Z$, and $L_k=L_{k+6}$. Set 
$\sigma(k)=\dis_{L_{k} } (L_{k-1},L_{k+1})$. Recall the cosine formula
$$
\cosh(\sigma(k))={{ \cosh(\sigma(k+3)) -\cosh(\sigma(k+1)) \cosh(\sigma(k-1)) } \over{\sinh(\sigma(k+1)) \sinh(\sigma(k-1)) }}.
$$
\noindent
Assume that $\sigma(2j+1)={{1}\over{2}}(R+a_{2j+1})+i\pi$, $j=0,1,2$, and $a_{2j+1} \in \D$.
A hexagon with this property is called a $thin$ $hexagon$. 
From the cosine formula for a skew right-angled hexagon we have (see also Lemma 5.1 in \cite{bowen})  
\begin{equation}\label{hex-1}
\sigma(2j)=2e^{ {{1}\over{4}}[-R+a_{2j+3}-a_{2j+1}-a_{2j-1}] }+i\pi+O(e^{ -{{3R}\over{4}} }).
\end{equation}
\noindent
From the pentagon formula the hyperbolic distance between opposite sides in the hexagon can be estimated as (see Lemma 5.4 in \cite{bowen} and Lemma 2.1 in \cite{series})
\begin{equation}\label{hex-2}
{{R}\over{4}} -10 <d(L_k,L_{k+3})<{{R}\over{4}}+10,
\end{equation}
\noindent
for $R$ large enough.

\begin{lemma}\label{derivative-est-0} Suppose that  $|\dis_{O} (C_0,C_{n+1})|<{{R}\over{5}}$. Then for $R$ large enough the following estimate holds
$$
|\dis'_{O} (C_0,C_{n+1})| \le 20e^{ -{{R}\over{4}} } \sum_{i=0}^{n} e^{d(O,D_i) }+
{{n}\over{R}} \left( \max_{1 \le i \le n}  e^{d(O,C_i)} \right).
$$

\end{lemma}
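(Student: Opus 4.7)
The plan is to apply the Kerckhoff--Series--Wolpert derivative formula (\ref{S-formula}) and bound each of the two summations separately. Each summand is a product of a $\cosh$ of a complex distance (which will produce the exponential factor $e^{d(O,D_i)}$ or $e^{d(O,C_i)}$) and a $\tau$-derivative of a complex distance (which must produce the small factor $e^{-R/4}$ or $1/R$, respectively).

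The $\cosh$ factors are handled by an elementary pointwise inequality. Since $F_i$ is by construction the common orthogonal of $O$ and $D_i$, the real part of the complex distance along $F_i$ is exactly $d(O,D_i)$; a direct computation gives $|\cosh z|^{2} = \cosh^{2}(\RE z) - \sin^{2}(\IM z) \le \cosh^{2}(\RE z)$, hence $|\cosh(\dis_{F_{i}}(O,D_i))| \le \cosh(d(O,D_i)) \le e^{d(O,D_i)}$, and the same reasoning gives $|\cosh(\dis_{N_{i}}(O,C_i))| \le e^{d(O,C_i)}$.

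For the derivative factors I would exploit the two distinct roles that the $C_i$'s and $D_i$'s play. The complex distance $\dis_{D_i}(C_i,C_{i+1})$ is a short ``seam'' side of the skew right-angled hexagon representing the pair of pants that contains both $C_i$ and $C_{i+1}$; by the thin-hexagon estimate (\ref{hex-1}), applied to a hexagon whose three alternate sides have half-lengths $\tfrac{R}{2} + \tfrac{\tau \zeta_{C}}{2}$ with $|\zeta_{C}|<1$, this quantity has the form $2 e^{(-R+\text{linear in }\tau)/4} + i\pi + O(e^{-3R/4})$, and differentiating in $\tau$ gives $|\dis'_{D_i}(C_i,C_{i+1})| \le 20 e^{-R/4}$ for $R$ large. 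On the other hand, $\dis_{C_i}(D_{i-1},D_i)$ is the offset along the cuff $C_i$ between the orthogonals coming from the two pants adjacent to $C_i$, and by the definition of the twist-bend parameter this equals $\wt{s}(C_i)$ up to an additive constant fixed by the marking. On the holomorphic branch this branch constant is independent of $\tau$, and since by Definition \ref{def-C} we have $\wt{s}(C_i) = 1 + \tau \eta_{C_i}/R$ with $|\eta_{C_i}|<1$, we conclude $|\dis'_{C_i}(D_{i-1},D_i)| \le 1/R$.

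Inserting these bounds into (\ref{S-formula}) yields the lemma: the first sum contributes at most $20 e^{-R/4} \sum_{i=0}^{n} e^{d(O,D_i)}$, and the second contributes at most $\tfrac{1}{R} \sum_{i=1}^{n} e^{d(O,C_i)} \le \tfrac{n}{R} \max_{1 \le i \le n} e^{d(O,C_i)}$. The point demanding most care is the identification of $\dis_{C_i}(D_{i-1},D_i)$ with $\wt{s}(C_i)$ up to a $\tau$-independent additive constant: a priori one could pick up an extra term $k\,\hl(C_i)$ whose derivative $k\zeta_{C_i}/2$ is only $O(1)$ and would destroy the $1/R$ bound. The hypothesis $|\dis_{O(0)}(C_0,C_{n+1})| < R/5$ is precisely what prevents $O$ from winding multiply around any cuff and, combined with the chosen marking, ensures that the integer $k$ vanishes for the continuous branch in $\tau$; verifying this is the main obstacle.
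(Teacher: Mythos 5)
Your proposal follows exactly the paper's route: apply the Kerckhoff--Series--Wolpert formula (\ref{S-formula}), bound each $\cosh$ factor by $e^{\RE z}$ (using $|\cosh z|^2=\cosh^2(\RE z)-\sin^2(\IM z)$, which is correct), use the thin-hexagon formula (\ref{hex-1}) to get $|\dis'_{D_i}(C_i,C_{i+1})|\le 20e^{-R/4}$, and use Definition \ref{def-C} to get $|\dis'_{C_i}(D_{i-1},D_i)|\le 1/R$. Your observation that identifying $\dis_{C_i}(D_{i-1},D_i)$ with $\wt{s}(C_i)$ carries an a priori ambiguity $k\,\hl(C_i)$ whose derivative $k\zeta_{C_i}/2$ would be fatal is a sharp point, and you are right to single it out: the paper simply writes $\dis_{C_j}(D_{j-1},D_j)=1+\tau\eta_{j-1}/R$ with no comment on that ambiguity.

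Where your account places the hypothesis $|\dis_{O(0)}(C_0,C_{n+1})|<R/5$ is, however, not where the paper places it. The paper uses the bound $R/5$ together with (\ref{hex-2}) to show that the arc of $O(0)$ between consecutive $C_j(0)$, $C_{j+1}(0)$ connects two \emph{distinct} cuffs of the enclosing pair of pants and is freely homotopic to the shortest orthogonal seam; only then is $D_j$ a side of a thin hexagon, so that (\ref{hex-1}) may be invoked to estimate $\dis'_{D_j}(C_j,C_{j+1})$. Your proposal takes the thin-hexagon identification for granted (``$\dis_{D_i}(C_i,C_{i+1})$ is a short seam side'') without justifying it, and instead redirects the hypothesis to the $k=0$ claim. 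So there is a real gap on your side of the ledger: you must first show $D_i$ genuinely is a seam of a thin hexagon, and that is precisely what the $R/5$ bound is for. Conversely, your concern about $k=0$ is legitimate and is not settled in the paper's two-line assertion either; you honestly flag it as the main unfinished step rather than claiming it is done. In summary: same strategy and same estimates, but you have moved the role of the hypothesis, leaving the thin-hexagon step unjustified while correctly surfacing a subtlety that the paper elides.
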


\begin{proof} Let $\gamma$ be the geodesic segment on $O(0)$ that runs between $C_j(0)$ and $C_{j+1}(0)$. Then $\gamma$ is a lift of a geodesic  arc connecting two cuffs in the  pair of pants whose all three cuffs have  length $R$. Since the length of $\gamma$ is at most ${{R}\over{5}}$ we have from (\ref{hex-2}) that $\gamma$ connects two different cuffs in this pair of pants and is freely homotopic to the shortest orthogonal arc between these two cuffs in this pair of pants. This implies that there exists $\wt{C} \in \Col(R)$ such that the hexagon determined by $C_j,C_{j+1}$ and $\wt{C}$ is a thin hexagon. Then $D_i$ is a side of this hexagon since it is the common orthogonal for $C_i$ and $C_{i+1}$.  Taking into account that the orientation of $D_i$ that comes from this hexagon is the opposite to the one we defined above in terms of  $O$ and applying (\ref{hex-1}) we obtain

\begin{equation}\label{cuff-distance}
\dis_{D_j}(C_j,C_{j+1})=2e^{{{1}\over{4}}[-R+  \wt{\zeta} \tau  -  \zeta_j \tau - \zeta_{j+1} \tau ] }+O(e^{ -{{3R}\over{4}} }).
\end{equation}
\noindent
where $\zeta_j,\zeta_{j+1},\wt{\zeta} \in \D$ are the complex numbers associated to the corresponding $C \in \Col^{0}$ in Definition \ref{def-C}. Differentiating the cosine formula for the skew right-angled hexagon we get

$$
|\dis'_{D_j}(C_j,C_{j+1})|<20 e^{-{{R}\over{4} } }.
$$
\noindent
for $R$ large enough (here we use $|\zeta_C|, |\tau|<1$).
\vskip .1cm
On the other hand  $\dis_{C_j}(D_{j-1},D_j)=1+{{\tau \eta_{j-1}}\over{R}}$, where $\eta_{j-1} \in \D$ is the corresponding number. 
Differentiating this identity gives $|\dis'_{C_j}(D_{j-1},D_j ) | \le {{1}\over{R}}$ (we use $|\eta_{j-1}|<1$). 
Combining these estimates with the equality of Theorem \ref{thm-ksw} proves the lemma.

\end{proof}

\subsection{Preliminary estimates}  The purpose of the next two subsections is to estimate the two terms on the right-hand side of the inequality of Lemma 
\ref {derivative-est-0} in terms of the complex distance $\dis_{O} (C_0,C_{n+1})$. We will show that
$$
|\dis'_{O} (C_0,C_{n+1})| \le C F(\dis_{O} (C_0,C_{n+1})),
$$
\noindent
where $C$ is a constant and $F$ is the function $F(x)=xe^{x}$. We will obtain this estimate under some natural assumptions (see Assumption \ref{ass} below).
\vskip .1cm
Let $\alpha, \beta$ be two oriented geodesics in $\Ho$  such that $d(\alpha,\beta)>0$ and let $O$ be their common orthogonal (with either orientation). Let $q_0=\beta \cap O$ . Let $t \in \R$ and let $q:\R \to \beta$ be the   parametrisation by arc length such 
that $q(0)=q_0$.  The following trigonometric formula follows directly from the $\cosh$ and $\sinh$ rules for  right angled triangles in the hyperbolic plane
(the planar case of this formula was stated in Lemma 2.4.7 in \cite{epstein}) 
\begin{equation}\label{epstein}
\sinh^{2}(d(q(t),\alpha))=\sinh^{2}(d(\alpha,\beta) ) \cosh^{2}(t)+\sinh^{2}(t)\sin^{2}(\IM[\dis_{O}(\alpha,\beta)]).
\end{equation}
\noindent
This yields the following inequality that will suffice for us
\begin{equation}\label{epstein-0}
\sinh(d(\alpha,\beta) ) \cosh(t) \le \sinh(d(q(t),\alpha)).
\end{equation}

From this we derive:
\begin{equation}\label{epstein-1}
|t| \le d(q(t), \alpha) - \log d(\alpha, \beta),\,\, \text{for every}\,\, t \in \R,
\end{equation}
\noindent
and
\begin{equation}\label{epstein-2} 
|t| \le \log d(q(t), \alpha) + 1 - \log d(\alpha, \beta),  \,\, \text{when} \,\,  d(q(t),\alpha)  \le 1 .
\end{equation}
 
 \begin{figure}
	\input{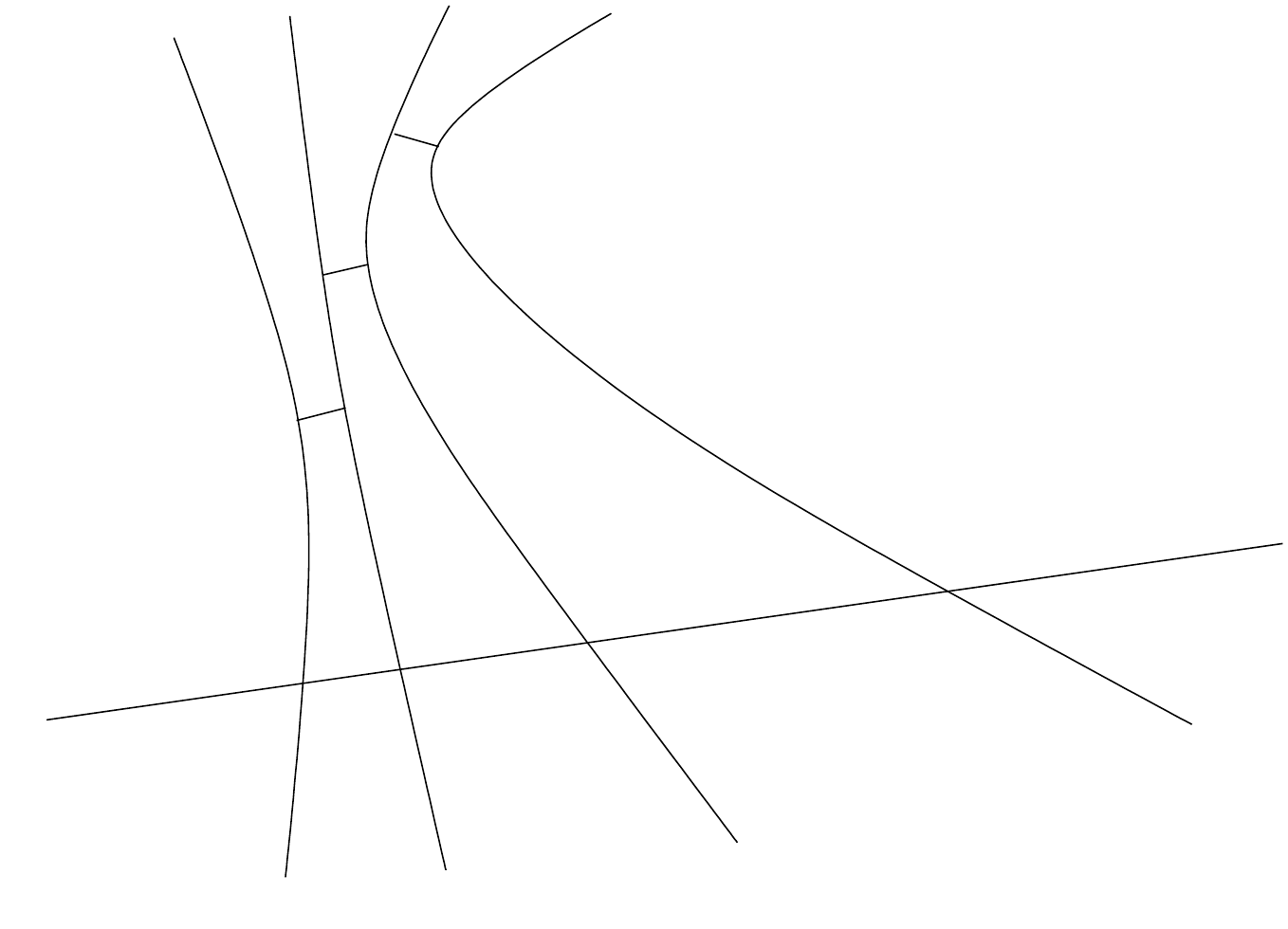_t}
	\caption{The $z$'s and the $w$'s}
  	\label{fig:transversal}
\end{figure}
\vskip .1cm
Let $\gamma=\gamma(\tau)$, $\tau \in \D$, be an oriented geodesic  in $\Ho$ that varies continuously in $\tau$, and such that $\gamma(0)$ belongs to the plane $\Ha \subset \Ho$ that contains the lamination $\Col_{0}(R)$ (the common orthogonal  $O$ from the previous subsection is an example of $\gamma$ but there is no need to restrict ourselves to $O$ in order to prove the estimates below). 
Let $C_1(0),...,C_k(0)$ be an ordered subset of geodesics from $\Col_0(R)$ that $\gamma(0)$ consecutively intersects (this means that the segment of $\gamma(0)$ between $C_i(0)$ and $C_{i+1}(0)$ does not intersect any other geodesic from  $\Col_0(R)$). 
Orient each $C_i$ so that the angle from $\gamma(0)$ to $C_i(0)$ is positive. Let $N_i$ be the common orthogonal between $\gamma$ and $C_i$, and let $z_i=N_i \cap C_i$ and  $z'_i=N_i \cap \gamma$. (See Figure \ref{fig:transversal}). Let $D_i$, $i=1,...,k$  be the common orthogonal between $C_i$ and $C_{i+1}$ and let $w^{-}_i=D_i \cap C_i$ and $w^{+}_i=D_i \cap C_{i+1}$. As long as the distance between $z_i$ and $z_{i+1}$ is at most ${{R}\over{5}}$, then (as seen in the previous subsection)  for $R$ large enough we have 

\begin{equation}\label{cuff-distance-1}
\dis_{D_{i} }(C_i,C_{i+1})=(2+o(1))e^{-{{R}\over{4}}+ \tau \mu} \le e^{-{{R}\over{4}}+2}, 
\end{equation} 
\noindent
where $\mu \in \C$ and $|\mu|\le {{3}\over{4}}$ (see (\ref{cuff-distance}) ). Then it follows from the definition of $\Col_{\tau}(R)$ that 

\begin{equation}\label{twist}
\dis_{C_{i}}(w^{+}_{i-1},w^{-}_i)=1+ \RE[ {{\tau \eta}\over{R}} +j{{ (R+ \tau \zeta)}\over{2}} ],
\end{equation}
\noindent
for some $j \in \Z$, where $\eta=\eta_C$ and $\zeta=\zeta_C$ are the complex numbers from the unit disc that correspond to the cuff in 
$C \in \Col^{0}$ whose lift is $C_i(0)$. Here $\dis_{C_{i}}(w^{+}_{i-1},w^{-}_i)$ denotes the signed hyperbolic distance.

\begin{lemma}\label{lemma-est-1} Assume that $d(z_i,z_{i+1})< e^{-5}$, for  $i=1,...,k-1$. Set  $a_i=\dis_{C_{i}}(z_i,w^{-}_i)$. Then for $R$ large enough the following inequalities hold
\begin{enumerate}
\item $a_{i+1}-a_i<1+e^{-1}$, $i=1,...,k-2$ 
\item $k < R$.
\end{enumerate}

\end{lemma}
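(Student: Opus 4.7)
The plan is to split $a_{i+1}-a_i$ via additivity of signed distance along $C_{i+1}$ and then control the residual by a thin-hexagon analysis. Define $b_i := \dis_{C_{i+1}}(z_{i+1}, w^+_i)$. Then, applying the additive decomposition followed by (\ref{twist}) with $j=0$ (the branch selected by the lifted pants decomposition that defines our $D_i$'s),
\[
a_{i+1}-a_i = (b_i - a_i) + \dis_{C_{i+1}}(w^+_i, w^-_{i+1}) = (b_i - a_i) + 1 + O(1/R).
\]
So part (1) reduces to showing $|b_i - a_i| < e^{-1} - O(1/R)$. For this I would use the skew right-angled hexagon with cyclic sides $\gamma, N_i, C_i, D_i, C_{i+1}, N_{i+1}$: its side $D_i$ has complex length $O(e^{-R/4})$ by (\ref{cuff-distance-1}), and in the degenerate limit $D_i \to 0$ the two cuffs $C_i, C_{i+1}$ coincide as oriented geodesics (orientations agree since each is chosen so that the angle with $\gamma$ is positive), forcing $z_i = z_{i+1}$, $w^-_i = w^+_i$, and $b_i - a_i = 0$. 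A perturbative application of the hexagon cosine rule around this degenerate limit, using the a priori bound $|a_i|,|b_i| \le R/4 + O(1)$ from part (2) below, should yield the required smallness of $b_i - a_i$.

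For part (2), apply (\ref{epstein-0}) with $\alpha = C_{i+1}$, $\beta = C_i$, $q(t) = z_i$, $t = a_i$:
\[
\sinh(d(C_i, C_{i+1}))\cosh(a_i) \le \sinh(d(z_i, C_{i+1})) \le \sinh(d(z_i, z_{i+1})) \le \sinh(e^{-5}).
\]
By (\ref{cuff-distance-1}), $d(C_i, C_{i+1}) \ge c e^{-R/4}$ for some absolute $c > 0$ (since $\RE(e^{\tau\mu}) \ge e^{-1}$ when $|\tau\mu| \le 3/4$), so $\cosh(a_i) \le e^{R/4 + O(1)}$ and therefore $|a_i| \le R/4 + O(1)$ for $i = 1, \ldots, k-1$ --- a bound that does not rely on part (1). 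Combining with part (1) then gives
\[
(k-2)(1 - e^{-1}) \le a_{k-1} - a_1 \le R/2 + O(1),
\]
which for $R$ sufficiently large forces $k < R$.

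The hard part will be the refined hexagon analysis in part (1). Our hexagon is neither thin in the sense of (\ref{hex-1}) nor a small perturbation of an all-short-sided hexagon: it has two long sides $C_i, C_{i+1}$ (of length up to $R/4$) flanking the very short $D_i$ ($\sim e^{-R/4}$), so a naive perturbative estimate gives an error of size $O(e^{-R/4}\cosh(\max(|a_i|, |b_i|))) = O(1)$, which is too coarse. Extracting the cancellation required to bring this below $e^{-1}$ --- coming from the hexagon closure equations that link $C_i$ to $C_{i+1}$ through both $D_i$ and through the segment of $\gamma$ between $z'_i$ and $z'_{i+1}$ --- is the technical core of the lemma.
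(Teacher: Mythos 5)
Your decomposition of $a_{i+1}-a_i$ into the twist term $\dis_{C_{i+1}}(w^+_i, w^-_{i+1})=1+O(1/R)$ plus a residual $b_i-a_i$ is the same one the paper uses, and your derivation of the a priori bound $|a_i|\le R/4+O(1)$ via (\ref{epstein-0}) and (\ref{cuff-distance-1}) is essentially the paper's (\ref{est-1})--(\ref{est-2}). But there are two real problems.

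First, you cannot take $j=0$ in (\ref{twist}) ``by the branch selected by the lifted pants decomposition.'' The integer $j$ is not a choice you get to make: it records how far along $C_i$ the crossing point $z_i$ sits relative to the feet $w^+_{i-1}$ and $w^-_i$, and it would be nonzero if the transversal $O$ wrapped around the cuff before meeting $C_{i+1}$. Pinning down $j=0$ is in fact the main content of the paper's proof of part (1). The paper first proves the a priori bound $d(z_i,w^+_{i-1}),\,d(z_i,w^-_i)\le R/4-2$ (using (\ref{epstein-2}) and (\ref{cuff-distance-1}), exactly in the spirit of your part (2) estimate), deduces $|\dis_{C_i}(w^+_{i-1},w^-_i)|\le R/2-4$, and then feeds this into (\ref{twist}) to force $|j|<1$. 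You produce a bound of the right flavor for part (2), but you never use it to justify $j=0$; moreover your bound only controls $d(z_i,w^-_i)$ and not $d(z_i,w^+_{i-1})$, and you need both. This is a genuine gap, and the logical order also needs to be reversed: the a priori bound must come before, not after, part (1).

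Second, you substantially overestimate the difficulty of bounding $|b_i-a_i|$. No hexagon cosine rule or cancellation argument is needed, and the $O(e^{-R/4}\cosh(R/4))=O(1)$ blowup you worry about never appears. Once $j=0$ is in hand, the paper closes (1) with the bare triangle inequality
\[
|a_{i+1}-a_i-1|\le d(w^-_i,w^+_i)+d(z_i,z_{i+1})+|d(w^+_i,w^-_{i+1})-1|,
\]
where the first term is the length of $D_i$ and is $\le (2+o(1))e^{-R/4}$ by (\ref{cuff-distance-1}), the second is $\le e^{-5}$ by hypothesis, and the third is $\le 1/R$ by (\ref{twist}) with $j=0$. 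The sum is plainly $<e^{-1}$. The point is that $z_i$ and $z_{i+1}$ are close in $\Ho$ and so are the endpoints $w^-_i,w^+_i$ of the very short orthogonal $D_i$, so the signed positions transfer directly across $D_i$ with only these three small errors; one never solves for a hexagon side and so the large sides $C_i,C_{i+1}$ contribute nothing. Your instinct that ``the technical core of the lemma'' lies here is misplaced: the technical core is the $j=0$ step you skipped.
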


\begin{proof}   Since the distance  between each pair $z_i$ and $z_{i+1}$ is at most $e^{-5}$, applying (\ref{epstein-2}) and (\ref{cuff-distance-1}) to all pairs $\alpha=C_i$ and $\beta=C_{i+1}$ yields the inequality
\begin{equation}\label{est-1}
d(z_i,w^{+}_{i-1}),  d(z_i,w^{-}_{i}) \le {{R}\over{4}}-2,
\end{equation}
\noindent
for each $i=1,...,k-1$.  By the triangle inequality we have
\begin{equation}\label{pm} 
\dis_{C_{i}}(w^{+}_{i-1},w^{-}_i) \le {{R}\over{2}}-4.
\end{equation}
\noindent
On the other hand, from  (\ref{twist}) we obtain
$$
|j|(1-{{|\tau \zeta|}\over{R}} ) \le {{2}\over{R}} (  \dis_{C_{i}}(w^{+}_{i-1},w^{-}_i) +1+{{|\tau \eta|}\over{R}} ) \le {{2}\over{R}} ({{R}\over{2}}-4+2).
$$
\noindent 
Since $|\tau|,|\zeta|,|\eta|<1$ and from (\ref{pm}) we get
$$
|j|\le {{1-{{4}\over{R}} }\over{1-{{1}\over{R}} }},
$$
\noindent
which shows that $j=0$ in (\ref{twist}).
\vskip .1cm
From (\ref{est-1}) we have 
\begin{equation}\label{est-2}
|a_i| < {{R}\over{4}}.
\end{equation}
\noindent
We write (using the triangle inequality) 
$$
a_{i+1} - a_i - 1 \le d(w_i^-, w_i^+) + d(z_i, z_{i+1} ) + |d(w_i^+, w_{i+1}^-) - 1|.
$$
\noindent
By (\ref{cuff-distance-1}) we have
$$ 
d(w_i^{-} , w_i^+) \le  e^{-{{R}\over{4}} + 2}.
$$
\noindent
The assumption of the lemma is $d(z_i, z_{i+1}) \le e^{-5}$. It follows from (\ref{twist}) (and the established fact that in this case $j=0$) 
that
$$ 
| d(w_i^+, w_{i+1}^-) -1 | \le | \RE({{\tau \eta}\over{R}} )  | \le {{1}\over{R}}.
$$
\noindent
Therefore 
$$
a_{i+1}-a_i -1 < e^{-1},
$$
\noindent
which proves the first part of the lemma.
\vskip .1cm
From (\ref{est-2}) we have $-{{R}\over{4}}< a_1$, which implies that $a_{k-1}>(k-1)(1-e^{-1})-{{R}\over{4}}$. Again from (\ref{est-2}) we have $a_{k-1}<{{R}\over{4}}$, which proves 
$$
k<{{R}\over{2(1-e^{-1}) } }+1<R.
$$
\end{proof}

The following lemma is a corollary of the previous one.

\begin{lemma}\label{number} Let $\gamma$ be a geodesic segment in $\Ha$ that is transverse to the lamination $\Col_0(R)$. For $R$ large enough, the number of geodesics from $\Col_0(R)$ that $\gamma$ intersects is at most $(2+R)e^{5}|\gamma|$.

\end{lemma}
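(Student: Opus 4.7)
The plan is to cluster the intersection points of $\gamma$ with $\Col_0(R)$ so that Lemma \ref{lemma-est-1} applies to each cluster, and then bound the number of clusters using the length of $\gamma$.

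First, enumerate the intersection points $z_1,\ldots,z_M$ of $\gamma$ with geodesics of $\Col_0(R)$ in the order in which they appear along $\gamma$, and partition $\{1,\ldots,M\}$ into maximal runs of consecutive indices on which $d(z_i,z_{i+1})<e^{-5}$. The sub-segment of $\gamma$ spanned by any one run, together with the corresponding consecutive geodesics of $\Col_0(R)$, satisfies exactly the hypothesis of Lemma \ref{lemma-est-1}, so that lemma immediately gives strictly fewer than $R$ indices in each run.

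Next, between two consecutive runs there is (by the maximality in the partition) at least one pair $z_i, z_{i+1}$ with $d(z_i, z_{i+1}) \ge e^{-5}$, contributing at least $e^{-5}$ to the length of $\gamma$. If $B$ denotes the total number of runs, then $|\gamma| \ge (B-1)e^{-5}$, so $B \le |\gamma|e^5 + 1$. Multiplying the two bounds gives $M < RB \le R(|\gamma|e^5 + 1) = R|\gamma|e^5 + R$, which rearranges to the claimed estimate $(R+2)e^5|\gamma|$ once the additive $R$ is absorbed into $2e^5|\gamma|$.

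The main point to verify carefully is that Lemma \ref{lemma-est-1} really does apply to each run as advertised: one must check that the oriented geodesics $C_i$ and their common orthogonals $N_i$ and $D_i$, as set up in that lemma, are inherited consistently from the global configuration along $\gamma$ when attention is restricted to a single cluster, and that the strict inequality $k<R$ is the one obtained. Once this is in place the counting step is essentially mechanical; the final arithmetic carries an implicit assumption that $|\gamma|$ is not tiny relative to $1/R$, which I expect to be absorbed into the ``$R$ large enough'' clause or handled separately when $|\gamma|$ is very short.
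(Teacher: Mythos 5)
You are following the paper's approach: cluster the intersection points into maximal runs on which consecutive distances are below $e^{-5}$, apply Lemma \ref{lemma-est-1} to each run, and bound the number of runs via $|\gamma|$. The cluster-and-gap decomposition and the invocation of Lemma \ref{lemma-est-1} are correct. However, the final arithmetic is a genuine gap, not a cosmetic one, and your closing sentence misdiagnoses what goes wrong.

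You obtain $M < RB$ with $B \le e^5|\gamma|+1$, hence $M < Re^5|\gamma| + R$. To deduce $M \le (2+R)e^5|\gamma| = Re^5|\gamma| + 2e^5|\gamma|$ from this you would need $R \le 2e^5|\gamma|$, i.e.\ $|\gamma| \ge R/(2e^5)$. This constraint becomes \emph{harder}, not easier, as $R \to \infty$, so it cannot be folded into the ``$R$ large enough'' clause; and the threshold is not ``$|\gamma|$ tiny relative to $1/R$'' but ``$|\gamma|$ comparable to $R$,'' which is a strong condition. In the place the lemma is used (Lemma \ref{derivative-est-1}), $|\gamma| = d(C_0(0),C_{n+1}(0))$ is bounded above by a constant $T_1$ fixed before $R$, so for large $R$ your bound does not rearrange to the stated one and the downstream estimate $\frac{n}{R}\le 1000\,d(C_0(0),C_{n+1}(0))$ is lost.

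The paper avoids the stray additive $R$ by counting $l$, the number of indices $i$ with $d(z_i,z_{i+1})>e^{-5}$, so $l < e^5|\gamma|$, and estimating the total as $2l + lR = l(R+2)$: each of at most $l$ between-gap clusters contributes at most $R$ geodesics, and the geodesics through the $2l$ gap endpoints are charged separately. Crucially this total is a multiple of $l$, so $l(R+2) < (R+2)e^5|\gamma|$ follows at once with no leftover term. Your estimate $R(l+1) = Rl + R$ carries a residual $+R$ from the single ``extra'' run; the repair is to redo the final bookkeeping so that the entire count is proportional to $l$ (equivalently, to $|\gamma|$) rather than to $l+1$.
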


\begin{proof} As above denote by $C_i(0)$, $i=1,...,k$, the geodesics from $\Col_0(R)$ that $\gamma$ intersects. Using the above notation, let $j_1,...,j_l \in \{0,..,k\}$, be such that $d(z_{j_{i}}(0),z_{j_{i}+1}(0))>  e^{-5}$. Then 
$$
l<{{|\gamma|}\over{e^{-5}} }=e^{5}|\gamma|.
$$
\noindent
By definition, the open segment between $z_{j_{i}}(0)$ and $z_{j_{i}+1} (0)$ does not intersect any geodesics from $\Col_0(R)$.   
\vskip .1cm
On the other hand, by the previous lemma the number of geodesics from $\Col_0(R)$ that the subsegment of $\gamma$ between $z_{j_{i}+1 } (0)$ and $z_{j_{i+1} } (0)$ intersects is at most $R$ (because the distance between any $z_i(0)$ and $z_{i+1}(0)$ in this range is at most $e^{-5}$). 
Since there are at most $l$ such segments we have that the total number of geodesics from $\Col_0(R)$ that $\gamma$ intersects is at most 
$2l+lR<(2+R) e^{5}|\gamma|$.
\end{proof}

\subsection{Estimating the derivative $|\dis'_{O} (C_0,C_{n+1})|$}
We now combine the notation of the previous two subsections (and set $\gamma=O$). In the following  lemmas we prove estimates for the two terms on the right-hand side in the inequality of Lemma \ref{derivative-est-0}, that are independent of $R$.
\vskip .1cm
We first estimate the second term in  the inequality of Lemma \ref{derivative-est-0}.

\begin{lemma}\label{derivative-est-1} We have 

$$
{{n}\over{R}} \left( \max_{1 \le i \le n}  e^{d(O,C_i)} \right) \le 1000
d(C_0(0),C_{n+1}(0)) \left( \max_{1 \le i \le n}  e^{d(O,C_i)} \right),
$$
\noindent
where $n$ is the number of geodesics that $O(0)$ intersects between $C_0(0)$ and $C_{n+1}(0)$.
\end{lemma}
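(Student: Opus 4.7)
The inequality factors: since $\max_{1\le i\le n} e^{d(O,C_i)}$ appears on both sides, it suffices to show $n/R \le 1000\,d(C_0(0),C_{n+1}(0))$. The plan is to reduce this immediately to Lemma \ref{number}.

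First I would observe that at $\tau=0$ the representation $\rho_0$ is Fuchsian (all Fenchel--Nielsen coordinates are real by Definition \ref{def-C}), so every axis in $\Col_0(R)$ lies in the common hyperbolic plane $\Ha \subset \Ho$. In particular the common orthogonal $O(0)$ of $C_0(0)$ and $C_{n+1}(0)$ lies in $\Ha$, and the geodesic subsegment $\gamma$ of $O(0)$ running from $C_0(0)$ to $C_{n+1}(0)$ is a geodesic arc in $\Ha$ of length exactly $|\gamma| = d(C_0(0),C_{n+1}(0))$. By the definition of $n$, this segment $\gamma$ is transverse to the lamination $\Col_0(R)$ and meets exactly $n+2$ of its leaves.

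Next I would apply Lemma \ref{number} to $\gamma$, obtaining
$$
n \;\le\; n+2 \;\le\; (2+R)\,e^{5}\,|\gamma| \;=\;(2+R)\,e^{5}\,d(C_0(0),C_{n+1}(0)).
$$
Dividing by $R$ and using $(2+R)/R \le 2$ for $R \ge 2$ (which is ensured once $R$ is taken large enough as in the standing hypothesis), this gives
$$
\frac{n}{R} \;\le\; 2e^{5}\,d(C_0(0),C_{n+1}(0)) \;<\; 1000\,d(C_0(0),C_{n+1}(0)),
$$
since $2e^{5} < 300$. Multiplying both sides by $\max_{1\le i\le n} e^{d(O,C_i)}$ yields the asserted inequality.

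There is no real obstacle here; the only small thing to check is that the segment we feed into Lemma \ref{number} is genuinely a planar geodesic segment of $\Ha$, which is immediate from the fact that $\rho_0$ is Fuchsian and therefore $\Col_0(R)$ is a geodesic lamination of $\Ha$. The generous constant $1000$ in the statement absorbs the factor $(2+R)/R$, so we need not track $R$ carefully as long as $R$ is bounded away from $0$.
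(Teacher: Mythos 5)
Your proof is correct and follows essentially the same route as the paper: both arguments invoke Lemma \ref{number} to bound $n$ by $(2+R)e^5\,d(C_0(0),C_{n+1}(0))$ and then absorb the $R$-dependent factor into the generous constant $1000$. Your explicit check that the relevant segment of $O(0)$ is a planar geodesic arc in $\Ha$ (because $\rho_0$ is Fuchsian) is a reasonable sanity check that the paper leaves implicit.
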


\begin{proof} From Lemma \ref{number} we have 
$$
n \le (2+R)e^{5}d(C_0(0),C_{n+1}(0))<1000 R d(C_0(0),C_{n+1}(0)),
$$
\noindent
which proves the lemma.

\end{proof}

We now bound the first term in the inequality of Lemma \ref{derivative-est-0} under the following assumption.

\begin{assumption}\label{ass} Assume that for some $\tau \in \D$ the following estimates hold for $i=0,...,n+1$,
$$
d(z_i,z_{i}(0)),\, d(O,C_i)< {{1}\over{4}} e^{-5}. 
$$
\end{assumption}

We have

\begin{lemma}\label{derivative-est-2}  Under  Assumption \ref{ass} and for $R$ large enough we have
$$
20e^{ -{{R}\over{4}} }\sum_{i=0}^{n+1} e^{d(O,D_i) } \le  10^{8}d(C_0(0),C_{n+1}(0) ) e^{d(C_0(0),C_{n+1}(0)) }.
$$

\end{lemma}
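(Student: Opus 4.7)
The plan is to split the proof into three steps. First, I will establish the pointwise bound
\[
d(O,D_i) \le |a_i| + \tfrac{1}{2}e^{-5}
\]
by applying the hyperbolic cosine rule to the right triangle with vertices $z_i^O = N_i\cap O$, $z_i = N_i\cap C_i$, and $w_i^- = D_i\cap C_i$. Since $N_i \perp C_i$, the right angle sits at $z_i$; the legs have lengths $d(O,C_i)=d(z_i^O,z_i)<\tfrac{1}{2}e^{-5}$ (by Assumption~\ref{ass}) and $|a_i|=d(z_i, w_i^-)$, so $\cosh d(z_i^O, w_i^-)=\cosh d(O,C_i)\cosh|a_i|$, and since $w_i^- \in D_i$ the bound follows.

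Second, I will control $|a_i|$ using the thin hexagon containing $C_i, C_{i+1}$, whose seam $D_i$ has length $\ell \le 2e^{-R/4+2}$ by (\ref{cuff-distance-1}). The hyperbolic trigonometric identity
\[
\cosh d(z_i, z_{i+1}) = \cosh a_i \cosh b_i \cosh \ell - \sinh a_i \sinh b_i,
\]
with $b_i = \dis_{C_{i+1}}(z_{i+1}, w_i^+)$, combined with the twist relation (\ref{twist}) (valid with $j=0$, as in the proof of Lemma~\ref{lemma-est-1}), gives $b_i = a_{i+1} - 1 + O(1/R)$. The assumption $d(O, C_i), d(O, C_{i+1}) < \tfrac{1}{2}e^{-5}$ together with the exponential divergence of $C_i, C_{i+1}$ away from $D_i$ forces $|a_i| \le R/4 + O(1)$. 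Two regimes then arise: in the \emph{transverse} regime ($a_i$ and $b_i$ of opposite signs) the identity yields $\cosh d(z_i, z_{i+1}) \approx \cosh(2|a_i|)$, so $e^{|a_i|} \le 2\, e^{d(z_i, z_{i+1})/2}$; in the \emph{parallel} regime (same signs) $d(z_i, z_{i+1}) \ge c > 0$ is uniformly bounded below, so the number $n_s$ of parallel indices satisfies $n_s \le L/c$, where $L=d(C_0(0),C_{n+1}(0))$.

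Third, Assumption~\ref{ass} gives $\sum_{i=0}^{n} d(z_i, z_{i+1}) \le L + (n+2) e^{-5}$. Combining the pointwise bound with the two-regime estimate and using convexity for the transverse sum,
\[
20 e^{-R/4}\sum_i e^{d(O,D_i)} \le 20 e^{-R/4}e^{\epsilon_0}\Big(n_s\, e^{R/4} + e^{L/2}+n\Big) \le \tfrac{20}{c}L + 20 e^{-R/4}(e^{L/2}+n),
\]
with $\epsilon_0 = \tfrac{1}{2}e^{-5}$. Lemma~\ref{number} gives $n \le (2+R) e^5 L$, so for $R$ large the second term is negligible and the right-hand side is at most $100\, L\, e^{L}$, as required.

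The main obstacle lies in the second step: identifying the two regimes and, in particular, establishing the uniform lower bound $c > 0$ on $d(z_i, z_{i+1})$ in the parallel regime. This requires the precise twist relation $b_i = a_{i+1} - 1 + O(1/R)$ together with a careful case analysis in the hexagon trigonometry; once this is in hand, the summation is routine.
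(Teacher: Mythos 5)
The two-regime dichotomy in your Step~2 is where the argument breaks, and the gap is genuine. The claim that $d(z_i,z_{i+1}) \ge c > 0$ uniformly in the parallel regime is false, and it fails for the typical index. For any short-gap index (say $d(z_i,z_{i+1}) < e^{-5}$), Lemma~\ref{lemma-est-1} gives $a_{i+1} = a_i + 1 + O(e^{-1})$, so together with the twist relation $b_i = a_{i+1} - 1 + O(1/R)$ one finds $b_i = a_i + O(e^{-1})$. Hence as soon as $|a_i| > 1$, say, the pair $(a_i,b_i)$ has the same sign and nearly equal magnitude (parallel), and your hexagon identity gives
\[
\cosh d(z_i,z_{i+1}) = \cosh a_i \cosh b_i \cosh\ell - \sinh a_i \sinh b_i \approx 1 + (\cosh\ell - 1)\cosh^2 a_i,
\]
so $d(z_i,z_{i+1}) \approx \ell\cosh a_i \approx e^{|a_i| - R/4}$, using $\ell = O(e^{-R/4})$ from (\ref{cuff-distance-1}). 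For $|a_i| \le R/8$ this is of order $e^{-R/8}$, which defeats any fixed $c>0$; the count $n_s \le L/c$ is therefore unavailable. The transverse bound is also not justified as stated: the identity yields $\cosh d \approx \cosh(|a_i| + |b_i|)$, not $\cosh(2|a_i|)$, and the stated consequence $e^{|a_i|} \le 2e^{d/2}$ requires $|a_i| \le |b_i| + O(1)$, which does not follow from the twist relation alone. On a long-gap index one can perfectly well have $|a_i|$ close to $R/4$ while $|b_i|=O(1)$, making $e^{|a_i|}$ exponentially larger than $e^{d/2}$.

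The underlying problem is that the sign dichotomy does not track the quantity you actually need, which is how $|a_i|$ evolves with $i$. The paper's proof partitions by gap size instead: it isolates the at most $l < 2e^{5}L$ indices $j_1,\dots,j_l$ with $d(z_{j_i},z_{j_i+1}) > e^{-5}$ (each exceptional index contributes at most $e^{R/4+L+2}$), and on the complementary intervals it invokes Lemma~\ref{lemma-est-1} to get $a_{t+1}-a_t > 1/2$, so that $a_t$ is strictly increasing with a uniform step. Since $|a_t| \le R/4 + L + 2$, the values $e^{|a_t|}$ decay geometrically from each endpoint of each interval, giving a sum $\le 2e^{R/4+L+3}/(1-e^{-1/2})$ per interval; multiplying by the $O(L)$ count of intervals and then by $20 e^{-R/4}$ gives the lemma. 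No pointwise lower bound on $d(z_i,z_{i+1})$ is needed anywhere, and the exceptional indices are controlled by their number rather than by any regime-specific geometry.
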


\begin{proof} Recall $z'_i=N_{i} \cap O$ (note $z_0=z'_0$ and $z_{n+1}=z'_{n+1}$ since $O$ is the common orthogonal between $C_0$ and $C_{n+1}$).  Observe

\begin{equation}\label{distance-1}
d(O,D_i) \le d(z'_i,z_i)+  d(z_i,w^{-}_i) =d(O,C_i)+ |a_i|<1+|a_i|.
\end{equation}
\noindent
It follows from  (\ref{epstein-1}) that 
$$
|a_i|=d(z_i,w^{-}_i) \le d(z_i,C_{i+1}) - \log d(C_{i},C_{i+1})
$$
We observe the estimate $d(z_i,C_{i+1}) \le d(z_i,z_{i+1})$. On the other hand, by  (\ref {cuff-distance-1}) we have 
$$
\dis_{D_{i} }(C_i,C_{i+1})=(2+o(1))e^{-{{R}\over{4}}+ \tau \mu},
$$
so for $R$ large enough (such that $|o(1)|<1$) we find that (using the estimate $|\tau \mu|<1$)
$$
d(C_i,C_{i+1}) \ge e^{-\frac{R}{4}-1},
$$
that is $-\log d(C_{i},C_{i+1}) \le \frac{R}{4}+1$. It follows that 
$$
|a_i| \le d(z_i,z_{i+1}) + {{R}\over{4}}+1.
$$
\noindent
From
\begin{equation}\label{est-z}
|d(z_i,z_{i+1})-d(z_i(0),z_{i+1}(0))| \le d(z_i,z_{i}(0))+d(z_{i+1},z_{i+1}(0))  \le {{e^{-5}}\over{2}}
\end{equation}
\noindent
and  $d(z_i(0),z_{i+1}(0)) \le  d(C_{0}(0),C_{n+1}(0))$, we obtain

\begin{equation}\label{a-est}
|a_i|<{{R}\over{4}}+d(C_0(0),C_{n+1}(0) ) +2.
\end{equation}

\vskip .1cm
Let $j_1,...,j_{l} \in \{1,..,n-1 \}$, be such that $d(z_{j_{i}},z_{j_{i}+1})> e^{-5}$ (note that $l=l(\tau)$ depends on $\tau$). Set $j_0=0$ and $j_{l+1}=n$. 
From (\ref{est-z}) we have $d(z_{j_{i}}(0),z_{j_{i}+1}(0))> {{e^{-5}}\over{2}}$ for each $1 \le i \le l$. 
The intervals $(z_i(0),z_{i+1}(0))$ partition the arc between $z_0(0)$ and $z_{n+1}(0)$ so we get

\begin{equation}\label{distance-2}
l<{{ d(C_0(0),C_{n+1}(0))  }\over{{{e^{-5}}\over{2}} }}=2 e^{5}  d(C_0(0),C_{n+1}(0)).
\end{equation}
\vskip .1cm
Let $0 \le i \le l+1$. For $j_{i}+1 \le t <  j_{i+1}$ we have $d(z_{t},z_{t+1}) \le e^{-5}$. It follows from Lemma \ref{lemma-est-1} that 
$$
{{1}\over{2}}<a_{t+1}-a_t. 
$$
\noindent
We see that in  this interval the sequence $a_t$ is an increasing sequence.  
Combining this with (\ref{a-est}) and (\ref{distance-1}) we obtain
\begin{align}
\sum_{t= j_{i}+1}^{j_{i+1}} e^{d(O,D_t) } &\le 2e^{ {{R}\over{4}} + d(C_0(0),C_{n+1}(0)) +3 } \sum_{t=0}^{\infty} e^{-{{t}\over{2}} }
\label{est-mo}  \\
&<200e^{  {{R}\over{4}} + d( C_0(0),C_{n+1}(0) )  } \notag. 
\end{align}
\noindent
We have
$$
\sum_{i=0}^{n+1} e^{d(O,D_i) }  \le    (l+1) \max_{i=0,...,n+1} e^{d(O,D_i) }+
\sum_{i=0}^{l+1} \sum_{t= j_{i}+1}^{j_{i+1}} e^{d(O,D_t) } 
$$
By  (\ref{distance-1}),  (\ref{a-est}) we have  
$$ 
e^{d(O,D_i)} \le e^{ {{R}\over{4}}+d(C_0(0),C_{n+1}(0) ) +2 }.
$$
\noindent
Also, by   (\ref{distance-2}) and (\ref{est-mo}) we have 
\begin{align*}
\sum_{i=0}^{l+1} \sum_{t= j_{i}+1}^{j_{i+1}} e^{d(O,D_t) } &\le \big( 2 e^{5} d(C_0(0),C_{n+1}(0))+1 \big) \times 200e^{  {{R}\over{4}} + d( C_0(0),C_{n+1}(0) ) } \\
&<
10^{6} d(C_0(0),C_{n+1}(0))e^{  {{R}\over{4}} + d( C_0(0),C_{n+1}(0) ) }
\end{align*}

Combining all this gives
$$
20e^{ -{{R}\over{4}} }\sum_{i=0}^{n+1} e^{d(O,D_i) } \le   10^{8} d(C_0(0),C_{n+1}(0)) e^{d(C_0(0),C_{n+1}(0)) }.
$$

\end{proof}

The previous two lemmas together with Lemma \ref{derivative-est-0} imply

\begin{lemma} \label{derivative-3} Under Assumption \ref{ass} and assuming that $d(C_0,C_{n+1})<\frac{R}{5}$,  for $R$ large enough we have
$$
|\dis'_{O} (C_0,C_{n+1})| <  10^{9} d(C_0(0),C_{n+1}(0)) e^{d(C_0(0),C_{n+1}(0)) }.
$$
\end{lemma}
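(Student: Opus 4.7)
The proof is essentially a direct combination of the three preceding lemmas, so the plan is short. The key observation is that Lemma \ref{derivative-est-0} already expresses $|\dis'_{O}(C_0, C_{n+1})|$ as a sum of two terms, one involving the $D_i$-distances and one involving the $C_i$-distances, and Lemmas \ref{derivative-est-1} and \ref{derivative-est-2} are designed precisely to bound those two terms.

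First I would write down the inequality of Lemma \ref{derivative-est-0}, which requires only the hypothesis $|\dis_{O(0)}(C_0, C_{n+1})| < R/5$. In our setting this is guaranteed for $R$ large because under Assumption \ref{ass} we have $d(C_0(0), C_{n+1}(0))$ bounded by the ambient parameters and certainly less than $R/5$ for $R$ large enough; if this is an issue I would add a clarifying remark that one may reduce to this case (the desired inequality is vacuous otherwise by enlarging the constant, or one invokes Assumption \ref{ass} directly to obtain a uniform upper bound on the relevant length).

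Next I apply Lemma \ref{derivative-est-2} to the first term, obtaining the bound
\[
20 e^{-R/4}\sum_{i=0}^{n} e^{d(O,D_i)} \le 100\, d(C_0(0),C_{n+1}(0))\, e^{d(C_0(0),C_{n+1}(0))},
\]
and Lemma \ref{derivative-est-1} to the second term, obtaining
\[
\tfrac{n}{R}\max_{1\le i\le n} e^{d(O,C_i)} \le 1000\, d(C_0(0),C_{n+1}(0)) \max_{1\le i\le n} e^{d(O,C_i)}.
\]
Now I use Assumption \ref{ass}, which gives $d(O,C_i) < \tfrac{1}{2}e^{-5}$ for every $i$; this forces $e^{d(O,C_i)} < e^{e^{-5}/2}$, a constant strictly less than, say, $1.01$. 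So the second contribution is at most $1010\, d(C_0(0),C_{n+1}(0))$.

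Finally, writing $d = d(C_0(0),C_{n+1}(0))$ and using the trivial inequality $d \le d e^{d}$, I add the two bounds:
\[
|\dis'_{O}(C_0,C_{n+1})| < 100\, d\, e^{d} + 1010\, d \le 1110\, d\, e^{d} < 2000\, d\, e^{d},
\]
which is the claimed estimate. There is no real obstacle here; the whole lemma is an arithmetic assembly of the preceding estimates, and the numerical constants in Lemmas \ref{derivative-est-1} and \ref{derivative-est-2} were clearly chosen so that their sum stays safely below $2000$.
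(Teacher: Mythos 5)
Your proof is correct and follows exactly the route the paper intends: Lemma \ref{derivative-est-0} supplies the two-term bound, Lemmas \ref{derivative-est-1} and \ref{derivative-est-2} control the two terms, and Assumption \ref{ass} gives $d(O,C_i)<\tfrac12 e^{-5}$ so that $\max_i e^{d(O,C_i)}$ is a constant close to $1$, after which the arithmetic lands comfortably under $2000$. Your side remark about the hypothesis $|\dis_{O(0)}(C_0,C_{n+1})|<R/5$ of Lemma \ref{derivative-est-0} is the right instinct; in the paper's application that quantity is $d(C_0(0),C_{n+1}(0))\le T_1$, a constant independent of $R$, so the hypothesis is automatic for $R$ large.
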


\begin{proof} By Lemma \ref{derivative-est-0}   the estimate 
$$
|\dis'_{O} (C_0,C_{n+1})| \le 20e^{ -{{R}\over{4}} } \sum_{i=0}^{n} e^{d(O,D_i) }+
{{n}\over{R}} \left( \max_{1 \le i \le n}  e^{d(O,C_i)} \right)
$$
holds for $R$ large enough (recall that  $n$ is the number of geodesics that $O(0)$ intersects between $C_0(0)$ and $C_{n+1}(0)$). 
By Lemma \ref{derivative-est-1} we have 
$$
{{n}\over{R}} \left( \max_{1 \le i \le n}  e^{d(O,C_i)} \right) \le 1000
d(C_0(0),C_{n+1}(0)) \left( \max_{1 \le i \le n}  e^{d(O,C_i)} \right).
$$
By Assumption \ref{ass} we have that 
$$
d(O,C_i) \le  {{1}\over{4}}  e^{-5},
$$
for every $0\le i \le n+1$, so we obtain
$$
{{n}\over{R}} \left( \max_{1 \le i \le n}  e^{d(O,C_i)} \right) \le 3000
d(C_0(0),C_{n+1}(0)).
$$

On the other hand, by Lemma \ref{derivative-est-2} we have
$$
20e^{ -{{R}\over{4}} }\sum_{i=0}^{n+1} e^{d(O,D_i) } \le  10^{8}d(C_0(0),C_{n+1}(0) ) e^{d(C_0(0),C_{n+1}(0)) }.
$$
Putting the above estimates together proves the lemma.

\end{proof}

\subsection{The family of surfaces $\Su(R)$}  
We will consider geodesic laminations on a closed hyperbolic surface, and on its universal cover, the hyperbolic plane, which we will identify with the unit disk. By recording the endpoints of the leaves of a lamination of the unit disk, we can think of the lamination as a symmetric subset of $\partial\D \times \partial\D$, and by adding the diagonal, we obtain a closed subset of $\partial \D \times \partial \D$. The Hausdorff topology on such closed subsets will give us what we will call the Hausdorff topology on geodesic laminations of the unit disk. 

We have
\begin{definition} Let $R>1$ and let $P(R)$ be the pair of pants whose all three cuffs have the length $R$. We define the surface $\Su(R)$ to be the genus two surface that is obtained by gluing two copies of $P(R)$ alongside the cuffs with the twist parameter equal to $+1$ (these are the Fenchel-Nielsen coordinates for $\Su(R)$). The surface $\Su(R)$  can also be obtained by first doubling $P(R)$ and then applying the right earthquake of length $1$, where the lamination that supports the earthquake is the union of the three cuffs of $P(R)$. 
\end{definition}
By $\Orb(R)$ we denote the quotient orbifold  of the surface $\Su(R)$ (the quotient of $\Su(R)$ by the group of automorphisms of $\Su(R)$).
Observe that the Riemann surface $\Ha / \rho_{0}(\pi_1(S^{0}))$ is a regular finite degree cover of the orbifold $\Orb(R)$. In particular there exists a Fuchsian group $G(R)$ such that $\Orb(R)=\Ha / G(R)$ and  that $\rho_{0}(\pi_1(S^{0}))<G(R)$ is a finite index subgroup. It is important to point out that the lamination $\Col_0(R)$ is invariant under the group $G(R)$. In fact, one can define the group $G(R)$  as the group of all elements of $\PSLR$ that leave invariant the lamination $\Col_0(R) \subset \Ha$. Observe that the group $G(R)$ acts transitively on the geodesics from  $\Col_0(R)$, that is the $G(R)$-orbit of a geodesic from  $\Col_0(R)$ is equal to $\Col_0(R)$.
\vskip .1cm
Although the marked family of surfaces $S(R)$ (marked by its Fenchel-Nielsen coordinates defined above) tends to $\infty$ in the Teichm\"uller space of genus two surfaces, the unmarked family  $S(R)$ stays in some compact set in the moduli space of genus two surfaces. We prove this fact below.

\begin{lemma}\label{short} 
For $R$ large enough, the length of the shortest closed geodesic on the surface $\Su(R)$ is at least $e^{-5}$.
\end{lemma}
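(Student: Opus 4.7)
The plan is to argue by contradiction. Suppose $\gamma$ is a closed geodesic on $\Su(R)$ with $\ell(\gamma) < e^{-5}$, and lift it to its axis $\tilde\gamma \subset \Ha$ with covering translation $g \in \pi_1(\Su(R))$ of translation length $\ell(g) = \ell(\gamma)$. The argument splits according to how $\tilde\gamma$ meets the lamination $\Col_0(R)$.

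If $\tilde\gamma$ is itself a leaf of $\Col_0(R)$, then $\gamma$ is (a power of) a cuff of $\Su(R)$, so $\ell(\gamma) \geq R$. If $\tilde\gamma$ lies in a single complementary component of $\Col_0(R)$ in $\Ha$, then that component is the lift of a pair of pants $P(R)$ with all cuffs of length $R$; moreover $g$ stabilises this component, so $g$ belongs to the subgroup $\pi_1(P(R)) < \pi_1(\Su(R))$. Since the shortest closed geodesic in $P(R)$ is one of its three boundary cuffs, of length $R$, we again get $\ell(\gamma) \geq R$. Both sub-cases contradict $\ell(\gamma) < e^{-5}$ for $R$ large.

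The substantive case is that $\tilde\gamma$ meets $\Col_0(R)$ transversely. Let $\{z_i\}_{i \in \Z}$ be the ordered sequence of intersection points of $\tilde\gamma$ with $\Col_0(R)$, let $C_i \in \Col_0(R)$ be the leaf through $z_i$, and let $k \geq 1$ be the integer period with $g(z_i) = z_{i+k}$, $g(C_i) = C_{i+k}$. Any fundamental segment of $\tilde\gamma$ has length $\ell(g) < e^{-5}$, so every consecutive gap satisfies $d(z_i, z_{i+1}) < e^{-5}$. Lemma \ref{lemma-est-1} then applies to any finite subsequence and yields $a_{i+1} - a_i > 1 - e^{-1}$ for every $i$, where $a_i = \dis_{C_i}(z_i, w_i^{-})$. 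Summing $k$ consecutive increments gives $a_k - a_0 > k(1 - e^{-1}) > 0$. On the other hand, $g$ is an orientation-preserving isometry that fixes $\tilde\gamma$ setwise together with its orientation; since the orientation of each $C_i$ was fixed by requiring the angle with $\tilde\gamma$ to be positive, $g$ transports the oriented triple $(C_i, z_i, w_i^{-})$ to $(C_{i+k}, z_{i+k}, w_{i+k}^{-})$ as an isometric oriented triple, forcing $a_{i+k} = a_i$. In particular $a_k = a_0$, contradicting the strict inequality above.

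The point I expect to need most care is the orientation bookkeeping in this $g$-equivariance step: one must verify that the signed distance $a_i$, not merely its absolute value, is respected by $g$, so that the strictly positive drift produced by Lemma \ref{lemma-est-1} cannot be cancelled by an orientation flip after one period. Once this is checked, the rest of the argument is a clean pigeonhole on the one-dimensional coordinate $a_i$ along the chain of cuffs crossed by $\tilde\gamma$.
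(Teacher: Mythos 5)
Your argument is correct and takes essentially the same route as the paper's: lift a hypothetical closed geodesic of length less than $e^{-5}$ to $\tilde\gamma\subset\Ha$, observe that the period forces every consecutive gap $d(z_i,z_{i+1})$ along $\tilde\gamma$ to be less than $e^{-5}$, and invoke Lemma~\ref{lemma-est-1}. The only difference in mechanism is that you use part (1) of that lemma (the uniform drift $a_{i+1}-a_i>1-e^{-1}$) together with the $g$-equivariance $a_{i+k}=a_i$, whereas the paper cites part (2) ($k<R$) and notes that a lift of a closed geodesic transverse to $\Col_0(R)$ must cross infinitely many leaves; since part (2) is itself deduced from part (1) plus the bound $|a_i|<R/4$, these are two faces of the same pigeonhole. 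One thing you do that the paper does not is explicitly dispose of the case where $\tilde\gamma$ is disjoint from $\Col_0(R)$ and hence lies in a complementary region covering a pair of pants --- the paper's parenthetical remark (``otherwise $\gamma\in\Col_0(R)$'') glosses over this. That is a worthwhile addition, but your justification there is slightly imprecise: for $R$ large the shortest closed geodesic in $P(R)$ need not be a boundary cuff, since a non-simple (figure-eight type) closed geodesic can be shorter than $R$. What matters is only that the systole of $P(R)$ is bounded below by a universal positive constant (cuffs have length $R$, and non-simple closed geodesics on any hyperbolic surface have length bounded away from zero), so the case is still correctly excluded.
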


\begin{proof} Suppose that the length of the shortest closed geodesic on $\Su(R)$ is less than  $e^{-5}$ and let $\gamma$ be a lift of this geodesic to $\Ha$ (this geodesic is transverse to the lamination $\Col_0(R)$ because otherwise $\gamma \in \Col_0(R)$ which implies that the length of the shortest closed geodesic on $\Su(R)$ is equal to $R$). Then by Lemma \ref{lemma-est-1} every subsegment of $\gamma$ can intersect at most $R$ geodesics from $\Col_0(R)$, which means that $\gamma$ intersects at most $R$ geodesics from $\Col_0(R)$. This is impossible since $\gamma$ is a lift of a closed geodesic that is transverse to $\Col_0(R)$ so it has to intersect infinitely many geodesics from $\Col_0(R)$. This proves the lemma.

\end{proof}

The conclusion is that the family of (unmarked) Riemann surfaces $\Su(R)$ stays in some compact set in the moduli space of genus two surfaces. 
One can describe the accumulation set of the family $\Su(R)$  in the moduli space as follows. 
Let $P$ be a pair of pants that is decomposed into two ideal triangles so that all three shears between these two ideal triangles are equal to $1$. Then all three cuffs have the length equal to $2$. Let $\Su_t$, $t \in [0,1]$ be the genus two Riemann surface that is obtained by gluing one copy of $P$ onto another copy of $P$ (along the three cuffs) and twisting by $+2t$ along each cuff. The ``circle'' of surfaces  $\Su_t$ is the accumulation set of $\Su(R)$, when $R \to \infty$. Note that the edges of the ideal triangles that appear in the pants $P$ are the limits of the ($R$ long) cuffs from the pairs of pants $P(R)$. 
\vskip .1cm
Then we have the induced circle of orbifolds $\Orb_t$. Let $G_t$ be a circle of Fuchsian groups such that $\Orb_t=\Ha / G_t$. By $\Col_{0,t}$ we denote the lamination in $\Ha$ that is the lift of the corresponding ideal triangulation on $\Su_t$.  Then up to a conjugation by elements of $\PSLR$, the circle of groups $G_t$ is the accumulation set of the groups $G(R)$, when $R \to \infty$, and the circle of laminations $\Col_{0,t}$ is the accumulation set of the laminations $\Col_0(R)$. We observe that the group $G_t$ acts transitively on $\Col_{0,t}$.

\subsection{Quasisymmetric maps and hyperbolic geometry}

In this subsection we state and prove a few preparatory statements about quasisymmetric maps and the complex  distances between geodesics in $\Ho$, culminating in Theorem \ref{theorem-new-1}.

\begin{definition} We say that a geodesic lamination $\lambda$ on $\Ha$ is nonelementary if neither of the following holds:
\begin{enumerate}
\item There exists $z \in \partial{\Ha}$ that is an endpoint of every leaf of $\lambda$. 
\item There exists a geodesic $O \subset \Ha$ that is orthogonal to every leaf of $\lambda$.
\end{enumerate}
\end{definition}

Of course,  $\lambda$ has at least three elements if $\lambda$ is nonelementary. Moreover if $\lambda$ is nonelementary then there is a sublamination $\lambda' \subset \lambda$ such that $\lambda'$ contains exactly  three geodesics and that such that $\lambda'$ is nonelementary.
\vskip .1cm
Let $\lambda$ be a geodesic lamination, all of whose leaves have disjoint closures. By $\partial{\lambda}$ we denote the union of the endpoints of leaves from $\lambda$. We let $\iota_{\lambda}:\partial{\lambda} \to \partial{\lambda}$ be the involution such that $\iota_{\lambda}$ exchanges the two points of $\partial{\alpha}$, for every leaf $\alpha \in \lambda$. 
\vskip .1cm
We say that a quasisymmetric map $g:\partial{\Ha} \to \partial{\Ho}$ is $K$-quasisymmetric if for every 4 points on $\partial{\Ha}$ with cross ratio
equal to $-1$, the cross ratio of the image four points is within $\log K$ hyperbolic distance of $-1$ for the hyperbolic metric on 
$\C \setminus \{0,1, \infty\}$ (observe that a map is $K$-quasisymmetric if and only if it has a $K'$-quasiconformal extension to $\partial{\Ho}$ for some $K'>1$).

\vskip .1cm

If $\alpha$ and $\beta$ are oriented geodesics in $\Ho$ by $\dis(\alpha,\beta)$ we denote their unsigned complex distance.

\begin{lemma}\label{lemma-G} Suppose that $\lambda$ is nonelementary, and $f:\partial{\lambda} \to \partial{\Ho}$ is such that 
$$
\dis(f(\alpha),f(\beta))=\dis(\alpha,\beta),
$$
\noindent
for all $\alpha, \beta \in \lambda$. Then there is a unique M\"obius transformation $T$ such that either
\begin{enumerate}
\item $T=f$ on $\partial{\lambda}$, or
\item $T=f \circ \iota_{\lambda}$ on $\partial{\lambda}$.
\end{enumerate}

\end{lemma}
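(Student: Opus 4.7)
The plan is to first construct $T$ on a three-leaf sub-lamination with Property $T$ and then extend to all of $\lambda$. By the observation preceding the lemma, Property $T$ of $\lambda$ guarantees a sub-lamination $\lambda' = \{\alpha_1, \alpha_2, \alpha_3\} \subset \lambda$ of exactly three leaves still having Property $T$. Writing $f(\alpha)$ for the unoriented geodesic in $\Ho$ whose endpoints are $f(\partial\alpha)$, the hypothesis reads $\dis(\alpha, \beta) = \dis(f(\alpha), f(\beta))$ for all $\alpha, \beta \in \lambda$.

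For existence on $\lambda'$: by $3$-transitivity of $\PSLC$ on $\partial\Ho$, fix a M\"obius $T$ sending the endpoints of $\alpha_1$ to those of $f(\alpha_1)$ (in a labeling to be chosen) and one endpoint of $\alpha_2$ to a chosen endpoint of $f(\alpha_2)$. Since $T$ is an isometry, $T\alpha_2$ and $f(\alpha_2)$ both pass through this image point and are at equal unsigned complex distance to $T\alpha_1 = f(\alpha_1)$. Normalizing $f(\alpha_1) = \{0, \infty\}$, a direct cross-ratio computation shows there are exactly two geodesics through a given boundary point at a given unsigned complex distance to $\{0, \infty\}$, related by inverting the free endpoint. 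Hence $T\alpha_2$ is either $f(\alpha_2)$ or its reflected alternative; adjusting the initial labeling of the endpoints of $\alpha_1$ if needed yields $T\alpha_2 = f(\alpha_2)$ as an unoriented geodesic. Applying the same argument to $\alpha_3$, the remaining $\Z_2$-ambiguity is broken by Property $T$ of the image triple (which is encoded in the pairwise unsigned distances alone), forcing $T\alpha_3 = f(\alpha_3)$.

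For uniqueness: any two M\"obius transformations $T_1, T_2$ with $T_j \alpha_i = f(\alpha_i)$ as unoriented geodesics differ by an element of the $\PSLC$-stabilizer of the unoriented triple $\{f(\alpha_1), f(\alpha_2), f(\alpha_3)\}$. A non-identity orientation-preserving isometry of $\Ho$ fixing three unoriented geodesics must either share a common boundary fixed point with all of them (contradicting Property $T$ (1)) or be a $\pi$-rotation about a geodesic orthogonal to all of them (contradicting Property $T$ (2)); so $T_1 = T_2$.

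To extend to all of $\lambda$, for any $\beta \in \lambda$ apply the same construction to a triple $\{\alpha_i, \alpha_j, \beta\}$ that retains Property $T$, taking two of the leaves from $\lambda'$. Uniqueness forces the resulting M\"obius to equal $T$, whence $T(\beta) = f(\beta)$ as an unoriented geodesic. Thus on each leaf $\alpha \in \lambda$, $T|_{\partial\alpha}$ agrees with $f|_{\partial\alpha}$ either directly or after swapping the two endpoints, which is the stated dichotomy $T = f$ or $T = f \circ \iota_\lambda$. The main obstacle is the rigidity step: the unsigned complex distance has a per-pair $\Z_2$ ambiguity, and Property $T$ must be used to break this ambiguity consistently across all three leaves of $\lambda'$ simultaneously, not merely leaf-by-leaf.
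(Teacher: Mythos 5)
Your proposal tracks the paper's argument for the case where all leaves have disjoint closures, but the paper's proof splits into two genuinely different cases, and your sketch does not cover the first one. If $\lambda$ contains leaves $\alpha_1,\alpha_2$ sharing an endpoint, the unsigned complex distance $\dis(\alpha_1,\alpha_2)$ is not defined (asymptotic geodesics have no common orthogonal), so your step deducing that $T\alpha_2$ and $f(\alpha_2)$ ``are at equal unsigned complex distance to $T\alpha_1=f(\alpha_1)$'' has no content. The paper handles this via Proposition \ref{proposition-P}, which is deliberately formulated so that only the distances from the asymptotic pair $\alpha_0,\alpha_1$ to a third geodesic $\beta$ missing the common endpoint are used, never $\dis(\alpha_0,\alpha_1)$ itself; it is precisely in this case that only alternative~(1) of the lemma can hold (as the paper notes, $\iota_\lambda$ is only sensible when closures are disjoint). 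Your construction of $T$ has to be re-done from scratch here.

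The step you yourself call ``the main obstacle'' — breaking the per-pair $\Z_2$ ambiguity — is in fact the entire content of Proposition \ref{proposition-A}, and flagging it is not the same as resolving it. Concretely, once $T\alpha_1=f(\alpha_1)$ and $T\alpha_2=f(\alpha_2)$ are arranged, the candidates for $f(\alpha_3)$ are $T\alpha_3$ and $-Q(T\alpha_3)$ where $Q$ is the $\pi$-rotation about the common orthogonal of $T\alpha_1,T\alpha_2$; that these two candidates are distinct is exactly where the ``no common orthogonal'' half of Property $T$ enters, and your claim that this ``is encoded in the pairwise unsigned distances alone'' is not correct — it depends on the actual configuration of the triple, not just on the three complex numbers $\dis(\alpha_i,\alpha_j)$. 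Finally, your closing sentence silently passes from the leaf-by-leaf statement (on each $\alpha$, $T|_{\partial\alpha}$ is $f|_{\partial\alpha}$ or the swap) to the global dichotomy $T=f$ or $T=f\circ\iota_\lambda$ on all of $\partial\lambda$; these are not the same thing, and establishing uniformity of the orientation choice across $\lambda$ is a real step. Related to this, you propose extending to a general $\beta\in\lambda$ by finding a triple $\{\alpha_i,\alpha_j,\beta\}$ with Property $T$ — but such a triple need not exist for every $\beta$. The paper avoids this obstacle: for the extension it applies only the ``at most two solutions'' conclusion of Proposition \ref{proposition-A} to the two triples $\{\alpha_0,\alpha_1,\beta\}$ and $\{\alpha_0,\alpha_2,\beta\}$ and derives a contradiction if $f(\beta)\ne T(\beta)$, which works regardless of whether those triples themselves have Property $T$.
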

The second case can only occur when all the leaves of $\lambda$ have disjoint closures. We will prove two special cases of Lemma \ref{lemma-G} before we prove the lemma. 
\vskip .1cm
If the endpoints of $\alpha$ are $x$ and $y$ and $\alpha$ is oriented from $x$ to $y$ then we write $\partial{\alpha}=(x,y)$.
The following lemma is elementary.

\begin{lemma}\label{lemma-K} For every $d \in \C  /  2 \pi i \Z / \Z_2$, with $d \ne 0$, there exists a unique 
$s \in \C / 2\pi i \Z$ such that for two oriented geodesics $\alpha$ and $\beta$  we have $\dis(\alpha,\beta)=d$ if and only if $\partial{\beta}=(x,y)$ and $y=T_{s,\alpha}(x)$, where $T_{s,\alpha}$ is the translation by $s$ along $\alpha$. 
\end{lemma}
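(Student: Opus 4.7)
The plan is to normalize $\alpha$ via a M\"obius transformation and then reduce the lemma to an explicit one-variable computation. First I would conjugate in $\PSLC$ so that $\alpha$ is the geodesic in the upper half-space model with endpoints $0$ and $\infty$, oriented so that $T_{s,\alpha}$ acts on $\partial\Ho=\hat{\C}$ by $z\mapsto e^s z$. For any oriented geodesic $\beta$ disjoint from $\alpha$, with $\partial\beta=(x,y)$, the condition $y=T_{s,\alpha}(x)$ then becomes $y/x=e^s$, pinning down $s$ uniquely in $\C/2\pi i\Z$ as $\log(y/x)$.

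The next observation is that both the ratio $y/x$ and the unsigned complex distance $\dis(\alpha,\beta)$ are invariant under the stabilizer of oriented $\alpha$ in $\PSLC$ (the group $z\mapsto\lambda z$, $\lambda\in\C^{\ast}$), and this stabilizer acts transitively on the oriented $\beta$'s with prescribed $y/x$. Hence $\dis(\alpha,\beta)$ is a function of $s$ alone, and it only remains to show that this function defines a bijection onto the admissible values of $d$.

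For the explicit formula I would take the distinguished representative $\beta_s$ with $\partial\beta_s=(1,e^s)$. For $s$ real and positive, $\alpha$ and $\beta_s$ lie in a common Fuchsian plane, and a classical cross-ratio calculation in that plane (using $[0,\infty;1,e^s]=e^{-s}$) yields
\[
\cosh(\dis(\alpha,\beta_s))=\coth(s/2).
\]
Both sides are holomorphic in $s$, so the identity persists for all $s\in\C/2\pi i\Z$ by analytic continuation. Since $\cosh$ descends to a bijection $(\C/2\pi i\Z)/\Z_2\to\C$ and $\coth(\cdot/2)$ descends to a bijection $\C/2\pi i\Z\to\hat{\C}\setminus\{\pm 1\}$, the identity inverts: for each $d\ne 0$ (with $d\ne i\pi$, the degenerate case $\beta=-\alpha$, tacitly excluded) we set $s=2\operatorname{arccoth}(\cosh d)\in\C/2\pi i\Z$, establishing both existence and uniqueness of $s$.

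The main obstacle I expect is establishing the cross-ratio/complex-distance identity in the complex setting. The real case is classical hyperbolic trigonometry and can be verified directly, while extending to complex $s$ requires a careful analytic-continuation argument in the holomorphic one-parameter family of configurations $\{(\alpha,\beta_s):s\in\C\}$.
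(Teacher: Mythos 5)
The paper offers no proof of Lemma \ref{lemma-K} --- it is introduced as ``elementary'' and stated without justification --- so there is nothing to compare your argument against; your plan is a correct and natural way to supply one. Normalizing $\alpha$ to $(0,\infty)$, observing that both $y/x$ and $\dis(\alpha,\beta)$ are invariant under the stabilizer $z\mapsto\lambda z$ of oriented $\alpha$ (which acts transitively on the oriented $\beta$ with a prescribed $y/x$), and then checking that the resulting one-variable function $s\mapsto\dis(\alpha,\beta_s)$ is a bijection is exactly the right reduction.

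Two small remarks. First, the step ``both sides are holomorphic in $s$, so the identity persists by analytic continuation'' is a little soft as stated: you would need to justify a priori that $s\mapsto\cosh(\dis(\alpha,\beta_s))$ is holomorphic, which does not follow immediately from the definition of $\dis$. The cleanest fix is to note that $\tanh^{2}(\dis(\alpha,\beta)/2)$ is a rational expression in the four endpoints (the cross-ratio form of the complex-distance formula); in your normalized picture this reads $\tanh^{2}(\dis(\alpha,\beta_s)/2)=[0,\infty;1,e^{s}]=e^{-s}$, which gives $\cosh(\dis(\alpha,\beta_s))=\coth(s/2)$ for every $s$ as an algebraic identity, with no continuation argument required. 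Second, you are right to flag $d=i\pi$: since $\coth(s/2)$ never takes the value $-1=\cosh(i\pi)$, the lemma as literally stated fails for $d=i\pi$. Indeed the only $\beta$ with $\dis(\alpha,\beta)=i\pi$ is $\beta=-\alpha$, which shares both endpoints with $\alpha$ so that $y/x$ is not a well-defined nonzero number, and for any $s$ there are plenty of $\beta$ with $y=T_{s,\alpha}(x)$ and $\dis(\alpha,\beta)\ne i\pi$. This is a harmless imprecision in the paper: in the two places the lemma is invoked (Propositions \ref{proposition-A} and \ref{proposition-P}) the surrounding hypotheses rule out $\beta=\pm\alpha$, hence $d\notin\{0,i\pi\}$.
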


\begin{proposition}\label{proposition-A} Suppose that $\alpha_0,\alpha_1,\alpha_2$ are oriented geodesics in $\Ho$ for which $\dis(\alpha_i,\alpha_j)\ne 0$ for $i\ne j$, and $\alpha_0,\alpha_1,\alpha_2$ do not have a common orthogonal. 
Suppose $\alpha'_0,\alpha'_1,\alpha'_2$ are such that  $\dis(\alpha_i,\alpha_j)=\dis(\alpha'_i,\alpha'_j)$. Then we can find a unique 
$T \in \PSLC$  that satisfies one of the two conditions 
\begin{enumerate}
\item $T(\alpha_i)=\alpha'_i$, $i=0,1,2$,
\item $T(\alpha_i)=-\alpha'_i$, $i=0,1,2$, where $-\alpha'_i$ is $\alpha'_i$ with the orientation reversed,
\end{enumerate}

\end{proposition}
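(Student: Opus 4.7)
The plan is to exploit the triple transitivity of $\PSLC$ on $\partial{\Ho}$ together with the fact that, for oriented geodesics $\alpha,\beta$, the quantity $\cosh(\dis(\alpha,\beta))$ is a rational function of the four endpoints which is M\"obius in each single endpoint with the other three held fixed, and that this quantity is exactly what the unsigned complex distance $\dis(\alpha,\beta)$ records.

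For uniqueness: if $T_1,T_2\in\PSLC$ both realise (1), then $T_2T_1^{-1}$ fixes the six pairwise distinct endpoints of $\alpha'_0,\alpha'_1,\alpha'_2$ (distinctness being forced by $\dis(\alpha_i,\alpha_j)\ne 0$), so $T_2T_1^{-1}=\id$; the same argument handles (2). If $T_1$ realises (1) and $T_2$ realises (2), then $T_2T_1^{-1}$ preserves each $\alpha'_i$ setwise while reversing its orientation, hence is an order-two rotation whose axis is a common orthogonal of the three $\alpha'_i$---but the no-common-orthogonal hypothesis on the $\alpha_i$ transfers to the $\alpha'_i$ via $T_1$, so no such rotation exists.

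For existence I would take the unique $T_0\in\PSLC$ sending the triple $(p_0,q_0,p_1)$ to $(p'_0,q'_0,p'_1)$, where $\alpha_i=(p_i,q_i)$ and $\alpha'_i=(p'_i,q'_i)$. Then $T_0(\alpha_0)=\alpha'_0$ with orientation, and $T_0(\alpha_1)$ has endpoints $(p'_1,T_0(q_1))$. Since $y\mapsto\cosh(\dis(\alpha'_0,(p'_1,y)))$ is a non-constant M\"obius function of $y$, the equality $\dis(\alpha_0,\alpha_1)=\dis(\alpha'_0,\alpha'_1)$ forces $T_0(q_1)=q'_1$, so $T_0(\alpha_1)=\alpha'_1$. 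Replacing $\alpha_i$ by $T_0(\alpha_i)$ I may assume $\alpha_i=\alpha'_i$ for $i=0,1$. The constraints that $\beta=(x,y)$ have the prescribed unsigned complex distances to $\alpha_0$ and to $\alpha_1$ each express $y$ as a definite M\"obius function of $x$; intersecting the two graphs is a M\"obius fixed-point equation with exactly two solutions generically. Both $\alpha_2$ and $\alpha'_2$ lie in this two-element set, so either $\alpha_2=\alpha'_2$ (and $T=T_0$ realises (1)), or $\{\alpha_2,\alpha'_2\}$ is the full solution set.

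In the latter case I would identify the ``other'' solution as a $\PSLC$-translate of the orientation-reversed triple. Normalising further to $\alpha_0=(0,\infty)$ and $p_1=1$ turns the joint system into an explicit quadratic in $p_2$ whose two roots $p_2,p_2^{\#}$ satisfy $p_2 p_2^{\#}=c_{02}/c_{01}$, where $c_{0i}$ is the cross-ratio encoding $\cosh(\dis(\alpha_0,\alpha_i))$. A direct calculation shows that the composite operation ``apply the orientation reversal $(\alpha_0,\alpha_1,\alpha_2)\mapsto(-\alpha_0,-\alpha_1,-\alpha_2)$, then re-normalise by the unique $\PSLC$-element restoring the frame $(0,\infty,1)$'' sends $p_2$ to $c_{02}/(c_{01}p_2)=p_2^{\#}$. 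Therefore $(\alpha'_0,\alpha'_1,\alpha'_2)$ lies in the $\PSLC$-orbit of $(-\alpha_0,-\alpha_1,-\alpha_2)$, which furnishes a $T\in\PSLC$ realising (2). I expect this last step---the explicit identification of the second quadratic root with the orientation-reversed, re-normalised triple---to be the main obstacle; conceptually it says that the involution ``flip orientations then re-normalise'' acts fixed-point-freely on the finite fibre of the unsigned-distance invariants (thanks to the no-common-orthogonal assumption) and so must exchange the two $\PSLC$-orbits which a dimension count permits in the generic fibre.
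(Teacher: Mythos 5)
Your uniqueness argument is correct and actually more explicit than the paper's (the paper leaves it implicit): two solutions realizing the same alternative must agree on six distinct boundary points, hence coincide, and if one realizes (1) and the other (2) then their quotient is an involution reversing the orientation of each $\alpha'_i$, forcing a common orthogonal --- a contradiction. Your normalization $\alpha_i=\alpha'_i$ for $i=0,1$ and the at-most-two-solutions count via the M\"obius fixed-point equation $(T_1^{-1}\circ T_0)(x)=x$ are also exactly the paper's route (this is precisely its Lemma \ref{lemma-K}).

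The gap is the step you yourself flag as ``the main obstacle'': you assert but do not carry out the coordinate computation ($p_2p_2^{\#}=c_{02}/c_{01}$, normalization to the frame $(0,\infty,1)$, etc.) that identifies the second root with the orientation-reversed, renormalized triple, so the existence half of the proof is incomplete. The paper closes this gap without any coordinates: let $Q\in\PSLC$ be the $180^\circ$ rotation around the common orthogonal of $\alpha_0$ and $\alpha_1$, so $Q(\alpha_i)=-\alpha_i$ for $i=0,1$, and set $\wh{\alpha}_2=-Q(\alpha_2)$. Then $\dis(\alpha_i,\wh{\alpha}_2)=\dis(\alpha_i,-Q\alpha_2)=\dis(-\alpha_i,Q\alpha_2)=\dis(Q\alpha_i,Q\alpha_2)=\dis(\alpha_i,\alpha_2)$, so $\wh{\alpha}_2$ is a second solution; it differs from $\alpha_2$ because $\wh{\alpha}_2=\alpha_2$ would make $\alpha_2$ orthogonal to the axis of $Q$, contradicting no-common-orthogonal. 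Hence $\alpha'_2\in\{\alpha_2,\wh{\alpha}_2\}$, giving $T=\id$ in case (1) or $T=Q$ in case (2). This is the clean geometric replacement for your unverified quadratic-root identity; your ``flip orientations then renormalize'' involution is exactly conjugation by $Q$, and recognizing that makes the calculation unnecessary.
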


\begin{proof} 
Given $\alpha_i$ and $\alpha'_i$ satisfying the hypotheses of the proposition we can assume that $\alpha_i=\alpha'_i$ for $i=0,1$. Let $d_i=\dis(\alpha_i,\alpha_2)$, and let $T_i=T_{d_{i},\alpha_{i}}$ as in Lemma \ref{lemma-K}.
Then by Lemma \ref{lemma-K} for any $\beta$ for which $\dis(\alpha_i,\beta)=d_i$ we have $T_i(x)=y$ where $\partial{\beta}=(x,y)$.
Thus $(T_1^{-1} \circ T_{0})(x)=x$. Since $T_1 \ne T_0$ (because $\alpha_0 \ne \alpha_1$), 
we see that the equation $\dis(\alpha_i,\beta)=d_i$ (in $\beta$) has at most as many solutions as the equation $(T_1^{-1} \circ T_{0})(x)=x$, $x \in \partial{\Ha}$. Therefore $\dis(\alpha_i,\beta)=d_i$ has at most two solutions and it has at most one solution if $T_1^{-1} \circ T_{0}$ has a unique fixed point on $\partial{\Ha}$.
\vskip .1cm
On the other hand if we let $Q$ be the M\"obius transformation such that $Q(\alpha_i)=-\alpha_i$, for $i=0,1$ (such $Q$ exists since $\dis(\alpha_i,\alpha_j)\ne 0$ for $i\ne j$). Let $\wh{\alpha}_2=-Q(\alpha_2)$. Then $\dis(\alpha_i,\wh{\alpha}_2)=\dis(\alpha_i,\alpha_2)$ for $i=0,1$. Therefore $\wh{\alpha}_2 \ne \alpha_2$ since $\alpha_0,\alpha_1$ and $\alpha_2$ do not have a common orthogonal. We conclude that $\alpha'_2=\alpha_2$ or $\alpha'_2=\wh{\alpha}_2$.

\end{proof}

\begin{proposition}\label{proposition-P} Suppose that distinct geodesics $\alpha_0$ and $\alpha_1$ in $\Ha$ have a common endpoint $x \in \partial{\Ha}$, and let $\beta$ be another geodesic in $\Ha$ such that $x$ is not an endpoint of $\beta$. Set $E=\partial{\alpha_0} \cup \partial{\alpha_1} \cup \partial{\beta}$. Let $f:E \to \partial{\Ho}$ be such that $\dis(f(\alpha_i),f(\beta))=\dis(\alpha_i,\beta)$, $i=0,1$. Then there exists a unique M\"obius transformation $T$ such that $f=T$ on $E$.
\end{proposition}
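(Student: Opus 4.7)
The plan is to post-compose $f$ with a Möbius transformation to reduce to an identity on three points of $E$, and then invoke Lemma \ref{lemma-K} to pin down the remaining two points.

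First, write $\partial{\beta} = \{p,q\}$. Since $x \notin \partial{\beta}$, the three points $x, p, q \in \partial{\Ha} \subset \partial{\Ho}$ are distinct. Let $T_0 \in \PSLC$ be the unique Möbius transformation sending $f(x), f(p), f(q)$ to $x, p, q$ respectively, and set $g := T_0 \circ f$. Then $g$ is the identity on $\{x,p,q\}$, $g(\beta) = \beta$ as oriented geodesics, and $\dis(g(\alpha_i), \beta) = \dis(\alpha_i, \beta) =: d_i$ for $i = 0, 1$. Let $y_i$ denote the endpoint of $\alpha_i$ other than $x$. It then suffices to show $g(y_i) = y_i$ for $i = 0, 1$: once this is done, $g = \id$ on $E$, so $f = T_0^{-1}|_E$, and uniqueness of $T := T_0^{-1}$ is automatic since its values are prescribed on the three points $\{x, p, q\}$.

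For the key step, orient each $\alpha_i$ with $x$ as the first endpoint (so $\partial{\alpha_i} = (x, y_i)$ in that order), and orient $g(\alpha_i)$ compatibly with $g(x) = x$ as the first endpoint. Now apply Lemma \ref{lemma-K} with $\beta$ in the role of ``$\alpha$'' and $g(\alpha_i)$ in the role of ``$\beta$'': it produces a unique $s_i \in \C/2\pi i\Z$ (depending only on $d_i$) such that every oriented geodesic at unsigned distance $d_i$ from $\beta$ has endpoints of the form $(u, T_{s_i, \beta}(u))$ for some $u \in \partial{\Ho}$. Taking $u = x$ (which is a first endpoint of $g(\alpha_i)$ by construction) forces the second endpoint of $g(\alpha_i)$ to be $T_{s_i, \beta}(x)$. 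Applying the identical reasoning to $\alpha_i$ itself gives $y_i = T_{s_i, \beta}(x)$, whence $g(y_i) = y_i$ as desired.

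The main subtlety, and where I expect most of the care, is the orientation ambiguity in the unsigned complex distance $\dis$. Without fixing orientations, there are generically two \emph{unoriented} geodesics through $x$ at the same unsigned distance from $\beta$ (with respective second endpoints $T_{s_i, \beta}(x)$ and $T_{-s_i, \beta}(x)$), and the corresponding \emph{oriented} unsigned distances differ by $i\pi$. The shared endpoint $x$ of $\alpha_0$ and $\alpha_1$ --- which by construction is fixed by $g$ --- furnishes the canonical orientation convention (``$x$ first'') that matches on $\alpha_i$ and $g(\alpha_i)$, and this is precisely what makes the two oriented unsigned distances on the two sides of the hypothesis genuinely equal, ruling out the flipped alternative $g(y_i) = T_{-s_i, \beta}(x)$. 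This explains why the hypothesis that $\alpha_0$ and $\alpha_1$ share the endpoint $x$ is essential to the uniqueness conclusion.
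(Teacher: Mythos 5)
Your proof is correct and genuinely dualizes the paper's normalization, which makes it cleaner. The paper normalizes $f$ to be the identity on $\partial{\alpha_0} \cup \partial{\alpha_1} = \{x, y_0, y_1\}$ and must then recover $\beta$; it does so by intersecting the two one-parameter families supplied by Lemma~\ref{lemma-K} (translations along $\alpha_0$ and along $\alpha_1$, both of which fix $x$), and this forces a case split according to whether $\partial{\beta}$ meets $\partial{\alpha_0} \cup \partial{\alpha_1}$, precisely so that the distances fed to Lemma~\ref{lemma-K} are nonzero. You instead normalize so that $g = T_0 \circ f$ fixes $\{x\} \cup \partial{\beta}$ and recover each remaining endpoint $y_i$ by a single application of Lemma~\ref{lemma-K} to $\beta$: since $g(\alpha_i)$ has known first endpoint $x$ and known unsigned distance $d_i$ to $\beta$, its second endpoint is forced to be $T_{s_i,\beta}(x)$, which the same reasoning applied to $\alpha_i$ identifies as $y_i$. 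The one thing you should add is the degenerate case $d_i = 0$: Lemma~\ref{lemma-K} is stated only for $d \neq 0$, and $d_i = 0$ occurs exactly when $y_i \in \partial{\beta}$ (since $x \notin \partial{\beta}$) --- but then $g(y_i) = y_i$ holds automatically because $g$ already fixes $\partial{\beta}$, so nothing breaks; with your normalization this case is even easier than in the paper, where it is the source of the explicit case analysis. Your closing remarks about orientation are accurate in spirit but are not really an extra step: Lemma~\ref{lemma-K} is already a statement about oriented geodesics, and orienting both $\alpha_i$ and $g(\alpha_i)$ with $x$ first is simply what makes the lemma applicable.
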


\begin{proof} We can assume that the restriction of $f$ to $\partial{\alpha_0} \cup \partial{\alpha_1}$ is the identity. If $\partial{\beta} \subset 
\partial{\alpha_0} \cup \partial{\alpha_1}$, then $|E|=3$ and we are finished. If $\partial{\beta} \cap \partial{\alpha_0}=\{y\}$ for some $y$, then we can write $\partial{\beta}=(y,z)$ (or  $(z,y)$),  and then $\partial{f(\beta)}=(y,z')$ (or respectively $(z',y)$). But then $z=z'$  because  $\dis(f(\alpha_1),f(\beta))=\dis(\alpha_1,\beta)$
(here we use Lemma \ref{lemma-K}). Likewise if $\partial{\beta} \cap \partial{\alpha_1} \ne \emptyset$.
\vskip .1cm
If $\partial{\beta} \cap (\partial{\alpha_0} \cup \partial{\alpha_1}) =\emptyset$, then by Lemma \ref {lemma-K}
$\partial{\beta}=(y,z)$ and $\partial{f(\beta)}=(y',z')$ and
$z=T_0(y)=T_1(y)$, $z'=T_0(y')=T_1(y')$, where $T_i$ translates along $\alpha_i$, and then $T^{-1}_0 \circ T_1$ has $x$ as one of its fixed points, so the other must be $y$, so $y'=y$, so $f(\beta)=\beta$.
\end{proof}

Now we are ready to prove Lemma \ref{lemma-G}.

\begin{proof} First suppose that $\lambda$ has two distinct leaves $\alpha,\beta$ with a common endpoint $x$. Then there is a unique $T \in \PSLC$ for which $T=f$ on $\partial{\alpha}\cup \partial{\beta}$. By Proposition \ref{proposition-P} we have 
$T(\gamma)=f(\gamma)$, whenever $\gamma \in \lambda$ and $x$ does not belong to $\partial{\gamma}$. Because $\lambda$ is nonelementary we can find at least one such $\gamma$. 
\vskip .1cm
Now suppose $\delta \in \lambda$ and $x \in \partial{\delta}$. We want to show $T(\delta)=f(\delta)$. We can find $T' \in \PSLC$ such that $T'=f$ on $\partial{\alpha} \cup \partial{\delta}$. By Proposition \ref{proposition-P}, $T'(\gamma)=f(\gamma)$, so $T$ and $T'$ agree on $\partial{\alpha} \cup \partial{\delta}$, so $T=T'$, so $f(\delta)=T(\delta)$, and we are done.
\vskip .1cm
Now suppose that any two distinct leaves of $\lambda$ have disjoint closures. Then we can find three leaves $\alpha_i$, $i=0,1,2$, with no common orthogonal (because $\lambda$ is nonelementary). By Proposition \ref{proposition-A} we can find a unique $T \in \PSLC$ such that $T=f$ on $E=\bigcup_{i=0}^{2} \partial{\alpha_i}$, or $T=f \circ \iota_{\lambda}$ on $E$. In the latter case  we can replace $f$ with $f \circ \iota_{\lambda}$. In either case we can assume that $T$ is not the identity.
\vskip .1cm
Now given any $\beta \in \lambda$, we want to show that $f(\beta)=\beta$. For $i=1,2$ let $Q_i$ be the 180 degree rotation around $O_i$, the common orthogonal to $\alpha_0$ and $\alpha_i$. If $f(\beta)\ne \beta$, then $f(\beta)=-Q_i(\beta)$ for $i=1,2$, so $Q^{-1}_0 \circ Q_1$ fixes the endpoints of $\beta$. But $Q^{-1}_0 \circ Q_1$ fixes the endpoints of $\alpha_0$, and $\beta \ne \alpha_0$, so this is impossible. So $f(\beta)=\beta$ for every $\beta$, and we are finished.

\end{proof}

We observe that Lemma \ref{lemma-G} holds even if we do not require the lamination to be closed.

\begin{definition} Let $\lambda$ be a geodesic lamination on $\Ha$. An effective radius for $\lambda$ is a number $M>0$ 
such that every open hyperbolic disc of radius $M$ in $\Ha$ intersects $\lambda$ in a (not necessarily closed) nonelementary sublamination.
\end{definition}

We observe that the condition that the intersection of $\lambda$ and the open disc centred at $z$   of radius $M$ is nonelementary is open in both $z$ and  $\lambda$. The following proposition follows easily from this observation.

\begin{proposition}\label{prop-new-1} Let $\Lambda$ be a family of geodesic laminations on $\Ha$ such that 
\begin{enumerate}

\item \label{group invariant} if $\lambda \in \Lambda$, and $g \in \PSLR$, then $g(\lambda) \in \Lambda$,
\item \label{Hausdorff closed} $\Lambda$ is closed (and hence compact) in the Hausdorff topology on the space of geodesic  laminations modulo $\PSLR$,   
\item if $\lambda \in \Lambda$ then  $\lambda$ is nonelementary.
\end{enumerate}

Then we can find $M>0$ such that $M$ is an effective radius for every $\lambda \in \Lambda$.

\end{proposition}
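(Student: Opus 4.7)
The plan is to combine the openness-in-$\lambda$ observation recalled just before the proposition with the $\PSLR$-invariance and Hausdorff-compactness of $\Lambda$. Write $D(z,M) \subset \Ha$ for the open hyperbolic disc of radius $M$ centred at $z$.

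First I would reduce to a single basepoint. Fix $z_0 \in \Ha$. Given $z \in \Ha$ and $\lambda \in \Lambda$, pick $g \in \PSLR$ with $g(z_0) = z$; then $g$ sends $D(z_0, M)$ isometrically onto $D(z, M)$ and, by condition (\ref{group invariant}), $g^{-1}(\lambda) \in \Lambda$. Since Property $T$ is $\PSLR$-invariant, the condition ``$D(z, M) \cap \lambda$ has Property $T$'' is equivalent to ``$D(z_0, M) \cap g^{-1}(\lambda)$ has Property $T$''. Hence it suffices to find a single $M > 0$ such that $D(z_0, M) \cap \lambda$ has Property $T$ for every $\lambda \in \Lambda$.

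Next, for each $\lambda \in \Lambda$ I would produce an $M(\lambda) > 0$ that works at $z_0$ for that specific $\lambda$. By the remark following the definition of Property $T$, there is a three-leaf sublamination $\lambda' \subset \lambda$ that still has Property $T$. Choosing $M(\lambda)$ large enough that $D(z_0, M(\lambda))$ meets all three leaves of $\lambda'$, we have $\lambda' \subset D(z_0, M(\lambda)) \cap \lambda$. The two conditions defining Property $T$ are universal statements over leaves, so their failure is preserved under enlarging the sublamination within $\lambda$; consequently $D(z_0, M(\lambda)) \cap \lambda$ has Property $T$. By the openness observation, the set
$$
U(\lambda) = \{\mu \in \Lambda : D(z_0, M(\lambda)) \cap \mu \text{ has Property } T\}
$$
is then an open neighbourhood of $\lambda$ in $\Lambda$.

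Finally, condition (\ref{Hausdorff closed}) makes $\Lambda$ compact, so finitely many $U(\lambda_1), \dots, U(\lambda_N)$ cover $\Lambda$. Setting $M = \max_i M(\lambda_i)$ and invoking the same monotonicity once more (enlarging the disc only enlarges the intersected sublamination, and Property $T$ is preserved under such enlargement), we obtain that $D(z_0, M) \cap \lambda$ has Property $T$ for every $\lambda \in \Lambda$, which finishes the argument. I do not foresee any substantial obstacle: the only delicate ingredient, openness of Property $T$ with respect to the Hausdorff topology on $\lambda$, has already been isolated by the paper as the observation preceding the proposition, and the monotonicity under adding leaves is immediate from the two-condition definition.
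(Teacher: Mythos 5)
Your proof is correct and takes essentially the approach the paper intends: the paper states the proposition ``follows easily'' from the openness observation, and your argument is the natural filling-in of that sketch. Reducing to a single basepoint $z_0$ via the $\PSLR$-invariance in hypothesis (\ref{group invariant}), producing a per-lamination radius $M(\lambda)$ from the three-leaf sublamination with Property $T$ together with the monotonicity of Property $T$ under enlargement, and then extracting a uniform $M$ from the compactness in hypothesis (\ref{Hausdorff closed}) and the openness of the condition in $\lambda$, is exactly the content of the phrase ``follows easily from this observation.''
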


We call such a family a closed invariant family of non-elementary laminations. 
For any $R_1>0$ we let $\Lambda(R_1)$ be the closure of $\bigcup_{R \ge R_1} \Col_0(R)$ under properties \ref{group invariant} and \ref{Hausdorff closed} in Proposition \ref{prop-new-1}. We observe that taking the Hausdorff closure just adds the translates of all the $\Col_{0,t}$ under $\PSLR$, where $\Col_{0,t}$ was defined in the previous subsection. Hence $\Lambda(R_1)$ is a closed invariant family of nonelementary laminations. 

We say that a lamination $\lambda$ is \emph{unflippable} if it has two distinct leaves with a common endpoint, or if the involution $\iota_\lambda$ is not continuous. The latter occurs if and only if there is a point of $\partial \lambda$ that is the limit of a sequence leaves of $\lambda$ whose diameter go to zero (or $\lambda$ has two distinct leaves with a common endpoint). This will always occur when $\lambda$ is invariant by a nonelementary Fuchsian group $G$, and $\lambda$ has a recurrent (or closed) leaf in $\Ha/G$. In particular, a nonempty lamination $\lambda$ that is invariant under a cocompact group is unflippable (and nonelementary). We conclude that all of the laminations in $\Lambda(R_1)$ are unflippable. 

%

We can now prove that a quasisymmetric map that locally preserves complex distances on an unflippable lamination is M\"obius.

\begin{proposition}\label{proposition-C} Suppose that $\lambda$ is an unflippable nonelementary lamination. Suppose that $M$ is an effective radius for $\lambda$, and $f:\partial{\Ha} \to \partial{\Ho}$ is a continuous embedding such that $\dis(f(\alpha),f(\beta)) =\dis(\alpha,\beta)$, for all $\alpha,\beta \in \lambda$ such that $d(\alpha,\beta) \le 3M$. Then $f$ is the restriction of a M\"obius transformation.
\end{proposition}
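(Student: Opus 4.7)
The plan is the following. For each $z \in \Ha$ let $D_z$ denote the open hyperbolic disk of radius $M$ centred at $z$ and set $\lambda_z = \{\alpha \in \lambda : \alpha \cap D_z \ne \emptyset\}$. Because $M$ is an effective radius, $\lambda_z$ has Property T, and any two leaves of $\lambda_z$ lie within distance $2M \le 3M$, so the hypothesis lets us apply Lemma \ref{lemma-G} to $\lambda_z$, producing a unique $T_z \in \PSLC$ with either (i) $T_z = f$ on $\partial \lambda_z$, or (ii) $T_z = f \circ \iota_{\lambda_z}$ on $\partial \lambda_z$. The first step will be to show that $T_z$ and the selected alternative are independent of $z$. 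Given $z, z' \in \Ha$ with $d(z,z') \le M/2$, consider the sub-lamination $\lambda_{z,z'}$ of leaves meeting the convex hull of $D_z \cup D_{z'}$; this hull has diameter at most $2M + d(z,z') \le 5M/2 \le 3M$, so the distance-preservation hypothesis still applies pairwise, and Property T is inherited from $\lambda_z \subset \lambda_{z,z'}$. Applying Lemma \ref{lemma-G} to $\lambda_{z,z'}$ and restricting to $\partial \lambda_z$ (using $\iota_{\lambda_{z,z'}}|_{\partial \lambda_z} = \iota_{\lambda_z}$), the uniqueness clause forces the resulting M\"obius to equal both $T_z$ and $T_{z'}$ with the same alternative, so $T_z = T_{z'}$; by connectedness of $\Ha$ one obtains a single $T \in \PSLC$ and a single global alternative.

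The hard part will be ruling out alternative (ii). To do this, pick three leaves $\alpha_0, \alpha_1, \alpha_2 \in \lambda$ that are pairwise ``parallel'', meaning no two have nested endpoints on $\partial \Ha$; such triples exist in abundance in the cocompact laminations under consideration (for example the three lifts of the cuffs bounding a single pants region in $\Col_0(R)$, and analogous configurations for the laminations in $\Lambda(R_1)$). Label the six endpoints $P_1, \dots, P_6$ in cyclic order on $\partial \Ha$; parallelism forces the pairing $\{P_1,P_2\}, \{P_3,P_4\}, \{P_5,P_6\}$, so $\iota_\lambda$ induces the permutation $\sigma = (1\,2)(3\,4)(5\,6)$ on these labels. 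In alternative (ii) one has $f(P_i) = T(P_{\sigma(i)})$. Because $f$ agrees with $T \circ \iota_\lambda$ on the dense set $\partial \lambda$, one deduces $f(\partial \Ha) = T(\partial \Ha)$ --- a Möbius circle --- and that $f : \partial \Ha \to T(\partial \Ha)$ is a homeomorphism between circles, so it must preserve or reverse cyclic order on every finite subset. But the cyclic positions of $f(P_1), \dots, f(P_6)$ on $T(\partial \Ha)$ are recorded by the sequence $\sigma(1), \dots, \sigma(6) = 2,1,4,3,6,5$, and direct inspection shows this sequence is not a cyclic shift of either $1,2,3,4,5,6$ or $6,5,4,3,2,1$. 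That contradiction eliminates (ii).

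Alternative (i) must therefore hold, giving $f = T$ on $\partial \lambda = \bigcup_{z \in \Ha} \partial \lambda_z$. Because the automorphism group of $\lambda$ is cocompact, its limit set is all of $\partial \Ha$, so the non-empty invariant subset $\partial \lambda$ is dense; continuity of both $f$ and $T$ then upgrades $f = T$ from $\partial \lambda$ to all of $\partial \Ha$. The principal obstacle is the combinatorial step just described: one must show that the permutation induced on the six endpoints of three pairwise-parallel leaves by the endpoint-swap involution $\iota_\lambda$ does not lie in the dihedral group of the cyclic hexagon, which is what breaks the cyclic-order rigidity enjoyed by the continuous circle embedding $f$ and thereby kills the ``ghost alternative'' (ii).
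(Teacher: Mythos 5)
Your first half --- defining $T_z$ via Lemma~\ref{lemma-G} applied to $\lambda_z$, then using overlapping discs to show that $T_z$ and the selected alternative are independent of $z$ --- is essentially the paper's own argument; the only difference is your choice to work with leaves meeting the convex hull of $D_z\cup D_{z'}$ (with $d(z,z')\le M/2$) rather than with $\lambda_z\cup\lambda_{z'}$ for $d(z,z')\le M$ as in the paper, which is cosmetic.

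Where you genuinely depart is in ruling out alternative (ii). The paper's route is dynamical: if $T=f\circ\iota_{\lambda}$ then $\iota_{\lambda}=f^{-1}\circ T$ is continuous on $\partial{\lambda}$, and since $\operatorname{Aut}(\lambda)$ is non-elementary one can find, for a dense set of $z\in\partial{\lambda}$, leaves whose two endpoints both converge to $z$; continuity then forces $\iota_{\lambda}(z)=z$ everywhere, which contradicts that $\iota_{\lambda}$ interchanges the two distinct endpoints of each leaf. Your route is combinatorial: a hexagonal triple supplies six endpoints, cyclically labelled $P_1,\dots,P_6$, which $\iota_{\lambda}$ pairs adjacently, and the induced sequence $(2,1,4,3,6,5)$ is not a cyclic shift of $(1,\dots,6)$ or of $(6,\dots,1)$; since (as you correctly deduce from density of $\partial{\lambda}$) $f$ is a homeomorphism of $\partial{\Ha}$ onto the M\"obius circle $T(\partial{\Ha})$ and must respect cyclic order, this is a contradiction. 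The permutation check is correct and this is a legitimate alternative to the paper's argument.

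The genuine gap is that you never establish that a hexagonal triple exists; you only cite $\Col_0(R)$ and ``analogous configurations'' in $\Lambda(R_1)$. That does cover the places the paper actually invokes Proposition~\ref{proposition-C} (for $\Col_{0,t}$ the leaves share endpoints, so alternative (ii) cannot even arise), but the proposition is stated for an arbitrary cocompactly-invariant lamination with an effective radius. What you would need is: whenever all leaves of $\lambda$ have pairwise disjoint closures (the only situation in which alternative (ii) can occur), some complementary region of $\lambda$ has at least three boundary leaves, and hence yields a hexagonal triple. This is in fact true --- a complementary region with only one or two boundary leaves would be a half-plane or a strip, whose quotient by its stabilizer in $\operatorname{Aut}(\lambda)$ has infinite hyperbolic area and so cannot embed in the compact surface $\Ha/\operatorname{Aut}(\lambda)$ --- but that step must be supplied, and it is exactly the extra structural input that the paper's argument, using continuity of $\iota_{\lambda}$ and shrinking leaves, is designed to avoid.
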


\begin{proof} For $z \in \Ha$ let $D_z$ be the open disc of radius $M$ centred at $z$, and $\lambda_z$ be the leaves of $\lambda$ that meet $D_z$.
Because $M$ is an effective radius, $\lambda_z$ is nonelementary. Therefore there is a unique $T_z \in \PSLC$ such that either $T_z=f$ on $\partial{\lambda_z}$ or $T_z=f \circ \iota_{\lambda}$ on  $\partial{\lambda_z}$. Now if $d(z,z') \le M$, then $\dis(f(\alpha),f(\beta)) =\dis(\alpha,\beta)$ for all $\alpha,\beta \in \lambda_z \cup \lambda_{z'}$, and $\lambda_z \cup \lambda_{z'}$ is nonelementary, so $T_z=T_{z'}$. We conclude that there is one $T \in \PSLC$ such that $T=f$ or $T=f \circ \iota_{\lambda}$ on all of $\partial{\lambda}$.
But in the latter case, $\iota_\lambda$ would be continuous, which is impossible since $\lambda$ is unflippable. 
\end{proof}

We now characterize the sequences of $K$-quasiconformal maps whose dilatations do not go to 1.
\begin{lemma}\label{lemma-new-1} Let $K_1>K>1$. Suppose that for $m \in \N$, $f_m:\partial{\Ha} \to \partial{\Ho}$ is $K_1$-quasisymmetric but not 
$K$-quasisymmetric. Then, after passing to a subsequence if necessary, we have that there exist  $h_m, q_m \in \PSLC$ such that 
$q_m \circ f_m \circ h_m \to f_{\infty} : \partial{\Ha} \to \partial{\Ho}$ is a $K_1$-quasisymmetric map and
$f_{\infty}$ is not a restriction of a M\"obius transformation on $\partial{\Ha}$.
\end{lemma}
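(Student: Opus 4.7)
The plan is a standard normal families / compactness argument: use the failure of $K$-quasisymmetry to pin down a bad configuration of four points for each $m$, renormalize by Möbius transformations so that three of them (and their images) sit at fixed positions, and then extract a quasiconformal limit that inherits the bad configuration.

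First, since each $f_m$ is not $K$-quasisymmetric, by the definition of $K$-quasisymmetry I can choose four points
$z_m^1,z_m^2,z_m^3,z_m^4\in\partial{\Ha}$ with cross-ratio $-1$ such that the cross-ratio
$\kappa_m$ of $f_m(z_m^1),\dots,f_m(z_m^4)\in\partial{\Ho}$ is at hyperbolic distance strictly greater than $\log K$ from $-1$ in $\C\setminus\{0,1,\infty\}$. Pick three fixed points $w^1,w^2,w^3\in\partial{\Ha}$ (for example $0,1,\infty$) and let $w^4\in\partial{\Ha}$ be the unique point making $(w^1,w^2,w^3,w^4)$ have cross-ratio $-1$. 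By triple transitivity of $\PSLR$ on $\partial{\Ha}$, there exists a unique $h_m\in\PSLR$ with $h_m(w^i)=z_m^i$ for $i=1,2,3$; since cross-ratio is a Möbius invariant, automatically $h_m(w^4)=z_m^4$. Similarly, by triple transitivity of $\PSLC$ on $\partial{\Ho}$, I can choose $q_m\in\PSLC$ sending $f_m(z_m^i)$ to three fixed reference points $p^1,p^2,p^3\in\partial{\Ho}$ for $i=1,2,3$.

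Set $g_m=q_m\circ f_m\circ h_m$. Pre- and post-composition with Möbius transformations preserves the $K_1$-quasisymmetry constant (cross-ratios are preserved), so each $g_m$ is $K_1$-quasisymmetric, and hence (by the remark following the definition of $K$-quasisymmetric) extends to a $K_1'$-quasiconformal homeomorphism of $\partial{\Ho}$ for some $K_1'>1$ depending only on $K_1$. Moreover $g_m(w^i)=p^i$ for $i=1,2,3$. The standard compactness theorem for $K_1'$-quasiconformal self-homeomorphisms of $\partial{\Ho}$ with three specified values says that the family $\{g_m\}$ is precompact in the uniform topology and that any sub-sequential limit is again a $K_1'$-quasiconformal homeomorphism (the three fixed values prevent the degeneration to a constant map). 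After passing to a subsequence I therefore obtain a uniform limit $g_m\to f_\infty$ where $f_\infty$ is a $K_1'$-quasiconformal self-homeomorphism of $\partial{\Ho}$. Restricting to $\partial{\Ha}$ yields the desired $K_1$-quasisymmetric limit, since the $K_1$-quasisymmetry condition is closed under uniform convergence: for any four points with cross-ratio $-1$ on $\partial{\Ha}$, their images under $g_m$ have cross-ratio within $\log K_1$ of $-1$, and passing to the limit the same bound holds for $f_\infty$.

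It remains to check that $f_\infty$ is not the restriction of a Möbius transformation. By construction $g_m(w^i)=q_m(f_m(z_m^i))$ for each $i$, and since $q_m$ is Möbius, the cross-ratio of $(g_m(w^1),\dots,g_m(w^4))$ equals $\kappa_m$, which lies outside the open $\log K$-ball around $-1$ in the hyperbolic metric on $\C\setminus\{0,1,\infty\}$. The cross-ratio map is continuous on quadruples of distinct points in $\partial{\Ho}$, and the four values $f_\infty(w^i)$ are distinct (they are a uniform limit of quadruples whose first three entries are $p^1,p^2,p^3$ and whose cross-ratio stays bounded away from $-1$, hence from being degenerate). Passing to the limit, the cross-ratio of $(f_\infty(w^1),\dots,f_\infty(w^4))$ is still at distance at least $\log K>0$ from $-1$. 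If $f_\infty$ were Möbius it would preserve the cross-ratio $-1$ of $(w^1,\dots,w^4)$, contradicting this. Hence $f_\infty$ is not a restriction of a Möbius transformation.

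The main obstacle in the argument is verifying that the limit is a bona fide homeomorphism rather than a degenerate map; this is resolved by the three-point normalization together with the classical compactness of the family of $K_1'$-quasiconformal maps of $\partial{\Ho}$ fixing three points, so this step is essentially routine once the normalization is set up correctly.
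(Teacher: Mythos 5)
Your argument is correct and follows the same strategy as the paper: pick a bad quadruple for each $f_m$, normalize by Möbius maps on both sides so that three points (and their images) are pinned, invoke the normal family property of normalized $K_1$-quasiconformal maps to extract a uniform limit, and observe that the persisting distorted cross-ratio prevents the limit from being Möbius. One small wording slip: you justify distinctness of the $f_\infty(w^i)$ by saying the cross-ratio "stays bounded away from $-1$, hence from being degenerate"; what actually keeps the cross-ratio off $\{0,1,\infty\}$ is the $K_1$-quasisymmetry bound (distance $\le \log K_1$ from $-1$), or more simply the fact that $f_\infty$ is a homeomorphism — but this does not affect the validity of the proof.
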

 
\begin{proof} Fix $a,b,c,d \in \partial{\Ha}$ such that the cross ratio of these four points is equal to 1.  Since $f_m$ is not K-quasisymmetric 
there exist points $a_m,b_m,c_m,d_m \in \partial{\Ha}$  whose cross ratio is equal to one and such that the cross ratio of the points
$f_m(a_m),f_m(b_m),f_m(c_m),f_m(d_m) \in \partial{\Ho}$ stays outside some closed disc $U$ centred at the point $1 \in \C$ for every $m$. 
We let $h_m$ be the M\"obius transformation that maps $a,b,c,d$ to  $a_m,b_m,c_m,d_m$. We then choose $q_m \in \PSLC$ 
such that $q_m \circ f_m \circ h_m$ fixes the points $a,b,c$. Then for each $m$ the map $q_m \circ f_m \circ h_m$ is $K_1$-quasisymmetric and it 
fixes the points $a,b,c$. 
\vskip .1cm
The standard normal family argument states that given $L>1$, a sequence of $L$-quasisymmetric maps that all fix the same three distinct points, converges uniformly to a $L$-quasisymmetric map (after passing onto a subsequence if necessary). Therefore, we have $q_m \circ f_m \circ h_m \to f_{\infty}$. 
Moreover the cross ratio of the points $f_{\infty}(a),f_{\infty}(b), f_{\infty}(c),  f_{\infty}(d)$ lies outside the disc $U$ so we conclude that $f_{\infty}$ is not a M\"obius transformation on $\partial{\Ha}$.

\end{proof}

We can now conclude the constant of quasisymmetry for $f$ is  close to 1 when $f$ changes the complex distance of neighbouring 
geodesics  a sufficiently small amount.

\begin{theorem}\label{theorem-distance-qc}
Let $\Lambda$ be a closed invariant family of unflippable nonelementary laminations, and let $K_1 \ge K > 1$. Then there exists $\delta=\delta(K_1,K,\Lambda) > 0$ and $T=T(\Lambda)$  such that the following holds. If $\lambda \in \Lambda$ and $f:\partial{\Ha} \to \partial{\Ho}$ is a $K_1$-quasisymmetric map, and
$$
|\dis(f(\alpha),f(\beta))-\dis(\alpha,\beta)| \le \delta,
$$
\noindent
for all $\alpha,\beta \in \lambda$ such that $d(\alpha,\beta) \le T$. Then $f$ is $K$-quasisymmetric.
\end{theorem}

\begin{proof}
By Proposition \ref{prop-new-1}, we can find $M=M(\Lambda)>0$ such that $M$ is an effective radius for every $\lambda \in \Lambda$. We let $T = 3M$. 
Suppose that there is no good $\delta$. Then we can find $\lambda_m \in \Lambda$, $f_m$ (for $m \in \N$) such that 
$$
|\dis(f(\alpha),f(\beta))-\dis(\alpha,\beta)| \to 0, m \to \infty,
$$
\noindent
uniformly for all $\alpha,\beta \in \lambda_m$ for which $d(\alpha,\beta) \le T$, but for which $f$ is not $K$-quasisymmetric. Passing to a subsequence and applying Lemma \ref{lemma-new-1}, we obtain $\lambda_m \to \lambda_{\infty} \in \Lambda$, and 
$f_m \to f_{\infty}:\partial{\Ha} \to \partial{\Ho}$ such that $f_{\infty}$ is a $K_1$-quasisymmetric map that  is not a M\"obius transformation on $\partial{\Ha}$. Moreover $\dis(f_{\infty}(\alpha),f_{\infty}(\beta))=\dis(\alpha,\beta)$ for all 
$\alpha,\beta \in \lambda_{\infty}$ with $d(\alpha,\beta) \le T=3M$. Then by Proposition \ref{proposition-C}, $f_{\infty}$ is a M\"obius transformation, a contradiction.
\end{proof}

We can now derive a corollary, which is our object for this section:
\begin{theorem}\label{theorem-new-1} Let $K_1 \ge K>1$, and let $R_1 = 10$. There exists $\delta_1=\delta_1(K,K_1)>0$ and a universal constant $T_1$  such that the following holds. Suppose that $R \ge R_1$, and $f:\partial{\Ha} \to \partial{\Ho}$ is a $K_1$-quasisymmetric map, and
$$
|\dis(f(\alpha),f(\beta))-\dis(\alpha,\beta)| \le \delta_1,
$$
\noindent
for all $\alpha,\beta \in \Col_0(R)$ such that $d(\alpha,\beta) \le T_1$. Then $f$ is $K$-quasisymmetric.
\end{theorem}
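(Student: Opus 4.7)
My strategy is a contradiction-and-compactness argument. Suppose no such $\delta_1$ works; then there exist sequences $R_m \ge R_1$, $\delta_m \to 0$, and $K_1$-quasisymmetric maps $f_m \from \partial\Ha \to \partial\Ho$ failing to be $K$-quasisymmetric yet satisfying the hypothesis with $\delta_m$ in place of $\delta_1$. The plan is to extract a subsequential limit map $f_\infty$ that is $K_1$-quasisymmetric but not M\"obius, yet (by exploiting $\delta_m \to 0$) preserves the unsigned complex distance between nearby leaves of a limit lamination $\lambda_\infty \in \Lambda(R_1)$; Proposition \ref{proposition-C} will then force $f_\infty$ to be M\"obius, a contradiction.

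To extract the two limits simultaneously, I pre-compose each $f_m$ with $h_m \in \PSLR$ (forced into $\PSLR$ since it must preserve $\partial\Ha$ setwise, as in the proof of Lemma \ref{lemma-new-1}) and post-compose with $q_m \in \PSLC$, choosing them jointly so that: (i) following Lemma \ref{lemma-new-1}, the pulled-back cross-ratio-$(-1)$ witness to failure of $K$-quasisymmetry is standardized, $\tilde f_m := q_m \circ f_m \circ h_m$ fixes three prescribed points, and normal-family compactness yields a subsequential limit $\tilde f_m \to f_\infty$ that is $K_1$-quasisymmetric and not M\"obius; and (ii) the laminations $\lambda_m := h_m^{-1}(\Col_0(R_m)) \in \Lambda(R_1)$ do not escape to infinity under the residual $\PSLR$ action. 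For (ii) I invoke the moduli-space compactness established around Lemma \ref{short}: the orbifolds $\Orb(R)$, $R \ge R_1$, lie in a compact subset of moduli space, so after a further $\PSLR$ normalization I obtain subsequential Hausdorff convergence $\lambda_m \to \lambda_\infty \in \Lambda(R_1)$, and conjugates of the cocompact groups $G(R_m)$ converge to a cocompact subgroup of $\PSLR$ preserving $\lambda_\infty$. Coordinating these two normalizations on a common subsequence is the main technical obstacle.

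Next I transfer the distance estimate to $f_\infty$. Fix $\alpha, \beta \in \lambda_\infty$ with $d(\alpha,\beta) < T_1 = 3M(R_1)$, and pick $\alpha_m, \beta_m \in \lambda_m$ with $\alpha_m \to \alpha$, $\beta_m \to \beta$; then $d(\alpha_m,\beta_m) \le T_1$ for large $m$, so $d(h_m\alpha_m, h_m\beta_m) \le T_1$ since $h_m$ is a hyperbolic isometry. The hypothesis applied to $h_m\alpha_m, h_m\beta_m \in \Col_0(R_m)$ gives
$$
|\dis(f_m h_m \alpha_m, f_m h_m \beta_m) - \dis(h_m \alpha_m, h_m \beta_m)| \le \delta_m,
$$
and conjugating by the isometries $h_m$ and $q_m$ (which preserve unsigned complex distance) rewrites this as
$$
|\dis(\tilde f_m \alpha_m, \tilde f_m \beta_m) - \dis(\alpha_m, \beta_m)| \le \delta_m.
$$
Letting $m \to \infty$, continuity of $\dis$ together with $\tilde f_m \to f_\infty$ yields $\dis(f_\infty\alpha, f_\infty\beta) = \dis(\alpha, \beta)$; continuity of $\dis$ in its arguments extends this to $d(\alpha,\beta) \le T_1$.

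Finally, $\lambda_\infty \in \Lambda(R_1)$ has effective radius $M(R_1)$ by Proposition \ref{our laminations} and is invariant under a cocompact group by the previous paragraph, while $f_\infty$ is an embedding since it is $K_1$-quasisymmetric. Proposition \ref{proposition-C} applied with lamination $\lambda_\infty$ and effective radius $M(R_1)$ therefore forces $f_\infty$ to be the restriction of a M\"obius transformation, contradicting the conclusion of Lemma \ref{lemma-new-1} and completing the proof.
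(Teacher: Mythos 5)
Your proof is essentially the paper's argument: assume no such $\delta_1$ exists, normalize via Lemma \ref{lemma-new-1} to extract a non-M\"obius $K_1$-quasisymmetric limit $f_\infty$, pass to a Hausdorff limit $\lambda_\infty\in\Lambda(R_1)$ of the renormalized laminations, transfer the distance-preservation to $f_\infty$, and invoke Proposition \ref{proposition-C} for the contradiction. You are right to observe that it is $h_m^{-1}(\Col_0(R_m))$ rather than $\Col_0(R_m)$ itself that converges (a point the paper elides), but the ``coordination'' you flag as the main obstacle is in fact routine: once $h_m$ is fixed by Lemma \ref{lemma-new-1}, $h_m^{-1}(\Col_0(R_m))$ lies in the compact $\PSLR$-invariant set $\Lambda(R_1)$, so a further subsequence suffices — no joint choice of $h_m,q_m$ is needed.
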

This follows immediately from Theorem \ref{theorem-distance-qc}, because $\Lambda(R_1)$ is a closed invariant family of unflippable noninvariant laminations. Observe that $T_1=3M_1$ is a universal constant, where $M_1$ is the effective radius of every lamination in $\Lambda(R_1)$.

\subsection{Proof of Theorem \ref{geometry-1} }  

In this section we will verify Assumption \ref{ass} holds when the quasisymmetry constant for $f_{\tau}$ is close to 1. This will permit us, thanks to Lemma \ref{derivative-3}, to verify the hypotheses of Theorem \ref{theorem-new-1} and thereby improve the quasisymmetry constant for $f_{\tau}$.
We thus obtain an inductive argument for Theorem \ref{geometry-1}.
\vskip .1cm
This lemma is an abstraction of its corollary, Corollary \ref{cor-1} where $A,B,C$ will be $C_0(0),C_i(0),C_{n+1}(0)$.

\begin{lemma}\label{ll-1} For all $\delta_2,T_1>0$ we can find $K>1$ such that if

\begin{enumerate}
\item A,B,C are oriented geodesics in $\Ha$, $d(A,C)>0$, and $B$ separates $A$ and $C$, 
\item $d(A,C) \le T_1$,
\item O is the common orthogonal for $A$ and $C$,
\item $x=A \cap O$, $y=B \cap O$,
\item $f:\partial{\Ha} \to \partial{\Ho}$ is $K$-quasisymmetric,
\item $\partial{A'}=f(\partial{A})$,  $\partial{B'}=f(\partial{B})$, and  $\partial{C'}=f(\partial{C})$ 
(taking into account the order of the endpoints), 
\item $O'$ is the common orthogonal to $A'$ and $C'$, and $x'=A' \cap O'$,
\item $N$ is the common orthogonal to $O'$ and $B'$, and $y'=N \cap O'$,
\end{enumerate}
then $d(O',B') \le \delta_2$, and $|\dis_{O'}(x',y')-\dis(x,y)| \le \delta_2$.

\end{lemma}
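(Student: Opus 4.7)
The plan is to prove this by a compactness and contradiction argument. Suppose the conclusion fails for some fixed $\delta_2, T_1 > 0$: then there exist sequences $K_n \to 1$, configurations $(A_n, B_n, C_n)$ in $\Ha$ satisfying hypotheses $(1)$--$(4)$, and $K_n$-quasisymmetric maps $f_n\from \partial{\Ha} \to \partial{\Ho}$ for which either $d(O'_n, B'_n) > \delta_2$ or $|\dis_{O'_n}(x'_n, y'_n) - \dis_O(x_n, y_n)| > \delta_2$. The strategy is to use the $\PSLR$ action on the source and the $\PSLC$ action on the target to place the configurations and the maps into a normal family, extract a subsequential limit, and argue that the limit $f_\infty$ must be the restriction of a M\"obius transformation, which forces the desired estimates to hold in the limit and yields a contradiction.

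First I would normalize by composing with $\PSLR$ on $\Ha$ so that $A_n = A_0$, $O_n = O_0$, and $x_n = x_0$ are independent of $n$; then $C_n$ is the geodesic orthogonal to $O_0$ at hyperbolic distance $d_n := d(A_n, C_n) \in (0, T_1]$ from $x_0$ in a chosen direction, and the intersection $y_n = B_n \cap O_0$ lies on the segment of $O_0$ of length $d_n$ between $x_n$ and $z_n := C_n \cap O_0$. The geodesic $B_n$ is then determined by the point $y_n$ and its angle $\theta_n \in (0, \pi)$ with $O_0$, both of which lie in a compact parameter space. On the target side, I would normalize by composing each $f_n$ with an element of $\PSLC$ so that $f_n$ fixes three prescribed points of $\partial{\Ha}$, viewed as a subset of $\partial{\Ho}$.

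After passing to subsequences, $(d_n, y_n, \theta_n)$ converges in the compact closure of the parameter space, and the normalized maps $f_n$ converge uniformly on $\partial{\Ha}$ to some $f_\infty$ by the standard normal family argument for quasisymmetric maps fixing three points. Since $K_n \to 1$, the limit $f_\infty$ preserves every cross-ratio of a harmonic quadruple on $\partial{\Ha}$, so $f_\infty$ is the restriction of a M\"obius transformation; together with the three prescribed fixed points, this forces $f_\infty$ to be the identity inclusion $\partial{\Ha} \hookrightarrow \partial{\Ho}$. Continuity of the geodesic in its pair of ideal endpoints then yields $A'_n \to A_0$, $C'_n \to C_\infty$, $B'_n \to B_\infty$, and hence $O'_n \to O_0$, while $N_n$ converges to a geodesic meeting $O_0$ at $y_\infty$. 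Consequently $d(O'_n, B'_n) \to 0$ and $\dis_{O'_n}(x'_n, y'_n) \to \dis_{O_0}(x_0, y_\infty) = \lim_n \dis_O(x_n, y_n)$, contradicting the supposed failure with margin $\delta_2$.

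The main obstacle is the careful handling of the boundary cases of the compact parameter space: the degenerations $d_\infty = 0$ (so $A_\infty = C_\infty$ and the limit of $O'_n$ requires additional care), $\theta_\infty \in \{0,\pi\}$ (so $B_\infty$ becomes asymptotic to $O_0$ at an ideal endpoint), and the possibility that an endpoint of $B_n$ coalesces with an endpoint of $A_n$ or $C_n$. In each of these cases one must verify that both sides of the desired inequality depend continuously on the compactified configuration data, so that the argument above goes through. The cleanest way is to check that in every degenerate limit the two quantities in the conclusion either both vanish or converge to the same finite value as their source counterparts, ensuring that the closure of the admissible configurations remains a genuine compact set on which the continuity/contradiction argument applies.
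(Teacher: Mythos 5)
Your compactness argument is sound at its core, and it is essentially the argument the paper uses once $d(A,C)$ is bounded below. The gap is that you defer the degenerate case $d(A,C)\to 0$ to a claimed ``continuity on the compactified configuration data,'' and this does not work. When $d(A_n,C_n)\to 0$ the endpoints of $A'_n$ and $C'_n$ coalesce, so the common orthogonal $O'_n$ has no stable limit, and the quantities $\dis_{O'}(x',y')$ and $\dis(x,y)$ both tend to $0$. One cannot read off from continuity that their difference is small, because there is no continuous extension of the conclusion quantities to the boundary stratum $d(A,C)=0$; the configuration is genuinely singular there. What is actually needed is a uniform quantitative estimate --- independent of the quasisymmetry constant as long as it is bounded, say, by $2$ --- showing that $d(O',B')$ and $|\dis_{O'}(x',y')-\dis(x,y)|$ are both small whenever $d(A,C)$ is small.

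The paper supplies exactly such an estimate as a separate first step: after normalizing so that $\partial A=\partial A'=\{0,\infty\}$ and $\partial O=\partial O'=\{-1,1\}$, it uses the orthogonality of $O'$ to $C'$ to write $\partial C'=\{c',1/c'\}$ with $c'$ small, and then uses the $2$-quasisymmetry of $f$ to bound the endpoints of $B'$ in terms of $|c'|$, forcing $d(O',B')$, $d(x,y)$ and $d(x',y')$ to all be small simultaneously. This produces a $T_2=T_2(\delta_2)>0$ such that the conclusion holds whenever $d(A,C)\le T_2$ and $f$ is $2$-quasisymmetric. Only after that does the paper run the compactness argument you propose, now restricted to $d(A,C)\in[T_2,T_1]$, where the complex distance between $A$ and $C$ stays bounded away from $0$ and the common orthogonal and its foot genuinely vary continuously. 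You should make this split explicit and actually prove the small-$d(A,C)$ estimate, rather than absorbing it into an unverified compactification claim.
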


\begin{proof} First suppose that $d(A,C)$ is small, say $d(A,C) \le T_2$ for some $T_2>0$, and $f$ is $2$-quasisymmetric. Then by applying a M\"obius transformation to the range and domain of $f$ we can assume that $\partial{A}=\partial{A'}=\{0,\infty\}$, and $1 \in \partial{O}$,  $1 \in \partial{O'}$ (and hence $\partial{O}=\partial{O'}=\{-1,1\}$). Note that while $f(0)=0$ and $f(\infty)=\infty$, $f(1)$ is not necessarily equal to 1. It follows that $\partial{C}=\{c,{{1}\over{c}} \}$, for $c$ real and small (we can assume $c>0$), and $\partial{C'}=\{c',{{1}\over{c'}} \}$ where $c'$ is small and 
$c'=f(c)$, ${{1}\over{c'}}=f( {{1}\over{c}})$.
\vskip .1cm
We let $\partial{B}=\{b_0,b_1\}$, where $b_0,{{1}\over{b_{1}}} \in (0,c)$. Then $|f(b_0)|<10|c'|$, and $|f(b_1)|>{{1}\over{10}}|{{1}\over{c'}}|$
because $f$ is $2$-quasisymmetric and $f$ fixes $0,\infty$. Therefore, by choosing $T_2$ to be  small enough we can arrange that $d(O',B')$, $d(x,y)$ and $d(x',y')$ are as small as we want, so 
we conclude that for every $\delta_2>0$ there exists $T_2>0$ such that if $d(A,C) \le T_2$, and $f$ is $2$-quasisymmetric then 
$$
d(O',B'), |d(x,y)-d(x',y')| <\delta_2.
$$
\noindent
So we need only show that for every $\delta_2$ and $T_1$ there exists $K>1$ such that if
$d(A,C) \in [T_2,T_1]$, where $T_2=T_2(\delta_2)$,  and all other hypotheses hold, then 
\begin{equation}\label{eka-1}
d(O',B'), |d(x,y)-d(x',y')| <\delta_2.
\end{equation}

\vskip .1cm
Suppose that this statement is false. Then we can find a sequence of $A_n,B_n,C_n$ and $f_n$ for which $f_n$ is $K_n$-quasisymmetric, $K_n \to 1$, but for which (\ref{eka-1}) does not hold. Then normalizing and passing to a subsequence we obtain $A,B,C$ in the limit, and $f_n \to \id$. So
$A'_n \to A'=A$, $B'_n \to B'=B$, and $C'_n \to C'=C$. Moreover, because the common orthogonal to two geodesics varies continuously when the complex distance is non-zero, $O_n \to O$ and $O'_n \to O'$, so $d(O',B'_n) \to 0$, and $\dis(B'_n,O) \ne 0$. Also 
$N'_n \to N$, and $(x_n,y_n,x'_n,y'_n) \to (x,y,x',y')$, and $x'=x$, $y'=y$ so
$$
|d(x'_n,y'_n)-d(x_n,y_n)| \to 0.
$$
\noindent
We conclude that (\ref{eka-1}) holds for large enough $n$, a contradiction.

\end{proof}

Assume that for some $\tau \in \D$ the representation $\rho_{\tau}:\pi_1(S^{0}) \to \PSLC$ is quasifuchsian and let  $f_{\tau}:\partial{\Ha} \to \partial{\Ho}$ be the normalised  equivariant quasisymmetric  map  (that conjugates $\rho_{0}(\pi_1(S^{0}))$ to  $\rho_{\tau}(\pi_1(S^{0}))$).
\vskip .1cm
Here we show that Assumption \ref{ass} holds if $f_{\tau}$ is sufficiently close to being conformal.
 
\begin{corollary}\label{cor-1}  Given $T_1$ we can find $K_1>1$ such that if $f_{\tau}$ is $K_1$-quasisymmetric then the following holds.
Let $C_0(0),C_{n+1}(0)$ be geodesics in $\Col_0(R)$ such that $d(C_0(0),C_{n+1}(0)) \le T_1$, and let $C_i(0) \in \Col_0(R)$, $i=1,...,n$, denote the intermediate geodesics. Also, $O(0), O(\tau)$, $z_i(0),z_i$ and $C_i(\tau)$ are defined as usual. Then 
$$
|d(z_i,z_{i+1})-d(z_i(0),z_{i+1}(0))| <{{e^{-5} }\over{4}},
$$
\noindent
and $d(O,C_i) \le  {{e^{-5} }\over{4}}$.
\end{corollary}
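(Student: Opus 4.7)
The plan is to derive Corollary \ref{cor-1} directly from Lemma \ref{ll-1} by applying it to the triples $(C_0(0), C_j(0), C_{n+1}(0))$ for each intermediate index $j$. First I would fix $\delta_2 = e^{-5}/10$ and let $K_1 = K(\delta_2, T_1)$ be the constant produced by Lemma \ref{ll-1}, and then assume $f_{\tau}$ is $K_1$-quasisymmetric.

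For each $j \in \{1,\dots,n\}$ I would invoke Lemma \ref{ll-1} with $A = C_0(0)$, $B = C_j(0)$, $C = C_{n+1}(0)$, and the quasisymmetric map $f_{\tau}$. All hypotheses are in place: $B$ separates $A$ and $C$ in the Fuchsian plane, $d(A,C) \le T_1$, and because $f_{\tau}$ conjugates $\rho_0(\pi_1(S^0))$ to $\rho_{\tau}(\pi_1(S^0))$ it sends $\partial C_j(0)$ to $\partial C_j(\tau)$, so in the notation of the lemma we have $A' = C_0(\tau)$, $B' = C_j(\tau)$, $C' = C_{n+1}(\tau)$, $O' = O(\tau)$, $N = N_j(\tau)$, $x' = z_0$, and $y' = z'_j$. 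The first conclusion $d(O',B') \le \delta_2$ reads $d(O,C_j) \le \delta_2 \le e^{-5}/2$; together with the trivial $d(O,C_0) = d(O,C_{n+1}) = 0$ this yields the second inequality of the corollary.

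For the first inequality I would use the other half of Lemma \ref{ll-1}, namely
$$|\dis_{O(\tau)}(z_0,z'_j) - \dis_{O(0)}(z_0(0),z_j(0))| \le \delta_2,$$
which holds for every $j$. Subtracting the estimates for $j=i$ and $j=i+1$ gives $|\dis_{O(\tau)}(z'_i,z'_{i+1}) - \dis_{O(0)}(z_i(0),z_{i+1}(0))| \le 2\delta_2$ for $1 \le i \le n-1$, and the boundary cases $i = 0$ or $i = n$ follow from a single application (using $z_0 = z'_0$ and $z_{n+1} = z'_{n+1}$). All the complex distances here are between points on a single geodesic and hence are real, so this bound is $|d(z'_i,z'_{i+1}) - d(z_i(0),z_{i+1}(0))| \le 2\delta_2$. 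A triangle inequality
$$|d(z_i,z_{i+1}) - d(z'_i,z'_{i+1})| \le d(z_i,z'_i) + d(z_{i+1},z'_{i+1}) \le 2\delta_2,$$
where I use the already-established $d(O,C_i) \le \delta_2$, then yields $|d(z_i,z_{i+1}) - d(z_i(0),z_{i+1}(0))| \le 4\delta_2 < e^{-5}/2$, as required.

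I do not anticipate any real obstacle here: the reasoning is almost entirely bookkeeping, and the only genuinely non-routine ingredient, the quantitative control of the common orthogonal and the foot-points under a quasisymmetric perturbation, has already been packaged into Lemma \ref{ll-1}. The points requiring care are the distinction between the foot $z'_i \in O(\tau)$ and the foot $z_i \in C_i(\tau)$ (so that the triangle inequality above is cleanly applied) and the treatment of the endpoints $i = 0$ and $i = n$, where one of the two feet involved is forced to lie on $O(\tau)$ itself.
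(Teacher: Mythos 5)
Your proof is correct and takes essentially the same route as the paper: both apply Lemma \ref{ll-1} to the triples $(C_0(0),C_j(0),C_{n+1}(0))$, subtract consecutive estimates to control $d(z'_i,z'_{i+1})$, and then pass from $z'_i$ to $z_i$ via a triangle inequality using $d(z_i,z'_i)=d(O,C_i)$. The only difference is the bookkeeping choice of $\delta_2$ ($e^{-5}/10$ rather than the paper's $e^{-5}/8$), which is immaterial.
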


\begin{proof} We apply the previous lemma with $\delta_2={{ e^{-5} }\over{16}}$. Then  $d(O,C_i)<{{e^{-5}}\over{16}}$.
Furthermore
$$
|d(z'_0,z'_i)-d(z_0(0),z_i(0))| <{{e^{-5} }\over{16}},
$$
and
$$
|d(z'_0,z'_{i+1})-d(z_0(0),z_{i+1}(0))| <{{e^{-5} }\over{16}},
$$
so
$$
|d(z'_i,z'_{i+1})-d(z_i(0),z_{i+1}(0))| <{{e^{-5} }\over{8}}.
$$
Moreover 
$$
d(z_i, z_{i+1}) \le d(z'_i, z'_{i+1}) + d(O, C_i) + d(O, C_{i+1})
$$
and therefore
$$
|d(z_i,z_{i+1})-d(z_i(0),z_{i+1}(0))| <{{e^{-5} }\over{4}}.
$$

\end{proof}

We are now ready to complete the proof of Theorem \ref{geometry-1}. 
Let $R>R_1=10$. Since the space of quasifuchsian representations of the group $\pi_1(S^{0})$ is open (in the space of all representations),  
there exists $0<\epsilon_1<1$ so that the disc $\D(0,\epsilon_1)$ (of radius $\epsilon_1$ and centred at $0$) is the maximal disc such that $f_{\tau}$ is $K_1$-quasisymmetric on all of $\D(0,\epsilon_1)$,  where $K_1$ is the constant from Corollary \ref{cor-1}. We can choose such $\epsilon_1$ to be positive because the map $f_{0}$ is $1$-quasisymmetric and given any $K>1$ we can find an open neighbourhood of $0$ in the $\tau$ plane such that in that neighbourhood we have that every $f_{\tau}$ is $K$-quasisymmetric.

By that corollary, Assumption \ref{ass} holds for $f_{\tau}$, for all $\tau \in \D(0,\epsilon_1)$.  
Let $C_0(0), C_{n+1}(0) \in \Col_0(R)$ be such that  $d(C_0(0),C_0(n+1)) \le T_1$, where $T_1$ is the constant from Theorem \ref{theorem-new-1}. From Lemma \ref{derivative-3}, 
for $R$ large enough and for every $\tau \in \D(0,\epsilon_1)$, we have  
$$
|\dis'_{O} (C_0,C_{n+1})| \le  10^{9} T_1e^{T_{1}}.
$$
\noindent
This yields
\begin{equation}\label{ajde}
|\dis_{O} (C_0,C_{n+1})-\dis_{O(0)} (C_0(0),C_{n+1}(0))| \le 10^{9} \epsilon_1  T_1 e^{T_{1}},
\end{equation}
\noindent
for every $\tau \in \D(0,\epsilon_1)$. 
\vskip .1cm
Let  $0<\delta_1=\delta_1(\sqrt{K_1},K_1)$, 
be the corresponding constant from  Theorem \ref{theorem-new-1}.
We show 
$$
\epsilon_1 \ge {{\delta_1}\over{10^{9} T_1 e^{T_{1}}}}.
$$
\noindent
Assume that this is not the case. Then from (\ref{ajde})  we have that for every $\tau \in \D(0,\epsilon_1)$ the map $f_{\tau}$ is $\sqrt{K_1}$-quasisymmetric (and hence for   $\tau \in \overline{ \D(0,\epsilon_1)}$). This implies that $f_{\tau}$ is $K_1$-quasisymmetric for every $\tau \in \D(0,\epsilon)$ 
for some $\epsilon>\epsilon_1$. 
But this contradicts the assumption that $\D(0,\epsilon_1) $ is the maximal disc so that every $f_{\tau}$ is 
$K_1$-quasisymmetric. 
\vskip .1cm
Set
$$
\wh{\epsilon}= {{\delta_1}\over{10^{9} T_1 e^{T_{1}}}}.
$$
\noindent
Then for every  $\tau \in \D(0,\wh{\epsilon})$, and for $R$ large enough the map $f_{\tau}$ is $K_1$-quasisymmetric. 

We prove the other estimate in Theorem \ref{geometry-1} as follows. First of all, by the Slodkowski extension theorem (for the statement and proof of this theorem see  \cite{fletcher-markovic}), 
we can extend the maps $f_{\tau}$ to quasiconformal maps of the sphere $\partial{\Ho}$ such that the Beltrami dilatation
$$
\mu_{\tau}(z)=\frac{\overline{\partial} f_{\tau}}{\partial f_{\tau} }(z)
$$
varies holomorphically in $\tau$ for every fixed $z \in \partial{\Ho}$. Observe that $\mu_{0}(z)=0$, and  $|\mu_{\tau}(z)|<1$ for every $\tau$ and $z$ (recall that the absolute value of the Beltrami dilatation of any quasiconformal map is less than 1). For a fixed $z$ we then apply the Schwartz lemma to the function $\mu_{\tau}(z)$, and this yields the desired estimate from Theorem \ref{geometry-1}.

\section{Surface group representations in $\pi_1(\M)$}

\subsection{Labelled collection of oriented skew pants}  From now on $\M=\Ho / \KG$ is a  fixed closed hyperbolic three manifold and $\KG$ a suitable Kleinian group. By $\Gamma^{*}$  and $\Gamma$ we denote respectively the collection of oriented and unoriented closed geodesics in $\M$. By $-\gamma^{*}$ we denote the opposite orientation of an oriented geodesic $\gamma^{*} \in \Gamma^{*}$.
\vskip .1cm

Let $\Pi^{0}$ be a topological pair of pants.  Recall (from the beginning of Section 2) that a pair of pants in a closed hyperbolic 3-manifold $\M$  is an injective homomorphism 
$\rho:\pi_1(\Pi^{0}) \to \pi_1(\M)$, up to conjugacy. A pair of pants in $\M$ is determined by (and determines) a continuous map $f:\Pi^{0} \to \M$, up to homotopy. Moreover, the representation  $\rho$ induces a representation
$$
\rho:\pi_1(\Pi^{0}) \to \PSLC, 
$$
up to conjugacy.

Fix an orientation and a base point on $\Pi^{0}$.  We equip  $\Pi^0$ with an orientation preserving   homeomorphism $\omega:\Pi^{0} \to \Pi^{0}$, of order three that permutes the cuffs and let $\omega^{i}(C)$, $i=0,1,2$, denote the oriented cuffs of $\Pi^0$. We may assume that the base point of  $\Pi^{0}$ is fixed under $\omega$. 
By $\omega:\pi_1(\Pi^0) \to \pi_1(\Pi^0)$ we also denote the induced isomorphism of the fundamental group (observe that the homeomorphism $\omega:\Pi^{0} \to \Pi^{0}$ has a fixed point that is the base point for $\Pi^0$ so the isomorphism of the fundamental group is well defined). Choose  $c \in \pi_1(\Pi^0)$ to be an element in the conjugacy class that corresponds to the cuff $C$, such that $\omega^{-1}(c) c \omega(c)=\id$.

\begin{definition}\label{def-adm} Let  $\rho:\pi_1(\Pi^{0}) \to \PSLC$ be a faithful representation. We say that $\rho$ 
is an admissible representation if   $\rho(\omega^i(c))$ is a loxodromic M\"obius transformation, and
$$
\hl(\omega^{i}(C) )={{\len(\omega^{i}(C) )}\over{2}},
$$
\noindent
where $\len(\omega^{i}(C))$ is chosen so that $ -\pi <\IM(\len(\omega^{i}(C))) \le \pi$.

\end{definition}

\begin{definition}\label{def-skew} Let $\rho:\pi_1(\Pi^{0}) \to \KG$, be an admissible representation. A skew pants $\Pi$ is the conjugacy class   $\Pi=[\rho]$. The set of all skew pants is denoted by $\Pant$.
\end{definition}

For $\Pi \in \Pant$ we define $\refl(\Pi) \in \Pant$ as follows. Let $\rho:\pi_1(\Pi^{0}) \to \KG$
be a representation such that $[\rho]=\Pi$, and set $\rho(\omega^i(c))=A_i \in \KG$. Define the representation $\rho_1:\pi_1(\Pi^{0}) \to \KG$ by  
$\rho_1(\omega^{-i}(c) )=A^{-1}_i$. One verifies that $\rho_1$ is well defined and we let  $\refl(\Pi)=[\rho_1]$. The mapping 
$\refl:\Pant \to \Pant$ is a fixed point free involution.
\vskip .1cm
For $\Pi \in \Pant$ such that $\Pi=[\rho]$ we let $ \gamma^{*}(\Pi,\omega^{i}(c)) \in \Gamma^{*}$  denote the oriented geodesic that represents the conjugacy class of $\rho(\omega^i(c))$. Observe the identity $ \gamma^{*}(\refl(\Pi),\omega^{i}(c))= -\gamma^{*}(\Pi,\omega^{-i}(c))$.
The set of  pairs $(\Pi,\gamma^{*})$, where $\gamma^{*}=\gamma^{*}(\Pi,\omega^{i}(c))$, for some $i=0,1,2$, is called the set of marked skew pants and denoted by  $\Pant^{*}$.
\vskip .1cm
There is the induced  (fixed point free) involution  $\refl:\Pant^{*} \to \Pant^{*}$,   given by 
$\refl(\Pi,\gamma^{*}(\Pi,\omega^{i}(c)))=(\refl(\Pi),\gamma^{*}(\refl(\Pi),\omega^{-i}(c) ))$. 
Another obvious mapping  $\rot:\Pant^{*} \to \Pant^{*}$ is given  by 
$\rot(\Pi,\gamma^{*}(\Pi,\omega^{i}(c)))=(\Pi,\gamma^{*}(\Pi,\omega^{i+1}(c) ) )$.

\begin{definition} Let $\Lab$ be a finite set of labels. We say that a map  $\lab:\Lab \to \Pant^{*}$ is a legal labeling map if the following holds
\begin{enumerate}
\item  there exists an involution $\refl_{\Lab}:\Lab \to \Lab$,  such that $\refl(\lab(a))=\lab(\refl_{\Lab}(a))$,
\item there is a bijection $\rot_{\Lab}:\Lab \to \Lab$ such that $\rot(\lab(a))=\lab(\rot_{\Lab}(a))$.
\end{enumerate}
\end{definition}

\begin{example} Let $\N \Pant$ denote the collection of all formal sums of oriented skew pants from $\Pant$ over non-negative integers.
We say that $W \in \N \Pant $ is symmetric if $W=n_1(\Pi_1+\refl(\Pi_1))+n_2(\Pi_2+\refl(\Pi_2))+...+n_m(\Pi_m+\refl(\Pi_m))$, where 
$n_i$ are positive integers, and $\Pi_i \in \Pant$. Every symmetric $W$ induces a   canonical legal labeling defined as follows.
The corresponding set of labels is $\Lab=\{(j,k): j=1,2,...,2(n_1+n_2+...+n_m); k=0,1,2 \}$ (observe that the set $\Lab$ has $6(n_1+...+n_m)$ elements). 
Set  $\lab(j,k)=(\Pi_s, \gamma^{*}(\Pi_s, \omega^{k}(c) ) )$, if $j$ is odd and  if $2(n_1+\cdot \cdot \cdot +n_{s-1}) < j \le 2(n_1+ \cdot \cdot \cdot+n_{s})$. Set $\lab(j,k)=(\refl(\Pi_s), \gamma^{*}(\refl(\Pi_{s}),\omega^{-k}(c) ) )$, if $j$ is even and $2(n_1+ \cdot \cdot \cdot +n_{s-1}) < j \le 2(n_1 + \cdot \cdot \cdot +n_{s})$.
The bijection  $\refl_{\Lab}$ is given by  $\refl_{\Lab}(j,k)=(j+\delta(j),k)$, where $\delta(j)=+1$ if $j$ is even and  $\delta(j)=-1$ if $j$ is odd. 
The bijection $\rot_{\Lab}$ is defined accordingly.

\end{example}

\begin{definition} Let $\sigma:\Lab \to \Lab$ be an involution. We say that $\sigma$ is admissible with respect to a legal labeling $\lab$ if the following holds. Let $a \in \Lab$ and let $\lab(a)=(\Pi_1,\gamma^{*})$, for some $\Pi_1 \in \Pant$  where $\gamma^{*}=\gamma(\Pi_{1},\omega^{i}(c) )$, for some $i \in \{0,1,2\}$. Then $\lab(\sigma(a))=(\Pi_2,-\gamma^{*})$, where $\Pi_2 \in \Pant$ is some other skew pants. 
\end{definition}

Observe that every legal labeling has an admissible involution $\sigma:\Lab \to \Lab$, given by $\sigma(a)=\refl_{\Lab}(a)$. 

\vskip .1cm
Suppose that we are given a legal labeling $\lab:\Lab \to \Pant^{*}$ and an admissible involution $\sigma:\Lab \to \Lab$. We construct a closed topological surface $S^{0}$ (not necessarily connected) with a pants decomposition $\Col^{0}$, and a representation $\rho_{\lab,\sigma}:\pi(S^{0}) \to \KG$ as follows. Each element of $\Lab$ determines an oriented cuff in $\Col^{0}$. Each element in the orbit space $\Lab/ \rot_{\Lab}$ gives a copy of the oriented topological pair of pants $\Pi^{0}$. The pairs of pants are glued according to the instructions given by $\sigma$, and this defines the representation $\rho_{\lab,\sigma}$. One can check that after we glue the corresponding pairs of pants we construct a closed surface $S^{0}$. Moreover $S^{0}$ is connected if and only if the action of  the group of bijections $\left< \refl_{\Lab}, \rot_{\Lab},\sigma \right>$ is minimal on $\Lab$ (that is  $\Lab$ is the smallest invariant set under the action of this group). 

\vskip .1cm
Let $a \in \Lab$. Then   $(\Pi,\gamma^{*})=\lab(a)$ and $(\Pi_1,-\gamma^{*})=\lab(\sigma(a))$ for some skew pants $\Pi,\Pi_1 \in \Pant$. Also
$\gamma^{*}=\gamma^{*}(\Pi,\omega^i(c))$ and $-\gamma^{*}=\gamma^{*}(\Pi_1,\omega^j(c))$. 
Set  
$$
\hl(a)=\hl(\omega^i(C)), 
$$
\noindent
where the half length  $\hl(\omega^i(C))$ is computed for the representation that corresponds to the skew pants $\Pi$.

It follows from our definition of admissible representations that 
$\hl(a)=\hl(\sigma(a))$. Set $\len(a)= \len(\omega^i(C))$. Then $\len(a)=\len(\sigma(a))$ and
$$
\hl(a)={{\len(a)}\over{2}}.
$$

\subsection{The unit normal bundle of a closed geodesic} Next, we discuss in more details the structure of the unit normal bundle $\NB(\gamma)$ of a closed geodesic  $\gamma \subset \M$ (for the readers convenience we will repeat several definitions given at the beginning of Section 2).  The bundle  $\NB(\gamma)$ has an induced differentiable structure and it is diffeomorphic to a torus. Elements of $\NB(\gamma)$ are pairs $(p,v)$, where $p \in \gamma$ and $v$ is a unit  vector at $p$ that is orthogonal to $\gamma$. The disjoint union of all the bundles is denoted by $\NB(\Gamma)$.
\vskip .1cm
Fix an orientation $\gamma^{*}$ on $\gamma$. Consider $\C$ as an additive group and for $\zeta \in \C$, let $\A_{\zeta}:\NB(\gamma) \to \NB(\gamma)$ be the mapping given by $\A_{\zeta}(p,v)=(p_1,v_1)$ where $p_1$ and $v_1$ are defined as follows. Let $\wt{\gamma^{*}}$ be a lift of $\gamma^{*}$ to $\Ho$ and let $(\wt{p},\wt{v}) \in  \NB(\wt{\gamma})$ be a lift of $(p,v)$. 
We may assume that $\wt{\gamma^{*}}$ is the geodesic between $0,\infty \in \partial{\Ho}$. Let $A_{\zeta} \in \PSLC$ be given by $A_{\zeta}(w)=e^{\zeta} w$, for $w \in \partial{\Ho}$. Set $(\wt{p}_1,\wt{v}_1)=A_{\zeta}(\wt{p},\wt{v})$. Then $(\wt{p}_1,\wt{v}_1)$ is a lift of $(p_1,v_1)$.
\vskip .1cm
If $\A^{1}_{\zeta}:\NB(\gamma)\to \NB(\gamma) $ and $\A^{2}_{\zeta}:\NB(\gamma)\to \NB(\gamma)$ are the actions that correspond to different orientations on $\gamma$  then on $\NB(\gamma)$ we have $\A^{1}_{\zeta}=\A^{2}_{-\zeta }=(\A^{2}_{\zeta})^{-1}$, $\zeta \in \C$. Unless we specify otherwise, by $\A_{\zeta}$ we denote either of the two actions.
\vskip .1cm
The group $\C$ acts transitively on $\NB(\gamma)$. Let $\len(\gamma)$ be the complex translation length of $\gamma$, such that $-\pi<\IM(\len(\gamma)) \le \pi$ (by definition  $\RE(\len(\gamma)) >0$). 
Then $A_{\len(\gamma)}=\id$ on $\NB(\gamma)$. This implies that the map $A_{ {{\len(\gamma)}\over{ 2}} }$ is an involution which enables us to define the bundle $\NB(\sqrt{\gamma})= \NB(\gamma)/ A_{ {{\len(\gamma)}\over{2}} }$. The disjoint union of all the bundles is denoted by $\NB(\sqrt{\Gamma})$.   
\vskip .1cm
The additive group $\C$ acts on $\NB(\sqrt{\gamma})$ as well. There is a unique complex structure on  $\NB(\sqrt{\gamma})$ so that the action 
$\A_{\zeta}$ is by biholomorphic maps. With this complex structure we have
$$
\NB(\sqrt{\gamma}) \equiv \C/ \big( {{\len(\gamma)}\over{2}} \Z+ 2 \pi i \Z \big ).
$$
\noindent 
The corresponding  Euclidean distance on  $\NB(\sqrt{\gamma})$ is denoted by $\dist$. Then for $|\zeta|$ small we have    $\dist((p,v),(\A_{\zeta}(p,v)))=|\zeta|$.  There is also the induced map $\A_{\zeta}: \NB(\sqrt{\Gamma}) \to \NB(\sqrt{\Gamma})$, $\zeta \in \C$,  where 
the restriction of $\A_{\zeta}$ on each torus $\NB(\sqrt{\gamma})$ is defined above. 
\vskip .1cm
Let $(\Pi,\gamma^{*}) \in \Pant^{*}$  and let $\gamma^{*}_k$ be such that  $(\Pi,\gamma^{*}_k)=\rot^{k}(\Pi,\gamma^{*} )$, $k=1,2$.
Let $\delta^{*}_k$ be an oriented  geodesic  (not necessarily closed) in $\M$ such that $\delta^{*}_k$ is the common orthogonal of   $\gamma^{*}$ and $\gamma^{*}_k$, 
and so that a lift of $\delta^{*}_k$ is a side in the corresponding  skew right-angled hexagon that determines $\Pi$ (see Section 2). The orientation on $\delta^{*}_k$ is determined so that the point $\delta^{*}_k \cap \gamma^{*}_k$ comes after the point $\delta^{*}_k \cap \gamma^{*}$.  
Let $p_k=\delta^{*}_k \cap \gamma^{*}$, and let $v_k$ be the unit  vector $v_k$ at $p_k$ that has the same direction as $\delta^{*}_k$. 
Since the pants $\Pi$ is the conjugacy class of an admissible representation in sense of Definition \ref{def-adm}, we observe that 
$\A_{ {{\len(\gamma)}\over{2}} }$ exchanges $(p_1,v_1)$ and $(p_2,v_2)$, so the class $[(p_k,v_k)] \in \NB(\sqrt{\gamma})$ does not depend on $k \in \{1,2\}$.  Define the map
$$
\foot:\Pant^{*} \to \NB(\sqrt{\Gamma}),
$$
\noindent
by
$$
\foot_{\gamma}(\Pi)=\foot(\Pi,\gamma^{*})=[(p_k,v_k)] \in \NB(\gamma), 
$$
Observe that $\foot(\Pi,\gamma^{*})=\foot(\refl(\Pi,\gamma^{*}))$.
\vskip .1cm
Let   $S^{0}$ be a topological surface with a pants decomposition $\Col^{0}$, and let  $\rho:\pi_1(S^{0}) \to \KG$ be a representation, such that the restriction of $\rho$ on the fundamental group of each pair of pants satisfies the assumptions of Definition \ref{def-adm} (recall that $\KG$ is the Kleinian group such that $\M \equiv \Ho / \KG$). Let  $\Pi^{0}_i$, $i=1,2$ be two pairs of pants from the pants decomposition of  $S^{0}$ that both have a given cuff $C \in \Col^{0}$ in its boundary.  By $(\Pi_1,\gamma^{*})$ and $(\Pi_2,-\gamma^{*})$ we denote the corresponding marked pants in $\Pant^{*}$. 
Let $s(C) $ denote the corresponding reduced  complex Fenchel-Nielsen coordinate for $\rho$. 
Let $\A^{1}_{\zeta}$ be the action on $\NB(\sqrt{\gamma} )$ that corresponds to the orientation $\gamma^{*}$. Fix $\zeta_0 \in \C$ to be such that 
$$
\A^{1}_{\zeta_{0}}( \foot(\Pi_1,\gamma^{*} ))= \foot(\Pi_2,-\gamma^{*}).
$$
\noindent
Such $\zeta_0$ is uniquely determined up to a translation from the lattice  ${{\len(\gamma)}\over{2}} \Z+ 2i\pi \Z$. If $\A^{2}_{\zeta}$ is the other action then we have
$$
\A^{2}_{\zeta_{0}}( \foot(\Pi_2,-\gamma^{*} ))= (\Pi_1,\gamma^{*}),
$$
\noindent
since $\A^{1}_{\zeta} \circ \A^{2}_{\zeta}=\id$. That is, the choice of $\zeta_0$ does not depend on the choice of the action $\A_{\zeta}$.
Then $s(C) \in  \C / ({{\len(\gamma)}\over{2}} \Z  + 2\pi i \Z)$ and 
\begin{equation}\label{FN-0}
s(C)=(\zeta_0-i\pi), \,\, \pmod {  {{\len(\gamma)}\over{2}} \Z + 2\pi i \Z }.
\end{equation}
\vskip .1cm
The rest of the paper is devoted to proving the following theorem.

\begin{theorem}\label{thm-lab}  There exist  constants ${\bf q} >0$ and $K>0$ such that for every $\epsilon>0$ and for every $R>0$ large enough  the following holds. There exist a finite set of labels $\Lab$, a legal labeling  $\lab:\Lab \to \Pant$ and an admissible involution $\sigma:\Lab \to \Lab$, such that for every $a \in \Lab$ we have
$$
|\hl(a)-{{R}\over{2}}| < \epsilon,
$$
\noindent
and
$$
\dist( \A_{1+i\pi}( \foot(\lab(a))),\foot(\lab(\sigma(a)))) \le K R e^{- {\bf q}  R}, 
$$
\noindent
where $\dist$ is the Euclidean distance on  $\NB(\sqrt{\gamma})$. 
\end{theorem}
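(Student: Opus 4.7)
The plan is to construct a measure $\mu$ on the space $\Pant^{*}$ of approximately regular oriented skew pants, rationalise it to produce $\Lab$, and then build $\sigma$ via a discrete transportation argument on each $\NB(\sqrt{\gamma})$.

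First, I would parametrise near-regular skew pants by tripods of frame-flow orbit segments of length approximately $R/2$: three such segments that close up to an approximately right-angled hexagonal pattern yield, via their lifts to $\Ho$, a skew right-angled hexagon whose three alternating sides have complex lengths close to $R/2$, and hence determine a pair $(\Pi,\gamma^{*}) \in \Pant^{*}$ with $|\hl(\omega^{i}(C)) - R/2| < \epsilon$. I would define $\mu$ as the push-forward of a measure built from the Liouville measure on the frame bundle of $\M$, weighted by a cutoff enforcing near-closure of the tripod; averaging over orbit reversal and over cyclic permutation of the three legs makes $\mu$ manifestly $\refl$- and $\rot$-invariant.

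The key quantitative input is exponential mixing of the frame flow (Moore). Applied to the tripod construction it should yield the following equidistribution: for each $\gamma \in \Gamma$ with $|\len(\gamma)-R|$ sufficiently small, the push-forward $\mu_{\gamma} := (\foot)_{*}(\mu\big|_{\Pant^{*}_{\gamma}})$ on $\NB(\sqrt{\gamma})$ is absolutely continuous with density equal to a positive constant up to Lipschitz-norm error $O(e^{-\mathbf{q} R})$; in particular $\mu_{\gamma}$ is $\A_{1+i\pi}$-invariant to the same precision. One then rationalises: approximate $\mu$ by a finitely supported integer-valued measure $\mu'$, preserving $\refl$- and $\rot$-symmetry and remaining $O(e^{-\mathbf{q} R})$-close to $\mu$ in each pushed-forward measure. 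The atoms of $\mu'$ give $\Lab$ together with the required maps $\lab$, $\refl_{\Lab}$, $\rot_{\Lab}$; the bound $|\hl(a) - R/2| < \epsilon$ is automatic from the choice of tripod widths. For each closed geodesic $\gamma$, split $\Lab_{\gamma}$ into its two orientation classes; these have equal cardinality by $\refl$-symmetry and, as counting measures on $\NB(\sqrt{\gamma})$, both are $O(e^{-\mathbf{q} R})$-close to a common multiple of Lebesgue. A Hall/transportation lemma on the flat torus $\NB(\sqrt{\gamma})$, whose diameter is $O(R)$, then provides a bijection $\sigma_{\gamma}$ between the two classes with post-shift displacement bounded by $O(Re^{-\mathbf{q} R})$; assembling these into $\sigma$ gives the required admissible involution, and the factor $R$ in $KRe^{-\mathbf{q} R}$ comes precisely from this Wasserstein-type step.

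The main obstacle, and indeed the heart of the paper, is the first quantitative step: constructing $\mu$ and proving $e^{-\mathbf{q} R}$-equidistribution of $\mu_{\gamma}$ on each $\NB(\sqrt{\gamma})$ from exponential mixing of the frame flow. Once that equidistribution is in hand, the rationalisation and the Hall-type matching on each torus are clean combinatorial consequences.
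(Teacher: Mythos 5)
Your proposal follows essentially the same route as the paper: build a $\refl$-invariant measure on skew pants from tripods of frame-flow orbit segments, deduce from exponential mixing that the foot-pushforward on each $\NB(\sqrt{\gamma})$ is exponentially close to a multiple of the flat measure, rationalise to integer weights, and invoke a Hall-type transportation lemma on each torus to produce $\sigma$, with the factor $R$ in $KRe^{-{\bf q}R}$ coming from the torus diameter. The only cosmetic difference is that the paper works throughout with a precise notion of $\delta$-equivalence between the (atomic) pushforward $\ph\mu$ and an auxiliary absolutely continuous measure $\beta$, whereas you speak somewhat loosely of $\mu_{\gamma}$ itself being ``absolutely continuous with density close to constant''; the underlying mechanism is the same.
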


\begin{remark} The constant $\bf {q}$ depends on the manifold $\M$. In fact it only depends on the first eigenvalue for the Laplacian on $\M$.
\end{remark}

Given this theorem we can prove Theorem \ref{main} as follows. We saw that every legal labeling together with an admissible involution yields a representation $\rho(\lab,\sigma):\pi_1(S^{0}) \to \KG$, where $\KG$ is  the corresponding Kleinian group and $S^{0}$ is a closed topological surface (if $S^{0}$ is not connected we pass onto a connected component). By the above discussion the reduced complex Fenchel-Nielsen coordinates $(\hl(C),s(C))$ satisfy the assumptions of Theorem \ref{geometry} (observe that  $K R e^{-{\bf q} R}=o({{1}\over{R}})$, when $R \to \infty$). Then   Theorem \ref{main} follows from Theorem \ref{geometry} .

\subsection{Transport of measure} Let $(X,d)$ be a metric space. By $\Mes(X)$ we denote  the space of  positive, finite Borel measures on $X$ with compact support. For $A \subset X$ and $\delta>0$ let 
$$
\Ne_{\delta}(A)=\{x \in X: \quad \text{there exists} \quad  a \in A \quad \text{such that} \quad  d(x,a) \le \delta \},
$$
\noindent
be the $\delta$-neighbourhood of $A$.
 
\begin{definition} Let $\mu, \nu \in \Mes(X)$ be two measures such that $\mu(X)=\nu(X)$, and let $\delta>0$. 
Suppose that for every Borel set $A \subset X$ we have  $\mu(A) \le \nu(\Ne_{\delta}(A))$. Then we say that $\mu$ and $\nu$ are $\delta$-equivalent measures.
\end{definition}

It appears that the definition is asymmetric in $\mu$ and $\nu$. But this is not the case. For any Borel set $A \subset X$ the above definition yields
$\nu(A) \le \nu(X)-\nu \big(\Ne_{\delta}(X \setminus \Ne_{\delta}(A))\big) \le \mu(X)-\mu(X \setminus \Ne_{\delta}(A))=\mu(\Ne_{\delta}(A)$. This shows that the definition is in fact symmetric in $\mu$ and $\nu$.

\vskip .1cm
The following propositions follow from the definition of equivalent measures.

\begin{proposition}\label{prop-basic-1} Suppose that  $\mu$ and $\nu$ are $\delta$-equivalent. Then for any $K>0$ the measures  $K\mu$ and $K\nu$ are $\delta$-equivalent. 
If in addition we assume that measures $\nu$ and $\eta$ are  $\delta_1$-equivalent, then $\mu$ and $\eta$ are $(\delta+\delta_1)$-equivalent. 
\end{proposition}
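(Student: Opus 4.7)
The plan is to verify both assertions directly from the definition of $\delta$-equivalence, using only the metric triangle inequality and the monotonicity of measures.

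For the first assertion, the scaling statement is immediate from the definition. Given any Borel set $A \subset X$, I would simply compute $(K\mu)(A) = K \mu(A) \le K \nu(\Ne_\delta(A)) = (K\nu)(\Ne_\delta(A))$, and note that $(K\mu)(X) = K\mu(X) = K\nu(X) = (K\nu)(X)$, so the total masses still agree. No further work is needed here.

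For the second assertion, the key observation is a triangle inequality for $\delta$-neighbourhoods: for any subset $A \subset X$,
\begin{equation*}
\Ne_{\delta_1}(\Ne_\delta(A)) \subseteq \Ne_{\delta+\delta_1}(A).
\end{equation*}
This is because if $x \in \Ne_{\delta_1}(\Ne_\delta(A))$, then there exists $y \in \Ne_\delta(A)$ with $d(x,y) \le \delta_1$, and in turn there exists $a \in A$ with $d(y,a) \le \delta$; hence $d(x,a) \le \delta + \delta_1$. Then for any Borel $A \subset X$, I chain the two $\delta$-equivalences and apply monotonicity of $\eta$:
\begin{equation*}
\mu(A) \;\le\; \nu(\Ne_\delta(A)) \;\le\; \eta(\Ne_{\delta_1}(\Ne_\delta(A))) \;\le\; \eta(\Ne_{\delta+\delta_1}(A)).
\end{equation*}
The equality of total masses $\mu(X) = \nu(X) = \eta(X)$ transfers from the two given equivalences, completing the verification that $\mu$ and $\eta$ are $(\delta+\delta_1)$-equivalent.

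There is no real obstacle here; the only point that deserves a sentence of care is the nested-neighbourhood containment, and the remark preceding the proposition (showing that $\delta$-equivalence is automatically symmetric in $\mu$ and $\nu$) already justifies why checking only the one-sided inequality $\mu(A) \le \eta(\Ne_{\delta+\delta_1}(A))$ suffices.
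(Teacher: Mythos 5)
Your proof is correct and is exactly the direct verification the paper intends; the paper itself offers no proof, remarking only that the proposition "follows from the definition of equivalent measures." The one small technical point you might flag, if you wanted to be scrupulous, is that in the chain $\mu(A)\le \nu(\Ne_\delta(A))\le \eta(\Ne_{\delta_1}(\Ne_\delta(A)))$ the second step silently uses that $\Ne_\delta(A)$ is itself a Borel set to which the $\delta_1$-equivalence of $\nu$ and $\eta$ may be applied (unproblematic in the paper's applications, where the relevant sets are finite or compact, but worth a word in general).
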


\begin{proposition}\label{prop-basic-2}  Let $(T,\Lambda)$ be a measure space and let $f_i:T \to X$, $i=1,2$, be two maps such that $d(f_1(t),f_2(t)) \le \delta$, for almost every $t \in T$. Then the measures $(f_1)_{*}\Lambda$ and $(f_2)_{*} \Lambda$ are $\delta$-equivalent.
\end{proposition}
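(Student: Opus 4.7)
The plan is to unfold the definitions of pushforward measure and $\delta$-equivalence, and then observe that the almost-everywhere bound $d(f_1(t),f_2(t))\le\delta$ forces the preimage inclusion $f_1^{-1}(A)\subset f_2^{-1}(\Ne_\delta(A))$ modulo a $\Lambda$-null set, for every Borel set $A\subset X$.

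In more detail, first I would check that the two pushforward measures have equal total mass: indeed $(f_1)_*\Lambda(X)=\Lambda(f_1^{-1}(X))=\Lambda(T)=\Lambda(f_2^{-1}(X))=(f_2)_*\Lambda(X)$. Then I would fix an arbitrary Borel $A\subset X$ and let $E=\{t\in T: d(f_1(t),f_2(t))>\delta\}$, which by hypothesis satisfies $\Lambda(E)=0$. For any $t\in f_1^{-1}(A)\setminus E$ we have $f_1(t)\in A$ and $d(f_1(t),f_2(t))\le\delta$, so by the very definition of the $\delta$-neighborhood $f_2(t)\in\Ne_\delta(A)$, i.e. $t\in f_2^{-1}(\Ne_\delta(A))$. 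Hence
\[
f_1^{-1}(A)\subset f_2^{-1}(\Ne_\delta(A))\cup E,
\]
and passing to $\Lambda$ measures gives
\[
(f_1)_*\Lambda(A)=\Lambda(f_1^{-1}(A))\le \Lambda(f_2^{-1}(\Ne_\delta(A)))+\Lambda(E)=(f_2)_*\Lambda(\Ne_\delta(A)).
\]
This is exactly the inequality required by the definition of $\delta$-equivalence (and symmetry in $\mu,\nu$ was already noted in the paragraph following the definition, so we do not need to verify the reverse inequality separately).

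There is no real obstacle here: the statement is essentially a tautology once one remembers that pushforward turns the metric bound on $f_1,f_2$ into a set-theoretic inclusion of preimages. The only mild care needed is measurability of $E$ and of $\Ne_\delta(A)$, which holds because $\Ne_\delta(A)$ is open whenever $A$ is (it is a $\delta$-thickening in a metric space) and because $t\mapsto d(f_1(t),f_2(t))$ is measurable when $f_1,f_2$ are; one can also simply assume the maps $f_i$ are measurable, which is implicit in the statement since we are pushing $\Lambda$ forward.
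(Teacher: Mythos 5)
Your proof is correct and is precisely the direct unfolding of the definitions that the paper intends; the paper itself gives no proof, simply remarking that Propositions \ref{prop-basic-1} and \ref{prop-basic-2} follow from the definition of $\delta$-equivalent measures. The preimage inclusion $f_1^{-1}(A)\subset f_2^{-1}(\Ne_\delta(A))\cup E$ with $\Lambda(E)=0$ is exactly the right observation, and your equal-total-mass check and appeal to the symmetry remark after the definition are both appropriate.
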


In the remainder of this subsection  we prove two theorems, each representing  a converse of the previous proposition in a special case. The following theorem is a converse of Proposition \ref{prop-basic-2} in the special case of discrete measures.

\begin{theorem}\label{thm-Hall}  Suppose that $A$ and $B$ are finite sets with the same number of elements, and equipped with the standard counting measures $\Lambda_A$ and $\Lambda_B$  respectively. Suppose that there are maps $f:A \to X$ and $g:B \to X$ such that the measures $f_{*} \Lambda_A$ and $g_{*} \Lambda_B$ are $\delta$-equivalent for some $\delta>0$. Then one can find a bijection $h:A \to B$ such that $d(g(h(a)),f(a)) \le \delta$, for every $a \in A$.
\end{theorem}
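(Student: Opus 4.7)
The plan is to reduce the statement to the classical Hall marriage theorem applied to an appropriate bipartite graph built from the metric data. Define a bipartite graph $\Gamma$ with vertex classes $A$ and $B$, and place an edge between $a\in A$ and $b\in B$ precisely when $d(f(a),g(b))\le \delta$. A perfect matching in $\Gamma$ is exactly the bijection $h:A\to B$ demanded by the conclusion, so everything reduces to verifying Hall's condition: for every $S\subseteq A$, the set of neighbours
\[
N(S)=\{\, b\in B:\exists\, a\in S\ \text{with}\ d(f(a),g(b))\le\delta\,\}
\]
satisfies $|N(S)|\ge |S|$.

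To verify Hall's condition, fix $S\subseteq A$ and set $E=f(S)\subset X$. Since every element of $S$ lies in $f^{-1}(E)\cap A$,
\[
f_{*}\Lambda_{A}(E)=|f^{-1}(E)\cap A|\ge |S|.
\]
On the other hand, unwinding the definitions,
\[
g_{*}\Lambda_{B}(\Ne_{\delta}(E)) = |\{b\in B:g(b)\in\Ne_{\delta}(E)\}| = |\{b\in B:\exists\, a\in S,\, d(f(a),g(b))\le\delta\}| = |N(S)|.
\]
The hypothesis that $f_{*}\Lambda_{A}$ and $g_{*}\Lambda_{B}$ are $\delta$-equivalent gives $f_{*}\Lambda_{A}(E)\le g_{*}\Lambda_{B}(\Ne_{\delta}(E))$; chaining the two inequalities yields $|S|\le |N(S)|$, which is Hall's condition.

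Since $|A|=|B|$, Hall's marriage theorem supplies a perfect matching of $\Gamma$, i.e.\ a bijection $h:A\to B$ whose graph consists entirely of edges of $\Gamma$. By construction of $\Gamma$, this bijection satisfies $d(f(a),g(h(a)))\le \delta$ for every $a\in A$, as required. The only step requiring any care is the identification $g_{*}\Lambda_{B}(\Ne_{\delta}(E))=|N(S)|$, but this is immediate from the definition of the pushforward counting measure together with the definition of $\Ne_{\delta}$; I do not anticipate any other obstacle.
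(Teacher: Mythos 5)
Your proof is correct and follows essentially the same route as the paper's: both reduce the statement to Hall's marriage theorem by defining the relation/bipartite graph in which $a$ and $b$ are joined exactly when $d(f(a),g(b))\le\delta$, and both verify Hall's condition by applying the $\delta$-equivalence hypothesis to the set $f(S)$ and its $\delta$-neighbourhood. A small point in your favour: you correctly write $f_{*}\Lambda_A(f(S))\ge|S|$, whereas the paper writes this as an equality, which is inaccurate when $f$ is not injective (though the inequality is all that is needed in either case).
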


\begin{proof} We use the Hall's marriage theorem which states the following. Suppose that $\Rel \subset A \times B$ is a relation. 
For every $Q \subset A$ we let
$$
\Rel(Q)=\{b \in B: \, \text{there exists} \, a \in Q, \, \text{such that} \, (a,b) \in \Rel \}.
$$
\noindent
If $|\Rel(Q)| \ge |Q|$ for every $Q \subset A$ then there exists an injection $h:A \to B$ such that $(a,h(a)) \in \Rel$ for every $a \in A$. This is Hall's marriage theorem. In the general case of this theorem the sets $A$ and $B$ need not have the same number of elements. However, in our case they do so the map $h$ is a bijection.
\vskip .1cm
Define $\Rel \subset A \times B$ by saying that $(a,b) \in \Rel$ if $d(f(a),g(b)) \le \delta$. Then 
$$
\Rel(Q)=\{b \in B: \, \text{there exists} \, a \in Q, \, \text{such that} \, d(f(a),g(b))\le \delta \},
$$
\noindent
for every $Q \subset A$. Therefore $|\Rel(Q)|=(g_{*}\Lambda_B)(\Ne_{\delta}(f(Q))) \ge  (f_{*}\Lambda_A)(f(Q)) = |Q|$, since  $f_{*} \Lambda_A$ and $g_{*} \Lambda_B$ are $\delta$-equivalent. This means that the hypothesis of the Hall's marriage theorem is satisfied, and one can find the bijection $h:A \to B$ such that $d(g(h(a)),f(a)) \le \delta$.

\end{proof}

Let $a,b \in \C$ be two complex numbers such that $T(a,b)=\C/(a\Z + ib\Z)$ is a  torus. We let $z=x+iy$ denote a point in $\C$ (sometimes we use $(x,y)$ to denote a point in $\R^2 \equiv \C$).

Let $\phi$ be a positive $C^0$ function on $T(a,b)$. As usual, by $\phi(x,y) \, dx \, dy $ we denote the corresponding two form on the torus $T(a,b)$. By $\lambda_{\phi}$ we denote the measure on $T(a,b)$ given by
$$
\lambda_{\phi}(A)=\int_{A} \phi(x,y) \, dx \, dy,
$$
for any measurable set $A$. We abbreviate this equation $d\lambda_{\phi}=\phi\, dx \, dy$. By $\lambda$ we denote the standard Lebesgue measure on $T(a,b)$,  that is $\lambda=\lambda_{\phi}$ for $\phi \equiv 1$. 

In  the following  lemma we show that any $C^0$ measure that is close to the Lebesgue measure is obtained by transporting the Lebesgue measure by a diffeomorphism that is $C^0$ close to the identity.

\begin{lemma}\label{lemma-Radon} Let $g:\R^2 \to \R$ be a $C^{0}$ function on $\C$ that is well defined on the quotient $T(1,1)=\C/(\Z + i\Z)$, and such that 
\begin{enumerate} 
\item For some $0< \delta<\frac{1}{3}$ we have 
$$
1-\delta \le g(x,y) \le 1+\delta
$$
for all $(x,y) \in \R^2$,
\item The following equality holds
$$
\int_{0}^{1} \int_{0}^{1} g(x,y) \,dx \, dy=1.
$$
\end{enumerate}
Then we can find a $C^1$ diffeomorphism $h:T(1,1) \to T(1,1)$ such that 
\begin{enumerate} 
\item $g(x,y)=\Jac(h)(x,y)$, that is $g(x,y) \, dx \, dy= h^{*}(dx \, dy)$, where  $h^{*}(dx \, dy)$ is the pull-back of the two form $dx \, dy$ by the diffeomorphism $h$ and  $\Jac(h)$ is the Jacobian of $h$,
\item The inequality 
$$
|h(z)-z| \le 4 \delta,
$$
holds for every $z=x+iy \in \C$.
\end{enumerate}

\end{lemma}

\begin{proof} We define the map $h:\R^2 \to \R^2$ by  $h(x,y)=(h_1(x,y),h_2(x,y))$, where
$$
h_1(x,y)=\int_{0}^{x} \left( \int_{0}^{1}g(s,t)\, dt \right)\, ds,
$$
and 
$$
h_2(x,y)=\frac{ \int_{0}^{y} g(x,t)\, dt } { \int_{0}^{1} g(x,t) \, dt }.
$$
Since $g(x+1,y)=g(x,y+1)=g(x,y)$, we find that  $h(x+1,y)-h(x,y)=(1,0)$ and $h(x,y+1)-h(x,y)=(0,1)$, so $h$ descends to a map from $T(1,1)$ to itself.

Furthermore, we find that
$$
\frac{\partial{h_1} } {\partial{x} }=\int_{0}^{1}g(x,t) \, dt ; \,\, \, \frac{\partial{h_1} } {\partial{y} }=0,
$$
and
$$
\frac{\partial{ h_2} }{\partial{y}}=\frac{g(x,y)}{\int_{0}^{1}g(x,t) \, dt },
$$
which is sufficient to conclude that 
$$
\Jac(h)(x,y)=g(x,y). 
$$
Therefore, the map $h:T(1,1) \to T(1,1)$ is a local diffeomorphism, and thus a covering map of degree $n$ where 
$$
n=\int_{T(1,1)} \Jac(x,y) \, dx \, dy.
$$
Since $\Jac(h)(x,y)=g(x,y)$, and
$$
\int_{T(1,1)} g(x,y) \, dx \, dy =1,
$$
it follows that $n=1$, that is $h$ is a diffeomorphism.

On the other hand, for $x,y \in [0,1]$,
$$
|h_1(x,y)-x| \le \delta x \le \delta,
$$
and 
$$
h_2(x,y)-y \le \frac{y(1+\delta)}{1-\delta} -y \le 3\delta y \le 3 \delta,
$$
since $\delta<\frac{1}{3}$, and 
$$
y-h_2(x,y) \le y-\frac{y(1-\delta)}{1+\delta} \le 2\delta y \le 2 \delta.
$$
Therefore, $|h_2(x,y)-y| \le 3 \delta$. Combining the estimates for $|h_1(x,y)-x|$ and $|h_2(x,y)-y|$ we find that
$$
|h(z)-z| \le |h_1(x,y)-x|+|h_2(x,y)-y| \le 4\delta.
$$
The completes the proof.

\end{proof}

The following theorem is a corollary of the of the previous lemma.

\begin{theorem}\label{thm-Radon} Let $\mu \in \Mes(T(a,b))$ be a measure whose Radon-Nikodym derivative  ${{d\mu}\over{d \lambda}}(z)$ is a $C^{0}$ function on the torus $T(a,b)$,  such that 
for some $K>0$ and $\frac{1}{3}>\delta>0$ we have  $\mu(T(a,b))=K\lambda(T(a,b))$ and
$$
K \le \left| {{d\mu}\over{d \lambda}} \right| \le K(1+\delta), \, \, \, \text{everywhere on} \,\,  T(a,b),
$$
Then $\mu$ is $4\delta (|a|+|b|)$-equivalent to the measure $K\lambda$.
\end{theorem}

\begin{proof} By $\mu$  we also denote the lift of the corresponding measure to the universal cover $\C$. Then $d\mu=g_1(x,y)dx \, dy$, where $g_1(x,y)={{d\mu}\over{d \lambda}}(x,y)$ is the Radon-Nikodym derivative. The function $g_1$ is  $C^0$ on $\C$, and $g_1$ is well defined on the quotient  $\C/(a\Z +b i\Z)=T(a,b)$.

Let $L:T(1,1) \to T(a,b)$ be the standard affine map. Let 
$$
g(x,y)=\frac{1}{K}  (g_1 \circ L)(x,y).
$$
Then $g(x,y)$ satisfies the assumptions of the previous lemma. Let $h$ be the corresponding diffeomorphism from Lemma \ref{lemma-Radon}, and let $h_1=L \circ h \circ L^{-1}$. 
Then $(h_1)^{*} \mu=K\lambda$ on $T(a,b)$. Since the affine map $L$ is $(|a|+|b|)$ bi-Lipschitz we conclude that 
$$
|h_1(z)-z| \le 4\delta(|a|+|b|)
$$
for every $z \in \C$, so $\mu$ is $4\delta (|a|+|b|)$-equivalent to $K\lambda$.

\end{proof}

\subsection{Measures on skew pants and the $\ph$ operator} We have

\begin{definition} By $\Mes^{\refl}_0(\Pant)$ we denote the space of positive  Borel measures with finite support  on the set of oriented skew pants $\Pant$ such that  the involution $\refl:\Pant \to \Pant$ preserves each measure in $\Mes^{\refl}_0(\Pant)$.  By $\Mes_0( \NB(\sqrt{\Gamma}) ) $ we denote the space of positive  Borel measures with compact support  on  the manifold $\NB(\sqrt{\Gamma})$ (a measure from  $\Mes_0(\NB(\sqrt{\Gamma}))$ has a compact support if and only if its support is contained in at most finitely many tori  $\NB(\sqrt{\gamma}) \subset \NB(\sqrt{\Gamma})$). 
\end{definition}

We define  the  operator 
$$
\ph:\Mes^{\refl}_0(\Pant) \to \Mes_0( \NB(\sqrt{\Gamma}) ),
$$
\noindent
as follows. The set $\Pant$ is a countable set, so every measure from $\mu \in \Mes^{\refl}_0(\Pant)$ is determined by its value $\mu(\Pi)$ 
on every $ \Pi \in \Pant$. Let $ \Pi \in \Pant$ and let $\gamma^{*}_i \in \Gamma^{*}$, $i=0,1,2$, denote the corresponding oriented geodesics so that $(\Pi,\gamma^{*}_i) \in \Pant^{*}$. Let $\alpha^{\Pi}_i \in \Mes_0( \NB(\sqrt{\Gamma}) ) $ be the atomic measure supported at the point $\foot(\Pi,\gamma^{*}_i) \in  \NB(\sqrt{\gamma_i})$, where the mass of the atom is equal to $1$. Let 
$$
\alpha^{\Pi}=\sum_{i=0}^{2} \alpha^{\Pi}_i,
$$
\noindent
and define
$$
\ph \mu= \sum_{\Pi \in \Pant} \mu(\Pi) \alpha^{\Pi}.
$$
\noindent
We call this the $\ph$ operator on measures. The total measure of $\ph \mu$ is three times the total measure of $\mu$.
\vskip .1cm
Let $\alpha \in \Mes_0(\NB(\sqrt{\Gamma})) $. Choose $\gamma^{*} \in \Gamma^{*}$, and recall the action $\A_{\zeta}: \NB(\sqrt{\Gamma}) \to \NB(\sqrt{\Gamma})$, $\zeta \in \C$. Let $(\A_{\zeta})_{*} \alpha$ denote the push-forward of the measure $\alpha$.  We say that $\alpha$ is $\delta$-symmetric if the measures  $\alpha$ and $(\A_{\zeta})_{*} \alpha$ are $\delta$-equivalent for every $\zeta \in \C$.

\begin{theorem}\label{theorem-mes} 
There exists ${\bf q} >0$ and  $D_1,D_2>0$, so that for every $1 \ge \epsilon>0$ and every $R>0$ large enough, there exists a measure  
$\mu \in \Mes^{\refl}_0(\Pant)$ with the following properties. If $\mu(\Pi)>0$ for some $\Pi \in \Pant$, then the half lengths $\hl(\omega^{i}(C))$
that correspond to the skew pants $\Pi$ satisfy the inequality 
$$
\left| \hl(\omega^i(C))-{{R}\over{2}} \right|  \le \epsilon .
$$
\noindent 
There exists a measure $\beta \in \Mes_0(\NB(\sqrt{\Gamma}))$, such that  the measure $\ph \mu$ and $\beta$ are  
$D_1e^{-{{R}\over{8}} }$-equivalent, and such that for each torus $\NB(\sqrt{\gamma})$, there exists a constant $K_{\gamma} \ge 0$ so that
$$
K_{\gamma} \le \left| {{d\beta}\over{d \lambda}} \right| \le K_{\gamma}(1+D_2e^{-  {\bf q}  R} ), \text{almost everywhere on}\, \NB(\sqrt{\gamma}) ,
$$
\noindent
where $\lambda$ is the standard Lebesgue measure on the torus $\NB(\sqrt{\gamma})=\C/ \big( {{\len(\gamma)}\over{2}} \Z+ 2i\pi \Z \big)$.
\end{theorem}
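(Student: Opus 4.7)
The plan is to build $\mu$ from an auxiliary continuous measure $\nu$ on a space of ``pants assemblies'' arising from the Liouville measure on the frame bundle of $\M$, and then to rationalise $\nu$ to a finitely supported, $\refl$-symmetric, integer-weighted measure on $\Pant$. The nearly-Lebesgue distribution of feet on each torus $\NB(\sqrt{\gamma})$ will come from exponential mixing of the frame flow (Moore's theorem), with the constant $\mathbf{q}$ determined by the spectral gap of the Laplacian on $\M$; the $D_1 e^{-R/8}$ error comes from the discretisation step.

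First I would parameterise approximate skew pants by data on the frame bundle of $\M$. Following the skew right-angled hexagon picture of Section~2, a pants with half-lengths within $\epsilon$ of $R/2$ is determined (up to $\refl$) by a ``tripod'' consisting of three legs of hyperbolic length close to $R/4$ emanating from a common orthogonal configuration, together with three short closing segments. Equivalently, a pants is encoded by a pair of frames in the frame bundle separated by flow time close to $R/2$, together with matching conditions to close up into three cuffs. The Liouville measure on pairs of frames, restricted to the subset where all three resulting complex half-lengths lie within $\epsilon$ of $R/2$, defines a smooth measure $\nu$ supported on a finite union of $\Pant$-parameter regions. Using the thin-hexagon formulas \eqref{hex-1}--\eqref{hex-2} one checks that the Jacobian of the map from this frame-parameter space to $(\Pant, \foot)$ is bounded above and below with relative variation of order $e^{-\mathbf{q} R}$, and one builds in $\refl$-symmetry from the start via the natural orientation-reversing involution on frame pairs.

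The equidistribution step is the heart. For a Borel $U \subset \NB(\sqrt{\gamma})$, the mass $\ph\nu$ places on $U$ can be expressed as a correlation integral on the frame bundle of $\M$ between the indicator function of $U$ (localised at one end of the flow segment) and a similarly localised function at the other end, separated by flow time $\approx R/4$ (or $R/2$ after closing the cuff). Moore's theorem on exponential mixing of the frame flow, applied to Hölder smoothings of these indicator functions, yields
$$
\ph \nu (U) = K_\gamma \, \lambda(U)\bigl(1 + O(e^{-\mathbf{q} R})\bigr)
$$
for a constant $K_\gamma \ge 0$ depending only on $\gamma$ and the total mass of $\nu$ assigned to pants with $\gamma$ as a cuff. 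Taking $\beta$ to be the measure on $\NB(\sqrt{\Gamma})$ with density $K_\gamma$ on each torus $\NB(\sqrt{\gamma})$ gives the Radon--Nikodym bound of the theorem, with $D_2$ chosen to absorb the implicit constant from Moore's estimate.

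Finally I would rationalise $\nu$ to an integer-valued $\mu \in \Mes^{\refl}_0(\Pant)$. Each atom of the discrete pushforward (there are only finitely many $\Pi \in \Pant$ meeting the length constraint, with total $\nu$-mass $\sim e^{CR}$ and individual masses bounded below by some $e^{-C'R}$) is rounded to a nearby integer, preserving the $\refl$-symmetry by pairing $\Pi$ with $\refl(\Pi)$. Choosing the rationalisation scale carefully yields an atomic $\ph\mu$ that differs from $\ph\nu$ (and hence from $\beta$) by a measure of total variation $O(e^{-R/8})$ concentrated in at most $e^{-R/8}$-neighbourhoods of points, giving the desired $D_1 e^{-R/8}$-equivalence in the sense of the previous subsection. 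The main obstacle will be verifying the regularity hypotheses of Moore's theorem for the specific smoothings forced on us by the pants parameterisation, and controlling the Jacobian of that parameterisation uniformly in $R$ near the boundary of the admissible region (where the thin-hexagon asymptotics start to fail); getting both error terms to genuinely decay at exponential rates in $R$, rather than merely in $\epsilon$, is what forces the spectral-gap constant $\mathbf{q}$ into the statement.
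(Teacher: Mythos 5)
There is a genuine gap, in two related places. First, you attribute the $D_1e^{-R/8}$ equivalence error to the ``discretisation/rationalisation step,'' but Theorem~\ref{theorem-mes} does not require $\mu$ to have rational or integer weights at all --- that is the content of the later Proposition~\ref{proposition-mes}, handled by a separate rationalisation argument. In the paper's proof the measure $\mu$ is simply a pushforward of (Liouville measure times affinity weight) and is automatically finitely supported with real weights, and the $e^{-R/8}$ error has a purely geometric source: the ``predicted foot'' $\ft_\delta$ read off from a connected bipod differs from the true foot $\foot_\delta(\pi(T))$ of the assembled skew pants by $O(e^{-r/4})$ (Proposition~\ref{prop-dis}, equation~\eqref{f-foot}), and one then invokes Proposition~\ref{prop-basic-2}. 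Your account puts the error in the wrong place and would not survive as written, since there is no rounding step here to generate it.

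Second, your route to the Radon--Nikodym bound --- smoothing the indicator of a Borel set $U\subset\NB(\sqrt\gamma)$ and applying Moore's mixing estimate, or controlling the Jacobian of a parameterisation ``to $(\Pant,\foot)$'' --- would not give the pointwise almost-everywhere bound $K_\gamma\le |d\beta/d\lambda|\le K_\gamma(1+D_2e^{-\mathbf{q}R})$. Smoothing an indicator of an arbitrary Borel set introduces boundary errors that are not exponentially small and do not disappear. The paper avoids this by a factorisation that you are missing: for each $\delta$, it considers the \emph{bipod} measure $\nu_\delta$ (two legs only), which is exactly invariant under the natural $\C/(2\pi i\Z+l(\delta)\Z)$ action because the affinity function is $\PSLC$-invariant, so $(\ft_\delta)_*\nu_\delta$ is \emph{exactly} a constant multiple of Lebesgue measure on $\NB(\sqrt\delta)$. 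The tripod measure $\wt\mu|_{C_\delta}$ projected down to bipods is obtained from $\nu_\delta$ by multiplying by the total affinity of the third leg, $\abs(\omega^2(F_1),\overline\omega^2(F_2))$, which by exponential mixing (Lemma~\ref{mixing}) is $\frac{1}{\Lambda(\FR(\M))}(1+O(e^{-\mathbf{q}r}))$ \emph{uniformly} in $(F_1,F_2)$. This is what yields a density with exponentially small relative variation: the mixing error enters as a multiplicative perturbation of an a priori exactly uniform reference, not as a term to be compared with the Lebesgue measure of small sets. Without this exact $\C$-equivariance trick, the uniform Radon--Nikodym bound does not follow from mixing alone.
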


\begin{remark} This theorem holds in two dimensions as well.  That is, in the statement of the above theorem  we can replace a closed hyperbolic three manifold $\M$ with any hyperbolic closed surface. 
\end{remark}

We prove this theorem in the next section.  But first we prove Theorem \ref{thm-lab} assuming Theorem \ref{theorem-mes}. We have

\begin{proposition}\label{proposition-mes} 
There exist ${\bf q} >0$, $D>0$ so that for every $1 \ge \epsilon>0$ and every $R>0$ large enough, there exists a measure  
$\mu \in \Mes^{\refl}_0(\Pant)$ with the following properties
\begin{enumerate}
\item $\mu(\Pi)$ is a rational number for  every $\Pi \in \Pant$,
\item  if $\mu(\Pi)>0$ for some $\Pi \in \Pant$, then the half lengths $\hl(\omega^{i}(C))$
that correspond to the skew pants $\Pi$ satisfy the inequality 
$ |\hl(\omega^i(C))-{{R}\over{2}}| \le \epsilon $,

\item the measures $\ph \mu$  and $(\A_{1+i\pi})_{*} \ph \mu$   are $D R e^{ - {\bf q}  R}$-equivalent.

\end{enumerate}
\end{proposition}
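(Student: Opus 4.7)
The proof combines Theorem \ref{theorem-mes} with Theorem \ref{thm-Radon}, and concludes with a rationalization step.

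First, I apply Theorem \ref{theorem-mes} to obtain $\mu_0 \in \Mes^{\refl}_0(\Pant)$ satisfying condition (2), together with $\beta \in \Mes_0(\NB(\sqrt{\Gamma}))$ such that $\ph\mu_0$ and $\beta$ are $D_1 e^{-R/8}$-equivalent, and on each torus $\NB(\sqrt{\gamma}) = \C/(\frac{\len(\gamma)}{2}\Z + 2\pi i \Z)$ the Radon--Nikodym derivative $d\beta/d\lambda$ lies in $[K_\gamma, K_\gamma(1+D_2 e^{-\mathbf{q}R})]$. Since the lattice diameter $|\len(\gamma)|/2 + 2\pi$ is $O(R)$, Theorem \ref{thm-Radon} applied torus-by-torus gives that $\beta|_{\NB(\sqrt{\gamma})}$ is $O(Re^{-\mathbf{q}R})$-equivalent to $\beta_0^\gamma := K'_\gamma \lambda_\gamma$ (with $K'_\gamma$ fixed by matching total masses); assembling over $\gamma$ yields that $\beta$ is $O(Re^{-\mathbf{q}R})$-equivalent to $\beta_0 := \sum_\gamma \beta_0^\gamma$.

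Second, I observe that $\A_{1+i\pi}$ acts as an isometric translation on each torus and preserves Lebesgue, so $(\A_{1+i\pi})_*\beta_0 = \beta_0$. Since $\delta$-equivalence is preserved under push-forward by isometries (immediate from the definition), and by the triangle inequality of Proposition \ref{prop-basic-1}, I string together
\begin{equation*}
\ph\mu_0 \sim \beta \sim \beta_0 = (\A_{1+i\pi})_*\beta_0 \sim (\A_{1+i\pi})_*\beta \sim (\A_{1+i\pi})_*\ph\mu_0
\end{equation*}
and conclude that $\ph\mu_0$ and $(\A_{1+i\pi})_*\ph\mu_0$ are $D'Re^{-\mathbf{q}R}$-equivalent for some $D' > 0$, after choosing the final $\mathbf{q}$ strictly less than $1/8$ so that the stray term $e^{-R/8}$ is dominated by $Re^{-\mathbf{q}R}$.

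Finally, I rationalize. The support of $\mu_0$ is a finite collection of $\refl$-pairs $\{\Pi_j, \refl(\Pi_j)\}$ with real weights $w_j$, and I replace each $w_j$ by a non-negative rational $q_j$ with $|q_j - w_j|$ very small, setting $\mu(\Pi_j) = \mu(\refl(\Pi_j)) = q_j$. Conditions (1) and (2) follow immediately. The main obstacle is verifying condition (3): since $\delta$-equivalence requires exactly equal total masses and is sensitive to additive perturbations on atomic measures, one cannot transfer the equivalence from $\mu_0$ to $\mu$ by a naive continuity argument. My plan is to use the dual transport formulation: the $D'Re^{-\mathbf{q}R}$-equivalence of $\ph\mu_0$ and $(\A_{1+i\pi})_*\ph\mu_0$ yields a coupling on the atoms supported on pairs of distance $\le D'Re^{-\mathbf{q}R}$, and by taking $|q_j - w_j|$ exponentially small in $R$ one can adjust the coupling on the finitely many atoms---absorbing the correction mass by routing through the same $\delta$-neighbourhood graph with slightly enlarged $\delta = DRe^{-\mathbf{q}R}$---to produce a valid coupling for $(\ph\mu, (\A_{1+i\pi})_*\ph\mu)$. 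This establishes (3) and completes the proof.
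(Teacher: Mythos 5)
Your main chain of equivalences is essentially the same as the paper's. The paper also uses Theorem \ref{thm-Radon} torus-by-torus, the $\A_\zeta$-invariance of Lebesgue, the bound $|\len(\gamma)|/2 \le R/2 + 1$, and Proposition \ref{prop-basic-1} to string together $(\A_\zeta)_*\ph\mu \sim K'\lambda \sim \ph\mu$; it actually records the slightly stronger conclusion that $\ph\mu$ is $DRe^{-\mathbf{q}R}$-\emph{symmetric} (for all $\zeta$ simultaneously), but only the case $\zeta = 1 + i\pi$ is used, so your reduction is harmless. Your observation that $\mathbf{q}$ must be taken $< 1/8$ so the $e^{-R/8}$ term is dominated is implicit in the paper and worth stating.

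Where you diverge is the rationalization step, and your version, as written, is not yet a proof. You correctly identify that $\delta$-equivalence is not an open condition on the weights, so naive perturbation fails, but the remedy you sketch --- perturb a coupling and ``route correction mass through a slightly enlarged $\delta$-graph'' --- does not obviously terminate: the feasible polytope for the larger $\delta$ can have $\mu_0$ on its boundary (tight constraints for $\delta$ may remain tight for $\delta'$ if no new atoms enter the relevant $\Ne_{\delta'}(A)$), so enlarging $\delta$ does not by itself open up room to absorb the correction. The paper's route is cleaner and avoids this issue entirely: because $\ph\mu$ and $(\A_{1+i\pi})_*\ph\mu$ are finite atomic measures, the $\delta$-equivalence requirement reduces to finitely many inequalities of the form $\ph\mu(A) \le (\A_{1+i\pi})_*\ph\mu(\Ne_\delta(A))$ over subsets $A$ of the finite atom set, and each side is a $\Z_{\ge 0}$-linear combination of the weights $\mu(\Pi)$ (recall $\ph\mu(A) = \sum_\Pi \mu(\Pi)\,\#\{i : \foot(\Pi,\gamma^*_i) \in A\}$). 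Together with $\mu(\Pi) = \mu(\refl(\Pi)) \ge 0$ this is a rational linear system; since Theorem \ref{theorem-mes} supplies a real feasible point, the system has a rational feasible point (e.g.\ a basic feasible solution via Cramer's rule). The paper invokes this as ``the standard rationalisation procedure,'' citing \cite{kahn-markovic} and \cite{calegari}. You should replace your coupling-perturbation sketch with this linear-programming observation.
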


\begin{proof} Assume the notation and the conclusions of Theorem \ref{theorem-mes}. First we show that the measures $\ph \mu$  and 
$(\A_{1+i\pi})_{*} \ph \mu$   are $D R e^{-{\bf q} R}$-equivalent. 
Let $\gamma \in \Gamma$ be a closed geodesic such that  $\beta( \NB(\sqrt{\gamma})) >0$, that is the support of $\beta$ has a non-empty intersection with the torus 
$$
\NB(\sqrt{\gamma}) \equiv \C / \big( {{\len(\gamma)}\over{2}} \Z+2\pi i \Z \big).
$$
\noindent
The Lebesgue measure $\lambda$ on $\NB(\sqrt{\gamma})$  is invariant under the action $\A_{\zeta}$. This, together with Theorem \ref{thm-Radon}, 
implies that for any $\zeta \in \C$ the measure $(\A_{\zeta})_{*} \beta$ is $(2\pi+ \big| {{\len(\gamma^{*})}\over{2}} \big| ) D_2 e^{-{\bf q} R}$-equivalent with the measure $K'\lambda$, for some $K'>0$, where $D_2$ is from the previous theorem. Since $\big| {{\len(\gamma^{*})}\over{2}} \big| 
\le {{R}\over{2}}+1$,  we have that the measures
$(\A_{\zeta})_{*} \beta$ and  $K'\lambda$ are $C_1 R e^{- {\bf q} R}$-equivalent, for some $C_1>0$.
\vskip .1cm
On the other hand, the measures $(\A_{\zeta})_{*} \beta$ and $(\A_{\zeta})_{*} \ph \mu$ are $D_1e^{-{{R}\over{8}} }$-equivalent.  From Proposition \ref{prop-basic-1} we conclude that the measures  $(\A_{\zeta})_{*} \ph \mu$ and $K'\lambda$ are 
$D_2  (R e^{- {\bf q} R}+e^{ -{{R}\over{8}} })$-equivalent, for every $\zeta \in \C$, and for some constant $D_2>0$. 
Again, since $\lambda$ is  invariant under $\A_{\zeta}$ and from Proposition \ref{prop-basic-1} we conclude that $\ph \mu$ is $D R e^{-{\bf q}  R}$-symmetric, for some constant $D>0$ and assuming that ${\bf q} \le \frac{1}{8}$ (this assumption can be made without loss of generality). 
In particular, we have that the measures $\ph \mu$  and  $(\A_{1+i\pi})_{*} \ph \mu$   are $D R e^{- {\bf q} R}$-equivalent. 
\vskip .1cm
Both measures $\ph \mu$  and $ (\A_{1+i\pi})_{*} \ph \mu$ are atomic (with finitely many atoms), so it follows from the definition that  the measures $\ph \mu$  and $(\A_{1+i\pi})_{*} \ph \mu$   are $D R e^{- {\bf q} R}$-equivalent if and only if  a finite system of linear inequalities  with integer coefficients has a real valued solution. Then the standard rationalisation procedure (see Proposition 4.2 in \cite{kahn-markovic} and \cite{calegari}) implies that this system of equations has a rational solution, so we may assume that that the measure $\mu$ from Theorem \ref{theorem-mes} has rational weights. This proves the proposition.
\end{proof}

\subsection{Proof of Theorem \ref{thm-lab}} First we make several observations about an arbitrary measure  $\nu \in \Mes^{\refl}_0(\Pant)$. The measure 
$\nu$ is supported on finitely  many skew pants $\Pi \in \Pant$. Moreover, $\nu(\Pi)=\nu(\refl(\Pi))$, for every $\Pi \in \Pant$. Let $\Pant^{+}$ and $\Pant^{-}$ be disjoint subsets of $\Pant$, such that $\Pant^{+} \cup \Pant^{-}=\Pant$, and $\refl(\Pant^{+})=\Pant^{-}$ (there are many such decompositions of $\Pant$). Let $\nu^{+}$ and $\nu^{-}$ denote the restrictions of $\nu$ on the  sets $\Pant^{+}$ and $\Pant^{-}$ respectively. 
Then $\ph \nu^{+}=\ph \nu^{-}$ and $\ph \nu = 2 \ph \nu^{-}$ (this follows from the fact that $\foot(\Pi,\gamma^{*})=\foot(\refl(\Pi),-\gamma^{*})$). Therefore if the measure $\ph \nu$ is $\delta$-symmetric then so are the measures $\ph \nu^{+}$ and $\ph \nu^{-}$.
\vskip .1cm
Let $\mu$ be the measure from Proposition \ref{proposition-mes}. Then $\mu$ has rational weights. We multiply $\mu$ by a large enough integer and obtain the measure $\mu'$, such  that the weights $\mu'(\Pi)$ are even numbers, $\Pi \in \Pant$. Then $\ph \mu'$ and $ (\A_{1+i\pi})_{*} \ph \mu'$ are  
$D Re^{- {\bf q} R}$-equivalent. For simplicity we set $\mu=\mu'$. 
\vskip .1cm
Since $\mu$ is invariant under reflection and the weights are even integers, we see that $\mu \in \N \Pant$ is a $\refl$-symmetric formal sum. Let $\lab:\Lab \to \Pant^{*}$ denote the corresponding legal labeling (see the example at the beginning of this section). It remains to define an admissible involution $\sigma:\Lab \to \Lab$. 
\vskip .1cm
Fix $\gamma^{*} \in \Gamma^{*}$. Let $X^{+} \subset \Lab$ such that $ a \in X^{+}$ if  $\lab(a)=(\Pi,\gamma^{*})$, where $\Pi \in \Pant^{+}$. 
Define $X^{-}$ similarly, and let $f^{+/-}:X^{+/-} \to \Pant^{*}$ denote the corresponding restriction of the labeling map $\lab$ on the set $X^{+/-}$
(observe that $f^{+}=\refl \circ f^{-} \circ \refl_{\Lab}$). 
\vskip .1cm
Denote by $\alpha^{+}$ the restriction of $\ph \mu^{+}$ on $\NB(\sqrt{\gamma})$ (define $\alpha^{-}$ similarly). Observe that $\alpha^{+}=\alpha^{-}$. Then by the definition of $\Lab$,  the measure $\alpha^{+/-}$ is the $\ph$ of  
the push-forward of the counting measure on $X^{+/-}$ by the map $f^{+/-}$.   
\vskip .1cm
Define $g:X^{-} \to \NB(\sqrt{\gamma} )$  by $g=\A_{1+i\pi} \circ f^{-}$. Then the measure $(\A_{1+i \pi } )_{*} \alpha^{-}$ is the push-forward of the counting measure on $X^{-}$ by the map $g$. Since $\alpha^{+}$ and $(\A_{1+i \pi})_{*} \alpha^{-}$ are $2 D R e^{- {\bf q} R}$-equivalent, by Theorem \ref{thm-Hall}  there is a bijection $h:X^{+} \to X^{-}$, such that $\dist(g(h(b)),f^{+}(b)) \le 2D R e^{- {\bf q} R}$, for any $b \in X^{+}$ (recall that $\dist$ denotes the Euclidean distance on  $\NB(\sqrt{\gamma^{*}})$). 
\vskip .1cm
We define $\sigma:X^{+} \cup X^{-} \to X^{+} \cup X^{-}$ by $\sigma(x)=h(x)$ for $x \in X^{+}$, $\sigma(x)=h^{-1}(x)$ for $x \in X^{-}$.
The  map $\sigma:(X^{+}\cup X^{-}) \to (X^{+}\cup X^{-})$ is an involution. By varying $\gamma^{*}$ we construct the involution 
$\sigma:\Lab \to \Lab$. It follows from the definitions that $\sigma$ is admissible and that the pair $(\lab,\sigma)$ satisfies the assumptions of  Theorem \ref{thm-lab}.

\section{Measures on skew pants and the frame flow} We start by outlining the construction of the measures from Theorem \ref{theorem-mes}.
Fix a sufficiently small number $\epsilon>0$   and let $r>>0$ denote any large enough real number. Set $R=2(r-\log{{4}\over{3}})$. We let $\Pant_{\epsilon,R}$ be the set of skew pants $\Pi$ in $\M$ for which  $|\hl(\delta)-\frac{R}{2}| < \epsilon$ for all $\delta \in \partial{\Pi}$. In this section we will construct a measure $\mu$ on  $\Pant_{D\epsilon,R}$ (for some universal constant $D>0$)
and a measure $\beta_{\delta}$ on each $\NB(\sqrt{\delta})$ such that for $r$ large enough we have
 
$$
\left| K(\delta){{d\beta_{\delta} }\over{d \Eucl _{\delta} }}-1 \right| \le  e^{-{\bf q} r},
$$
\noindent
and the measures $\ph\mu|_{\NB(\sqrt{\delta})}$ and $\beta_{\delta}$ are $Ce^{-{{r}\over{4}} }$ equivalent,  
where $\Eucl_{\delta}$ is the Euclidean measure on $\NB(\sqrt{\delta})$, the unique probability measure invariant under $\C/(2\pi i \Z+l(\delta)\Z)$ action.
\vskip .1cm
Let $\FR(\Ho)$ denote the set of (unit) 2-frames $F_p=(p,u,n)$ where $p \in \Ho$ and  the unit tangent vectors $u$ and $n$ are orthogonal at $p$. 
By $\flow_t$, $t \in \R$,  we denote the frame flow that acts on $\FR(\Ho)$ and by 
$\Lambda$ the invariant Liouville measure on $\FR(\Ho)$.  
We then define a bounded non-negative affinity function $\abs=\abs_{\epsilon,r}:\FR(\Ho) \times \FR(\Ho) \to \R$ with the following properties 
(for $r$ large enough):

\begin{enumerate}  
\item $\abs(F_p,F_q)=\abs(F_q,F_p)$, for every $F_p,F_q, \in \FR(\Ho)$.
\item $\abs(A(F_p),A(F_q))=\abs(F_p,F_q)$, for every $A \in \PSLC$.
\item If $\abs(F_p,F_q)>0$, and $F_p=(p,u,n)$ and $F_q=(q,v,m)$ then 
$$
|d(p,q)-r|<\epsilon,
$$
$$
\Theta(n \at q,m)<\epsilon,
$$
$$
\Theta(u,v(p,q))<Ce^{-{{r}\over{4}}}, \,\, \Theta(v,v(q,p))<Ce^{-{{r}\over{4}}}.
$$
\noindent
where $\Theta(x,y)$ denotes the unoriented angle between vectors $x$ and $y$, and $v(p,q)$ denotes the unit vector at $p$ that is tangent 
to the geodesic segment from $p$ and $q$. Here $n@q$ denotes the parallel transport of $n$ along the geodesic segment from $p$ (where $n$ is based) to $q$.
\item For every co-compact group $G < \PSLC$ we have
$$
\left|\sum\limits_{A \in G} \abs(F_p,A(F_q))-{{1}\over{\Lambda(\FR(\Ho) / G)}} \right|<e^{-{\bf q}_{G} r}.
$$
\end{enumerate}

The last property will follow from the exponential mixing of the frame flow on $\FR(\Ho)/ G$.

\vskip .1cm
Now let $F_p=(p,u,n)$ and $F_q=(q,v,m)$ be two 2-frames in $\FR(\M)=\FR(\Ho)/\KG$, where $\M$ is a closed hyperbolic 3-manifold and $\KG$ is the corresponding Kleinian group. 
Let $\gamma$ be a geodesic segment in $\M$ between $p$ and $q$.
We let $\wt{F}_p$ be an arbitrary lift of $F_p$ to $\FR(\Ho)$, and let $\wt{F}_q$ be the lift of $F_q$ along $\gamma$. We let 
$\abs_{\gamma}(F_p,F_q)=\abs(\wt{F}_p,\wt{F}_q)$. By the properties $(1)$ and $(2)$ this is well defined. Moreover for any $F_p,F_q \in \FR(\M)$

\begin{equation}\label{*}
\left|\sum\limits_{\gamma} \abs_{\gamma}(F_p,F_q)-{{1}\over{\Lambda(\FR(\M) ) }} \right|<e^{-{\bf q} r},
\end{equation}
\noindent
by property $(4)$.
\vskip .1cm
We define $\omega:\FR(\Ho) \to \FR(\Ho)$ by $\omega(p,u,n)=(p,\omega(u),n)$ where $\omega(u)$ is equal to $u$ rotated around $n$ for ${{2\pi}\over{3}}$, using
the right-hand rule. Observe that $\omega^{3}$ is the identity and we let $\omega^{-1}=\overline{\omega}$. 
To any frame $F$ we associate the tripod $T=(F,\omega(F),\omega^{2}(F))$ and likewise to any frame $F$ we associate  the ``anti-tripod'' $\overline{T}=(F,\overline{\omega}(F),\overline{\omega} ^{2}(F) )$. We have the similar definitions for frames in $\FR(\M)$.
\vskip .1cm
Let $\theta$-graph be the $1$-complex comprising three one cells (called $h_0,h_1,h_2$) each connecting two $0$-cells (called $\underline{p}$ and $\underline{q}$). A connected pair of tripods is a pair of frames $F_p=(p,u,n)$, $F_q=(q,v,m)$ from $\FR(\M)$, and three geodesic segments $\gamma_i$, $i=0,1,2$, that connect $p$ and $q$ in $\M$. We abbreviate $\gamma=(\gamma_0,\gamma_1,\gamma_2)$ and we let
$$
\taso_{\gamma}(T_p,T_q)=\prod\limits_{i=0}^{2} \abs_{\gamma_{i}}(\omega^{i}(F_p),\overline{\omega} ^{i}(F_q)).
$$
\noindent
We say $(T_p,T_q,\gamma)$ is a well connected pair of tripods along the triple of segments $\gamma$ if $\taso_{\gamma}(T_p,T_q)>0$. 
\vskip .1cm 
For any connected pair of tripods  $(T_p,T_q,\gamma)$ there is a continuous map from the $\theta$-graph to $\M$ that is obvious up to homotopy (map $\underline{p}$ to $p$ and $\underline{q}$ to $q$, and $h_i$ to $\gamma_i$). If $(T_p,T_q,\gamma)$ is a well connected pair of tripods then this map will be injective on the fundamental group $\pi_1(\theta-\text{graph})$. Moreover, the resulting pair of skew pants $\Pi$ has the half-lengths $D\epsilon$ close to $\frac{R}{2}$ where  $R=2(r-\log{{4}\over{3}})$ (then the cuff lengths of the skew pants $\Pi$ are close to $R$) and $D$ is a universal constant. Recall that  the collection of skew pants whose half-lengths are $D\epsilon$ close to $\frac{R}{2}$ (for some large $R$ and fixed $\epsilon$) is called  $\Pant_{D\epsilon,R}$.

\vskip .1cm
We write $\Pi=\pi(T_p,T_q,\gamma)$, so $\pi$ maps well connected pairs of tripods to pairs of  skew pants in $\Pant_{D\epsilon,R}$. 
We define the measure $\wt{\mu}$ on well connected tripods by 
$$
d\wt{\mu}(T_p,T_q,\gamma)=\taso_{\gamma}(T_p,T_q)\, d\lambda_{T}(T_p,T_q,\gamma),
$$
\noindent
where $\lambda_{T}(T_p,T_q,\gamma)$ is the product of the Liouville measure $\Lambda$ (for $\FR(\M)$) on the first two terms, and the counting measure on the third term. The measure $\lambda_{T}$ is infinite but $\abs_{\gamma}(T_p,T_q)$ has compact support, so $\wt{\mu}$ is finite. We define the measure $\mu$ on $\Pant_{D\epsilon,R}$ by $\mu=\pi_{*} \wt{\mu}$. This is the measure from Theorem \ref{theorem-mes}.

\vskip .1cm
It remains to construct the measure $\beta_{\delta}$ and show the $Ce^{-{{r}\over{4}}}$-equivalence of $\beta_{\delta}$ and $\ph\mu|_{\NB(\sqrt{\delta})}$. 
To any frame $F$ we associate the bipod $B=(F,\omega(F))$ and likewise to any frame $F$ we associate the ``anti-bipod'' 
$\overline{B}=(F,\overline{\omega}(F))$. We have the similar definitions for frames in $\FR(\M)$.

We say that  $(B_p,B_q,\gamma_0,\gamma_1)$ is a well connected pair of bipods along the pair of segments $\gamma_0$ and $\gamma_1$ if
$$
\abs_{\gamma_{0}}(F_p,F_q) \abs_{\gamma_{1}}(\omega(F_p),\overline{\omega}(F_q))>0.
$$ 
Then the closed curve $\gamma_0 \cup \gamma_1$ is homotopic to a closed geodesic in $\M$.   
Given a closed geodesic  $\delta \in \Gamma$ we let $S_{\delta}$ be the set of well connected bipods $(B_p,B_q,\gamma_0,\gamma_1)$  such that  $\gamma_0 \cup \gamma_1$ is homotopic to $\delta$.
(Note that $S_{\delta}$ is an open subset of the space of connected bipods which is the space of quadruples $(B_p,B_q,\gamma_0,\gamma_1)$, where $B_p$ and $B_q$ are tripods and $\gamma_0$ and $\gamma_1$  are geodesic  segments in $\M$ connecting the points $p$ and $q$). The  set $S_{\delta}$  of connected bipods carries the natural measure $\lambda_{B}$  which is the product of the Liouville measures on the first two terms and the counting measure on  the third and fourth.

\begin{remark} One can show that if $\epsilon$ is small in terms of the injectivity radius of $\M$ then for two bipods $B_p$ and $B_q$ in $\FR(\M)$ there exists at most one pair of segments $(\gamma_0,\gamma_1)$ such that $(B_p,B_q,\gamma_0,\gamma_1)$ is a well connected pair of bipods and that $\gamma_0 \cup \gamma_1$ is homotopic to $\delta$. However, we do not use this.
\end{remark}

Next, we define the action of the torus $\C/(2\pi i \Z+l(\delta)\Z)$  on $S_{\delta}$ that leaves the measure $\lambda_{B}$ invariant.
\vskip .1cm
Let $\TT_{\delta}$ be the open solid torus cover associated to $\delta$ (so $\delta$ lifts to a closed geodesic $\wt{\delta}$ in $\TT_{\delta}$).
Given a pair of well connected bipods in $S_{\delta}$, each bipod lifts  in a unique way to a bipod in  $\FR(\TT_{\delta})$ such that the pair of the lifted bipods is well connected in $\TT_{\delta}$.
We denote by $\wt{S}_{\delta}$ the set of such lifts, so  $\wt{S}_{\delta}$ is in one-to-one correspondence with $S_{\delta}$. There is a natural action of the torus $\C/(2\pi i \Z+l(\delta)\Z)$ on both $\NB(\delta)$ ($=\NB(\wt{\delta})$) and on $\FR(\TT_{\delta})$, and hence on $\wt{S}_{\delta}$ as well. Since $\wt{S}_{\delta}$ and $S_{\delta}$ are in one-to-one correspondence we have the induced action of $\C/(2\pi i \Z+l(\delta)\Z)$  on $S_{\delta}$. This action leaves invariant the measure $\lambda_{B}$ on $S_{\delta}$.

For either choice of $\hl(\delta)$ there is a natural action of $\C/(2\pi i \Z+l(\delta)\Z)$ on $\NB(\sqrt{\delta})$ via  $\C/(2\pi i \Z+\hl(\delta)\Z)$. We define in Section 4.7 a map 
$\ft_{\delta}:S_{\delta} \to \NB(\sqrt{\delta})$ with two important properties. The first one is that $\ft_{\delta}$   is equivariant with respect to the action of  $\C/(2\pi i \Z+l(\delta)\Z)$. 
The second  property is as follows.

Let $C_{\delta}$ be the set of well connected tripods $(T_p,T_q,\gamma)$ for which $\gamma_0 \cup \gamma_1$ is homotopic to $\delta$, and let $\chi:C_{\delta} \to S_{\delta}$ 
be the forgetting map, so $\chi(T_p,T_q,\gamma_0,\gamma_1,\gamma_2)=(B_p,B_q,\gamma_0,\gamma_1)$.  Then for any pair of well connected tripods  $T=(T_p,T_q,\gamma) \in C_{\delta}$
\begin{equation}\label{f-foot}
|\ft_{\delta}(\chi (T) )-\foot_{\delta}(\pi(T))|<Ce^{-{{r}\over{4}}},
\end{equation}
\noindent
where $\pi(T)$ is the skew pants defined above (recall that the map $\foot_{\delta}(\Pi)$ that associates the foot to a pair of marked skew pants $(\Pi,\delta)$, $\delta \in \partial{\Pi}$,
was defined in Section 3).  In other words, the map $\ft_{\delta}$ predicts feet of the skew pants $\pi(T)$ (just by knowing the pair of well connected bipods  $\chi(T)$) up to an error of $Ce^{-{{r}\over{4}}}$. This $Ce^{-{{r}\over{4}}}$ comes from the property $(3)$ of the affinity function $\abs$ defined above.
\vskip .1cm
There are two more natural measures on $S_{\delta}$. The first is $\chi_{*}(\wt{\mu}|_{C_{\delta}})$. The second is $\nu_{\delta}$ defined on $S_{\delta}$ by
$$
d\nu_{\delta}(B_p,B_q,\gamma_0,\gamma_1)=\abs_{\gamma_{0}}(F_p,F_q)\abs_{\gamma_{1}}(\omega(F_p),\overline{\omega}(F_q))\, d\lambda_{B}(F_p,F_q,\gamma_0,\gamma_1),
$$
\noindent
where we recall that $\lambda_{B}(F_p,F_q,\gamma_0,\gamma_1)$ is the product of the Liouville measure on the first two terms and the counting measure on the last two.
The two measures satisfy the fundamental inequality
\begin{equation}\label{**}
\left| {{d \chi_{*}(\wt{\mu}|_{C_{\delta}}) }\over{ d\nu_{\delta}(B_p,B_q,\gamma_0,\gamma_1)}}-{{1}\over{\Lambda(\FR(\M))}} \right|<Ce^{-{\bf q} r},
\end{equation}
\noindent
because the total affinity between $\omega^{2}(F_p)$ and $\overline{\omega}^{2}(F_q)$ (summing over all positive connections $\gamma_2$) is exponentially close to ${{1}\over{\Lambda(\FR(\M))}}$ by the inequality (\ref{*}) above. 

Moreover, since $\lambda_{B}$ and the product $\abs_{\gamma_{0}}(F_p,F_q)\abs_{\gamma_{1}}(\omega(F_p),\overline{\omega}(F_q))$  are both invariant under the action of  $\C/(2\pi i \Z+l(\delta)\Z)$, 
we see that $\nu_{\delta}$ is also invariant under the action of $\C/(2\pi i \Z+l(\delta)\Z)$. 
Therefore $(\ft_{\delta})_{*}\nu_{\delta}$ is as well, because $\ft_{\delta}$ is  $\C/(2\pi i \Z+l(\delta)\Z)$ equivariant. It follows that $(\ft_{\delta})_{*}(\nu_{\delta})=K_{\delta}\Eucl_{\delta}$, for some constant $K_{\delta}$. Therefore, by (\ref{**}) 
\begin{equation}\label{***}
\left|{{ d(\ft_{\delta})_{*}(\wt{\mu}|_{C_{\delta}} ) }\over{ d K'_{\delta}\Eucl_{\delta} }}-1 \right|<Ce^{-{\bf q}r},
\end{equation}
\noindent
where $K'_{\delta}=K_{\delta}/\Lambda(\FR(\M))$.
\vskip .1cm
This measure $(\ft_{\delta} )_{*} (\wt{\mu} |_{C_{\delta}} )$ is our desired measure $\beta_{\delta}$; it is $Ce^{-{{r}\over{4}}}$-equivalent to the measure $\ph \mu|_{\NB(\sqrt{\delta} ) }$ because the later measure is just $(\foot_{\delta} )_{*}  \pi_{*}(\wt{\mu}|_{C_{\delta}} )$, and as we already said
$$
|\ft_{\delta}(\chi(T))-\foot_{\delta}(\pi(T))|<Ce^{-{{r}\over{4}}},
$$
\noindent 
for every tripod $T$ in $C_{\delta}$.

\smallskip

We define $\abs(F_p, F_q)$ in Section 4.4,
and we prove that the skew pants $\pi(T_p, T_q, \gamma) \in \Pant_{D\epsilon,R}$  (for some universal constant $D>0$)
when $\taso_\gamma(T_p, T_q) > 0$ in Sections 4.5 and 4.6, using preliminaries developed in Section 4.1 and 4.2.
We define $\ft_\delta$ and prove \eqref{f-foot} in Section 4.7,
using preliminaries developed in Section 4.3. 
Finally we prove equation \eqref{***} in Section 4.8.

\subsection{The Chain Lemma} Let $T^{1}(\Ho)$ denote the unit tangent bundle. Elements of  $T^{1}(\Ho)$ are pairs $(p,u)$, where $p \in \Ho$ and $u \in T^{1}_p(\Ho)$.  For $u,v \in T^{1}_p(\Ho)$ we let $\Ang(u,v)$ denote the unoriented angle between $u$ and $v$. The function $\Ang$ takes values in the interval $[0,\pi]$. 
For $a,b \in \Ho$ we let $v(a,b) \in T^{1}_a(\Ho)$ denote the unit vector at $a$ that points toward $b$. If $v \in T^{1}_a(\Ho)$ then $v \at b \in T^{1}_b(\Ho)$ denotes the vector parallel transported to $b$ along the geodesic segment connecting $a$ and $b$. By $(a,b,c)$ we denote the hyperbolic triangle with vertices  $a,b,c \in \Ho$. For two points $a,b \in \Ho$, we let $|ab|=d(a,b)$.

\begin{proposition}\label{prop-tra-1} Let $a,b,c \in \Ho$ and $v \in T^{1}_a(\Ho)$. Then the inequalities  
$$
\Ang(v \at b \at c \at a,v)\le \text{Area}(abc) \le  |bc|,
$$
\noindent
hold,  where  $\text{Area}(abc)$ denotes the hyperbolic area of the triangle $(a,b,c)$. 
\end{proposition}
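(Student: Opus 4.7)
The statement splits into two bounds, and both reduce to $\Ha$ since any three points $a, b, c \in \Ho$ lie on a common totally geodesic plane $P \cong \Ha$.

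For the first inequality, I would decompose $v = v_\parallel + v_\perp$ with $v_\parallel \in T_a P$ and $v_\perp \in (T_a P)^\perp$. Since $P$ is totally geodesic in $\Ho$ and $\Ho$ has constant curvature, parallel transport along any geodesic of $P$ preserves both $TP$ and its (flat) normal line bundle, acting as intrinsic $\Ha$-transport on the tangential factor and trivially on the normal factor. Traversing $a \to b \to c \to a$ therefore sends $v$ to $v' = R_\theta(v_\parallel) + v_\perp$, where $R_\theta$ rotates $T_a P$ by the 2-dimensional holonomy angle, which by Gauss--Bonnet in $\Ha$ equals $\theta = \Area(abc)$. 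The elementary identity
$$
\cos \Ang(v, v') = \|v_\parallel\|^2 \cos \theta + \|v_\perp\|^2 \ge \cos \theta
$$
immediately yields $\Ang(v, v') \le \theta = \Area(abc)$.

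For the second inequality I first handle the semi-ideal case: in the upper half-plane, place $b = (0, 1)$, $c = (0, e^d)$ so $|bc| = d$, and let $a_\infty = (t, 0)$ with $u = \log t$. A direct computation of tangent directions to the sides at $b$ and $c$ gives $\cos B = \tanh u$ and $\cos C = \tanh(d - u)$, hence
$$
\Area(b c a_\infty) = \pi - B - C = \arcsin(\tanh u) + \arcsin(\tanh(d - u)).
$$
Since $\arcsin(\tanh x) = \int_0^x \operatorname{sech}(s)\, ds \le x$ for $x \ge 0$, and since the sum attains its maximum at $u = d/2$ by symmetry, one obtains $\Area(b c a_\infty) \le 2 \arcsin(\tanh(d/2)) \le d$.

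For a general triangle I would extend the segment $ba$ beyond $a$ to its ideal endpoint $a_1 \in \partial \Ha$ and form the semi-ideal triangle $a_1 b c$. The vertex $a$ lies on the edge $a_1 b$, and the edge $ac$ enters the interior of $a_1 b c$ at $a$ and cannot exit before reaching $c$ (two distinct geodesics in $\Ha$ meet at most once), so $abc \subset a_1 b c$ and monotonicity of area gives $\Area(abc) \le \Area(a_1 b c) \le |bc|$. The main obstacle is this containment step; the remaining pieces are a Gauss--Bonnet observation and a short explicit computation.
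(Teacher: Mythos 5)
Your proof is correct. For the first inequality you follow essentially the same route as the paper, namely Gauss--Bonnet, but you supply the detail the paper leaves implicit: the paper simply asserts that Gauss--Bonnet gives $\Ang(v\at b\at c\at a,v)\le\Area(abc)$ for \emph{every} $v\in T^1_a(\Ho)$ (and then remarks that equality holds when $v$ lies in the plane of the triangle), whereas you make precise why the general case reduces to the planar one: total geodesy of the plane $P$ means ambient parallel transport along the boundary loop respects the splitting $T_a\Ho=T_aP\oplus N_a$, acting as intrinsic holonomy on $T_aP$ and trivially on the (flat, rank-one) normal bundle, and the elementary cosine identity $\cos\Ang(v,v')=\|v_\parallel\|^2\cos\theta+\|v_\perp\|^2\ge\cos\theta$ finishes it. This is a clean and complete version of what the paper sketches.

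The more substantial difference is in the second inequality. The paper disposes of it with a single sentence, and in fact that sentence reads with the inequality reversed (``the length of a side $\dots$ is smaller than the area $\dots$. We have $|bc|\le\Area(abc)$''), which contradicts the statement of the proposition and is false in general (thin triangles have small area but long sides); this is evidently a slip. You instead give a genuine proof: an explicit computation in the semi-ideal case, $\Area(bca_\infty)=\arcsin(\tanh u)+\arcsin(\tanh(d-u))$, maximized at $u=d/2$ where it equals $2\operatorname{gd}(d/2)\le d$ since $\operatorname{gd}'=\operatorname{sech}\le1$, followed by the containment of a general triangle $abc$ in the semi-ideal triangle $a_1bc$ obtained by pushing $a$ to the ideal endpoint of the ray from $b$ through $a$. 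That containment is immediate from geodesic convexity: $a$ lies on the edge $\overline{a_1b}$, so $a,b,c$ and hence their convex hull lie in $\overline{a_1bc}$. This part of your argument supplies a proof where the paper gives only an (misstated) assertion, and it is correct as written.
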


\begin{proof} It follows from the Gauss-Bonnet  theorem that the inequality $\Ang(v \at b \at c \at a,v)\le \text{Area}(abc)$ holds for every $v \in T^{1}_a(\Ho)$. Moreover, if $v$ is in the plane of the triangle $(a,b,c)$, then the equality  $\Ang(v \at b \at c \at a,v)= \text{Area}(abc)$ holds. 
\vskip .1cm

We now prove that in every hyperbolic triangle the length of a side is greater than the area of the triangle, that is we prove $|bc| \ge \text{Area}(abc)$. Consider the geodesic ray that starts at $b$ and that contains $a$, and let  $a' \in \partial{\Ha}$ be the point where this ray hits the ideal boundary. Then the triangle $(a,b,c)$ is contained in the triangle $(a',b,c)$, so it suffices to show that $\text{Area}(a',b,c) \le |bc|$. Thus we may assume that the vertex $a$ is a point on $\partial{\Ha}$.

Considering the standard model of the upper half plane $\Ha=\{z \in \C: \IM(z)>0 \}$, we can assume that $a=\infty$ and that the geodesic segment $(bc)$ lies on the unit circle $\{z \in \C:|z|=1 \}$.
By the first part of the proposition we know that $\text{Area}(abc)$ is  equal to $\alpha$, where $\alpha$ is the unoriented angle between the Euclidean lines $l_b$ and $l_c$, where $l_b$ contains $0$ and $b$ and $l_c$ contains $0$ and $c$ ($0 \in \C$ denotes the origin). Since $b$ and $c$ lie on the unit circle we have that $\alpha$ is also equal to the Euclidean length of the arc of the unit circle between $b$ and $c$. On the other hand, the hyperbolic length of this arc (which is the geodesic segment $(bc)$ between $b$ and $c$ in the hyperbolic metric) is strictly larger than $\alpha$ because the density of the hyperbolic metric is $y^{-1}|dz|$, which is greater  than $1$ on the unit circle. We have
$$
\text{Area}(abc) \le \alpha \le |bc|,
$$
which proves the proposition.

\end{proof}

The following claim will be used in the proof of Theorem \ref{thm-chain} below.

\begin{claim}\label{claim-1} Let $a,b,c \in \Ho$. Then the inequality 
$$
\Ang( v(c,a), v(b,a) \at c ) \le \Ang( v(a,b),v(a,c) ) + \text{Area}(abc) \le \Ang(v(a,b),v(a,c)) + |bc|
$$
holds.

\end{claim}

\begin{proof} We have

\begin{align*} 
\Ang( v(c,a), v(b,a) \at c ) &= \Ang( v(c,a) \at a, v(b,a) \at c \at a ) \\
&= \Ang( -v(a,c), v(b,a) \at c \at a ) \\
&\le  \Ang( -v(a,c), -v(a,b))+ \Ang(-v(a,b),v(b,a) \at c \at a) \\
&= \Ang(v(a,c), v(a,b))+ \Ang(-v(a,b) \at b, v(b,a) \at c \at a \at b) \\
&= \Ang(v(a,c), v(a,b))+ \Ang(v(b,a), v(b,a) \at c \at a \at b).
\end{align*}
By the previous proposition we have $\Ang(v(b,a), v(b,a) \at c \at a \at b) \le \text{Area}(abc) \le |bc|$, and we are finished.

\end{proof}

The following two propositions are elementary and follow from the $\cosh$ rule for hyperbolic triangles.

\begin{proposition}\label{prop-tra-2} Let $(a,b,c)$ be a hyperbolic triangle such that $|ab|=l_1$ and $|bc|=\eta$. 
Then for $l_1$ large and $\eta$ small enough, the inequality
$\Ang(v(a,b),v(a,c)) \le D \eta e^{-l_{1}}$ holds, for some constant $D>0$.
\end{proposition}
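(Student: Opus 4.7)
The plan is to apply the hyperbolic law of sines to the triangle $(a,b,c)$. Let $\alpha=\Ang(v(a,b),v(a,c))$ denote the angle at the vertex $a$, and let $\beta$ denote the angle at $b$. The hyperbolic sine rule gives
$$
\frac{\sin\alpha}{\sinh|bc|}=\frac{\sin\beta}{\sinh|ac|},
$$
so in particular $\sin\alpha\le \sinh\eta/\sinh|ac|$ since $\sin\beta\le 1$.

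Next I would control $|ac|$ from below using the triangle inequality $|ac|\ge |ab|-|bc|=l_1-\eta$. Hence for $l_1$ sufficiently large (and $\eta$, say, at most $1$) we obtain $\sinh|ac|\ge \tfrac12 e^{l_1-\eta}\ge \tfrac{1}{2e}e^{l_1}$, while the elementary estimate $\sinh\eta\le 2\eta$ for $\eta$ small gives $\sinh\eta\le 2\eta$. Combining these yields
$$
\sin\alpha \;\le\; \frac{2\eta}{\tfrac{1}{2e}e^{l_1}}\;=\;4e\,\eta\,e^{-l_1}.
$$

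Finally, for $l_1$ large and $\eta$ small the right-hand side is small, so $\alpha$ itself is small (in particular $\alpha\le\pi/2$) and we can use $\alpha\le (\pi/2)\sin\alpha$ to pass from $\sin\alpha$ to $\alpha$. This produces a bound $\alpha\le D\eta e^{-l_1}$ with $D=2\pi e$ (or any comparable absolute constant), completing the proof.

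There is essentially no obstacle; the statement is a direct computation from the sine rule together with a crude lower bound on $|ac|$. The only point to pay attention to is ensuring that the bounds on $\sinh\eta$, on $\sinh|ac|$, and the linearization $\alpha\lesssim\sin\alpha$ are all valid simultaneously, which is guaranteed by the hypotheses ``$l_1$ large and $\eta$ small enough.''
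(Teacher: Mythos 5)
Your argument is essentially correct and gives a clean, elementary proof via the hyperbolic sine rule; the paper merely remarks that the proposition is elementary and ``follows from the $\cosh$ rule,'' without giving details, so the sine-rule route is a perfectly reasonable (arguably more direct) alternative. Two small points deserve attention. First, the displayed inequality $\sinh|ac|\ge\tfrac12 e^{l_1-\eta}$ is literally false, since $\sinh x=\tfrac12(e^x-e^{-x})<\tfrac12 e^x$; you should instead use something like $\sinh x\ge\tfrac14 e^x$ for $x\ge1$, which of course only changes the absolute constant. Second, and more substantively, the passage from ``$\sin\alpha$ is small'' to ``$\alpha$ is small'' requires you to rule out $\alpha$ being near $\pi$, since $\sin$ is not injective on $[0,\pi]$. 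This is genuine content, not automatic: you need to observe either that $\alpha$ is opposite the shortest side $bc$ of the triangle (so it is the smallest angle, hence $<\pi/3$), or, using the hyperbolic cosine rule, that if $\alpha\ge\pi/2$ then $\cosh\eta=\cosh|ab|\cosh|ac|-\sinh|ab|\sinh|ac|\cos\alpha\ge\cosh l_1\cosh|ac|$, which is impossible for $\eta$ small and $l_1$ large. Once you add one of these observations, the step $\alpha\le(\pi/2)\sin\alpha$ is justified and the proof is complete.
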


\begin{proposition}\label{prop-tra-3} Let $(a,b,c)$ be a hyperbolic triangle and set $|ab|=l_1$, $|cb|=l_2$ and $|ac|=l$. Let $\eta=\pi-\Ang(v(b,a),v(b,c))$.  Then  for $l_1$ and $l_2$ large we have
\begin{enumerate}
\item $|(l-(l_1+l_2))+\log 2 -\log(1+\cos\eta) | \le D e^{- 2\min\{l_{1},l_{2} \} }$, for any  $0 \le \eta \le {{\pi}\over{2}}$,
\item $|l-(l_1+l_2)| \le D \eta$, for   $\eta$ small, 
\item $\Ang(v(a,c),v(a,b)) \le D \eta e^{-l_{1}}$,  for   $\eta$ small, 
\item $\Ang(v(c,a),v(c,b)) \le D \eta e^{-l_{2}}$,  for   $\eta$ small, 
\end{enumerate}
for some constant $D>0$. 
\end{proposition}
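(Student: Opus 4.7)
The plan is to apply the hyperbolic cosine and sine rules to the triangle $(a,b,c)$ with angle $\pi-\eta$ at $b$. The cosine rule reads
\[
\cosh l = \cosh l_1 \cosh l_2 + \sinh l_1 \sinh l_2 \cos\eta,
\]
and the sine rule yields
\[
\sin A = \frac{\sinh l_2 \sin\eta}{\sinh l}, \qquad \sin C = \frac{\sinh l_1 \sin\eta}{\sinh l},
\]
where $A = \Ang(v(a,c),v(a,b))$ and $C = \Ang(v(c,a),v(c,b))$. Everything will be extracted from these two identities.

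For (1) I would expand the right-hand side of the cosine rule using $\cosh l_i = \tfrac12(e^{l_i}+e^{-l_i})$ and $\sinh l_i = \tfrac12(e^{l_i}-e^{-l_i})$, which gives
\[
\cosh l = \tfrac14 e^{l_1+l_2}(1+\cos\eta) + \tfrac14(e^{l_1-l_2}+e^{l_2-l_1})(1-\cos\eta) + \tfrac14 e^{-(l_1+l_2)}(1+\cos\eta).
\]
Since $\eta \in [0,\pi/2]$ keeps $1+\cos\eta \ge 1$ and $(1-\cos\eta)/(1+\cos\eta) \le 1$, the subdominant groups contribute a relative error of at most a constant multiple of $e^{-2\min(l_1,l_2)}$. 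Writing $\cosh l = \tfrac12 e^l(1+e^{-2l})$ and taking logarithms of both sides produces
\[
l - (l_1+l_2) = \log(1+\cos\eta)-\log 2 + O(e^{-2\min(l_1,l_2)}),
\]
which is (1); here the leading behavior forces $l$ to lie within a bounded distance of $l_1+l_2$, so the $e^{-2l}$ correction is absorbed into the claimed error.

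Part (2) is then immediate: for $\eta$ small, the Taylor expansion $\log(1+\cos\eta)-\log 2 = -\eta^2/4+O(\eta^4)$ is bounded by $D\eta$, and once $l_1,l_2$ are large the exponential error is swallowed into the same bound. For (3) and (4), part (2) implies $\sinh l = \tfrac12 e^{l_1+l_2}(1+o(1))$ and $\sinh l_i = \tfrac12 e^{l_i}(1+o(1))$, so the sine rule gives $\sin A \le D\eta e^{-l_1}$ and $\sin C \le D\eta e^{-l_2}$. Since both $A$ and $C$ are small for $\eta$ small, the bounds $A \le 2\sin A$ and $C \le 2\sin C$ deliver (3) and (4). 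The main obstacle will be the bookkeeping in part (1): one must track that the subdominant contributions in the expanded cosine rule remain uniformly of relative size $e^{-2\min(l_1,l_2)}$ as $\eta$ ranges over $[0,\pi/2]$, but the boundedness of $(1-\cos\eta)/(1+\cos\eta)$ on this interval is exactly what makes the estimate go through cleanly.
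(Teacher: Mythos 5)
The paper does not prove this proposition; it simply asserts that it ``follows from the $\cosh$ rule for hyperbolic triangles,'' which is exactly the approach you take, so you are supplying the omitted elementary argument. Parts (1), (3) and (4) are correct as written: the expansion of the cosine rule and the use of the sine rule are all right, and the uniformity of the relative error over $\eta\in[0,\pi/2]$ is correctly controlled by the bound on $(1-\cos\eta)/(1+\cos\eta)$.

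There is a gap in your derivation of (2) from (1). You argue that the exponential error $O(e^{-2\min(l_1,l_2)})$ is ``swallowed into the same bound'' once $l_1,l_2$ are large, but $l_1,l_2$ and $\eta$ are independent: for a fixed (large) $l_1=l_2$ and $\eta\to0$, the inequality $|l-(l_1+l_2)|\le D\eta$ forces the error to zero, while the bound from (1) stays at $De^{-2l_1}$. What actually makes (2) true is that the subdominant contributions carry a factor of $1-\cos\eta=O(\eta^2)$; equivalently, regroup the cosine rule as
\begin{equation*}
\cosh l \;=\; \cosh(l_1+l_2)\;-\;\sinh l_1\,\sinh l_2\,(1-\cos\eta),
\end{equation*}
so $\cosh(l_1+l_2)-\cosh l=\sinh l_1\sinh l_2(1-\cos\eta)\ge 0$, and then
\begin{equation*}
2\sinh\!\Big(\tfrac{(l_1+l_2)+l}{2}\Big)\sinh\!\Big(\tfrac{(l_1+l_2)-l}{2}\Big)=\sinh l_1\sinh l_2\,(1-\cos\eta),
\end{equation*}
which for $l_1,l_2$ large and $\eta$ small gives $l_1+l_2-l=\tfrac12(1-\cos\eta)+o(1-\cos\eta)=O(\eta^2)\le D\eta$. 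If instead you want to keep your route through (1), you must track that after taking logarithms the $e^{-(l_1+l_2)}(1+\cos\eta)$ piece cancels against the $\log(1+e^{-2l})$ correction (since $l\to l_1+l_2$ as $\eta\to0$), and the remaining cross term is $O\big(\eta^{2}e^{-2\min(l_1,l_2)}\big)$; without noting this cancellation the inference does not hold.
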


The following Theorem (the ``Chain Lemma'') allows us to estimate the geometry of a segment that is formed from a chain of long segments that nearly meet at their endpoints. It will be used in Section 4.2 to estimate the complex length of a closed geodesic formed from a closed chain of such segments. 
\begin{theorem}\label{thm-chain} Suppose $a_i,b_i \in \Ho$, $i=1,...,k$, and
\begin{enumerate} 
\item $|a_i b_i| \ge Q$,
\item $|b_i a_{i+1}| \le \epsilon$,
\item $\Ang(v(b_i,a_i) \at a_{i+1},-v(a_{i+1},b_{i+1})) \le \epsilon$.
\end{enumerate}
Suppose also that $n_i \in T^{1}_{a_{i}}(\Ho)$ is a vector at $a_i$ normal to $v(a_i,b_i)$ and
$$
\Ang(n_i  \at b_i,n_{i+1} \at b_i) \le \epsilon.
$$
\noindent
Then for $\epsilon$ small and $Q$ large, and some constant $D>0$
\begin{equation}\label{s-1} 
\left| ||a_1 b_k|-\sum\limits_{i=1}^{k} |a_ib_i| \right| \le k D \epsilon,
\end{equation}

\begin{equation}\label{s-2} 
\Ang(v(a_1,b_k),v(a_1,b_1))<kD\epsilon e^{-Q},\,\, \text{and} \,\,\,  \Ang(v(b_k,a_1),v(b_k,a_k))<kD\epsilon e^{-Q},
\end{equation}

\begin{equation}\label{s-2-1} 
\Ang(v(a_1,a_k),v(a_1,b_1))<2kD\epsilon e^{-Q},\,\, \text{if} \,\, k>1 ,
\end{equation}

\begin{equation}\label{s-3}
\Ang(n_k,n_1 \at a_k) \le 5k \epsilon.
\end{equation}
\end{theorem}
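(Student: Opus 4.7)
The plan is to prove the Chain Lemma by induction on $k$, with $k=1$ being trivial (empty sum in \eqref{s-1} and trivial angle comparisons in \eqref{s-2}, \eqref{s-2-1}, \eqref{s-3}). For the inductive step from $k-1$ to $k$, I would apply the inductive hypothesis to the truncated chain on $a_1,b_1,\dots,a_{k-1},b_{k-1}$ to obtain (s-1)--(s-3) at level $k-1$, and then attach the new leg via the short transit segment $\overline{b_{k-1}a_k}$ (length $\le\epsilon$) and the long segment $\overline{a_kb_k}$ (length $\ge Q$). The entire extension is then analyzed via two triangles: the \emph{thin} triangle $(a_1,b_{k-1},a_k)$ with one short side, and the \emph{nearly-straight} triangle $(a_1,a_k,b_k)$ whose deflection angle at $a_k$ will be shown to be of order $\epsilon$.

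The first triangle is handled by Proposition \ref{prop-tra-2}: it yields $\Ang(v(a_1,b_{k-1}),v(a_1,a_k))\le D\epsilon e^{-Q}$ and, via Proposition \ref{prop-tra-3}, also $||a_1a_k|-(|a_1b_{k-1}|+|b_{k-1}a_k|)|\le D\epsilon e^{-Q}$. Combined with the inductive bound on $\Ang(v(a_1,b_{k-1}),v(a_1,b_1))$ this delivers \eqref{s-2-1}. To set up the second triangle, I would bound the deflection $\eta_k=\pi-\Ang(v(a_k,a_1),v(a_k,b_k))$ by the chain of approximations: $-v(a_k,b_k)$ is $\epsilon$-close to $v(b_{k-1},a_{k-1})\at a_k$ by hypothesis (3); this is $(k-1)D\epsilon e^{-Q}$-close to $v(b_{k-1},a_1)\at a_k$ by the inductive angle bound at $b_{k-1}$; and this is $D\epsilon e^{-Q}$-close to $v(a_k,a_1)$ via parallel transport in the thin triangle (the angle at the opposite vertex of a thin triangle differs from the angle between the same-point vector and the transported vector by the triangle's area, which is $O(\epsilon)$ by Proposition \ref{prop-tra-1}). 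Hence $\eta_k=O(\epsilon)$, and Proposition \ref{prop-tra-3} applied to $(a_1,a_k,b_k)$ gives $||a_1b_k|-(|a_1a_k|+|a_kb_k|)|\le D\eta_k$ plus the damped angle bounds $\Ang(v(a_1,b_k),v(a_1,a_k))\le D\eta_k e^{-Q}$ and $\Ang(v(b_k,a_1),v(b_k,a_k))\le D\eta_k e^{-Q}$. Chaining these with the inductive estimates yields \eqref{s-1} and \eqref{s-2} with the constant~$D$ simply enlarged once and for all to absorb the additive contributions at each step.

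For \eqref{s-3}, parallel-transport $n_1$ successively along the chain $a_1\to b_1\to a_2\to\cdots\to a_k$, producing $\tilde n_k$ at $a_k$. Inductively, $\Ang(\tilde n_{i},n_{i})\le 4i\epsilon$: transporting $\tilde n_{i}$ and $n_{i}$ to $b_{i}$ keeps their angle fixed, hypothesis (4) adds $\epsilon$ relative to $n_{i+1}\at b_{i}$, and transport through the short segment $\overline{b_{i}a_{i+1}}$ (which takes $n_{i+1}\at b_{i}$ back to $n_{i+1}$) preserves the discrepancy. The remaining task is to replace chain-transport by direct transport along $\overline{a_1a_k}$: the difference is the holonomy of the closed polygonal loop (chain forward, geodesic back), and by Proposition \ref{prop-tra-1} this holonomy equals the total enclosed area, which decomposes as a sum over the $k-1$ corner triangles $(a_{i},b_{i},a_{i+1})$, each of area at most $|b_{i}a_{i+1}|\le\epsilon$. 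Adding this to the $k\epsilon$ already accumulated and adjusting the constant gives $\Ang(n_k,n_1\at a_k)\le 4k\epsilon$.

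The main obstacle is confirming $\eta_k=O(\epsilon)$ rather than $O(1)$: i.e., that the chain genuinely advances rather than doubling back at the joint $a_k$. This is precisely what hypothesis (3) is designed to enforce, and the inductive angle control at $b_{k-1}$ (damped by $e^{-Q}$) is what allows the induction to close. A secondary technical check is that all lengths involved exceed $Q$ so that Propositions \ref{prop-tra-2} and \ref{prop-tra-3} apply: this follows inductively from \eqref{s-1} since $|a_1b_{k-1}|\ge(k-1)Q-(k-1)D\epsilon$ and $|a_1a_k|\ge|a_1b_{k-1}|-\epsilon$, both large for $Q$ large and $\epsilon$ small.
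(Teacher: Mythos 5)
For \eqref{s-1}, \eqref{s-2}, and \eqref{s-2-1}, your route is essentially the paper's: you bound the deflection $\eta_k=\pi-\Ang(v(a_k,a_1),v(a_k,b_k))$ by $O(\epsilon)$ using hypothesis (3), the inductive angle control at $b_{k-1}$, and a thin-triangle transport estimate, then feed this into Proposition \ref{prop-tra-3} for $(a_1,a_k,b_k)$ and Proposition \ref{prop-tra-2} for $(a_1,b_{k-1},a_k)$ and chain with the inductive bounds. One internal slip: you assert that $v(b_{k-1},a_1)\at a_k$ is $D\epsilon e^{-Q}$-close to $v(a_k,a_1)$, citing Proposition \ref{prop-tra-1}, but that proposition bounds the discrepancy by the triangle's area, which for a thin hyperbolic triangle with one side of length $\le\epsilon$ and two long sides is of genuine size $O(\epsilon)$, not $O(\epsilon e^{-Q})$; the $e^{-Q}$ damping is a feature of the \emph{angle} at the far vertex (Proposition \ref{prop-tra-2}), not of the area. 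This does not harm your conclusion $\eta_k=O(\epsilon)$, which is all Proposition \ref{prop-tra-3} requires.

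Your treatment of \eqref{s-3} has a real gap. You bound the holonomy of the loop $a_1\to b_1\to a_2\to\cdots\to a_k\to a_1$ by the sum of the areas of the $k-1$ corner triangles $(a_i,b_i,a_{i+1})$. These do not exhaust the enclosed region: removing them leaves the central polygon $(a_1,a_2,\dots,a_k)$, whose area is not controlled by the short sides at all (for $k=3$, the pentagon $(a_1,b_1,a_2,b_2,a_3)$ minus the two corner triangles is exactly $(a_1,a_2,a_3)$). Controlling the central region forces you to invoke deflection estimates $\pi-\Ang(v(a_i,a_{i-1}),v(a_i,a_{i+1}))=O(\epsilon)$ at every interior vertex $a_i$, not just at $a_k$, and only then does Gauss--Bonnet yield area $O(k\epsilon)$. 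There is also a structural issue: in $\Ho$ the loop need not lie in a geodesic plane, so "total enclosed area" is not a quantity to which a single Gauss--Bonnet holonomy formula applies; Proposition \ref{prop-tra-1} is a statement about individual triangles, each of which does lie in a plane. The paper sidesteps both problems by staying triangle-by-triangle inside the induction: it compares $n_1\at a_k\at b_k$ with $n_1\at b_k$ (holonomy of $(a_1,a_k,b_k)$, with area $\le\pi-\Ang(v(a_k,a_1),v(a_k,b_k))\le 2\epsilon$ by the deflection bound), then $n_1\at b_k$ with $n_1\at a_{k+1}\at b_k$ (a thin triangle, area $\le\epsilon$), and chains with hypothesis (4) and the inductive bound $\Ang(n_k,n_1\at a_k)\le 4k\epsilon$. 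You can repair your version by adopting this triangle-by-triangle bookkeeping, or by spelling out a full fan decomposition with the interior-vertex deflection bounds and subadditivity of rotation angles under composition.
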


We can think of the sequence of geodesic segments from $a_i$ to $b_i$ as forming an ``$\epsilon$-chain'', and we can think of the broken segment connecting $a_1, b_1, a_2, b_2, \ldots, a_k, b_k$ as the concatenation of the $\epsilon$-chain, and the geodesic segment from $a_1$ to $b_k$ (or $a_k$) as the geodesic representative of the concatenation. Then the Chain Lemma is describing the relationship between the concatenation of an $\epsilon$-chain and its geodesic representative, and also estimating the discrepancy between parallel transport along the concatenation, and transport along its geodesic representative. 

\begin{proof} By induction. Suppose that the statement is true for some $k \ge 1$. We need to prove the above inequalities for $k+1$.
\vskip .1cm
We first prove the inequalities (\ref{s-2}) and (\ref{s-2-1}). By the triangle inequality we have  
$$
\Ang(v(a_1,b_k),v(a_1,b_{k+1})) \le \Ang(v(a_1,b_k),v(a_1,a_{k+1}))+\Ang(v(a_1,a_{k+1}),v(a_1,b_{k+1})).
$$
\noindent

By Proposition \ref{prop-tra-2} we have $\Ang(v(a_1,b_k),v(a_1,a_{k+1}))\le D_1\epsilon e^{-Q}$, where $D_1$ is the constant from   Proposition \ref{prop-tra-2}. By  (\ref{pom}) and Proposition \ref{prop-tra-3} we have $\Ang(v(a_1,a_{k+1}),v(a_1,b_{k+1}))\le 2D_2\epsilon e^{-Q}$, where $D_2$ is from Proposition \ref{prop-tra-3}. Together this shows
$$
\Ang(v(a_1,b_k),v(a_1,b_{k+1}))\le D\epsilon e^{-Q}.
$$
\noindent
Then by the triangle inequality and the induction hypothesis we have
\begin{align}
\Ang(v(a_1,b_{k+1}),v(a_1,b_1 ) ) & \le \Ang(v(a_1,b_k),v(a_1,b_1 ) ) +   \Ang(v(a_1,b_k),v(a_1,b_{k+1})) \notag \\
& \le  kD\epsilon e^{-Q}+D\epsilon e^{-Q}= (k+1)D\epsilon e^{-Q} \notag ,
\end{align}
\noindent
which proves the first inequality in (\ref{s-2}). The second one follows by symmetry. The inequality (\ref{s-2-1}) follows from (\ref{s-2}) and Proposition \ref{prop-tra-2}.
\vskip .1cm
Next, we prove the inequality (\ref{s-1}). By the triangle inequality we have
$$
\Ang(v(a_1,a_{k+1}),v(a_1,b_k)) \le \Ang(v(a_1,a_{k+1}),v(a_1,b_1) ) + \Ang(v(a_1,b_1),v(a_1,b_k)),
$$
and then applying (\ref{s-2}) and (\ref{s-2-1})  we get
$$
\Ang(v(a_1,a_{k+1}),v(a_1,b_k)) \le 2(k+1)D \epsilon e^{-Q} +kD \epsilon e^{-Q}< \epsilon
$$
for $Q$ large enough. Then by Claim \ref{claim-1} we have
$$
\Ang(v(a_{k+1},a_1),v(b_k,a_1) \at a_{k+1}) \le \Ang(v(a_1,a_{k+1}),v(a_1,b_k))+|b_k a_{k+1}| \le 2 \epsilon.
$$
Combining this inequality with the assumption (3) of the theorem, and by the triangle inequality we obtain
$\Ang(v(a_{k+1},a_1),-v(a_{k+1},b_{k+1}) ) \le 3\epsilon$. Therefore the inequality

\begin{equation}\label{pom}
\pi-\Ang(v(a_{k+1},a_1),v(a_{k+1},b_{k+1}) ) \le 3\epsilon
\end{equation}
\noindent
holds (observe that the same inequality holds for all $1 \le i \le k$).
\vskip .1cm

It follows from  Proposition \ref{prop-tra-3} and (\ref{pom}) that $\big| |a_1a_{k+1}|+|a_{k+1} b_{k+1}|- |a_1b_{k+1}| \big| \le 3D_1\epsilon$, where $D_1$ is the constant from Proposition \ref{prop-tra-3}. Since by the triangle inequality
$$
\big| |a_1b_k|-|a_1a_{k+1}| \big|\le \epsilon,
$$ 
\noindent
we obtain
$$
\big| |a_1b_k|+|a_{k+1} b_{k+1}|- |a_1b_{k+1}| \big| \le D\epsilon.
$$
\noindent
This proves the induction step for the inequality (\ref{s-1}).
\vskip .1cm

It remains to prove (\ref{s-3}). Using the induction hypothesis and the assumptions in the statement of this theorem, we obtain the following string of inequalities

\begin{align*}
\Ang(n_{k+1} , n_1 \at a_{k+1} ) &= \Ang (n_{k+1} \at b_k , n_1 \at a_{k+1} \at b_k ) \\
&\le \Ang(n_{k+1} \at b_k , n_k \at b_k ) + \Ang(n_k \at b_k , n_1 \at a_{k+1} \at b_k) \\
&\le \epsilon + \Ang(n_k \at b_k , n_1 \at a_{k+1} \at b_k) \\
&\le \epsilon  + \Ang(n_k \at b_k , n_1 \at a_k \at b_k ) + \Ang(n_1 \at a_k \at b_k, n_1 \at a_{k+1}  \at b_k ) \\
&\le  (5k + 1)\epsilon + \Ang(n_1 \at a_k \at b_k , n_1 \at a_{k+1} \at b_k) \\
&\le  (5k + 1)\epsilon + \Ang(n_1 \at a_k  \at b_k , n_1 \at b_k ) + \Ang(n_1 \at b_k , n_1 \at a_{k+1} \at b_k). 
\end{align*}
By (\ref{pom}) we have 
$$
\Ang(n_1 \at a_k  \at b_k , n_1 \at b_k ) \le 3\epsilon,
$$
and by Claim  \ref{claim-1} we have 
$$
\Ang(n_1 \at b_k , n_1 \at a_{k+1} \at b_k) \le  \epsilon.
$$
Combining these estimates gives
$$
\Ang(n_{k+1} , n_1 \at a_{k+1} ) \le  (5k + 5)\epsilon,
$$
which proves the induction step for (\ref{s-3}).

\end{proof}

\subsection{Corollaries of the Chain Lemma} For $X \in \PSLC$ we write $X(z)={{az+b}\over{cz+d}}$, where $ad-bc=1$.

The following proposition will provide the bridge between the Chain Lemma and Lemma \ref{lemma-chain}.

\begin{proposition}\label{prop-M} Let $p,q \in \Ho$ and  $A \in \PSLC$ be such that $A(p)=q$. Suppose that for every $u \in T^{1}_{p}(\Ho)$ we have $\Ang(A(u),u \at q)\le \epsilon$. Then for $\epsilon$ small enough and  $d(p,q)$  large enough, and for some constant $D>0$  we have  
\begin{enumerate}
\item the transformation $A$  is loxodromic,
\item $|\len(A)-d(p,q)|\le D\epsilon$, 
\item if $\text{axis}(A)$ denotes the axis of $A$ then $d(p,\text{axis}(A)),d(q,\text{axis}(A) ) \le D\epsilon$. 
\end{enumerate}
\end{proposition}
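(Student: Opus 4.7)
The plan is to apply the Chain Lemma (Theorem \ref{thm-chain}) to the forward orbit of $p$ under $A$. I would set $a_i := A^{i-1}(p)$ and $b_i := A^i(p)$, so that $a_{i+1} = b_i$, $|a_i b_i| = d(p,q) =: Q$, and $|b_i a_{i+1}| = 0$. I pick any $n_1 \in T^1_p(\Ho)$ orthogonal to $v(p,q)$ and set $n_i := A^{i-1}(n_1)$. Applying the isometry $A^{-(i-1)}$ to the two angular hypotheses of Theorem \ref{thm-chain} at step $i$, they reduce exactly to $\Ang(v(q, A(q)), -v(q,p)) \le \epsilon$ and $\Ang(A(n_1), n_1 \at q) \le \epsilon$, both of which are instances of the given bound $\Ang(A(u), u \at q) \le \epsilon$ (using $u = v(p,q)$, together with $A(v(p,q)) = v(q, A(q))$ and $v(p,q) \at q = -v(q,p)$, and $u = n_1$).

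From \eqref{s-1} applied to arbitrary $k$ I obtain $\bigl| d(p, A^k(p)) - k\, d(p,q)\bigr| \le kD\epsilon$, so $d(p, A^k(p)) \ge k(Q - D\epsilon)$ grows linearly in $k$. This rules out the parabolic (logarithmic orbit growth) and elliptic (bounded orbit) alternatives, proving (1). Passing to $k \to \infty$ with $\len(A) = \lim_k d(p, A^k(p))/k$ then yields $|\len(A) - d(p,q)| \le D\epsilon$, giving (2).

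For (3), let $p^\pm \in \partial\Ho$ denote the attracting and repelling fixed points of $A$. By \eqref{s-2-1}, $\Ang(v(p, A^{k-1}(p)), v(p,q)) \le 2kD\epsilon e^{-Q}$ for $k \ge 2$, while for a loxodromic isometry the orbit $A^k(p)$ converges to $p^+$ with visual angle from $p$ bounded by $Ce^{-k\len(A)}$. Balancing these at $k \sim \log(1/\epsilon)/Q$ gives $\Ang(v(p, p^+), v(p,q)) \le D'\epsilon$; the same argument applied to $A^{-1}$, whose hypothesis $\Ang(A^{-1}(w), w \at p) \le \epsilon$ follows from the original one by the symmetry of parallel transport along a geodesic, gives $\Ang(v(p, p^-), -v(p,q)) \le D'\epsilon$. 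Hence the visual angle at $p$ between $p^-$ and $p^+$ differs from $\pi$ by at most $2D'\epsilon$, and the classical identity $\sinh d(p, \operatorname{axis}(A)) = \cot\bigl(\Ang(v(p,p^-), v(p,p^+))/2\bigr)$ then yields $d(p, \operatorname{axis}(A)) \le D\epsilon$. The bound $d(q, \operatorname{axis}(A)) \le D\epsilon$ follows by basing the argument at $q$ instead of at $p$.

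The main obstacle is step (3): the Chain Lemma's angular error grows linearly in the iteration index $k$, while the orbit converges to the fixed point only exponentially, so these two contributions must be balanced carefully. The decisive observation is that the convergence rate $e^{-k\len(A)}$ is exponential in the large parameter $Q \approx \len(A)$, so even the naive choice $k \approx \log(1/\epsilon)/Q$ makes the linear-in-$k$ Chain Lemma term negligible compared with the $O(\epsilon)$ exponential term, and no logarithmic correction enters the final bound.
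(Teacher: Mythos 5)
Your route through the Chain Lemma is genuinely different from the paper's, which is a direct matrix computation: the paper normalizes $p,q$ onto the geodesic $(0,\infty)$ with $q=(0,0,1)$, writes $A=CB$ with $B(z)=Kz$ ($\log K=d(p,q)$) and $C$ fixing $(0,0,1)$, deduces from the hypothesis that the entries of $C$ are $O(\epsilon)$-close to those of the identity, and then reads off $\tr(A)=a\sqrt K + d/\sqrt K$ and the two fixed points explicitly. That computation delivers all three conclusions at once and, crucially, controls the \emph{complex} translation length, since $\tr(A)=2\cosh(\len(A)/2)$ is a complex identity.

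There is a genuine gap in your treatment of part~(2). You invoke $\len(A)=\lim_{k\to\infty} d(p,A^k(p))/k$, but $d(p,A^k(p))$ is a real number, so this limit is only $\RE\len(A)$; your argument therefore establishes $|\RE\len(A)-d(p,q)|\le D\epsilon$ but says nothing about $\IM\len(A)$. In the paper, $\len(A)$ denotes the complex translation length (with positive real part, defined mod $2\pi i$), and both Lemma \ref{lemma-chain} and Lemma \ref{lemma-bipo-0} downstream use Proposition \ref{prop-M} precisely to control a complex $\len(A)$, so the imaginary (rotation) part is not a side issue. The imaginary bound is also not automatic from (1) and the real-part bound: a loxodromic element with axis exactly through $p$ and $q$, real translation length exactly $d(p,q)$, and arbitrary rotation angle $\theta$ satisfies (1) and the real-part estimate trivially, but violates the hypothesis (for such $A$ and $u$ normal to the axis, $\Ang(A(u),u\at q)=|\theta|$). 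So the hypothesis \emph{does} control $\IM\len(A)$, but your argument never extracts it. The cleanest repair in your framework is to derive (3) first, then argue: at the foot $\pi(p)$ of the perpendicular from $p$ to the axis, $A$ composed with the inverse of parallel transport is rotation by $\IM\len(A)$ about the axis; since $d(p,\pi(p))\le D\epsilon$, the rotation angle seen at $p$ differs from that at $\pi(p)$ by $O(\epsilon)$, and the hypothesis bounds the former by $\epsilon$, giving $|\IM\len(A)|\le D'\epsilon$. Without this (or some equivalent) step, (2) is not proved.

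A secondary point on part~(3): when you write that the visual angle from $p$ of $A^k(p)$ toward $p^+$ is $\le Ce^{-k\len(A)}$, the constant $C$ in the standard estimate grows with $d(p,\operatorname{axis}(A))$, which is exactly the quantity you are bounding, so as stated there is a mild circularity. This is not fatal — one can first extract an a priori $O(1)$ bound on $d(p,\operatorname{axis}(A))$ from the Chain Lemma (the orbit $\{A^j(p)\}$ is a chain with per-step error $O(\epsilon)$ and step length $Q$, hence a quasi-geodesic uniformly Hausdorff-close to the axis), and then run your balancing argument with a universal $C$. But as written, this step needs to be made explicit. Note also that the balancing $k\sim\log(1/\epsilon)/Q$ only keeps the Chain Lemma term $2kD\epsilon e^{-Q}$ below $O(\epsilon)$ when $e^{-Q}\log(1/\epsilon)/Q$ is bounded; this is satisfied for all $\epsilon$ relevant to the paper's application, but it is a constraint between $\epsilon$ and $Q$ that the paper's explicit computation does not need to impose.
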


\begin{proof} We may assume that the points $p$ and $q$ lie on the  geodesic that connects $0$ and $\infty$, such that  $q$ is the point with coordinates $(0,0,1)$ in $\Ho$, and  
$p$ is $(0,0,x)$ for some $0<x<1$. Let $B \in \PSLC$ be given by $B(z)=Kz$, where $\log K=d(p,q)$. Since $K$ is a positive number it follows that for every $u \in T^{1}_p(\Ho)$ the identity $B(u)=u \at q$.
\vskip .1cm
Let  $A=C \circ B$, where $C \in \PSLC$ fixes the point $(0,0,1)\in \Ho$. It follows that for every $u \in T^{1}_{(0,0,1)}(\Ho)$ we have 
$\Ang(u,C(u))\le \epsilon$. This implies that for some $a,b,c,d \in \C$, $ad-bc=1$, we have 
$$
C(z)={{az+b}\over{cz+d}}, 
$$
\noindent
and $|a-1|,|b|,|c|,|d-1| \le D_1\epsilon$, for some constant $D_1>0$. Then
$$
A(z)={{a\sqrt{K}z+{{b}\over{\sqrt{K}}} }\over{\sqrt{K}cz+ {{d}\over{\sqrt{K}}} }},
$$
\noindent
and we find  
$$
\tr(A)=a\sqrt{K}+{{d}\over{\sqrt{K}}},
$$ 
\noindent
where $\tr(A)$ denotes the trace of $A$. Since $|a-1|, |d-1| \le D_1\epsilon$ we see that for $K$ large enough the real part of the trace $\tr(A)$ is a positive number $>2$,  which shows that $A$ is loxodromic. 
On the other hand, $\tr(A)=2\cosh({{\len(A)}\over{2}})$. This shows that $|\len(A)-\log K| \le D_2\epsilon$, for some constant $D_2>0$. 
\vskip .1cm
Let $z_1,z_2 \in \overline{\C}$ denote the fixed points of $A$. We find
$$
z_{1,2}={{ (a-{{d}\over{K}}) \pm  \sqrt{ ( a-{{d}\over{K}} )^{2}+{{4bc}\over{K}} } }\over{2c }}.
$$
\noindent
Then for $K$ large enough we have  
$$
|z_1| \le \epsilon ,\,\, |z_2| \ge {{3}\over{\epsilon}}.
$$
\noindent
This shows that $d(q,\text{axis}(A))=d((0,0,1),\text{axis}(A)) \le D_3 \epsilon$, for some constant $D_3>0$. The inequality 
$d(p,\text{axis}(A) ) \le D_3 \epsilon$ follows by symmetry.

\end{proof}

The following lemma is a corollary of Theorem \ref{thm-chain} and the previous proposition.  It provides an estimate for the complex length of the closed geodesic that is freely homotopic 
to the concatenation of a closed chain of geodesic segments.
\begin{lemma}\label{lemma-chain} Let $a_i,b_i \in \Ho$, $i \in \Z$ such that 
\begin{enumerate} 
\item $|a_i b_i| \ge Q$,
\item $|b_i a_{i+1}| \le \epsilon$,
\item $\Ang(v(b_i,a_i) \at a_{i+1},-v(a_{i+1},b_{i+1})) \le \epsilon$.
\end{enumerate}
Suppose also that $n_i \in T^{1}_{a_{i}}(\Ho)$ is a vector at $a_i$ normal to $v(a_i,b_i)$ and
$$
\Ang(n_i  \at b_i,n_{i+1} \at b_i) \le \epsilon.
$$
\noindent
Suppose there exists $A \in \PSLC$ and $k>0$ be such that  $A(a_i)=a_{i+k}$, $A(b_i)=b_{i+k}$, and $A(n_i)=n_{i+k}$, $i \in \Z$.  Then for $\epsilon$ small and $Q$ large $A$ is a  loxodromic transformation and 
\begin{equation}\label{A-1} 
\left| \len(A)-\sum\limits_{i=0}^{k} |a_ib_i| \right| \le kD \epsilon,
\end{equation}
\noindent
for some constant $D>0$. Moreover $a_i,b_i \in \Ne_{D k \epsilon}(\text{axis}(A))$, where $\Ne_{D k \epsilon}(\text{axis}(A)) \subset \Ho$ is the 
$D k \epsilon$ neighbourhood of $\text{axis}(A)$.
\end{lemma}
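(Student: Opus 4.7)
The plan is to reduce the lemma to Proposition \ref{prop-M}, applied with $p = a_1$ and $q = A(p) = a_{k+1}$, after first using Theorem \ref{thm-chain} (the Chain Lemma) to place the chain into approximately geodesic position. I would begin by invoking Theorem \ref{thm-chain} on the subchain $(a_i, b_i)_{i=1}^{k+1}$; its hypotheses hold because conditions (1)--(3) of the present lemma are given for all $i \in \Z$. This yields the distance estimate (\ref{s-1}), the angular estimates (\ref{s-2})--(\ref{s-2-1}), and the normal-vector estimate (\ref{s-3}). Combined with hypothesis (2) and the triangle inequality, this gives
$$\bigl|\,|pq| - \textstyle\sum_{i=1}^{k}|a_i b_i|\,\bigr| \le kD'\epsilon.$$

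The core step is to check that $\Ang(A(u), u \at q) \le Ck\epsilon$ for every unit $u \in T^1_p(\Ho)$. Since both $A$ and parallel transport are orthogonal, it suffices to check this on the orthonormal frame $e_1 := v(a_1, b_1)$, $e_2 := n_1$, $e_3 := e_1 \times e_2$. By the $A$-equivariance, $A(e_1) = v(a_{k+1}, b_{k+1})$ and $A(e_2) = n_{k+1}$, and $A(e_3) = A(e_1)\times A(e_2)$ since $A$ preserves orientation. For $e_2$ the bound is exactly (\ref{s-3}). For $e_1$, parallel transport along $[a_1, a_{k+1}]$ of $v(a_1, b_1)$ differs, by (\ref{s-2-1}), from parallel transport of $v(a_1, a_{k+1})$ by an angle of order $k\epsilon e^{-Q}$; the latter transports to $-v(a_{k+1}, a_1)$ along its own geodesic, and applying Theorem \ref{thm-chain} to the reversed chain $(\tilde a_j, \tilde b_j)_{j=1}^{k+1} := (b_{k+2-j}, a_{k+2-j})_{j=1}^{k+1}$ (whose hypotheses follow formally from the forward ones) identifies $-v(a_{k+1}, a_1)$ with $v(a_{k+1}, b_{k+1})$ up to an error of the same order. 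The estimate for $e_3$ is then automatic.

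With the angle hypothesis verified, Proposition \ref{prop-M} concludes that $A$ is loxodromic, $|\len(A) - |pq|| \le DCk\epsilon$, and $p, q \in \Ne_{DCk\epsilon}(\text{axis}(A))$. Combining the translation-length bound with the first step yields (\ref{A-1}) after absorbing constants. To establish that every $a_i, b_i$ sits in a $D\epsilon$-neighbourhood of $\text{axis}(A)$, I would observe that the Chain Lemma shows the chain stays within $O(\epsilon)$ Hausdorff distance of the geodesic through $a_1$ in direction $v(a_1, b_1)$, and that this geodesic is exponentially close to $\text{axis}(A)$ (they both pass near $a_1$ with nearly the same direction); the $A$-invariance of the axis and of the chain then propagates the estimate to all $i \in \Z$.

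The main obstacle is bookkeeping the $k$-dependence: the normal-vector estimate (\ref{s-3}) degrades linearly in $k$, so the hypothesis of Proposition \ref{prop-M} is met only with $O(k\epsilon)$ slack rather than $O(\epsilon)$. This factor is harmlessly absorbed into the constant $D$ of (\ref{A-1}), but it is the reason the sharper $D\epsilon$-neighbourhood claim in the last assertion cannot be read off directly from Proposition \ref{prop-M}; instead it requires the observation above that the chain's transverse deviation from its best-fit geodesic is controlled by the non-cumulative estimates $|b_i a_{i+1}| \le \epsilon$ and (\ref{s-2}), together with $A$-equivariance to upgrade an estimate near a single $a_i$ to one uniform in $i$.
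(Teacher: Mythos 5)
Your proof follows essentially the same route as the paper's: apply the Chain Lemma (Theorem \ref{thm-chain}) to verify the angle hypothesis of Proposition \ref{prop-M} on an orthonormal frame at $a_0$ (or $a_1$), then invoke Proposition \ref{prop-M} to conclude. You fill in the $e_1$-direction check more carefully (via the reversed chain) than the paper's one-line citation of (\ref{s-2-1}), and you correctly flag that a naive application of Proposition \ref{prop-M} with angle error $O(k\epsilon)$ yields only an $O(k\epsilon)$ neighbourhood of the axis rather than the stated $D\epsilon$ — a point the paper's proof glosses over (harmlessly, since $k$ is bounded in all applications).
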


We can think of taking $a_i,b_i \in \Ho / A$ (or even in some hyperbolic $3$-manifold  $N$), and $i \in \Z / k\Z$. We must then describe the geodesic segments from $a_i$ to $b_i$ which we will use  to determine $v(b_i,a_i)$ and $n_i \at b_i$, and so forth. (As long as the injectivity radius of $N$  is greater than $\epsilon$,  there are unique choices of geodesic segments from $b_i$ to $a_{i+1}$ with length less than $\epsilon$.) We then 
think of this sequence of segments as a ``closed $\epsilon$-chain'' and $\text{axis}(A)/A$ as its geodesic representative. 

\begin{proof}  Let $v_0=v(a_0,b_0)$. Observe that $A(v_0)=v(a_{k},b_{k})$. First we show  that the inequality $\Ang(A(v_0),v_0 \at a_k) \le 4\epsilon$ holds for $Q$ large enough.

Recall that for $Q$ large enough the inequality (\ref{pom}) holds (see the proof of Theorem \ref {thm-chain}), that is we have
$$
\pi-\Ang(v(a_{k},a_0),v(a_{k},b_{k}) ) \le 3\epsilon.
$$
Since $\Ang(v(a_{k},a_0),-v(a_{k},b_{k}) )= \Ang(v(a_0,a_k),v(a_{k},b_{k}) \at a_0 )$, we have
$$
\Ang(v(a_0,a_k),v(a_{k},b_{k}) \at a_0 ) \le 3\epsilon.
$$

On the other hand, it follows from (\ref{s-2-1}) that for $Q$ large enough we have 
$$
\Ang(v(a_0,a_k),v(a_{0},b_{0})  ) \le \epsilon, 
$$
so by the triangle inequality we obtain

\begin{align*}
\Ang(v(a_0,b_0),v(a_{k},b_{k}) \at a_0 ) &\le   
\Ang(v(a_0,a_k),v(a_{0},b_{0}))+    \Ang(v(a_0,a_k),v(a_{k},b_{k}) \at a_0 ) \\   
&\le 4\epsilon.
\end{align*}

Since $v(a_k,b_k) \at a_0 \at a_k=v(a_k,b_k)$ we find 
$\Ang(v(a_0,b_0),v(a_{k},b_{k}) \at a_0 )=\Ang(v_0 \at a_k, A(v_0) )$ so we have proved the inequality  $\Ang(v_0 \at a_k, A(v_0) ) \le 4 \epsilon$.

Next, from (\ref{s-3}) we find $\Ang(n_{k},n_0 \at a_k) \le 4k \epsilon$. Since $v_0$ is normal to $n_0$, and the parallel transport preserves   angles, it follows that 
\begin{equation}\label{esta-1}
\Ang(u \at a_k,A(u)) \le 4kD \epsilon,\,\,\,\text{for every vector}\,\, u \in T^{1}_{a_{0}}(\Ho).
\end{equation}
\noindent
On the other hand, the inequality
\begin{equation}\label{esta-2} 
\left| d(a_0, A(a_0)) - \sum\limits_{i=0}^{k} |a_ib_i| \right| \le kD\epsilon,
\end{equation}
\noindent
follows from (\ref{s-1}). The lemma now follows from Proposition \ref{prop-M}.

\end{proof}

\subsection{Preliminary propositions}    In this subsection we will prove two results in hyperbolic geometry (Lemmas  \ref{lemma-ph-1} and \ref{lemma-ph-2} ).
that we will use in Section \ref{section-4.7}.  The following proposition is elementary

\begin{proposition}\label{elem-0} Let $\alpha$ be an geodesic in $\Ho$ and let $p_1,p_2 \in \Ho$ be two points such that $d(p_1,p_2) \le C$ and $d(p_i,\alpha)\ge s$, $i=1,2$, for some constants $C,s>0$. Let $\eta_i$ be the oriented geodesic that contains $p_i$ and is normal to $\alpha$, and that is oriented from $\alpha$ to $p_i$. Then there exists a constant $D>0$, that depends only on $C$, such that 
$|\dis_{\alpha}(\eta_1,\eta_2)| \le De^{-s}$.
\end{proposition}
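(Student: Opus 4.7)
The plan is to pass to Fermi (cylindrical) coordinates around $\alpha^{*}$. Write every point $p\in\Ho\setminus\alpha^{*}$ uniquely as $(t,\rho,\theta)$, where $t\in\R$ is signed arclength along $\alpha^{*}$ to the foot of perpendicular, $\rho>0$ is $d(p,\alpha^{*})$, and $\theta\in\R/2\pi\Z$ is the rotation angle of that perpendicular around $\alpha^{*}$. In these coordinates the hyperbolic metric is the warped product
$$
ds^{2}=\cosh^{2}(\rho)\,dt^{2}+d\rho^{2}+\sinh^{2}(\rho)\,d\theta^{2}.
$$
If $p_{i}$ has coordinates $(t_{i},\rho_{i},\theta_{i})$ then $\eta^{*}_{i}\cap\alpha^{*}$ is the point at parameter $t_{i}$, and unwinding the definition of $\dis_{\alpha^{*}}$ from Section~2.1 gives
$$
\dis_{\alpha^{*}}(\eta^{*}_{1},\eta^{*}_{2})\ =\ (t_{2}-t_{1})\ +\ i(\theta_{2}-\theta_{1})\pmod{2\pi i\Z},
$$
so the problem reduces to bounding $|t_{2}-t_{1}|$ and the shortest lift of $\theta_{2}-\theta_{1}$.

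First I will treat the large-$s$ regime $s\ge C+1$. Let $\gamma\colon[0,L]\to\Ho$, $L\le C$, be the unit-speed geodesic from $p_{1}$ to $p_{2}$. By the triangle inequality, $\rho(\gamma(u))\ge s-C>0$ for all $u$. The warped-product form of the metric gives the pointwise bounds $|t'(u)|\le 1/\cosh(\rho(u))\le 1/\cosh(s-C)$ and $|\theta'(u)|\le 1/\sinh(\rho(u))\le 1/\sinh(s-C)$. Integrating yields
$$
|t_{2}-t_{1}|,\ \ |\theta_{2}-\theta_{1}|\ \le\ \frac{C}{\sinh(s-C)}\ \le\ D_{1}e^{-s},
$$
with $D_{1}=4Ce^{C}$, and in particular the bound on $\theta_{2}-\theta_{1}$ is well below $\pi$, so it legitimately bounds the shortest lift and no mod-$2\pi$ ambiguity survives.

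For the complementary regime $s<C+1$ I will use that orthogonal projection onto a complete geodesic in $\Ho$ is $1$-Lipschitz (being projection to a convex set in a $\mathrm{CAT}(0)$ space), so $d(q_{1},q_{2})\le d(p_{1},p_{2})\le C$, where $q_{i}=\eta^{*}_{i}\cap\alpha^{*}$. Combined with the trivial bound $|\theta_{2}-\theta_{1}|\le\pi$, this gives $|\dis_{\alpha^{*}}(\eta^{*}_{1},\eta^{*}_{2})|\le C+\pi$, a bound that is absorbed into a constant times $e^{-s}$ once multiplied by $e^{s}\le e^{C+1}$. Setting $D=\max(D_{1},(C+\pi)e^{C+1})$ then covers both regimes uniformly and depends only on $C$, as required.

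There is no serious obstacle here; the only points that need care are (i) ensuring one uses a single $D$ across the two regimes, and (ii) verifying that the bound on the $\theta$-difference really controls the imaginary part of the complex distance despite its $\operatorname{mod}2\pi i$ ambiguity, which is automatic in the large-$s$ regime because the lift becomes subangular.
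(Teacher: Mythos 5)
Your proof is correct, and since the paper gives no argument (calling the proposition ``elementary''), the Fermi-coordinate computation you provide is exactly the kind of direct derivation the authors intended: the metric $\cosh^{2}\rho\,dt^{2}+d\rho^{2}+\sinh^{2}\rho\,d\theta^{2}$ immediately yields $|t'|\le\operatorname{sech}\rho$ and $|\theta'|\le\operatorname{csch}\rho$ along the connecting geodesic, and the rest is bookkeeping. One minor correction: your parenthetical remark that the $\theta$-bound is ``well below $\pi$'' for $s\ge C+1$ is false when $C$ is large (at $s=C+1$ the bound is $C/\sinh 1\approx 0.85C$), but this is harmless — exhibiting \emph{any} representative of the $\operatorname{mod}\,2\pi i$ class with modulus $\le C/\sinh(s-C)$ already bounds $|\dis_{\alpha^{*}}(\eta^{*}_{1},\eta^{*}_{2})|$, so the subangularity observation is not needed at all; also, passing from componentwise bounds on $t_{2}-t_{1}$ and $\theta_{2}-\theta_{1}$ to the modulus of the complex distance technically costs a factor $\sqrt{2}$, which should be absorbed into $D$.
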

\vskip .1cm
Let $\alpha, \beta$ be two oriented geodesics in $\Ho$  such that $d(\alpha,\beta)>0$ and let $\gamma$ be their common orthogonal that is oriented from $\alpha$ to $\beta$.  
We observe that both $\alpha$ and $\beta$ are mapped to $-\alpha$ and $-\beta$ respectively,  by a $180$ degree rotation around $\gamma$.  Let $t \in \R$ and let $q:\R \to \beta$ be parametrisation by arc length such that $q_0(0)=\beta \cap \gamma$.  Let $\delta(t)$ be the geodesic that contains $q_0(t)$ and is orthogonal to $\alpha$, and is oriented from $\alpha$ to $q_0(t)$. The following proposition follows from the symmetry of $\alpha$ and $\beta$ around $\gamma$. Recall that the complex distance is well defined $\pmod {2\pi i}$, so we can always choose a complex distance such that its imaginary part is in the interval $(-\pi,\pi]$.

\begin{proposition}\label{elem-1} Assume that $\alpha \ne \beta$.  Then 
$\dis(q_0(t_1),\alpha)=\dis(q_0(t_2),\alpha)$ if and only if $|t_2|=|t_1|$. Moreover, if for some  $t \in \R$ 
we can choose the complex distance $\dis_{\alpha}(\delta(-t),\delta(t))$ such that 
$$
-\pi < \IM\big( \dis_{\alpha}(\delta(-t),\delta(t)) \big)< \pi,
$$
\noindent
then 
\begin{equation}
\dis_{\alpha}(\delta(-t),\gamma)={{1}\over{2}}\dis_{\alpha}(\delta(-t),\delta(t)).
\end{equation}
\end{proposition}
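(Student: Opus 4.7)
The natural tool here will be the $180^\circ$ rotation $\phi$ of $\Ho$ about the geodesic $\gamma_0^*$. This orientation-preserving isometry fixes $\gamma_0^*$ pointwise (so preserves its orientation as an oriented geodesic) and reverses every geodesic meeting it orthogonally; hence $\phi(\alpha^*) = -\alpha^*$, $\phi(\beta_0^*) = -\beta_0^*$, and in particular $\phi(q_0(t)) = q_0(-t)$. Tracking endpoints, since $\phi$ preserves $\alpha^*$ setwise, the geodesic $\phi(\delta_0^*(t))$ passes through $q_0(-t)$ and is orthogonal to $\alpha^*$, with its induced orientation running from its foot on $\alpha^*$ to $q_0(-t)$; hence $\phi(\delta_0^*(t)) = \delta_0^*(-t)$ as \emph{oriented} geodesics.

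For the first assertion, since $\phi$ is an isometry and the unsigned complex distance $\dis(\cdot,\cdot)$ is invariant under orientation reversals (as noted in Section 2.1),
\begin{equation*}
\dis(q_0(t), \alpha^*) = \dis(q_0(-t), -\alpha^*) = \dis(q_0(-t), \alpha^*),
\end{equation*}
which gives the direction $|t_1| = |t_2| \Rightarrow \dis(q_0(t_1), \alpha^*) = \dis(q_0(t_2), \alpha^*)$. For the converse I will take real parts and invoke identity \eqref{epstein}, rewriting via $\cosh^2(t) = 1 + \sinh^2(t)$:
\begin{equation*}
\sinh^2(d(q_0(t), \alpha^*)) = \sinh^2 d(\alpha^*, \beta_0^*) + \sinh^2(t)\bigl[\sinh^2 d(\alpha^*, \beta_0^*) + \sin^2(\IM \dis(\alpha^*, \beta_0^*))\bigr].
\end{equation*}
Because $d(\alpha^*, \beta_0^*) > 0$, the bracketed coefficient is strictly positive, so the right-hand side is strictly monotonic in $|t|$; equality of real parts therefore forces $|t_1| = |t_2|$.

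For the second assertion, apply $\phi$ to $\dis_{\alpha^*}(\delta_0^*(-t), \gamma_0^*)$: using that $\phi$ fixes $\gamma_0^*$ with its orientation, sends $\delta_0^*(-t) \mapsto \delta_0^*(t)$, and maps $\alpha^* \mapsto -\alpha^*$, combined with the identity $\dis_{-\gamma^*}(\cdot,\cdot) = -\dis_{\gamma^*}(\cdot,\cdot)$, one obtains
\begin{equation*}
\dis_{\alpha^*}(\delta_0^*(-t), \gamma_0^*) \equiv -\dis_{\alpha^*}(\delta_0^*(t), \gamma_0^*) \pmod{2\pi i}.
\end{equation*}
The cocycle identity for complex distances measured along a common perpendicular then yields
\begin{equation*}
\dis_{\alpha^*}(\delta_0^*(-t), \delta_0^*(t)) = \dis_{\alpha^*}(\delta_0^*(-t), \gamma_0^*) + \dis_{\alpha^*}(\gamma_0^*, \delta_0^*(t)) \equiv 2 \dis_{\alpha^*}(\delta_0^*(-t), \gamma_0^*) \pmod{2\pi i}.
\end{equation*}
Under the hypothesis $|\IM \dis_{\alpha^*}(\delta_0^*(-t), \delta_0^*(t))| < \pi$, halving selects the representative of $\dis_{\alpha^*}(\delta_0^*(-t), \gamma_0^*)$ whose imaginary part lies in $(-\pi/2, \pi/2)$, giving the stated equality on the nose.

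The place that will need the most care is the orientation bookkeeping for $\phi(\delta_0^*(t)) = \delta_0^*(-t)$ --- verifying it is the ``right'' orientation rather than the reversed one --- together with the resolution of the mod-$2\pi i$ ambiguity when halving in the second part. The latter is pinned down by continuity in $t$: both sides vanish as $t \to 0$, so the canonical branch is the one near zero. Beyond this orientation accounting no new geometric ingredient appears necessary; the single symmetry $\phi$ drives the entire argument.
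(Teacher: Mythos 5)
The paper labels this proposition as elementary and supplies no proof, so there is nothing in the text to compare against; your argument is therefore the only one on the table, and it is essentially sound. The $180^\circ$ rotation about $\gamma_0^*$ is precisely the natural symmetry, and you have the orientation accounting correct: $\phi(\alpha^*)=-\alpha^*$, $\phi(\gamma_0^*)=\gamma_0^*$, $\phi(\delta_0^*(t))=\delta_0^*(-t)$. The first assertion then follows cleanly (on the ``only if'' side, your use of \eqref{epstein} after the substitution $\cosh^2 t = 1 + \sinh^2 t$ is exactly what is needed, and the hypothesis $d(\alpha^*,\beta_0^*)>0$ makes the coefficient strictly positive). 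Note that $\dis(q_0(t),\alpha^*)$ in the statement can only sensibly mean the real distance $d(q_0(t),\alpha^*)$ --- the authors themselves switch to the notation $s(t)=d(q_0(t),\alpha^*)$ in the very next proposition --- so you can drop the language of ``taking real parts'' and the one genuinely dubious line $\dis(q_0(-t),-\alpha^*)=\dis(q_0(-t),\alpha^*)$, which does not follow from the stated identities for $\dis$ (reversing one orientation introduces $i\pi$); it simply isn't needed once you treat the quantity as a real distance invariant under isometry.

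The one real gap is the branch selection in the halving step, which you flag but do not close. From $2\,\dis_{\alpha^*}(\delta_0^*(-t),\gamma_0^*)\equiv \dis_{\alpha^*}(\delta_0^*(-t),\delta_0^*(t)) \pmod{2\pi i}$ one only gets that half of the chosen representative agrees with $\dis_{\alpha^*}(\delta_0^*(-t),\gamma_0^*)$ up to an additive $\pi i$. Your continuity-from-$t=0$ suggestion is the right resolution, but to make it work you need the parameter set where the hypothesis holds to be connected and contain $0$. This is in fact automatic here: since $\alpha^*$ and $\beta_0^*$ are disjoint, the half-plane through $\alpha^*$ containing $q_0(-t)$ sweeps out, as $t$ runs over $\R$, a total angle strictly less than $\pi$ (project to the normal bundle of $\alpha^*$; in upper-half-space coordinates with $\alpha^*$ vertical this is just the fact that $\arg$ varies by less than $\pi$ along a segment avoiding the origin). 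Since $t\mapsto \IM\,\dis_{\alpha^*}(\delta_0^*(-t),\gamma_0^*)$ is odd and vanishes at $t=0$, it therefore takes values in $(-\pi/2,\pi/2)$ for \emph{all} $t$, which is exactly what pins down the branch and, incidentally, explains the authors' Remark that the hypothesis on $\IM$ is automatic in the disjoint case. Adding that observation would close the argument completely.
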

\vskip .1cm
\begin{remark} Observe that if $\alpha$ and $\beta$ do not intersect we can always  choose the complex distance $\dis_{\alpha}(\delta(-t),\delta(t))$ such that 
$$
-\pi < \IM\big( \dis_{\alpha}(\delta(-t),\delta(t)) \big)< \pi.
$$
\end{remark}
\vskip .1cm
Assuming the above notation we have the following.

\begin{proposition} Let $s(t)=d(q_0(t),\alpha)$. Suppose  that $d(\alpha,\beta) \le 1$.
Then for $s(t)$ large enough we have 
\begin{align*}
&s(t+h)=s(t)+h+o(1), \,\, \text{as} \,\, t \to \infty  \\ 
&s(t+h)=s(t)-h+o(1), \,\, \text{as}\,\, t \to \infty \\
\end{align*}
\noindent
for any $|h|\le {{s(t)}\over{2}}$.
\end{proposition}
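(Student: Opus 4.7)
The plan is to push the identity (\ref{epstein}) from the previous subsection through an asymptotic expansion and then read off the derivative-like control on $s(t)$. Writing $a := d(\alpha^{*},\beta^{*}_0) \in (0,1]$ and $\theta := \IM[\dis_{O}(\alpha^{*},\beta^{*}_0)]$, the identity (\ref{epstein}) together with $\cosh^{2}(t)=1+\sinh^{2}(t)$ yields
\begin{equation*}
\sinh^{2}(s(t)) = \sinh^{2}(a) + C\sinh^{2}(t), \qquad C := \sinh^{2}(a)+\sin^{2}(\theta).
\end{equation*}
Since $a>0$, $C>0$; since $a\le 1$, $C$ is bounded above by $\sinh^{2}(1)+1$. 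This is the only quantitative use of the hypothesis $d(\alpha^{*},\beta^{*}_0)\le 1$.

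First I will derive an explicit asymptotic for $s$. Using $\sinh^{2}(x)=\tfrac{1}{4}e^{2|x|}(1-e^{-2|x|})^{2}$ on both sides and solving for $e^{2s(t)}$, one obtains
\begin{equation*}
e^{2s(t)} \;=\; C\,e^{2|t|}\bigl(1+O(e^{-2|t|})\bigr)\bigl(1+O(e^{-2s(t)})\bigr),
\end{equation*}
so taking logarithms,
\begin{equation*}
s(t) \;=\; |t| + \tfrac{1}{2}\log C + O\bigl(e^{-2\min(|t|,s(t))}\bigr).
\end{equation*}
Because $C$ is bounded above and below, $s(t)\to\infty$ is equivalent to $|t|\to\infty$; moreover in this regime $s(t)=|t|+O(1)$.

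With this expansion in hand, the conclusion is purely bookkeeping. For $t\gg 0$ and $|h|\le s(t)/2$, the estimate $s(t)=t+O(1)$ forces $t+h\ge t-s(t)/2 \ge t/2 - O(1)$, which is still large and positive when $t$ is large enough. In particular $|t+h|=t+h$ and both $e^{-2t}$ and $e^{-2(t+h)}$ are $o(1)$ as $s(t)\to\infty$. Subtracting the asymptotic expansion at $t+h$ from that at $t$, the $\tfrac{1}{2}\log C$ terms cancel and we obtain
\begin{equation*}
s(t+h)-s(t) \;=\; h + O\bigl(e^{-2\min(t,\,t+h)}\bigr) \;=\; h + o(1),
\end{equation*}
uniformly in $h$ with $|h|\le s(t)/2$. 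The case $t\ll 0$ is identical: the bound $|h|\le s(t)/2$ with $s(t)=|t|+O(1)$ keeps $t+h$ on the negative side, so $|t+h|=-(t+h)$ and the same subtraction yields $s(t+h)-s(t)=-h+o(1)$.

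There is no real obstacle; the only point requiring attention is verifying that the window $|h|\le s(t)/2$ does not push $t+h$ across $0$, which is exactly what the bound $s(t)=|t|+O(1)$ delivers, provided the uniform upper bound $C\le\sinh^{2}(1)+1$ (and hence the hypothesis $d(\alpha^{*},\beta^{*}_0)\le 1$) is in force so that the implied constants in $O(1)$ are absolute.
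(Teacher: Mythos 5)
Your proof is correct and takes essentially the same route as the paper: derive from (\ref{epstein}) the asymptotic $s(t)=|t|+\tfrac12\log C+o(1)$ (the paper phrases this as $e^{2s(t)}=Ce^{2|t|}+O(1)$), then subtract at $t$ and $t+h$, using the hypothesis $d(\alpha^{*},\beta^{*}_0)\le 1$ to ensure that the window $|h|\le s(t)/2$ cannot push $t+h$ across $0$. One small inaccuracy in the closing remark: the implied constant in $s(t)=|t|+O(1)$ is not absolute (it can be arbitrarily large in absolute value as $d(\alpha^{*},\beta^{*}_0)\to 0$, through $\tfrac12\log C$), but since the sign-preservation step only uses the upper bound $C\le\sinh^{2}(1)+1$, the argument is unaffected.
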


\begin{proof} By the triangle inequality we have

\begin{align*}
s(t) &= d(q_0(t),\alpha) \\ 
&\le  d(q_0(t), q_0(0)) + d(q_0(0),\alpha) \\ 
&\le |t| + 1,
\end{align*}
since $ d(q_0(0),\alpha)=d(\alpha,\beta) \le 1$. That is, we have
\begin{equation}\label{in-1}
s(t) \le |t|+1.
\end{equation}
\noindent
It follows from (\ref{in-1}) that $s(t)$ large implies that $|t|$ is large. 

Recall the following formula (\ref{epstein}) from Section 1.
$$
\sinh^{2}(d(q_0(t),\alpha))=\sinh^{2}(d(\alpha,\beta) ) \cosh^{2}(t)+
\sin^{2}(\IM[\dis_{\gamma}(\alpha,\beta)]) \sinh^{2}(t).
$$
\noindent
Combining this with (\ref{in-1}) we get
$$
e^{2s(t)}=e^{2t}\big(\sinh^{2}(d(\alpha,\beta) )+ \sin^{2}(\IM[\dis_{\gamma}(\alpha,\beta)]) \big )+O(1),
$$
\noindent
which proves the proposition.

\end{proof}

\vskip .1cm

We can define the foot of the geodesic $\beta$ on $\alpha$ as the normal to $\alpha$ pointing along $\gamma$.  The lemma below estimates how the foot of $\beta$ on $\alpha$ moves when $\beta$ is moved (and $\beta$ is very close to $\alpha$).

Let $\epsilon \in \D$ be a complex number and let $r>0$. Assume that 
\begin{equation}\label{assumption}
\dis_{\gamma}(\alpha,\beta)=e^{-{{r}\over{2}}+\epsilon}.
\end{equation}
\noindent
Then there exists $\epsilon_0>0$ such that for every for $|\epsilon|<\epsilon_0$, for every $r>1$, and for every $t \in \R$ we can choose the complex distance $\dis_{\alpha}(\delta(-t),\delta(t))$ such that 
\begin{equation}\label{d-1}
-{{\pi}\over{4}} < \IM  \dis_{\alpha}(\delta(-t),\delta(t)) < {{\pi}\over{4}}.
\end{equation}

\vskip .1cm
Let $\beta_1$ be another geodesic with   a parametrisation by arc length $q_1:\R \to \beta_1$.  We let $\gamma_1$,  denote the common orthogonal between $\alpha$ and $\beta_1$, that is oriented from $\alpha$ to $\beta_1$. We have

\begin{lemma}\label{lemma-ph-1} Assume  that $\alpha$ and $\beta$ satisfy (\ref{assumption}). Let $C>0$
and suppose that for some $t_1,t'_1,t_2,t'_2 \in \R$, where $t_1<0<t_2$, we have  
\begin{enumerate}
\item $d(q_1(t'_1),q_0(t_1)),d(q_1(t'_2),q_0(t_2)) \le C$, 
\item  $|d(q_0(t_1),\alpha)-d(q_0(t_2),\alpha)| \le C$,
\item $d(q_0(t_1),\alpha)>{{r}\over{4}}-C$.
\end{enumerate}
Then for $|\epsilon|<\epsilon_0$ and for $r$ large, we have 
$$
\dis_{\alpha}(\gamma,\gamma_1) \le D e^{-{{r}\over{4}}},
$$
\noindent
for some constant $D>0$, where $D$ only depends on $C$.
\end{lemma}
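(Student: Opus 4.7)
The plan is to bound $\dis_{\alpha^{*}}(\gamma^{*}_0,\gamma^{*}_1)$ via the midpoint interpretation of Proposition \ref{elem-1}. Let $\eta^{*}_i=\delta^{*}_0(t_i)$ denote the common orthogonal from $\alpha^{*}$ to $q_0(t_i)$, and let $\zeta^{*}_i$ denote the common orthogonal from $\alpha^{*}$ to $q_1(t'_i)$, for $i=1,2$. The strategy proceeds in three steps: (a) show $\eta^{*}_i$ and $\zeta^{*}_i$ are close along $\alpha^{*}$; (b) show $\gamma^{*}_0$ is close to the midpoint of $(\eta^{*}_1,\eta^{*}_2)$ along $\alpha^{*}$, and similarly $\gamma^{*}_1$ is close to the midpoint of $(\zeta^{*}_1,\zeta^{*}_2)$; (c) combine by the triangle inequality.

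For step (a), hypothesis (1) gives $d(q_0(t_i),q_1(t'_i))\le C$, and combined with hypothesis (3) and the triangle inequality, both points lie at distance at least $r/4-2C$ from $\alpha^{*}$. Proposition \ref{elem-0} yields $|\dis_{\alpha^{*}}(\eta^{*}_i,\zeta^{*}_i)|\le D_1e^{-r/4}$ with $D_1$ depending only on $C$. For step (b), the preceding proposition together with the assumption $\dis_{\gamma^{*}_0}(\alpha^{*},\beta^{*}_0)=e^{-r/2+\epsilon}$ gives the asymptotic $s(t)=|t|+\tfrac{1}{2}\log(\sinh^{2}d+\sin^{2}\IM)+o(1)=|t|-r/2+O(1)$ for large $|t|$, since both $\sinh^{2}d$ and $\sin^{2}\IM$ are of order $e^{-r+O(1)}$. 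Combining with hypotheses (2) and (3) yields $|t_i|=3r/4+O(1)$ and $||t_1|-|t_2||\le C_1$, hence (since $t_1<0<t_2$) $|t_1+t_2|\le C_1$. Applying Proposition \ref{elem-1} at $t=|t_1|$, $\gamma^{*}_0$ is the midpoint along $\alpha^{*}$ of $\delta^{*}_0(-|t_1|)=\eta^{*}_1$ and $\delta^{*}_0(|t_1|)$. Since $q_0(|t_1|)$ and $q_0(t_2)$ differ in parameter by at most $C_1$ and lie at distance $\ge r/4-C_1$ from $\alpha^{*}$, Proposition \ref{elem-0} again gives $|\dis_{\alpha^{*}}(\delta^{*}_0(|t_1|),\eta^{*}_2)|\le D_2e^{-r/4}$, so the midpoint of $(\eta^{*}_1,\eta^{*}_2)$ differs from $\gamma^{*}_0$ by at most $D_2e^{-r/4}/2$.

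The analogous bound for $\beta^{*}_1$ is obtained by reparameterizing $\beta^{*}_1$ naturally so that the origin lies at $\gamma^{*}_1\cap\beta^{*}_1$; let $\tau_i$ be the parameters of $q_1(t'_i)$. From $|t_1-t_2|=3r/2+O(1)$ and the $C$-closeness at both endpoints, $|\tau_1-\tau_2|=3r/2+O(1)$; analyzing the convexity of the distance function along $\beta^{*}_1$ together with the values at $\tau_1,\tau_2$ (each $\ge r/4-2C$) forces $d(\alpha^{*},\beta^{*}_1)=e^{-r/2+O(1)}$, whence the analog of the asymptotic for $s'$ gives $|\tau_i|=3r/4+O(1)$ and $|\tau_1+\tau_2|\le O(1)$. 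Applying Proposition \ref{elem-1} and Proposition \ref{elem-0} to $\beta^{*}_1$ as before, the midpoint of $(\zeta^{*}_1,\zeta^{*}_2)$ differs from $\gamma^{*}_1$ by at most $D_3e^{-r/4}$. Since the midpoint operation along $\alpha^{*}$ is linear, step (a) implies the two midpoints differ by at most $D_1e^{-r/4}$, and the triangle inequality concludes $|\dis_{\alpha^{*}}(\gamma^{*}_0,\gamma^{*}_1)|\le De^{-r/4}$ with $D$ depending only on $C$. The main obstacle is the analysis of $\beta^{*}_1$: verifying that $d(\alpha^{*},\beta^{*}_1)$ is in fact exponentially small (so that Proposition \ref{elem-1} applies with the strong asymptotic $|\tau_i|-s'(\tau_i)=-r/2+O(1)$, giving consistency between $|\tau_1-\tau_2|\approx 3r/2$ and $|\tau_i|\approx 3r/4$), rather than merely bounded.
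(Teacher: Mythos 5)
Your proposal uses the same key tools as the paper's proof (Propositions \ref{elem-0} and \ref{elem-1}, the midpoint interpretation, and the closeness of the $q$-points), and the overall architecture—compare each $\gamma^*_j$ to a midpoint, then compare midpoints—matches. But your treatment of $\beta^*_1$ diverges from the paper in a way that creates a gap and also makes the argument harder than necessary.

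The detour you flag as the ``main obstacle''—establishing $d(\alpha^*,\beta^*_1)=e^{-r/2+O(1)}$ and hence the strong asymptotic $|\tau_i|\approx 3r/4$—is not actually needed, and the ``analyzing the convexity'' step you invoke is not spelled out. The paper only needs $d(\alpha^*,\beta^*_1)<1$ (which follows from the fellow-traveling of $\beta^*_1$ with $\beta^*_0$ over a long stretch), at which point the growth estimate $s'(t+h)=s'(t)\pm h+o(1)$ together with the symmetry $s'(-\tau)=s'(\tau)$ already gives $|t'_1+t'_2|\le D_2$ without any control on the absolute scale of $d(\alpha^*,\beta^*_1)$. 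The relative comparison $\bigl||\tau_1|-|\tau_2|\bigr|\le D$ is all that is required, not $|\tau_i|\approx 3r/4$.

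The genuine gap is that you never verify the hypothesis under which Proposition \ref{elem-1} applies to $\beta^*_1$: that the complex distance $\dis_{\alpha^*}(\delta^*_1(-\tau),\delta^*_1(\tau))$ can be chosen with imaginary part in $(-\pi,\pi)$. The midpoint formula is only valid mod $2\pi i$ unless this branch condition is verified, and for $\beta^*_1$ it is not automatic the way it is for $\beta^*_0$ (where \eqref{d-1} gives it directly from \eqref{assumption}). The paper resolves this by first obtaining $|\dis_{\alpha^*}(\delta^*_0(\pm t_1),\delta^*_1(\pm t'_1))|\le D_4 e^{-r/4}$ from Proposition \ref{elem-0}, and then propagating the imaginary-part bound \eqref{d-1} across this small perturbation, yielding $-\pi/3<\IM\dis_{\alpha^*}(\delta^*_1(-t'_1),\delta^*_1(t'_1))<\pi/3$. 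Without this step, the midpoints you compare are only defined up to an additive $\pi i$, so the final triangle inequality does not yet yield the stated exponential bound. If you insert this verification (using the smallness of $\dis_{\alpha^*}(\eta^*_i,\zeta^*_i)$ from your step (a) plus \eqref{d-1}), the rest of your argument closes, and you can also drop the unnecessary $d(\alpha^*,\beta^*_1)=e^{-r/2+O(1)}$ claim entirely.
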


\begin{proof}  The constants $D_i$ defined below all depend only on $C$. 
From (\ref{assumption}) we have  $d(\alpha,\beta)<e^{-{{r}\over{2}}+1}$.
Since  
$$
d(q_1(t'_1),q_0(t_1)),d(q_1(t'_2),q_0(t_2)) \le C, 
$$
\noindent
it follows that for $r$ large we have  $d(\alpha,\beta_1)=o(1)$, and in  particular we have $d(\alpha,\beta_1)<1$.
By the triangle inequality we obtain $|d(q_1(t'_1),\alpha)-d(q_1(t'_2),\alpha)| \le D_1$.
Then it  follows from the previous proposition that the  inequalities $|t_2+t_1|,|t'_2+t'_1| \le D_2$ hold. 
This implies that $d(q_0(-t_1),q_1(-t'_1)) \le D_3$. 
\vskip .1cm
Let $\delta_1(t)$ be the geodesic that contains $q_1(t)$ and is orthogonal to $\alpha$, and is oriented from $\alpha$ to $q_1(t)$.
Now we apply Proposition \ref{elem-0} and find that
$$
|\dis_{\alpha}(\delta(-t_1),\delta_1(-t'_1))| \le D_4e^{-{{r}\over{4}}}.
$$
\noindent
Similarly
$$
|\dis_{\alpha}(\delta(t_1),\delta_1(t'_1))| \le D_4e^{-{{r}\over{4}}}.
$$
\noindent
It follows from (\ref{d-1}) and  the above two inequalities that for $r$ large we can choose 
the complex distance $\dis_{\alpha}(\delta_1(-t'_1),\delta_1(t'_1))$ such that 
$$
-{{\pi}\over{3}} < \IM  \dis_{\alpha}(\delta_1(-t'_1),\delta_1(t'_1)) < {{\pi}\over{3}}.
$$
\noindent
In particular, we can choose the complex distances $\dis_{\alpha}(\delta(-t_1),\delta(t_1))$ and 
$\dis_{\alpha}(\delta_1(-t'_1),\delta_1(t'_1))$ such that the corresponding imaginary parts belong to the interval $(-\pi,\pi)$, and such that
$$
\big| \dis_{\alpha}(\delta(-t_1),\delta(t_1)) - \dis_{\alpha}(\delta_1(-t'_1),\delta_1(t'_1)) \big| \le 2D_4e^{-{{r}\over{4}}}.
$$
\noindent
The proof now follows from Proposition \ref{elem-1} and the triangle inequality.

\end{proof}

\begin{lemma}\label{lemma-ph-2} Let $A \in \PSLC$ be a loxodromic transformation  with the axis $\gamma$.  Let $p,q \in (\partial{\Ho} \setminus \text{endpoints}(A))$,  and denote by $\alpha_1$ the oriented geodesic from $p$ to $q$, and by $\alpha_2$ the oriented geodesic from $q$ to $A(p)$. We let $\delta_j$ be the common orthogonal between $\gamma$ and $\alpha_j$, oriented from $\gamma$ to $\alpha_j$. Then 
$$
\dis_{\gamma}(\delta_1,\delta_2)=(-1)^{j} {{\len(A)}\over{2}}+k\pi i, 
$$
\noindent
for some  $k \in \{0,1\}$ and some $j \in \{1,2\}$. 
\end{lemma}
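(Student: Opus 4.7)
The plan is to realize $A$ as the composition of two $\pi$-rotations, one around each $\delta_j^*$, and then apply the classical identification of the complex translation length of such a composition with twice the complex distance between the rotation axes.

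For $j=1,2$, let $R_j \in \PSLC$ be the $\pi$-rotation of $\Ho$ around the unoriented geodesic underlying $\delta_j^*$. Each $R_j$ fixes $\delta_j^*$ pointwise, and since $\delta_j^* \perp \gamma^*$ and $\delta_j^* \perp \alpha_j^*$, $R_j$ preserves both $\gamma^*$ and $\alpha_j^*$ setwise, in each case swapping the two endpoints. In particular, $R_1$ exchanges $p$ and $q$, while $R_2$ exchanges $q$ and $A(p)$; hence $(R_2 \circ R_1)(p) = A(p)$. Moreover, since each $R_j$ swaps the two endpoints of $\gamma^*$, the composition $R_2 \circ R_1$ fixes them, as does $A$. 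Therefore $A^{-1} \circ R_2 \circ R_1$ fixes three distinct points on $\partial\Ho$ (the two endpoints of $\gamma^*$ and $p$, using the hypothesis that $p$ is not an endpoint of $A$) and so equals the identity. Hence $R_2 \circ R_1 = A$ in $\PSLC$.

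Setting $m_j = \delta_j^* \cap \gamma^*$, the restriction of $R_j$ to $\gamma^*$ is the reflection through $m_j$, so $R_2 R_1$ translates along $\gamma^*$ by $2(m_2 - m_1)$; and in the normal bundle to $\gamma^*$, each $R_j$ acts by reflection in the direction of $\delta_j^*$, so $R_2 R_1$ rotates by twice the oriented angle from $\delta_1^*$ to $\delta_2^*$. Combining these two observations gives the classical identity
\[
\len(R_2 R_1) \;\equiv\; \pm\, 2\,\dis_{\gamma^*}(\delta_1^*, \delta_2^*) \pmod{2\pi i \Z},
\]
where the overall sign depends on whether the prescribed orientations on $\delta_1^*, \delta_2^*$ (pointing from $\gamma^*$ to $\alpha_j^*$) agree or disagree with the orientations implicit in the above normal-bundle computation; reversing the orientation of a single $\delta_j^*$ shifts the imaginary part by $\pi$ (mod $2\pi i$). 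Setting this equal to $\len(A)$ and solving for $\dis_{\gamma^*}(\delta_1^*, \delta_2^*)$ produces the formula $(-1)^j \len(A)/2 + k\pi i$ for appropriate $j \in \{1,2\}$ and $k \in \{0,1\}$.

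The substantive work is all in the simple algebraic identity $R_2 \circ R_1 = A$; once that is in hand, the conclusion is just the standard fact that a loxodromic is a product of two half-turns around geodesics perpendicular to its axis. The main technical nuisance is the orientation bookkeeping that produces the $\pm$ sign and the $k\pi i$ summand rather than a cleaner equality. The most transparent way to handle it is to pass to a model in which $\gamma^*$ is the vertical axis of the upper half-space, write $R_1, R_2$ explicitly in $\PSLC$ (each a half-turn around a geodesic orthogonal to the vertical axis), and compute the trace of $R_2 R_1$ directly to verify the formula and track the $\pi$-ambiguity coming from the two independent orientation choices on $\delta_1^*$ and $\delta_2^*$.
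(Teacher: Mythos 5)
Your proof is correct and takes a genuinely different route from the paper's. The paper compares two degenerate right-angled hexagons: $H_1$ with sides $\gamma^*, \delta_1^*, \alpha_1^*, q, \alpha_2^*, \delta_2^*$ (with the ideal vertex $q$ a collapsed side), and $H_2$ obtained by replacing $\alpha_1^*$, $\delta_1^*$, $q$ by their $A$-images $\alpha_3^*$, $\delta_3^*$, $A(p)$. Because $A$ is an isometry, the three odd-indexed sides of $H_1$ and $H_2$ have the same complex lengths, so the $\cosh$ rule forces all corresponding sides to agree up to orientation changes (hence up to sign and $\pi i$); combining $\sigma_0 = \pm\sigma_0' + k\pi i$ with the fact that $\delta_3^*$ is the $A$-translate of $\delta_1^*$ along $\gamma^*$ gives the formula. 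You instead factor $A = R_2 \circ R_1$ into $\pi$-rotations about $\delta_2^*$ and $\delta_1^*$ --- verified by matching the two maps on the endpoints of $\gamma^*$ and on $p$ --- and appeal to the classical fact that a composition of half-turns about geodesics orthogonal to a common axis translates along that axis by twice the complex distance between the rotation axes. Your route is more elementary: it bypasses the hexagon trigonometry and the discussion of degenerate hexagons entirely, and the lemma collapses to the explicit M\"obius computation $R_j(z) = h_j^2 e^{2i\theta_j}/z$, $R_2 R_1(z) = (h_2/h_1)^2 e^{2i(\theta_2 - \theta_1)} z$ in the upper half-space model with $\gamma^*$ vertical. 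The paper's version keeps the argument inside the right-angled-hexagon framework it already uses for the Fenchel--Nielsen coordinates and the Series derivative formula, at the cost of quoting facts about degenerate hexagons. Both proofs necessarily carry the same $\pm$ and $\pi i$ ambiguities, since $\delta_1^*$ and $\delta_2^*$ enter the half-turns (or the hexagon) only as unoriented geodesics; your account of exactly where the ambiguity arises --- halving a congruence modulo $2\pi i$, plus two independent orientation reversals of the $\delta_j^*$ --- is the more transparent of the two.
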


Alternatively, we can think of $p$ and $q$ as points on the ideal boundary of $\Ho /A$, and $\alpha_1$ and $\alpha_2$ as two geodesics from $p$ to $q$, such that $\alpha_1 \cdot
(\alpha_2)^{-1}$ is freely homotopic to the core curve of the solid torus $\Ho / A$.

\begin{proof} Let $\alpha_3$ be the oriented geodesic from $A(p)$ to $A(q)$, and let $\delta_3$ be the common orthogonal between $\gamma$ and $\alpha_3$ (oriented from $\gamma$ to $\alpha_3$). Consider the right-angled hexagon $H_1$ with the sides $L_0=\gamma$, $L_1=\delta_1$, $L_2=\alpha_1$, $L_3=q$, $L_4=\alpha_2$ and $L_5=\delta_2$. Let  $H_2$ be the right-angled hexagon with the sides 
$L'_0=\gamma$, $L'_1=\delta_3$, $L'_2=\alpha_3$, $L'_3=A(p)$, $L'_4=\alpha_2$ and $L'_5=\delta_2$.
Note that  $H_1$ is a degenerate hexagon since the common orthogonal between $\alpha_1$ and $\alpha_2$ has shrunk to a point on $\partial{\Ho}$. The same holds for $H_2$. We note that the $\cosh$ formula is valid in degenerate right-angled hexagons and every such hexagon is uniquely determined by the complex lengths of its three alternating sides.
\vskip .1cm
Denote by $\sigma_k$ and $\sigma'_k$ the complex lengths of the sides $L_k$ and $L'_k$ respectively. By changing the orientations of the sides
$L_{k}$ and $L'_k$ if necessary we can arrange that $\sigma_1=\sigma'_1$, $\sigma_5=\sigma'_5$ and $\sigma_3=\sigma'_3=0$ (see Section 2.2 in \cite{series}). This shows that the hexagons $H_1$ and $H_2$ are isometric modulo the orientations of the sides, and this implies the equality $\sigma_0=\sigma'_0$. On the other hand, changing orientations of the sides can change the complex length of a side by changing its sign and/or adding $\pi i$. This proves the lemma.

\end{proof}

\subsection{The two-frame bundle and  the well connected frames}  Let $\FR(\Ho)$ denote the two frame bundle over $\Ho$. Elements of $\FR(\Ho)$ are frames $F=(p,u,n)$, where $p \in \Ho$ and $u,n \in \TB_p(\Ho)$ are two orthogonal vectors at $p$  (here $\TB(\Ho)$ denotes the unit tangent bundle). The group $\PSLC$ acts naturally on $\FR(\Ho)$. 
For $(p_i,u_i,n_i)$, $i=1,2$, we define the distance function $\dista$ on $\FR(\Ho)$ by
$$
\dista((p_1,u_1,n_1),(p_2,u_2,n_2))=d(p_1,p_2)+\Ang(u'_1,u_2)+\Ang(n'_1,n_2),
$$
\noindent
where $u'_1, n'_1 \in \TB_{p_{2}}(\Ho)$, are the parallel transports of $u_1$ and $v_1$ along the geodesic that connects $p_1$ and $p_2$. One can check that $\dista$ is invariant under the action of $\PSLC$ (we do not claim that $\dista$ is a metric on $\FR(\Ho)$). 
By $\Ne_{\epsilon}(F) \subset \FR(\Ho)$ we denote the $\epsilon$ ball around a frame $F \in \FR(\Ho)$.
\vskip .1cm
Recall the standard geodesic flow  $\flow_r:\TB(\Ho) \to \TB(\Ho)$, $r \in \R$. The flow action extends naturally on $\FR(\Ho)$, that is the map $\flow_r:\FR(\Ho) \to \FR(\Ho)$, is given by $\flow_r(p,u,n)=(p_1,u_1,n_1)$, where $(p_1,u_1)=\flow_r(p,u)$ and $n_1$ is the parallel transport of the vector $n$ along the geodesic that connects $p$ and $p_1$. The flow $\flow_r$ on $\FR(\Ho)$ is called the frame flow. The space $\FR(\Ho)$ is equipped with  the Liouville measure $\Lambda$ which is invariant under the frame flow, and under the $\PSLC$ action. Locally on $\FR(\Ho)$, the measure $\Lambda$ is the product of the standard Liouville measure for the geodesic flow and the Lebesgue measure on the unit circle. 
\vskip .1cm
Recall that $\M=\Ho/ \KG $ denotes a closed hyperbolic three manifold, and $\KG$ from now on denotes an appropriate Kleinian group. We identify the frame bundle $\FR(\M)$ with the quotient $\FR(\Ho)/ \KG$. The frame flow acts on $\FR(\M)$ by the projection. 
\vskip .1cm
It is well known \cite{brin-gromov} that the frame flow is mixing on closed 3-manifolds of variable negative curvature. In the case of constant negative curvature the frame flow is known to be exponentially mixing. This was proved by Moore in \cite{moore} using representation theory (see also \cite{pollicott}). The proof of the following theorem follows from the spectral gap theorem for  the Laplacian on closed hyperbolic manifold $\M$ and  Proposition 3.6 in \cite{moore} (we thank Livio Flaminio and Mark Pollicott for explaining this to us). 

\begin{theorem}\label{exp-mixing} There exists a ${\bf q}>0$ that depends only on $\M$ such that the following holds. 
Let $\psi,\phi:\FR(\M) \to \R$ be two $C^{1}$ functions. Then for every $r \in \R$ the inequality
$$
\left| \Lambda(\FR(\M)) \int\limits_{\FR(\M)} (\flow^{*}_r \psi)(x) \phi(x) \, d\Lambda(x)- \int\limits_{\FR(\M)} \psi(x)\, d\Lambda(x)
\int\limits_{\FR(\M)} \phi(x) \, d\Lambda(x)\right| \le Ce^{- {\bf q} |r| },
$$
\noindent
holds, where $C>0$ only depends on the $C^{1}$ norm of $\psi$ and $\phi$.
\end{theorem}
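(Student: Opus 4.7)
The plan is to realize the frame flow as a homogeneous flow and then reduce exponential mixing to decay of matrix coefficients of the regular representation, which in turn is controlled by the spectral gap of $\M$.

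First I would identify the frame bundle with a homogeneous space. After fixing a base frame $F_0 \in \FR(\Ho)$, the transitive action of $\PSLC$ on $\FR(\Ho)$ gives an identification $\FR(\Ho) \cong \PSLC$ (up to the finite cover issue coming from the ambiguity in the third frame vector). Under this identification, the frame flow $\flow_r$ corresponds to right multiplication by the one-parameter subgroup $a_r = \operatorname{diag}(e^{r/2}, e^{-r/2})$, and the Liouville measure $\Lambda$ corresponds (up to scale) to the Haar measure. Hence $\FR(\M) = \KG \backslash \PSLC$ and $\flow_r$ is right translation by $a_r$.

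Next I would reduce the statement to decay of matrix coefficients. Writing $\bar\psi = \frac{1}{\Lambda(\FR(\M))}\int \psi\, d\Lambda$ and $\psi_\perp = \psi - \bar\psi$ (similarly for $\phi$), a direct expansion shows
\begin{equation*}
\Lambda(\FR(\M))\int_{\FR(\M)} (\flow_r^{*}\psi)\phi\, d\Lambda - \int \psi\, d\Lambda \int \phi\, d\Lambda = \Lambda(\FR(\M)) \langle \pi(a_r)\psi_\perp,\phi_\perp\rangle,
\end{equation*}
where $\pi$ is the right regular representation of $G = \PSLC$ on $L^2_0(\KG \backslash G)$ (the orthogonal complement of constants). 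So the task becomes bounding $|\langle \pi(a_r)\psi_\perp,\phi_\perp\rangle| \le C e^{-\mathbf{q}|r|}$ with $C$ depending on the $C^1$-norms.

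The third step is to invoke representation theory. Decompose $L^2_0(\KG\backslash G)$ into irreducible unitary representations of $G$; by the classification for $\PSLC$, these are principal or complementary series. For spherical (i.e.\ $K$-invariant) vectors in a complementary series representation with parameter $s \in (0,1)$, the spherical function decays like $e^{(s-1)|r|}$, and the corresponding Laplacian eigenvalue on $\M$ is $1-s^2$. Hence the spectral gap $\lambda_1(\M) > 0$ forces $s \le s_0 < 1$ uniformly over the decomposition, yielding a uniform decay rate $\mathbf{q} = 1 - s_0 > 0$ depending only on $\M$. Principal series matrix coefficients already decay at rate $e^{-|r|}$. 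This handles spherical vectors.

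The main obstacle, addressed in the fourth step, is to pass from spherical (or $K$-finite) vectors to arbitrary $C^1$ functions $\psi,\phi$. For this I would invoke Moore's Proposition 3.6, which gives the quantitative decay for matrix coefficients of $K$-finite vectors with an explicit dependence on $K$-types. By expanding $\psi_\perp$ and $\phi_\perp$ in a $K$-type series, a standard Sobolev/Peter--Weyl argument on the compact group $K = \mathrm{PSU}(2)$ lets one trade one derivative for summability of the series: the $C^1$-norm controls the tail, while each $K$-type matrix coefficient decays at rate $e^{-\mathbf{q}|r|}$ with polynomial-in-$K$-type constants. Summing gives the desired bound with $C$ controlled by the $C^1$-norms of $\psi$ and $\phi$. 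This passage from $K$-finite decay to a sharp $C^1$ bound is the delicate point, and is precisely the content of Proposition 3.6 of Moore combined with the spectral gap input.
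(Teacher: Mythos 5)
Your proposal is correct and matches the paper's approach: the paper itself gives no proof beyond citing the spectral gap of the Laplacian on $\M$ together with Proposition 3.6 of Moore, and your sketch is a faithful unpacking of exactly that reduction (frame bundle as $\KG\backslash\PSLC$, frame flow as right translation by the diagonal, decay of matrix coefficients governed by the complementary-series parameter bounded via $\lambda_1(\M)$, and Moore's $K$-type estimate converted into a $C^1$ bound).
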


\begin{remark} In fact, one can replace the $C^{1}$ norm in the above theorem by the (weaker)  H\"older norm (see \cite{moore}).
\end{remark}

For two functions $\psi,\phi:\FR(\M) \to \R$ we set
$$
(\psi,\phi)=\int\limits_{\FR(\M)} \psi(x) \phi(x) \, d\Lambda(x).
$$
\vskip .1cm
From now on $r>>0$ denotes a large positive number that stands for the flow time of the frame flow. Also let $\epsilon>0$ denote a positive number that is smaller than the injectivity radius of $\M$. Then the projection map $\FR(\Ho) \to \FR(\M)$ is injective on every $\epsilon$ ball 
$\Ne_{\epsilon}(F) \subset \FR(\Ho)$. 
\vskip .1cm
Fix $F_0 \in \FR(\Ho)$ and let  $\Ne_{\epsilon}(F_0) \subset \FR(\Ho)$  denote the $\epsilon$ ball around the frame $F_0$.
Choose a  $C^{1}$-function $f_{\epsilon}(F_0):\FR(\Ho) \to \R$, that is positive on $\Ne_{\epsilon}(F_0)$, supported on  $\Ne_{\epsilon}(F_0)$, and such that
\begin{equation}\label{nor}
\int\limits_{\FR(\Ho)}f_{\epsilon}(F_0)(X) \, d\Lambda(X)=1.
\end{equation}
\noindent
For every $F \in \FR(\Ho)$ we define $f_{\epsilon}(F)$ by pulling back $f_{\epsilon}(F_0)$ by the corresponding element of $\PSLC$. For $F \in \FR(\M)$ the function  $f_{\epsilon}(F):\FR(\M) \to \R$ is defined accordingly (it is well defined since every ball $\Ne_{\epsilon}(F) \subset \FR(\Ho)$ embeds in $\FR(\M)$). Moreover the equality (\ref{nor}) holds for every $f_{\epsilon}(F)$.
\vskip .1cm

The following definition tells us when two frames in $\FR(\Ho)$ are well connected.

\begin{definition}\label{def-well-0} Let $F_j=(p_j,u_j,n_j) \in \FR(\Ho)$, $j=1,2$, be two frames, and set
$\flow_{ {{r}\over{4}} }  (p_j,u_j,n_j)=(\wh{p}_j,\wh{u}_j,\wh{n}_j)$.
Define
$$
\abs_{\Ho}(F_1,F_2)= 
\big( \flow^{*}_{ {{r}\over{2}}}f_{\epsilon}( \wh{p}_1,\wh{u}_1,\wh{n}_1 ), f_{\epsilon}(\wh{p}_2,-\wh{u}_2,\wh{n}_2) \big) .
$$
\noindent
We say that the frames $F_1$ and $F_2$ are $(\epsilon,r)$ well connected (or just well connected if $\epsilon$ and $r$ are understood) if $\abs_{\Ho}(F_1,F_2) >0$.
\end{definition}

The preliminary flow by time ${{r}\over{4}}$ to get $(\wh{p}_j,\wh{u}_j,\wh{n}_j)$  is used to get the estimates needed for 
Proposition \ref{prop-tripo-1} and  Proposition \ref{prop-dis}.

\begin{definition}\label{def-well} Let $F_j=(p_j,u_j,n_j) \in \FR(\M)$, $j=1,2$, be two frames and let $\gamma$ be a geodesic segment in $\M$ that connects $p_1$ and $p_2$.  Let $\wt{p}_1 \in \Ho$ be a lift of $p_1$, and let $\wt{p}_2$ denotes the lift of $p_2$ along $\gamma$. By  $\wt{F}_j=(\wt{p}_j,\wt{u}_j,\wt{n}_j) \in \FR(\Ho)$ we denote the corresponding lifts. Set $\abs_{\gamma}(F_1,F_2)=\abs_{\Ho}(\wt{F}_1,\wt{F}_2)$.
We say that the frames $F_1$ and $F_2$ are $(\epsilon,r)$ well connected (or just well connected if $\epsilon$ and $r$ are understood) along the segment $\gamma$, if $\abs_{\gamma}(F_1,F_2) >0$.
\end{definition}

The function  $\abs_{\gamma}(F_1,F_2)$ is the affinity function from the outline above. 
\vskip .1cm
Let $F_j=(p_j,u_j,n_j) \in \FR(\M)$, $j=1,2$ and let 
$\flow_{ {{r}\over{4}} }  (p_j,u_j,n_j)=(p'_j,u'_j,n'_j)$. Define
$$
\abs(F_1,F_2)=
\big( \flow^{*}_{ -{{r}\over{2}}} f_{\epsilon}( p'_1,u'_1,n'_1), 
f_{\epsilon}(p'_2,-u'_2,n'_2 ) \big).
$$
\noindent
Then
$$
\abs(F_1,F_2)=\sum_{\gamma} \abs_{\gamma}(F_1,F_2),
$$
\noindent
where $\gamma$ varies over all geodesic segments in $\M$ that connect $p_1$ and $p_2$ (only finitely many numbers $\abs_{\gamma}(F_1,F_2)$ are non-zero).
One can think of $\abs(F_1,F_2)$ as the total probability that the frames $F_1$ and $F_2$ are well connected, and $\abs_{\gamma}(F_1,F_2)$ represents the probability that they are well connected along the segment $\gamma$. The following lemma follows from Theorem \ref{exp-mixing}.

\begin{lemma}\label{mixing} Fix $\epsilon>0$.  Then for $r$ large and any $F_1,F_2 \in \FR(\M)$ we have
$$
\abs(F_1,F_2)= {{1}\over{\Lambda(\FR(\M))}} (1+O(e^{-{\bf q} {{r}\over{2}} })),
$$
\noindent
where ${\bf q}>0$ is a constant that depends only on the manifold $\M$.
\end{lemma}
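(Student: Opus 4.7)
The plan is to apply the exponential mixing Theorem \ref{exp-mixing} essentially directly. Setting $\psi = f_{\epsilon}(p'_1,u'_1,n'_1)$ and $\phi = f_{\epsilon}(p'_2,-u'_2,n'_2)$, where $(p'_j,u'_j,n'_j) = \flow_{r/4}(F_j)$, the definition of $\abs$ gives exactly
$$\abs(F_1,F_2) = (\flow^{*}_{-r/2}\psi,\phi) = \int_{\FR(\M)} (\flow^{*}_{-r/2}\psi)(x)\,\phi(x)\, d\Lambda(x),$$
so this is the inner product to which Theorem \ref{exp-mixing} applies at time $-r/2$.

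Next I would exploit the normalization (\ref{nor}): since $\int \psi\, d\Lambda = \int \phi\, d\Lambda = 1$, the product of integrals appearing in Theorem \ref{exp-mixing} is simply $1$. The theorem then yields
$$\left|\,\Lambda(\FR(\M))\,\abs(F_1,F_2) - 1\,\right| \le C\, e^{-{\bf q}\, r/2},$$
which rearranges to the claimed asymptotic. The only task is therefore to verify that the constant $C$ can be chosen independently of $F_1,F_2$.

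The uniformity of $C$ follows from the invariance built into the definition of $f_\epsilon$. By construction $f_\epsilon(F)$ on $\FR(\Ho)$ is the pullback of $f_\epsilon(F_0)$ under an element of $\PSLC$; since $\PSLC$ acts by isometries of $\FR(\Ho)$ preserving both the Liouville measure and the Riemannian metric used for the $C^1$ norm, every $f_\epsilon(F)$ has the same $C^1$ norm as the fixed $f_\epsilon(F_0)$. Because $\epsilon$ is smaller than the injectivity radius of $\M$, the ball $\Ne_\epsilon(F) \subset \FR(\Ho)$ embeds isometrically into $\FR(\M)$, so pushing down to $\FR(\M)$ preserves the $C^1$ norm as well. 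Thus a single constant $C$, depending only on $\epsilon$ and $\M$, works for all $F_1,F_2 \in \FR(\M)$, and the lemma follows.

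There is no real obstacle here: the work has already been front-loaded into Theorem \ref{exp-mixing} (which packages Moore's exponential mixing) and into the careful normalization of $f_\epsilon$; this lemma is essentially a bookkeeping translation of mixing into the language of the affinity function $\abs$.
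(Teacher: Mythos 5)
Your proposal is correct and matches the paper's intended argument: the paper simply asserts that the lemma "follows from Theorem \ref{exp-mixing}," and you have supplied precisely the missing details — identifying $\psi,\phi$, applying the mixing estimate at time $-r/2$, using the normalization $\int\psi = \int\phi = 1$, and noting that $\PSLC$-invariance makes the $C^1$ norms (hence the constant $C$) uniform over all frames.
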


\subsection{The geometry of well connected bipods}   There is natural order three homeomorphism $\omega:\FR(\Ho) \to \FR(\Ho)$ given by $\omega(p,u,n)=(p,\omega(u),n)$, where $\omega(u)$ is the vector in $\TB_p(\Ho)$ that is orthogonal to $n$ and  such that the oriented angle (measured anticlockwise) between $u$ and $\omega(u)$ is ${{2\pi}\over{3}}$ (the plane containing the vectors $u$ and $\omega(u)$ is oriented by the normal vector $n$). An equivalent way of defining $\omega$ is by the Right hand rule. The homeomorphism $\omega$ commutes with the $\PSLC$ action and it is well defined on $\FR(\M)$ by the projection. The distance function $\dista$ on $\FR(\M)$ is  invariant under $\omega$. 

To every  $F_p=(p,u,n) \in \FR(\M)$  we associate the bipod $B_p=(F_p,\omega(F_p)$ and the anti-bipod  $\overline{B}_p=(F_p,\overline{\omega}(F_p)$ 
(we recall that $\overline{\omega}=\omega^{-1}$).  We have the following definition.

\begin{definition}\label{def-bipo} Given two frames $F_p=(p,u,n) \in \FR(\M)$, and $F_q=(q,v,m) \in \FR(\M)$, let $B_p$ and $B_q$ denote  the corresponding bipods. 
Let $\gamma=(\gamma_0,\gamma_1)$,  be a pair of  geodesic segments in $\M$, each connecting the points $p$ and $q$. 
We say that the bipods $B_p$ and $B_q$ are   $(\epsilon,r)$ well connected along the pair of segments $\gamma$,  if the pairs of frames $F_p$ and $F_q$, and  $\omega(F_p)$ and $\overline{\omega}(F_q)$, are  $(\epsilon,r)$ well connected along the segments $\gamma_0$ and $\gamma_1$ respectively.
\end{definition}

\begin{lemma}\label{lemma-bipo} Let  $F_p=(p,u,n)$ and $F_q=(q,v,m)$ be two frames in $\M$. Suppose that  the corresponding bipods  $B_p$ and  $B_q$ are $(\epsilon,r)$-well connected along a pair of geodesic segments 
$\gamma_0$ and $\gamma_1$ that connect $p$ and $q$ in $\M$,  that is we assume $\abs_{\gamma_{0}}(F_p,F_q)>0$ and likewise  $\abs_{\gamma_{1}}(\omega(F_p),\overline{\omega}(F_q) )>0$. Then for $r$ large, the closed curve $\gamma_0 \cup \gamma_1$ is homotopic to a closed geodesic $\delta \in \Gamma$, and the following inequality holds 
$$
\big| \len(\delta)- 2r + 2\log {{4}\over{3}} \big| \le D\epsilon,
$$
\noindent
for some constant $D>0$.  Moreover,
$$
d(p,\delta), \, d(q,\delta) \le \log \sqrt{3} +D\epsilon.
$$
\end{lemma}
\begin{figure}
	\input{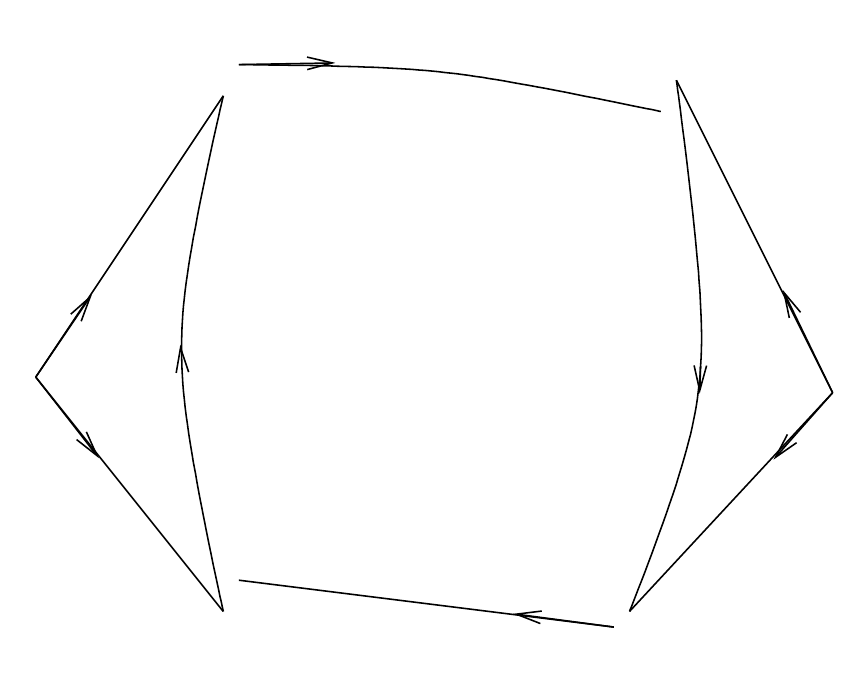_t}
	\caption{The closed $\epsilon$-chain for two well-connected bipods}
	\label{fig:bipods}
\end{figure}
\begin{proof}  We define, for $i=0,1$, $F_{\wh{p}_{i}} =(\wh{p}_i,\wh{u}_i,\wh{n}_i)$ by $\flow_{\frac{r}{4}}(\omega^{i}(F_p))$. Likewise, we let 
$F_{\wh{q}_{i}} =(\wh{q}_i,\wh{v}_i,\wh{m}_i)$ by $\flow_{\frac{r}{4}}(\overline{\omega}^{i}(F_q))$.  Because $\omega^{i}(F_p)$ and $\overline{\omega}^{i}(F_q)$ are well connected,
we can find $F_{p'_{i}} \in \Ne_{\epsilon}(F_{\wh{p}_{i}})$, and  $F_{q'_{i}} \in \Ne_{\epsilon}(F_{\wh{q}_{i}})$ such that $\flow_{\frac{r}{2}}(p'_i,u'_i,n'_i)=(q'_i,-v'_i,m'_i)$.
Moreover, there is a homotopy condition that is satisfies, namely that the concatenation of the $\epsilon$-chain 
$$
\left(  \flow_{[0,\frac{r}{4}]}(p_i,u_i), \flow_{[0,\frac{r}{2}]}(p'_i,u'_i),\flow_{[0,\frac{r}{4}]}(\wh{q}_i,-\wh{v}_i \right),  
$$
is homotopic rel endpoints to $\gamma_i$. 

We let $\eta_p$ be the geodesic segment from $\wh{p}_0$ to $\wh{p}_1$ that is homotopic  rel endpoints to 

$$
\flow_{[0,\frac{r}{4}]}(\wh{p}_0,-\wh{u}_0)  \cdot   \flow_{[0,\frac{r}{4}]}(p,u).
$$
Then $\wh{n}_0$ and $\wh{n}_1$ are parallel along $\eta_p$ because the are orthogonal to the plane of the immersed triangle we have formed), and the angle between $\eta_p$ and $-\wh{u}_i$ (at $\wh{p}_i$) is less than
$De^{-\frac{r}{4}}$. Moreover, 

$$
\left| \len(\eta_p)-\frac{r}{2}+\log \frac{4}{3} \right| \le De^{-\frac{r}{4}}.
$$
We likewise define $\eta_q$ and make the same observation.

We refer the reader to Figure \ref{fig:bipods} for an illustration of our construction. 

The segments $\eta_p$, $\flow_{[0,\frac{r}{2}]}(p'_1,u'_1)$,  $\eta^{-1}_q$,  $\flow_{[0,\frac{r}{2}]}(q'_0,v'_0)$,   form a closed $\epsilon$-chain, and we are therefore in a position to apply Lemma \ref{lemma-chain}.
We take 
$$
(a_0,b_0,a_1,b_1,a_2,b_2,a_3,b_3)=(\wh{p}_0,\wh{p}_1, p'_1,q'_1,\wh{q}_1, \wh{q}_0,q'_0,p'_0)
$$ 
(and connect $a_i$ to $b_i$ by the aforementioned segments), and we let 
$(n_0,n_1,n_2,n_3)=(\wh{n}_0,n'_1,m'_1,m'_0)$. We can easily verify that the hypotheses of Lemma \ref{lemma-chain} are satisfied, and we conclude that $\gamma_0 \cup \gamma_1$ is freely homotopic to a closed geodesic $\delta$, and the following inequalities
$$
\big| \len(\delta)- 2r + 2\log {{4}\over{3}} \big| \le D\epsilon,
$$
and 
$$
d(\wh{p}_i,\delta), \, d(\wh{q}_i,\delta) \le D\epsilon
$$
hold. It follows that the projection of $p$ onto $\eta_p$ is exponentially close to $\delta$, and therefore
$$
d(p,\delta), \, d(q,\delta) \le \log \sqrt{3} +D\epsilon.
$$

\end{proof}

\subsection{The geometry of well connected tripods} Let $P,P_1,P_2 \in \FR(\Ho)$. We call $P$ the  reference frame and $P_1,P_2$ the moving frames. 
Let  $F_1 \in \FR(\Ho)$, and $r$ large. 
Then the frame $F_2=L(F_1,P_1,P_2,r)$ is defined as follows: 
\vskip .1cm
Let $\wt{F}_1=\flow_{ {{r}\over{4}} }(F_1)$. Let $\wh{F}_1 \in \Ne_{\epsilon}(\wt{F}_1)$ denote the frame such that for some $M_1 \in \PSLC$ we have $M_1(P)=\wt{F}_1$ and $M_1(P_1)=\wh{F}_1$. Set $\flow_{ {{r}\over{2}} }(\wh{F}_1)=(\wh{q},-\wh{v},\wh{m})$, and $\wh{F}_2=(\wh{q},\wh{v},\wh{m})$. Let $\wt{F}_2$ denote the frame such that for some $M_2 \in \PSLC$ we have $M_2(P)=\wt{F}_2$ and $M_2(P_2)=\wh{F}_2$. Set $\flow_{ -{{r}\over{4}} }(\wt{F}_2)=F_2=(q,v,m)$.  Observe that the frame $F_2$ only depends on $F_1$, $P_1$, $P_2$, and $r$.
\vskip .1cm
Recall from Section 3 that  $\Pi^{0}$ denotes an oriented  topological pair of pants equipped with a homeomorphism $\omega_{0}:\Pi^{0} \to \Pi^{0}$, of order three that permutes the cuffs. By $\omega^{i}_0(C)$, $i=0,1,2$, we denote the oriented cuffs of $\Pi^0$.
For each $i=0,1,2$, we choose  $\omega^{i}_0(c) \in \pi_1(\Pi^0)$ to be an element in the conjugacy class that corresponds to the cuff $\omega^i_0(C)$, such that $\omega^{0}_0(c)  \omega^{1}_0(c) \omega^{2}_0(c)=\id$.
\vskip .1cm
Fix a frame $P \in \FR(\Ho)$, and fix six frames $P^{j}_i \in \Ne_{\epsilon}(P)$, $i=0,1,2$, $j=1,2$,  where $\Ne_{\epsilon}(P)$ is the $\epsilon$ neighbourhood of $P$. Denote by $(P^{j}_i)$ the corresponding six-tuple of frames. We define the representation 
$$
\rho(P^{j}_i): \pi_1(\Pi^0) \to \PSLC
$$
\noindent
as follows:
\vskip .1cm
Choose a frame  $F^{0}_1=(p,u,n) \in \Ho$, and let $F_2=L(F^{0}_1,P^{1}_0,P^{2}_0,r)$. Denote by $F^{j}_1$, $j=1,2$, given by
$\omega(F^{1}_1)=L(\omega^{-1}(F_2),P^{2}_1,P^{1}_1,r)$, and $\omega^{2}(F^{2}_1)=L(\omega^{-2}(F_2),P^{2}_2,P^{1}_2,r)$. Let $A_i \in \PSLC$ given by
$A_0(F^{0}_1)=F^{1}_1$, $A_1(F^{1}_1)=F^{2}_1$, and $A_2(F^{2}_1)=F^{0}_1$. Observe $A_2 A_1 A_0=\id$. We define $\rho(P^{j}_i)=\rho$ by
$\rho(\omega^{i}(c))=A_i$. Up to conjugation in $\PSLC$, the representation  $\rho(P^{j}_i)$ depends only on the six-tuple $(P^{j}_i)$ and $r$. Observe that if $P^{j}_i=P$, for all $i,j$,  then  
$\Ho / \rho(P^{j}_i)$ is a planar pair of pants whose all three cuffs have equal length, and the half lengths of the cuffs that  correspond to  this representation are positive real numbers.

We will use the following lemma to show that the skew pants that corresponds to a pair of well connected tripods (see the definition below) is indeed in $\Pant_{D\epsilon,R}$ for  some universal constant $D>0$.

\begin{lemma}\label{lemma-tripo-0} Fix a frame $P \in \FR(\Ho)$, and fix $P^{j}_i \in \Ne_{\epsilon}(P)$, $i=0,1,2$, $j=1,2$. Set $\rho(P^{j}_i)=\rho$. Then 
$$
\big| \hl(\omega^{i}_0(C))   - r + \log {{4}\over{3}} \big| \le D\epsilon,
$$
\noindent
for some constant $D>0$, where $\hl(\omega^{i}_0(C))$ denotes the half lengths that correspond to the representation $\rho$.
In particular, the transformation  $\rho(\omega^{i}_0(c))$ is loxodromic. 
\end{lemma}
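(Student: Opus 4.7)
The plan is to apply Lemma~\ref{lemma-bipo-0} to each of the three Möbius transformations $A_0, A_1, A_2$, concluding that each is loxodromic with $|\len(A_i) - 2r + 2\log \frac{4}{3}| \le D\epsilon$, and then invoke the admissibility of $\rho$ (Definition~\ref{def-adm}) to pass to half lengths.

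First I would verify the basic lemma: whenever $F' = L(F, Q_1, Q_2, r)$ with $Q_1, Q_2 \in \Ne_\epsilon(P)$, the ordered pair $(F,F')$ is $(\epsilon, r)$-well-connected in the sense of Definition~\ref{def-well-0}. This is immediate from unwinding the construction of $L$: it produces frames $\wh F_1 \in \Ne_\epsilon(\flow_{r/4}(F))$ and $\wh F_2 \in \Ne_\epsilon(\flow_{r/4}(F'))$ satisfying $\flow_{r/2}(\wh F_1) = (\wh q, -\wh v, \wh m)$ and $\wh F_2 = (\wh q, \wh v, \wh m)$, which forces $\abs_\Ho(F,F') > 0$. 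Applying this step to the three $L$-equations that define $F_2$, $F^1_1$, $F^2_1$ yields three well-connected pairs:
$$\text{(i)}\ (F^0_1, F_2), \qquad \text{(ii)}\ (\omega^{-1}(F_2), \omega(F^1_1)), \qquad \text{(iii)}\ (\omega^{-2}(F_2), \omega^2(F^2_1)).$$
These are precisely the three ``legs'' connecting the outer tripod at $F^0_1, F^1_1, F^2_1$ to the three legs $\omega^{-i}(F_2)$ of the middle tripod associated to $F_2$.

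Next, for each $i \in \{0,1,2\}$ I apply Lemma~\ref{lemma-bipo-0} to $A_i$ with intermediate frame chosen as the appropriate leg of the middle tripod. For $A_0$: set $F_1 := F^0_1$, $F_3 := F^1_1$, intermediate $F_2^{\mathrm{lem}} := \omega^{-1}(F_2)$; then $(F_1, \omega(F_2^{\mathrm{lem}})) = (F^0_1, F_2)$ is pair (i) and $(F_2^{\mathrm{lem}}, \omega(F_3)) = (\omega^{-1}(F_2), \omega(F^1_1))$ is pair (ii). For $A_1$: apply $\omega$ to both sides of $A_1(F^1_1) = F^2_1$ (using that $\omega$ commutes with Möbius transformations on frames) to obtain $A_1(\omega(F^1_1)) = \omega(F^2_1)$, and apply Lemma~\ref{lemma-bipo-0} with $F_1 := \omega(F^1_1)$, $F_3 := \omega(F^2_1)$, intermediate $F_2^{\mathrm{lem}} := \omega^{-2}(F_2)$; the two required pairs become (ii) and (iii). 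For $A_2$: apply $\omega^2$ to both sides of $A_2(F^2_1) = F^0_1$ and take intermediate $F_2^{\mathrm{lem}} := F_2$; the two pairs are (iii) and (i), using the symmetry $\abs_\Ho(F, F') = \abs_\Ho(F', F)$ for the latter. In each case Lemma~\ref{lemma-bipo-0} gives $|\len(A_i) - 2r + 2\log\frac{4}{3}| \le D\epsilon$ and the transformation is loxodromic.

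Finally, since $\rho$ is admissible, $\hl(\omega_0^i(C)) = \len(A_i)/2$ with $\len$ chosen so that $-\pi < \IM(\len(A_i)) \le \pi$; the real part $2r - 2\log\frac{4}{3} + O(\epsilon)$ is large enough for $r$ large that this branch is unambiguous, so dividing by two yields $|\hl(\omega_0^i(C)) - r + \log\frac{4}{3}| \le D\epsilon$, after absorbing constants into $D$. The main obstacle is the combinatorial bookkeeping in the application of Lemma~\ref{lemma-bipo-0}: one must match the three pairs produced by the $L$-constructions, after rotating both sides of $A_i(F^i_1) = F^{i+1}_1$ by a suitable power of $\omega$, to the specific bipod hypotheses required by the lemma, while keeping in mind that $(\epsilon, r)$-well-connectedness is \emph{not} preserved under applying $\omega$ to only one frame of a pair.
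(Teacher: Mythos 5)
Your application of Lemma~\ref{lemma-bipo-0} to $A_0$, $A_1$, $A_2$ is correct, and the combinatorial bookkeeping (choosing the intermediate frame as a suitable rotation of $F_2$, conjugating each equation $A_i(F^i_1)=F^{i+1}_1$ by a power of $\omega$, and using the symmetry of $\abs_\Ho$ together with $\omega^{-2}=\omega$) does match what the paper's terse "It follows from Lemma~\ref{lemma-bipo-0}" leaves implicit. So the length estimates $|\len(A_i) - 2r + 2\log\tfrac43| \le D\epsilon$ and loxodromicity are fine.

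The gap is in the final step, where you write "since $\rho$ is admissible, $\hl(\omega^i_0(C)) = \len(A_i)/2$." Admissibility is \emph{not} a hypothesis here; the lemma simply sets $\rho = \rho(P^j_i)$ for the representation constructed from the six frames, and admissibility is precisely what the paper is in the process of establishing (it is needed later to know $[\rho]$ is a skew pants in the sense of Definition~\ref{def-skew}). The geometrically defined half length $\hl(\omega^i_0(C))$ is a priori only known to satisfy $2\hl = \len \pmod{2\pi i}$, so $\hl = \len/2 + k\pi i$ for some $k \in \{0,1\}$ once the branch of $\len$ with $-\pi < \IM(\len) \le \pi$ is fixed. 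If $k=1$ the imaginary part of $\hl$ is near $\pi$, and the claimed estimate $|\hl - r + \log\tfrac43| \le D\epsilon$ would simply be false. Your remark that the real part is large and "this branch is unambiguous" does not resolve this: the large real part fixes the branch of $\len$, not the extra $\pi i$ ambiguity between $\len/2$ and $\len/2 + \pi i$. The paper handles this by a continuity argument: deform $P^j_i$ continuously to $P$ inside $\Ne_\epsilon(P)$; at the endpoint all $P^j_i = P$ the representation is Fuchsian with real positive half lengths, hence $k(0)=0$; the integer $k(t) \in \{0,1\}$ depends continuously on $t$, so $k \equiv 0$. Without this (or some equivalent argument) the proof is incomplete.
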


\begin{proof}  It follows from Lemma \ref{lemma-bipo} that  
$$
\big| \len(\omega^{i}_0(C))   - 2r + 2\log {{4}\over{3}} \big| \le D\epsilon,
$$
\noindent
where  $\len(\omega^{i}_0(C))$ denotes the cuff length of $\rho(\omega^{i}_0(c))=A_i$. 
\vskip .1cm
We have $\hl(\omega^{i}(C))={{\len(\omega^{i}(C) )}\over{2}}+k \pi i$, for some $k \in \{0,1\}$. It remains to show that $k=0$. 
\vskip .1cm
Let $t \in [0,1]$, and let $P^{j}_i(t)$ be a continuous path in $\Ne_{\epsilon}(P)$, such that $P^{j}_i(1)=P^{j}_i$, and $P^{j}_i(0)=P$.
Set $\rho_t=\rho(P^{j}_i(t))$. Then for each $t$ we obtain the corresponding number $k(t) \in \{0,1\}$. Since $k(0)=0$ and since $k(t)$ is continuous we have $k(1)=k=0$.
\end{proof}

\vskip .3cm 

To every frame $F \in \FR(\M)$ we associate the tripod  $T=\omega^{i}(F)$, $i=0,1,2$ and the anti-tripod  $\overline{T}=\overline{\omega}^{i}(F)$, $i=0,1,2$, where $\overline{\omega}=\omega^{-1}$.

\begin{definition}\label{def-trip} Given two frames   $F_p=(p,u,n)$ and $F_q=(q,v,m)$ in $\FR(\M)$, let $T_p=\omega^{i}(F_p)$ and $T_q=\omega^{i}(F_q)$, $i=0,1,2$, be the corresponding tripods. Let $\gamma=(\gamma_0,\gamma_1,\gamma_2)$,  be a triple of  geodesic segments in $\M$, each connecting the points $p$ and $q$. 
We say that the pair of tripods $T_p$ and $T_q$ is well connected along  $\gamma$, if each pair of frames  $\omega^{i}(F_p)$ and  $\omega^{-i}(F_q)$ is well connected along the segment $\gamma_i$. 
\end{definition}

\vskip .1cm
Next we show that to every pair of well connected tripods we can naturally associate a skew pants in the sense of Definition \ref{def-skew}.
Recall from Section 3 that  $\Pi^{0}$ denotes an oriented  topological pair of pants equipped with a homeomorphism $\omega_{0}:\Pi^{0} \to \Pi^{0}$, of order three that permutes the cuffs. By $\omega^{i}_0(C)$, $i=0,1,2$, we denote the oriented cuffs of $\Pi^0$.
For each $i=0,1,2$, we choose  $\omega^{i}_0(c) \in \pi_1(\Pi^0)$ to be an element in the conjugacy class that corresponds to the cuff $\omega^i_0(C)$, such that $\omega^{0}_0(c) \omega^{1}_0(c) \omega^{2}_0(c)=\id$.
\vskip .1cm
Let $a,b \in \Pi^{0}$ be the fixed points of the homeomorphism $\omega_0$. Let $\alpha_0 \subset \Pi^{0}$ be a simple arc  that connects $a$ and $b$, and set $\omega^{i}_0(\alpha_0)=\alpha_i$.  The union of two different arcs $\alpha_i$ and $\alpha_j$ is a closed curve in $\Pi^0$ homotopic to a cuff. One can think of the union of these three segments as the spine of $\Pi^0$. Moreover, there is an obvious projection from $\Pi^0$ to the spine $\alpha_0\cup \alpha_1 \cup \alpha_2$, and this projection is a homotopy equivalence.
\vskip .1cm
Let  $T_p=(p,\omega^{i}(u),n)$ and $T_q=(q,\omega^{i}(v),m)$, $i=0,1,2$, be two tripods in $\FR(\M)$ and $\gamma=(\gamma_0,\gamma_1,\gamma_2)$ a triple of geodesic segments in $\M$ each connecting the points $p$ and $q$. One  constructs a map $\phi$ from the spine of $\Pi^0$ to $\M$ by letting $\phi(a)=p$, $\phi(b)=q$, and by letting $\phi:\alpha_i \to \gamma_i$ be any homeomorphism. By precomposing this map with the projection from $\Pi^0$ to its spine we get a well defined map  $\phi:\Pi^0 \to \M$. By $\rho(T_p,T_q,\gamma):\pi_1(\Pi^0) \to \KG$ we denote the induced  representation of the fundamental group of $\Pi^0$. 
\vskip .1cm
In principle, the representation $\rho(T_p,T_q,\gamma)$ can be trivial. However if the the tripods $T_p$ and $T_q$ are well connected along $\gamma$, we prove below that the representation $\rho(T_p,T_q,\gamma)$ is admissible (in sense of Definition \ref{def-adm}) and that the conjugacy class  
$[\rho(T_p,T_q,\gamma)]$ is a skew pants in terms of Definition \ref{def-skew}.

\begin{lemma}\label{lemma-tripo} Let  $T_p$ and $T_q$ be two tripods that are well connected along a triple of segments $\gamma$ and set 
$\rho= \rho(T_p,T_q,\gamma)$.  Then
$$
\big| \hl(\omega^{i}_0(C))   - r + \log {{4}\over{3}} \big| \le D\epsilon,
$$
\noindent
for some constant $D>0$. In particular, the conjugacy class of  transformations  $\rho(\omega^{i}_0(C))$ is loxodromic. 
\end{lemma}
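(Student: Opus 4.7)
The plan is to reduce Lemma \ref{lemma-tripo} to Lemma \ref{lemma-tripo-0} by realising the representation $\rho(T_1,T_2,\gamma)$, up to conjugation in $\PSLC$, as a representation of the abstract form $\rho(P^j_i)$ built from a nearby six-tuple of reference frames. Once this identification is in place, the estimate on $\hl(\omega^i_0(C))$ is immediate from Lemma \ref{lemma-tripo-0}, and the loxodromic assertion then follows because the real part of each half length is close to $r > 0$.

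First I would pick a lift $\wt{F}_1 \in \FR(\Ho)$ of $F_1$ and unfold the three well-connected pairs one at a time. For each $i \in \{0,1,2\}$, let $\wt\gamma_i$ denote the lift of $\gamma_i$ whose initial endpoint is the basepoint of $\omega^i(\wt{F}_1)$, and let $\wt{F}_2^{(i)} \in \FR(\Ho)$ denote the lift of $F_2$ along $\wt\gamma_i$. By Definition \ref{def-trip}, the pair $\bigl(\omega^i(\wt{F}_1),\,\omega^{-i}(\wt{F}_2^{(i)})\bigr)$ is $(\epsilon,r)$-well connected in the sense of Definition \ref{def-well-0}, so after the preliminary flow by $r/4$ there exist frames in the supports of the two cut-off functions witnessing a positive value of $\abs_{\Ho}$. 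Fixing a reference frame $P \in \FR(\Ho)$ and pulling those witnessing frames back to $\Ne_\epsilon(P)$ by the unique $\PSLC$ elements that carry $P$ to the flowed lifts yields six frames $P^1_i,\,P^2_i \in \Ne_\epsilon(P)$.

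Next I would verify that the generators $A_i = \rho(T_1,T_2,\gamma)(\omega^i_0(c)) \in \KG$ are jointly conjugate in $\PSLC$ to the generators produced by the abstract recipe of Lemma \ref{lemma-tripo-0} applied to $(P^j_i)$. Unwinding the definitions, $A_i$ is precisely the $\PSLC$ element carrying the lifted spoke $\omega^i(\wt{F}_1)$ to $\omega^{i+1}(\wt{F}_1)$ through the composition of the two flow-time-$r/2$ identifications determined by the well-connected pairs along $\gamma_i$ and $\gamma_{i+1}$; this is exactly the recipe that produces $\rho(P^j_i)$ from the $P^j_i$'s in the paragraph preceding Lemma \ref{lemma-tripo-0}. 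Hence the two representations are conjugate in $\PSLC$, and Lemma \ref{lemma-tripo-0} gives the claimed bound on each $\hl(\omega^i_0(C))$.

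The main bookkeeping obstacle will be keeping the orientations and the $\omega$ versus $\omega^{-1}$ twist between the two tripods consistent with the abstract construction, and confirming that the identification of $\rho(T_1,T_2,\gamma)$ with $\rho(P^j_i)$ is independent of the auxiliary choices of $\wt{F}_1$ and $P$ (any two such choices differ by a global $\PSLC$-conjugation and therefore preserve the complex translation length). Once that is in hand, the conclusion — including that each $\rho(\omega^i_0(c))$ is loxodromic — follows at once from Lemma \ref{lemma-tripo-0} together with the $\PSLC$-conjugacy invariance of the complex translation length.
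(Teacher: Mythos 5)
Your proposal is correct and takes essentially the same route as the paper: the paper's proof is precisely the one-line observation that there exist $P^j_i \in \Ne_\epsilon(P)$ with $\rho(P^j_i) = \rho(T_1,T_2,\gamma)$, followed by an appeal to Lemma \ref{lemma-tripo-0}. You have simply spelled out the lifting, the extraction of the six-tuple from the supports of the cut-off functions in Definition \ref{def-well-0}, the matching of generators, and the conjugation-independence, all of which the paper leaves implicit.
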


\begin{proof} 

Observe that there exist $P^{j}_i \in \Ne_{\epsilon}(P)$, such that $\rho(P^{j}_i)=\rho(T_p,T_q,\gamma)$. The lemma follows from 
Lemma \ref{lemma-tripo-0}.

\end{proof}

Recall that $\Pant_{\epsilon,R}$ is the set of skew pants whose half-lengths are $\epsilon$ close to $\frac{R}{2}$, and that $R=2(r-\log \frac{4}{3} )$. 
If we  write $\pi(T_p,T_q,\gamma)=[\rho(T_p,T_q,\gamma)]$, then by Lemma \ref{lemma-tripo},  $\pi$ maps well connected pairs of tripods to pairs of skew pants in $\Pant_{D\epsilon,R}$. 

\vskip .1cm
We have
\begin{definition} Let $T_p$ and $T_q$ be two tripods that are well connected along a triple of segments $\gamma=(\gamma_0,\gamma_1,\gamma_2)$.
Set
$$ 
\taso_{\gamma}(T_p,T_q)=\prod\limits_{i=0}^{i=2} \abs_{\gamma_{i}}(\omega^{i}(F_p) ) , (\omega^{-i}(F_q) ) .
$$
\end{definition}

Observe that two tripods  $T_p$ and $T_q$  are   $(\epsilon,r)$ well connected along a triple of geodesic segments $\gamma$, if and only if  $\taso_{\gamma}(T_p,T_q)>0$. 
\vskip .1cm

We define the space of well connected tripods as the space of all triples $(T_p,T_q,\gamma)$, such that the tripods $T_p$ and $T_q$ are well connected along $\gamma$. It follows from the exponential  mixing statement that given any two tripods $T_p$ and $T_q$, and for $r$ large enough, there will exist at least one triple of segments $\gamma$ so that  $T_p$ and $T_q$ are well connected along $\gamma$ (in fact, it can be shown that there will be many such segments). 

We define the measure $\wt{\mu}$ on the set of well connected tripods by 
\begin{equation}\label{wt-mu}
d\wt{\mu}(T_p,T_q,\gamma)=\taso_{\gamma}(T_p,T_q)\, d\lambda_{T}(T_p,T_q,\gamma),
\end{equation}
\noindent
where $\lambda_{T}(T_p,T_q,\gamma)$ is the product of the Liouville measure $\Lambda$ (for $\FR(\M)$) on the first two terms, and the counting measure on the third term. The measure $\lambda_{T}$ is infinite (since there are infinitely many geodesic segments between any two points $p,q \in \M$) but $\taso_{\gamma}(T_p,T_q)$ has compact support (that is, only finitely many such triples of connections $\gamma$ are ``good"), so $\wt{\mu}$ is finite.

Recall that $R=2(r-\log \frac{4}{3} )$ (see the discussion after Lemma \ref{lemma-tripo} above).  
We define the measure $\mu$ on $\Pant_{D\epsilon,R}$ by $\mu=\pi_{*} \wt{\mu}$. This is the measure from Theorem \ref{theorem-mes}. It follows from the construction that this measure is invariant under the involution $\refl:\Pant \to \Pant$ (see Section 3 for the definition), that is  $\mu \in \Mes^{\refl}_0(\Pant)$.

\vskip .1cm

In order to prove Theorem \ref{theorem-mes} it remains to construct the corresponding measure $\beta \in \Mes_0(\NB(\sqrt{\Gamma}))$ and prove the stated 
properties.

\subsection{The  ``predicted foot" map $\ft_{\delta}$ }\label{section-4.7}   By $F_p=(p,u,n)$ and $F_q=(q,v,m)$ we continue to denote two frames in $\FR(\M)$. Suppose 
the frames $\omega^{i}(F_p)$ and  $\overline{\omega}^{i}(F_q)$ are well connected along the geodesic segments $\gamma_i$, $i=0,1$. In our terminology this means that the bipods $B_p$ and $B_q$ are well connected along the segments $\gamma_0$ and $\gamma_1$. Let  $\delta_{2} \in \Gamma$ denote the closed geodesic in $\M$ freely homotopic to  $\gamma_{0} \cup \gamma _{1}$.
We now associate the ``geometric feet" to $(B_p,B_q,\gamma_0,\gamma_1)$.

We first define the geodesic ray $\alpha_p:[0,\infty) \to \M$ by $\alpha_p(0)=p$, $\alpha'_p(0)=\overline{\omega}(u)$, and we likewise define the geodesic ray $\alpha_q:[0,\infty) \to \M$ by $\alpha_q(0)=q$, $\alpha'_q(0)=\omega(v)$. Then for $t \in [0,\infty)$, and $i=0,1$, we let $\beta^{t}_i$ be the geodesic segment homotopic relative endpoints to the piecewise geodesic arc $(\alpha_p[0,t])^{-1} \cdot \gamma_{i} \cdot \alpha_q[0,t])$. (The endpoints of both segments $\beta^{t}_{1}$ and $\beta^{t}_{2}$ are $\alpha_p(t)$ and $\alpha_q(t)$, and $\beta^{0}_{i}=\gamma_i$). We let $\beta^{\infty}_i$ be the limiting geodesic   of $\beta^{t}_i$, when $t \to \infty$. For each $t>0$ and $i=0,1$, there is an obvious choice of common orthogonal from $\delta_{2}$ to $\beta^{t}_i$, which varies continuously with $t \in [0,\infty]$. We let $f^{t}_{i} \in \NB(\delta_{2})$ be the foot of this common orthogonal at $\delta_{2}$, and we let $f_i=f^{\infty}_i$.

For a closed geodesic $\delta \in \Gamma$,  let $\TT_{\delta}$ denote the solid torus whose core curve is $\delta$. 
As an alternative point of view, we can lift $\gamma_0 \cup \gamma_1$ to a closed curve in the solid torus $\TT_{\delta_{2}}$ (there is a unique such lift to a closed curve in $\TT_{\delta_{2}}$). 
We can then lift $F_p$ and $F_q$, and also $\alpha_p[0,\infty]$ and $\alpha_q[0,\infty]$, where $\alpha_p(\infty), \alpha_q(\infty) \in \partial{\TT_{\delta_{2}}}$. Then we define $\beta^{t}_i$ (and 
$\beta^{\infty}_i$) as before and there will be unique common orthogonals from (the lift of) $\delta_{2}$ to $\beta^{t}_{i}$, $t \in [0,\infty]$.

By Lemma \ref{lemma-ph-2} we see that $\dis_{\delta_{2}}(f_0,f_1)=\hl(\delta_{2})$, so $f_0$ and $f_1$ represent the same point in $\NB(\sqrt{\delta_{2}})$. Therefore we have defined the  mapping
$$
(B_p,B_q,\gamma_0,\gamma_1) \mapsto \ft_{\delta_{2}}(B_p,B_q,\gamma_0,\gamma_1) \in \NB(\sqrt{\delta_{2}}),
$$ 
on the set of all well connected bipods such that the $\gamma_0 \cup \gamma_1$ is homotopic to $\delta_{2}$. We think of the vector 
$\ft_{\delta_{2}}(B_p,B_q,\gamma_0,\gamma_1) \in \NB(\sqrt{\delta_{2}})$ as the geometric foot of $(B_p,B_q,\gamma_0,\gamma_1)$.

Assume now that we are given a third geodesic segment $\gamma_2$ between $p$ and $q$ (also known as the third connection) such that $(T_p,T_q,\gamma)$ is a pair of well connected tripods along the triple of segments $\gamma=(\gamma_0,\gamma_1,\gamma_2)$.  Above, we have defined the skew pants $\Pi=\pi(T_p,T_q,\gamma)$, such that $\partial{\Pi}=\delta_0+\delta_1 +\delta_2$, where $\delta_i$ is homotopic to $\gamma_{i-1} \cup \gamma_{i+1}$ (using the convention $\gamma_i=\gamma_{i+3}$).

Let $h_i \in \NB(\delta_2)$, $i=0,1$, denote the foot of the common orthogonal from $\delta_2$ to $\delta_i$. Recall that since $\dis_{\delta_{2}}(h_0,h_1)=\hl(\delta_2)$ the projections of  $h_0$ and $h_1$ to $\NB(\sqrt{\delta_{2}})$ agree, and as before we let $\foot_{\delta_{2}}(\Pi) \in \NB(\sqrt{\delta_{2}})$ denote this projection. We say that $\foot_{\delta_{2}}(\Pi)$ is the foot of the skew pants $\Pi$ on the cuff $\delta_2$.

We will now verify that  on $\NB(\sqrt{\delta_{2}})$ we have $\dis_{\delta_{2}}(f_0,h_1)=\dis_{\delta_{2}}(f_1,h_0)=O(e^{-\frac{r}{4}})$. 
This will imply that the pairs $\{h_0,h_1\}$ and $\{f_0,f_1 \}$ project to  vectors in $\NB(\sqrt{\delta_{2}})$ that are $e^{-\frac{r}{4}}$ close.

\begin{proposition}\label{prop-tripo-1}  With the above notation we have that for $r$ large and $\epsilon$ small the inequalities 
$$
\dist(f_0,h_1), \dist(f_1,h_0) \le De^{-{{r}\over{4}} }
$$
holds for some universal constant $D>0$.

\end{proposition}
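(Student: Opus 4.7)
The plan is twofold: I would first establish the estimate $\dist(f_j,h_j)\le De^{-r/4}$ via Lemma \ref{lemma-ph-1}, and then use Lemma \ref{lemma-ph-2} together with that estimate to pin down the exact identity $f_2=\A_\zeta(f_1)$.

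For the estimate: by Lemma \ref{lemma-bipo-0} applied to each of the three bipod subconfigurations of the well-connected tripod pair $(T_1,T_2,\gamma)$, the three cuff axes are located, with $\text{axis}(A_0)$ passing within $\log\sqrt{3}+D\epsilon$ of $p_0,q_0,p_1$ and similarly for $\text{axis}(A_1)$ (near $p_1,q_0,p_2$) and $\text{axis}(A_2)$ (near $p_2,q_0,p_0$). Lemma \ref{lemma-tripo} gives cuff length $R=2r-2\log(4/3)+O(\epsilon)$, so the hexagon-distance estimate \eqref{hex-1} gives, after reversing one orientation to absorb the canonical $i\pi$, $\dis(\text{axis}(A_0),\text{axis}(A_j))=e^{-r/2+O(1)}$, verifying hypothesis \eqref{assumption} of Lemma \ref{lemma-ph-1}. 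Next I would identify the relevant cuff axis (either $A_1$ or $A_2$) for each of $\alpha$ and $\beta$: using the $\omega$-equivariance of the tripod construction together with the well-connectedness along $\gamma_2$, one shows that the ideal endpoints $z_1,z_2,z_3$ are visually close (within $O(e^{-r/4})$) to endpoints of specific lifts of these cuff axes. The Chain Lemma (Theorem \ref{thm-chain}) applied to the four-point chain $p_0,a_1,a_2,q_0$ places $a_1,a_2$ within $D\epsilon$ of both $\alpha$ and the relevant cuff axis, at distances of order $r/4$ from $\text{axis}(A_0)$. Lemma \ref{lemma-ph-1} with these data gives $\dist(f_1,h_1)\le De^{-r/4}$, and the argument for $\dist(f_2,h_2)$ is analogous.

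For the identity: since $A_0$ is an isometry commuting with the right-hand rotation $\omega$ about the normal vector, $A_0\bigl(\ray(p_0,\omega^2(u_0))\bigr)=\ray(p_1,\omega^2(u_1))$, so $A_0(z_1)=z_3$. Lemma \ref{lemma-ph-2} applied with $A=A_0$, $p=z_1$, $q=z_2$, $\gamma^*=\text{axis}(A_0)$ yields $f_2-f_1\in\{\hl(\rho(C)),\hl(\rho(C))+i\pi\}$ in $\NB(\text{axis}(A_0))$ (using $2\hl=\len$ modulo $2\pi i$ together with $-\hl\equiv\hl$ modulo $\len$). Since $h_2-h_1=\hl(\rho(C))$ by the very definition of half length in Section 2, and since $\dist(f_j,h_j)\le De^{-r/4}\ll\pi$ for $r$ large, the other possibility $\hl+i\pi$ would force $(f_2-h_2)-(f_1-h_1)\approx i\pi$ in $\NB(\text{axis}(A_0))$, which is incompatible with $f_j\approx h_j$. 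This pins down $f_2=\A_\zeta(f_1)$ with $\zeta=\hl(\rho(C))$.

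The main obstacle lies in identifying the correct relevant cuff axis for $\alpha$ and $\beta$, and in showing that the ideal endpoints $z_1,z_2,z_3$ are visually close to endpoints of specific lifts of those axes. This requires careful bookkeeping of the six lifts of $p,q$ in $\Ho$ joined by lifts of $\gamma_0,\gamma_1,\gamma_2$, together with the $\omega$-equivariance of the tripod construction, to set up Lemma \ref{lemma-ph-1} with the correct parametrizations.
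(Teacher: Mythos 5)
Your argument is correct and follows the paper's own route: locate the relevant cuff axis near $\alpha$ (and $\beta$) using Lemma~\ref{lemma-bipo-0} and the Chain Lemma, apply Lemma~\ref{lemma-ph-1} for the $De^{-r/4}$ estimate, invoke Lemma~\ref{lemma-ph-2} to get $f_2-f_1\in\{\hl,\hl+i\pi\}$, and rule out the $+i\pi$ branch using the estimate together with $h_2=\A_{\hl}(h_1)$. You make explicit a couple of steps the paper leaves implicit (verifying the hypothesis \eqref{assumption} of Lemma~\ref{lemma-ph-1} via the hexagon estimate, and the observation $A_0(z_1)=z_3$ via $\omega$-equivariance), but the underlying proof strategy is the same.
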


\begin{proof} Assume that we are given a skew pants $\Pi=\pi(T_p,T_q,\gamma)$, where $\gamma=(\gamma_0,\gamma_1,\gamma_2)$ is a triple of good connections. Recall that $\delta_i$ is a cuff of $\Pi$ that is homotopic to $\gamma_{i-1} \cup \gamma_{i+1}$.  Then for $i=0,1$, the geodesics $\delta_2$ and $\delta_i$ (or more precisely the appropriate lifts of $\delta_2$ and $\delta_i$ to the solid torus cover corresponding to $\delta_{2}$) satisfy (\ref{assumption}).

On the other hand, since $\gamma_2$ is a good connection, and from the definition of a good connection between two frames, it follows that for some universal constant $E>0$ the segment $\beta^{\frac{r}{4}}_{0}$ (considered in the solid torus cover $\TT_{\delta_{2}}$) has the endpoints $E$ close to $\delta_1$. Similarly the segment $\beta^{\frac{r}{4}}_{1}$ has the endpoints $E$ close to $\delta_0$.  
The inequality $\dist(f_0,h_1), \dist(f_1,h_0) \le De^{-{{r}\over{4}} }$ now follows from Lemma \ref{lemma-ph-1}.

\end{proof}

\vskip .1cm

For each skew pants $\Pi=\pi(T_p,T_q,\gamma)$ we let

$$
\ft_{\delta_{2}}(\Pi)=\ft_{\delta_{2}}(T_p,T_q,\gamma)= \ft_{\delta_{2}}(B_p,B_q,\gamma_0,\gamma_1).
$$
That is, we have defined the map $(\Pi,\delta^{*}) \mapsto \ft_{\delta}(\Pi,\delta) \in \NB(\sqrt{\delta})$ on the set of all marked skew pants $\Pant^{*}_{D\epsilon,R}$ that contain the geodesic $\delta$ in its boundary. Recall that we have already defined the mapping $(\Pi,\delta^{*}) \mapsto \foot_{\delta}(\Pi,\delta) \in \NB(\sqrt{\delta})$. We have

\begin{proposition}\label{prop-dis} Let $(\pi(T_p,T_q,\gamma),\delta^{*}) \in \Pant^{*}$.  Then for $r$ large and $\epsilon$ small  
we have
$$
\dis(\foot_{\delta}(\pi(T_p,T_q,\gamma),\ft_{\delta}(T_p,T_q,\gamma)) \le De^{-{{r}\over{4}} },
$$
\noindent
for some constant $D>0$.
\end{proposition}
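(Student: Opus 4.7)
The plan is to derive Proposition \ref{prop-dis} as an essentially immediate consequence of Proposition \ref{prop-tripo-1} after unwinding the two definitions of "foot" and checking they refer to the same geometric object up to a lift.

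First, I will identify the role of the auxiliary vectors $h_1, h_2 \in \NB(\text{axis}(A_0))$ from the paragraph preceding Proposition \ref{prop-tripo-1}. Recall from Section 3 that $\foot(\Pi,\delta^{*})$ is defined to be the class in $\NB(\sqrt{\delta})$ of the pair $(p_k,v_k)$, where $\delta_k^{*}$ is the side of the canonical skew right-angled hexagon for $\Pi$ joining $\gamma^{*}$ to the $k$th other cuff axis $\gamma_k^{*}$, and $(p_k,v_k)$ is the base-point--tangent pair at the endpoint on $\gamma^{*}$. Applied to $\Pi=\Pi(T_1,T_2,\gamma)$ with representation $\rho(T_1,T_2,\gamma)$, and with $\delta^{*}$ the cuff corresponding to the generator $A_0=\rho(\omega_0^{0}(c))$, the three hexagon axes are $\text{axis}(A_0),\text{axis}(A_1),\text{axis}(A_2)$; the side $\delta_k^{*}$ is precisely the common orthogonal between $\text{axis}(A_0)$ and $\text{axis}(A_k)$, oriented toward $\text{axis}(A_k)$. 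Hence the lift of $\foot_{\delta}(\Pi)$ to $\NB(\text{axis}(A_0))$ is exactly $h_k$. (Here Lemma \ref{lemma-tripo} ensures $\rho$ is admissible and determines $\hl$ without the possible extra $\pi i$, so the hexagon orientations and the identification of $h_k$ with the hexagon foot agree.)

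Second, by the definition in the paragraph preceding Proposition \ref{prop-dis}, $\ft_{\delta^{*}}(T_1,T_2,\gamma)$ is the projection to $\NB(\sqrt{\delta})$ of the vectors $f_1,f_2\in\NB(\text{axis}(A_0))$ constructed from the rays $\ray(p_0,\omega^{2}(u_0))$, $\ray(q_0,\omega^{-2}(v_0))$, $\ray(p_1,\omega^{2}(u_1))$ (and the relation $f_2=\A_{\hl(\rho(c))}(f_1)$ from Proposition \ref{prop-tripo-1} ensures the two projections coincide). Proposition \ref{prop-tripo-1} yields
\[
\dist(f_j,h_j)\le De^{-r/4}\qquad\text{in }\NB(\text{axis}(A_0)).
\]
The covering map $\NB(\text{axis}(A_0))\equiv\C\to\NB(\sqrt{\delta})=\C/(\hl(\delta)\Z+2\pi i\Z)$ is a local isometry for the Euclidean metrics, and for $r$ large the right-hand side is smaller than the injectivity radius (which is bounded below by $\pi$, coming from the $2\pi i\Z$ direction). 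Therefore the same bound descends to $\NB(\sqrt{\delta})$, giving $\dis(\foot_{\delta}(\Pi),\ft_{\delta}(T_1,T_2,\gamma))\le De^{-r/4}$ for the chosen cuff $\delta^{*}$.

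Finally, the case of the other two oriented cuffs follows by cyclically relabelling the tripod: the setup of Proposition \ref{prop-tripo-1} and the hypothesis that $T_1,T_2$ are well connected along $\gamma$ (Definition \ref{def-trip}) are symmetric under $\omega$, so the same argument works with $A_0$ replaced by $A_1$ or $A_2$ and the constant $D$ unchanged. There is no real obstacle — the content is already in Proposition \ref{prop-tripo-1}; what remains is the essentially tautological verification that the hexagon feet (which define $\foot$) and the intersection feet of the auxiliary geodesics $\alpha,\beta$ (which define $\ft$) agree up to the exponentially small error produced by the bending of the approximating broken geodesic path.
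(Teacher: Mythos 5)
Your proposal is correct and takes essentially the same route as the paper, which simply cites Proposition \ref{prop-tripo-1}; you spell out the unwinding of the definitions (identifying $h_j$ with the lift of $\foot_\delta(\Pi)$, $\ft_\delta$ with the projection of $f_j$, checking the projection preserves distances, and handling the other cuffs by symmetry) that the paper's one-line proof leaves implicit.
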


\begin{proof} It follows from Proposition \ref{prop-tripo-1}.

\end{proof}

Given skew pants  $\Pi=\pi(T_p,T_q,\gamma)$, the new foot $\ft_{\delta_{2}}(T_p,T_q,\gamma)$ ``predicts" the location of the old foot $\foot_{\delta_{2}}(T_p,T_q,\gamma)$ (up to an exponentially small error in $r$) without knowing the third connection $\gamma_2$.

\subsection{The proof of Theorem \ref{theorem-mes} } Fix $\delta \in \Gamma$. For a given measure $\alpha$ on $\NB(\sqrt{\Gamma})$ we let  $\alpha_{\delta}$ denote the restriction of  $\alpha$ on $\NB(\sqrt{\delta})$.  It remains to construct the measure $\beta$ on $\NB(\sqrt{\Gamma})$ from Theorem \ref{theorem-mes} and  estimate the  Radon-Nikodym derivative of $\beta_{\delta}$  
with respect to the Euclidean measure on $\NB(\sqrt{\delta})$.

Recall that $(B_p,B_q,\gamma_0,\gamma_1)$ is a well connected pair of bipods along the pair of segments $\gamma_0$ and $\gamma_1$ if
$$
\abs_{\gamma_{0}}(F_1,F_2) \abs_{\gamma_{1}}(\omega(F_1),\overline{\omega}(F_2))>0.
$$ 
We define the set $S_{\delta}$ by saying that  $(F_p,F_q,\gamma_0,\gamma_1) \in S_{\delta}$ if  $(B_p,B_q,\gamma_0,\gamma_1)$ is a well connected pair of bipods along a pair of segments $\gamma_0$ and $\gamma_1$ such that  $\gamma_0 \cup \gamma_1$ is homotopic to $\delta$. In the previous subsection we have defined the map
$$
\ft_{\delta}:S_{\delta} \to \NB(\sqrt{\delta}).
$$

Recall that that the bundle $\NB(\sqrt{\delta})$ has the natural  $\C/(2\pi i \Z+l(\delta)\Z)$ action by isometries.  Now, we define the action of the torus $\C/(2\pi i \Z+l(\delta)\Z)$  on $S_{\delta}$ so that the map $\ft_{\delta}$ becomes equivariant with respect to the torus actions on $S_{\delta}$ and $\NB(\sqrt{\delta})$, that is for each $\tau \in \C/(2\pi i \Z+l(\delta)\Z)$ we have
\begin{equation}\label{vazno}
\ft_{\delta}(\tau+(B_p,B_q,\gamma_0,\gamma_1))=\tau+\ft_{\delta}(B_p,B_q,\gamma_0,\gamma_1),
\end{equation}
where $\tau+(B_p,B_q,\gamma_0,\gamma_1)$ denotes the new element of $S_{\delta}$ (obtained after applying the action by $\tau$ to $(B_p,B_q,\gamma_0,\gamma_1)$).

Let $\TT_{\delta}$ be the open solid torus cover associated to $\delta$ (so $\delta$ has a unique lift to a closed geodesic in $\TT_{\delta}$ which we denote by $\wh{\delta}(\delta)$). Given a pair of well connected bipods in $S_{\delta}$, each bipod lifts  in a unique way to a bipod in  $\FR(\TT_{\delta})$ such that the pair of the lifted bipods is well connected in $\TT_{\delta}$. We denote by $\wt{S}_{\delta}$ the set of such lifts, so  $\wt{S}_{\delta}$ is in one-to-one correspondence with $S_{\delta}$.

We observe that the group of automorphisms of the solid torus $\TT_{\delta}$ is isomorphic to the group of isomorphisms of the unit normal bundle $\NB(\delta)$, that is in turn isomorphic to 
$\C/(2\pi i \Z+l(\delta)\Z)$ which acts  on both $\NB(\delta)$ and on $\FR^{2}(\TT_{\delta})$ so as to map  $\wt{S}_{\delta}$ to itself. 
Since $\wt{S}_{\delta}$ and $S_{\delta}$ are in one-to-one correspondence we have the induced action of $\C/(2\pi i \Z+l(\delta)\Z)$  on $S_{\delta}$. The equivariance (\ref{vazno}) follows from the construction.

Let $C_{\delta}$ be the space of well connected tripods $(T_p,T_q,\gamma)$, where $\gamma=(\gamma_0,\gamma_1,\gamma_2)$, such that $\gamma_0 \cup \gamma_1$ is homotopic to $\delta$. 
Let $\chi:C_{\delta} \to S_{\delta}$ be the forgetting map (the term forgetting map refers to forgetting the third connection $\gamma_2$), so $\chi(T_p,T_q,\gamma_0,\gamma_1,\gamma_2)=(B_p,B_q,\gamma_0,\gamma_1)$.  

It follows from Proposition \ref{prop-dis}  that for any pair of well connected tripods   $T=(T_p,T_q,\gamma) \in C_{\delta}$ we have
\begin{equation}\label{f-foot-new}
|\ft_{\delta}(\chi (T))-\foot_{\delta}(\pi(T,\gamma))|<Ce^{-{{r}\over{4}}},
\end{equation}
where $\pi(T,\gamma)$ is the corresponding skew pants. 

\vskip .3cm

Next, we define the measure $\nu_{\delta}$ on $S_{\delta}$ by
$$
d\nu_{\delta}(B_p,B_q,\gamma_0,\gamma_1)=\abs_{\gamma_{0}}(F_p,F_q)\abs_{\gamma_{1}}(\omega(F_p),\overline{\omega}(F_q))\, d\lambda_{B}(B_p,B_q,\gamma_0,\gamma_1),
$$
where $\lambda_B$ is the measure on $S_{\delta}$ defined as the product of the Liouville measures on the first two terms and the counting measure on the other two terms. 

We make two observations. The first one is that  $\lambda_{B}$ is invariant under the  $\C/(2\pi i \Z+l(\delta)\Z)$ action on $S_{\delta}$. The second one is as follows. Let $\tau \in \C/(2\pi i \Z+l(\delta)\Z)$, and for $(B_p,B_q,\gamma_0,\gamma_1) \in S_{\delta}$ we let  
$$
(B_{p(\tau)},B_{q(\tau)},\gamma_0(\tau),\gamma_1(\tau))=\tau+(B_p,B_q,\gamma_0,\gamma_1).
$$ 
denote the corresponding element of $S_{\delta}$. It follows from the definition of the affinity functions that 
$$
\abs_{\gamma_{0}}(F_p,F_q)\abs_{\gamma_{1}}(\omega(F_p),\overline{\omega}(F_q))=\abs_{\gamma_{0}(\tau)}(F_{p(\tau)},F_{q(\tau)})\abs_{\gamma_{1}(\tau)}(\omega(F_{p(\tau)}),\overline{\omega}(F_{q(\tau)})),
$$
for any $\tau$. These two observations show that the measure $\nu_{\delta}$ is invariant under the $\C/(2\pi i \Z+l(\delta)\Z)$ action on $S_{\delta}$.

Since the map $\ft_{\delta}$  is invariant under the  $\C/(2\pi i \Z+l(\delta)\Z)$ actions  (see $(\ref{vazno})$), it follows from the above two observations that the measure $(\ft_{\delta})_{*} \nu_{\delta}$ is  invariant under the $\C/(2\pi i \Z+l(\delta)\Z)$ action on $\NB(\sqrt{\delta})$. Therefore, the measure $(\ft_{\delta})_{*} \nu_{\delta}$  
is equal to a multiple of the Euclidean measure $\Eucl_{\delta}$ on $\NB(\sqrt{\delta})$. We write

\begin{equation}\label{eqos-1}
(\ft_{\delta})_{*} \nu_{\delta}=E_{\delta} \Eucl_{\delta},
\end{equation}
for some constant $E_{\delta} \ge 0$.

The other natural measure on $S_{\delta}$ is defined as follows. Let   $\chi:C_{\delta} \to S_{\delta}$ be the forgetting map (defined above).  
Recall that  $\wt{\mu}$ is the measure (defined by $(\ref{wt-mu})$ above) on the space of well connected tripods given by
$$
d\wt{\mu}(T_p,T_q,\gamma)=\taso_{\gamma}(T_p,T_q)\, d\lambda_{T}(T_p,T_q,\gamma),
$$
where $\lambda_{T}(T_p,T_q,\gamma)$ is the product of the Liouville measure $\Lambda$ (for $\FR(\M)$) on the first two terms, and the counting measure on the third term.
Then we get a new measure on $S_{\delta}$ by  $\chi_{*} (\wt{\mu}|_{C_{\delta}})$, where $\wt{\mu}|_{C_{\delta}}$ is  the restriction of $\wt{\mu}$ to the set $C_{\delta}$.

The two measures satisfy
\begin{align*}
\left| \frac{d \chi_{*}(\wt{\mu}|_{C_{\delta}})} { d\nu_{\delta}} \right| &= \sum_{\gamma_{2}} \abs_{\gamma_{2}}(\omega^{2}(F_p),\overline{\omega}^{2}(F_q)) \\
&=\abs(\omega^{2}(F_p),\overline{\omega}^{2}(F_q)).
\end{align*}
But by the mixing we have  
$$
\abs(\omega^{2}(F_p),\overline{\omega}^{2}(F_q))= \frac{1}{\Lambda(\FR(\M))} (1+ O(e^{-{\bf q} r}) ),
$$
so we find that  for some constant $C=C(\epsilon,\M)>0$ we have
$$
\left| \frac{d \chi_{*}(\wt{\mu}|_{C_{\delta}})} { d\nu_{\delta}}- \frac{1}{\Lambda(\FR(\M))}  \right|<Ce^{-{\bf q} r},
$$
which implies
\begin{equation}\label{**}
\frac{1}{\Lambda(\FR(\M))} (1-Ce^{-{\bf q} r} )\nu_{\delta} \le \chi_{*}( \wt{\mu}|_{C_{\delta}} )  \le \frac{1}{\Lambda(\FR(\M))} (1+Ce^{-{\bf q} r} ) \nu_{\delta}.
\end{equation}

Applying the mapping $(\ft_{\delta})_*$, and from (\ref{eqos-1}) we obtain

$$
\frac{E_{\delta}}{\Lambda(\FR(\M))} (1-Ce^{-{\bf q} r} )\Eucl_{\delta} \le \ft_{*}(\chi_{*}( \wt{\mu}|_{C_{\delta}}))  \le \frac{E_{\delta}}{\Lambda(\FR(\M))} (1+Ce^{-{\bf q} r} ) \Eucl_{\delta}.
$$

\vskip .3cm
We  let 
$$
\beta_{\delta}= \ft_{*}(\chi_{*}( \wt{\mu}|_{C_{\delta}})).
$$ 
It follows that the Radon-Nikodym derivative of $\beta_{\delta}$ satisfies desired inequality from Theorem \ref{theorem-mes}. On the other hand, it follows from
(\ref{f-foot-new}) that $\beta_{\delta}$ and $\ph\mu|_{\NB(\sqrt{\delta}) }$ are $O(e^{-\frac{r}{4}})$ equivalent. This completes the proof.

\end{document}